\documentclass[12pt]{article}

\usepackage{amsfonts,amsmath,amsthm,amssymb,amscd}
\usepackage[dvips]{graphicx}

\setcounter{tocdepth}{1}

\sloppy

\theoremstyle{definition}
\newtheorem{theorem}{Theorem}
\newtheorem{lemma}[theorem]{Lemma}
\newtheorem{proposition}[theorem]{Proposition}
\newtheorem{corollary}[theorem]{Corollary}

% Reset counters in each section
\numberwithin{equation}{section}
\numberwithin{theorem}{section}

% •M‹L'Ì'É'µ'½'¢

%%% For 12 point version %%%%%%%%%
%\setlength{\topmargin}{-.5cm}
\setlength{\headheight}{15pt}
\setlength{\footskip}{15pt}
\setlength{\oddsidemargin}{-.3cm}
\setlength{\evensidemargin}{-.3cm}
\setlength{\textwidth}{16cm}
\setlength{\textheight}{20cm}
%\pagestyle{plain}
%%%%%%%%%%%%%%%%%%%%%%%%%%%%%%%%%%

\begin{document}

\begin{center}
{\bf{\Large Rational Solutions of the Noumi and Yamada System of Type $A_5^{(1)}$}}
\end{center}

\begin{center}
Department of Engineering Science, Niihama National College of Technology,\\
7-1 Yagumo-chou, Niihama, Ehime, 792-8580. 
\end{center} 

\begin{center}
By Kazuhide Matsuda
\end{center}
\vskip 1mm
\par
\quad {\bf Abstract:} \,
In this paper,
we completely classify the rational solutions of the Noumi and Yamada System of type $A_5^{(1)}$,
which is a generalization of the fifth Painlev\'e equation. 
Noumi and Yamada system is a system of ordinary differential equations which has the 
af\/f\/ine Weyl group symmetry of type $A^{(1)}_l \,(l\geq 2).$ 
The Noumi and Yamada systems of types $A_2^{(1)}$ and $A_3^{(1)}$ are equivalent to the fourth and fifth 
Painlev\'e equations, respectively. 
The Noumi and Yamada system of type $A_4^{(1)}$ is a generalization of the fourth Painlev\'e equation. 
\par
\quad {\bf Key words:} \,
the Noumi and Yamada system of type $A_5^{(1)}$; 
the affine Weyl group; the B\"acklund transformations; rational solutions.
\newline
\quad 
2000 Mathematics Subject Classification. Primary 33E17; Secondary 37K10.

\section*{Introduction}
Paul Painlev\'e and his coworkers 
\cite{Painleve, Gambier} 
intended to find ``new transcendental functions'' defined by second order nonlinear differential equations. 
For this purpose, 
they investigated which second order ordinary differential equations of the form 
$$
y^{\prime\prime}
=F(t;y,y^{\prime}),
$$
where ${}^{\prime}=d/dt$ and $F$ is rational in $y$ and $y^{\prime}$ and analytic in $t,$ 
have the property that 
the solutions have no movable branch points, i.e., 
the locations of the multi-valued singularities are independent of the particular solution chosen 
and therefore dependent only on the equation; 
this is known as the Painlev\'e property. 
As a result, 
the differential equations are either 
integrable in terms of previously known functions 
(such as elliptic functions or are equivalent to linear differential equations) 
or reducible to  
one of the following six equations:
\begin{align*}
&P_{\rm I} & &: &  
&y^{\prime \prime}   
=
6
y^2+t, \\
&P_{\rm II}& &: &
&y^{\prime \prime}   
=
2 y^3 +ty+\alpha, \\
&P_{\rm III}& &: &
&y^{\prime \prime}   
=
\frac{1}{y}(y^{\prime})^2
-\frac{1}{t}y^{\prime}
+\frac{1}{t}(\alpha y^2+\beta)
+\gamma y^3 + \frac{\delta}{y}, \\
&P_{\rm IV}& &: &
&y^{\prime \prime}  
=
\frac{1}{2y}(y^{\prime})^2
+\frac32 y^3 +4ty^2
+2(t^2-\alpha)y+\frac{\beta}{y}, \\
&P_{\rm V}& &: &
&y^{\prime \prime}   
=
\left(
\frac{1}{2y}+\frac{1}{y-1}
\right)
(y^{\prime})^2
-\frac{1}{t}y^{\prime}
+\frac{(y-1)^2}{t^2} \left(\alpha y + \frac{\beta}{y} \right)
+\frac{\gamma}{t}y
+\delta \frac{y(y+1)}{y-1}, \\
&P_{\rm VI}& &: &
&y^{\prime \prime}   
= 
\frac12 
\left(
\frac{1}{y}+\frac{1}{y-1}+\frac{1}{y-t}
\right)
(y^{\prime})^2
-\left(\frac{1}{t}+\frac{1}{t-1}+\frac{1}{y-t}\right)y^{\prime} \\
& & & &
&\hspace{50mm}   
+\frac{y(y-1)(y-t)}{t^2(t-1)^2}
\left(
\alpha+\beta \frac{t}{y^2}+\gamma \frac{t-1}{(y-1)^2}
+\delta \frac{t(t-1)}{(y-t)^2}
\right), 
\end{align*}
where $\alpha, \beta, \gamma, \delta$ are all complex parameters. 
\par
While generic solutions of the Painlev\'e equations are 
``new transcendental functions,'' 
there are ``classical solutions'' which are expressible 
in terms of rational, algebraic or classical special functions for certain values of the parameters. 
In this paper, 
our concern is with the classical solutions and B\"acklund transformations 
which relate one solution to another solution of the same equation with different values of the parameters. 
\par
Examples of classical solutions are as follows: 
Airault \cite{Airault} constructed explicit rational solutions of $P_{\rm II}$ and $P_{\rm IV}$
with their B\"acklund transformations.
Milne, Clarkson and Bassom \cite{Milne} treated $P_{\rm III}$,  and
described their B\"acklund transformations
and exact solution hierarchies, which are given by rational, algebraic, or certain Bessel functions.
Bassom, Clarkson and Hicks \cite{Bassom-Clarkson-Hicks}
dealt with $P_{\rm IV}$, and
described their B\"acklund transformations and
exact solution hierarchies, which are expressed by rational functions, the parabolic cylinder functions or the complementary error functions.
Clarkson \cite{Clarkson}
studied some rational and algebraic solutions of $P_{\rm III}$ and
showed that
these solutions are expressible
in terms of special polynomials defined by second order, bilinear differential-difference equations
which are equivalent to the Toda equations.
\par
The rational solutions of $P_{\rm J} \,\,(\rm J=\rm II, III, IV, V, VI)$ 
were classified by 
Yablonski and Vorobev \cite{Yab:59,Vorob}, 
Gromak \cite{Gr:83,Gro}, 
Murata \cite{Mura1, Mura2}, 
Kitaev, Law and McLeod \cite{Kit-Law-McL}, 
Mazzoco \cite{Mazzo}, 
and 
Yuang and Li \cite{YuangLi}. 
\par
$P_{\rm J} \,\,(\rm J=\rm II, III, IV, V, VI)$ possesses the B\"acklund transformation group. 
It was shown by Okamoto \cite{oka1}, \cite{oka2}, \cite{oka3}, \cite{oka4} that 
the B\"acklund transformation groups of the Painlev\'e equations, except for $P_{\rm I},$ are isomorphic 
to the extended affine Weyl groups. 
For $ P_{\rm II},$ $P_{\rm III},$ $ P_{\rm IV}, P_{\rm V},$ and $P_{\rm VI}$, 
the B\"acklund transformation groups correspond to 
$A^{ ( 1 ) }_1,$ 
$A^{ ( 1 ) }_1 \bigoplus A^{ ( 1 ) }_1,$ 
$A^{ ( 1 ) }_2,$ 
$A^{ ( 1 ) }_3,$ and 
$D^{ ( 1 ) }_4$, 
respectively.
\par
Noumi and Yamada \cite{NoumiYamada-B} discovered the equation of type $A^{(1)}_l \,(l\geq 2),$ 
whose B\"acklund 
transformation group is isomorphic to the extended affine Weyl group $\tilde{W}(A^{(1)}_l)$. 
This system of differential equations is called the Noumi and Yamada system of type $A_l^{(1)}.$ 
The Noumi and Yamada systems of types $A_2^{(1)}$ and $A_3^{(1)}$ correspond to the fourth and fifth 
Painlev\'e equations, respectively. 
Noted is the fact that Murata \cite{Mura1} and Kitaev, Law and McLeod \cite{Kit-Law-McL} classified the rational solutions of 
the fourth and fifth Painlev\'e equations, respectively. 
Furthermore, 
we \cite{Matsuda1} classified the rational solutions of the Noumi and Yamada system of type $A_4^{(1)}.$ 
\par
The aim of this paper is to completely classify the rational solutions of 
the Noumi and Yamada system of type $A_5^{(1)},$ 
which is defined by 
\begin{equation*}
(*)
\begin{cases}
\frac{t}{2}
f_0^{\prime}
=
f_0
\left(
f_1f_2+f_1f_4+f_3 f_4-f_2f_3-f_2f_5-f_4 f_5
\right) 
+
\left(
\frac12
-\alpha_2-\alpha_4
\right)
f_0
+
\alpha_0
\left(
f_2+f_4
\right) \\
\frac{t}{2}
f_1^{\prime}
=
f_1
\left(
f_2f_3+f_2f_5+f_4 f_5-f_3f_4-f_3f_0-f_5 f_0
\right) 
+
\left(
\frac12
-\alpha_3-\alpha_5
\right)
f_1
+
\alpha_1
\left(
f_3+f_5
\right) \\
\frac{t}{2}
f_2^{\prime}
=
f_2
\left(
f_3f_4+f_3f_0+f_5 f_0-f_4f_5-f_4f_1-f_0 f_1
\right) 
+
\left(
\frac12
-\alpha_4-\alpha_0
\right)
f_2
+
\alpha_2
\left(
f_4+f_0
\right) \\
\frac{t}{2}
f_3^{\prime}
=
f_3
\left(
f_4f_5+f_4f_1+f_0 f_1-f_5f_0-f_5f_2-f_1 f_2
\right) 
+
\left(
\frac12
-\alpha_5-\alpha_1
\right)
f_3
+
\alpha_3
\left(
f_5+f_1
\right) \\
\frac{t}{2}
f_4^{\prime}
=
f_4
\left(
f_5f_0+f_5f_2+f_1f_2-f_0f_1-f_0f_3-f_2 f_3
\right) 
+
\left(
\frac12
-\alpha_0-\alpha_2
\right)
f_4
+
\alpha_4
\left(
f_0+f_2
\right) \\
\frac{t}{2}
f_5^{\prime}
=
f_5
\left(
f_0f_1+f_0f_3+f_2f_3-f_1f_2-f_1f_4-f_3 f_4
\right) 
+
\left(
\frac12
-\alpha_1-\alpha_3
\right)
f_5
+
\alpha_5
\left(
f_1+f_3
\right) \\
f_0+f_2+f_4=
f_1+f_3+f_5=t,
\quad
\alpha_0+
\alpha_1+
\alpha_2+
\alpha_3+
\alpha_4+
\alpha_5
=1.
\end{cases}
\end{equation*}
In this paper, $A_5^{(1)}(\alpha_j)_{0\leq j \leq 5}$ denotes 
the system of differential equations $(*)$. 
For $A_5^{(1)}(\alpha_j)_{0\leq j \leq 5}$, 
we consider the suffix of $f_i$ and $\alpha_i$ as elements of $\mathbb{Z} /6 \mathbb{Z}$.  
\par
$A_5^{(1)}(\alpha_j)_{0\leq j \leq 5}$ has the B\"acklund transformations, 
$s_0, s_1,s_2, s_3, s_4,s_5$ and $\pi$:
\begin{center}
\begin{tabular}{|c||c|c|c|c|c|c|c|}
\hline
$x$ & $s_0(x)$ & $s_1(x)$ & $s_2(x)$ & 
$s_3(x)$ & $s_4 (x)$ & $s_5(x)$ & $\pi(x)$ \\
\hline
$f_0$ & $f_0$ & $f_0-\alpha_1/f_1$ & $f_0$ & 
$f_0$ & $f_0$ & $f_0+\alpha_5/f_5$ & $f_1$ \\
\hline
$f_1$ & $f_1+\alpha_0/f_0$ & $f_1$ & 
$f_1-\alpha_2/f_2$ & $f_1$ & $f_1$ & $f_1$ &$f_2$ \\
\hline
$ f_2 $ & $ f_2 $ & $ f_2 + \alpha_1/f_1 $ & 
$ f_2 $ & $ f_2 - \alpha_3/f_3 $ & $ f_2 $ &$f_2$ & $ f_3 $ \\
\hline
$ f_3 $ & $ f_3 $ & $ f_3 $ & $ f_3 + \alpha_2/f_2 $ & 
$ f_3 $ & $ f_3 - \alpha_4/f_4 $ &$f_3$ & $ f_4 $ \\
\hline
$ f_4 $ & $ f_4$ & $ f_4 $ & $ f_4 $ & 
$ f_4 + \alpha_3/f_3 $ & $ f_4 $ & $f_4-\alpha_5/f_5$ & $ f_5 $ \\
\hline
$f_5$ & $f_5-\alpha_0/f_0$ & $f_5$ & $f_5$ & $f_5$ & $f_5+\alpha_4/f_4$ &$f_5$ &$f_0$  \\
\hline
\hline
$ \alpha_0 $ & $ - \alpha_0 $ & $ \alpha_0 + \alpha_1 $ & 
$ \alpha_0 $ & $ \alpha_0 $ & $ \alpha_0 $ & $\alpha_0+\alpha_5$ & $ \alpha_1 $ \\
\hline
$ \alpha_1 $ & $ \alpha_1 + \alpha_0 $ & $ - \alpha_1 $ & 
$ \alpha_1 + \alpha_2 $ & $ \alpha_1 $ & $ \alpha_1 $ & $\alpha_1$ &$ \alpha_2 $ \\
\hline
$ \alpha_2 $ & $ \alpha_2 $ & $ \alpha_2 + \alpha_1 $ & 
$ - \alpha_2 $ & $ \alpha_2 + \alpha_3 $ & $ \alpha_2 $ &$\alpha_2$ & $ \alpha_3 $ \\
\hline
$ \alpha_3 $ & $ \alpha_3 $ & $ \alpha_3 $ & $ \alpha_3 + \alpha_2 $ & 
$ - \alpha_3 $ & $ \alpha_3 + \alpha_4 $ & $\alpha_3$ & $ \alpha_4 $ \\
\hline
$ \alpha_4 $ & $ \alpha_4$ & $ \alpha_4 $ & $ \alpha_4 $ & 
$ \alpha_4 + \alpha_3 $ & $ - \alpha_4 $ & $\alpha_4+\alpha_5$ & $ \alpha_5 $ \\
\hline
$\alpha_5$ & $\alpha_5+\alpha_0$ & $\alpha_5$ & $\alpha_5$ & $\alpha_5$ &$\alpha_5+\alpha_4$ 
& $-\alpha_5$ & $\alpha_0$  \\
\hline
\end{tabular}
\end{center}
If $f_i \equiv 0$ for $i=0,1,2,3,4,5,$ 
which implies that $\alpha_i=0,$ 
then we consider $s_i$ as the identical transformation 
which is given by 
$$
s_i(f_j)=f_j \,\, \mathrm{and} \,\,s_i(\alpha_j)=\alpha_j \,\,(j=0,1,2,3,4,5).
$$
The B\"acklund transformation group $\langle s_0, s_1, s_2, s_3, s_4,s_5, \pi \rangle$
is isomorphic to the extended affine Weyl group $\tilde{W}(A^{(1)}_5)$.
\par
Our main theorem is as follows. 
\begin{theorem}
{\it
Suppose that for $A_5^{(1)}(\alpha_j)_{0\leq j \leq 5},$ 
there exists a rational solution. 
By some B\"acklund transformations, 
the parameters 
and solutions 
can then be transformed so that 
one of the following occurs:
\newline
(a-1)\quad $(\alpha_0,\alpha_1,\alpha_2,\alpha_3,\alpha_4,\alpha_5)=(\alpha_0,1-\alpha_0,0,0,0,0),$ 
and 
$$
(f_0,f_1,f_2,f_3,f_4,f_5)=(t,t,0,0,0,0),
$$
%\newline
(a-2)\quad $(\alpha_0,\alpha_1,\alpha_2,\alpha_3,\alpha_4,\alpha_5)=(\alpha_0,0,0,1-\alpha_0,0,0),$ 
and 
$$
(f_0,f_1,f_2,f_3,f_4,f_5)=(t,0,0,t,0,0),
$$
%\newline
(a-3)\quad $(\alpha_0,\alpha_1,\alpha_2,\alpha_3,\alpha_4,\alpha_5)=(0,1,0,0,0,0),$ 
and 
\begin{align*}
(f_0, f_1, f_2, f_3, f_4, f_5)
&=
(t,t,0,0,0,0), \,\,
(0,t,t,0,0,0), \,\,
(0,t,0,0,t,0),  \,\,
(t,t,t,0,-t,0),
\end{align*}
(b)\quad $(\alpha_0,\alpha_1,\alpha_2,\alpha_3,\alpha_4,\alpha_5)=(\alpha_0,-\alpha_0+1/2,\alpha_0,-\alpha_0+1/2,0,0),$ 
and 
$$
(f_0,f_1,f_2,f_3,f_4,f_5)=(t/2,t/2,t/2,t/2,0,0),
$$
%\newline
(c)\quad  $(\alpha_0,\alpha_1,\alpha_2,\alpha_3,\alpha_4,\alpha_5)=(\alpha_4,-\alpha_4+1/3, \alpha_4,-\alpha_4+1/3,\alpha_4,-\alpha_4+1/3)$ 
and 
$$
(f_0,f_1,f_2,f_3,f_4,f_5)=(t/3,t/3,t/3,t/3,t/3,t/3).
$$
}
\end{theorem}
\par
Let us explain how 
this paper is organized. 
For this purpose, 
let us denote the coefficients of the Laurent series 
of $f_j \,\, (0\leq j \leq 5)$ at $t=\infty$ and $t=0$ by 
\begin{equation*}
\begin{cases}
a_{\infty,k}\,{\rm and} \,a_{0,k}, \, k \in \mathbb{Z} \,\,\,(for \,\,f_0),    \,\,
b_{\infty,k}\,{\rm and} \,b_{0,k}, \,  k \in \mathbb{Z} \,\,\,(for \,\,f_1),    \,\,
c_{\infty,k}\,{\rm and} \,c_{0,k}, \,  k \in \mathbb{Z} \,\,\,(for \,\,f_2),    \\
d_{\infty,k}\,{\rm and} \,d_{0,k}, \,  k \in \mathbb{Z} \,\,\,(for \,\,f_3),    \,\,
e_{\infty,k}\,{\rm and} \,e_{0,k}, \,  k \in \mathbb{Z} \,\,\,(for \,\,f_4),    \,\,
f_{\infty,k}\,{\rm and} \,f_{0,k}, \,  k \in \mathbb{Z} \,\,\,(for \,\,f_5),  \\
\end{cases}
\end{equation*}
respectively. 
For example, 
if all of $(f_j)_{0\leq j \leq 5}$ have a pole at $t=\infty,$ 
we set 
\begin{equation*}
\begin{cases}
f_0 =a_{\infty, n_0}t^{n_0}+a_{\infty, n_0-1}t^{n_0-1}+\cdots+a_{\infty,0}+a_{\infty,-1}t^{-1}+\cdots,   \\
f_1 =b_{\infty, n_1}t^{n_1}+b_{\infty, n_1-1}t^{n_1-1}+\cdots+b_{\infty,0}+b_{\infty,-1}t^{-1}+\cdots,   \\
f_2 =c_{\infty, n_2}t^{n_2}+c_{\infty, n_2-1}t^{n_2-1}+\cdots+c_{\infty,0}+c_{\infty,-1}t^{-1}+\cdots,   \\
f_3 =d_{\infty, n_3}t^{n_3}+d_{\infty, n_3-1}t^{n_3-1}+\cdots+d_{\infty,0}+d_{\infty,-1}t^{-1}+\cdots,   \\
f_4 =e_{\infty, n_4}t^{n_4}+e_{\infty, n_4-1}t^{n_4-1}+\cdots+e_{\infty,0}+e_{\infty,-1}t^{-1}+\cdots,   \\
f_5 =f_{\infty, n_5}t^{n_5}+f_{\infty, n_5-1}t^{n_5-1}+\cdots+f_{\infty,0}+f_{\infty,-1}t^{-1}+\cdots,   \\
\end{cases}
\end{equation*}
where $n_j \,(0\leq j \leq 5)$ are all positive integers and 
$
a_{\infty, n_0}, \, b_{\infty,n_1}, c_{\infty,n_2 }, \,
d_{\infty, n_3}, \, e_{\infty,n_4}, f_{\infty,n_5 }
\neq 0.$
Moreover, 
the coefficients of the Laurent series 
of the auxiliary function $H$ at $t=\infty$ and $t=0$ are defined by 
$h_{\infty, k}$ and $h_{0,k}, \,\,k\in\mathbb{Z},$ respectively.
\par
In Section 1, we treat the meromorphic solution near $t=\infty$ 
and find that the residues of $f_j \,(0\leq j \leq 5)$ at $t=\infty,$ 
\begin{align*}
&a_{\infty,-1}(=-\mathrm{Res}_{t=\infty} f_0), \, b_{\infty,-1} (=-\mathrm{Res}_{t=\infty} f_1), 
\,c_{\infty,-1}(=-\mathrm{Res}_{t=\infty} f_2), \\
&d_{\infty,-1}(=-\mathrm{Res}_{t=\infty} f_3), \,
e_{\infty,-1}(=-\mathrm{Res}_{t=\infty} f_4), \,f_{\infty,-1}(=-\mathrm{Res}_{t=\infty} f_5)
\end{align*}
are all expressed by the parameters $\alpha_j \,\,(0\leq j \leq 5).$ 
From the meromorphic solutions near $t=\infty$, 
we get three types of meromorphic solutions at $t=\infty,$ 
Type A, Type B and Type C. 
For example, 
the rational solutions of (a-1), (a-2) and (a-3) in the main theorem are the solutions of Type A. 
The solutions of (b) and (c) are of Type B and Type C, respectively. 
\par
In Section 2, we deal with the meromorphic solutions near $t=0$ 
and see that the residues of $f_j \,(0\leq j \leq 5)$ at $t=0,$ 
\begin{align*}
&a_{0,-1}(=\mathrm{Res}_{t=0} f_0), \,\, 
b_{0,-1}(=\mathrm{Res}_{t=0} f_1), \,\,
c_{0,-1}(=\mathrm{Res}_{t=0} f_2), \\
&d_{0,-1}(=\mathrm{Res}_{t=0} f_3), \,\,
e_{0,-1}(=\mathrm{Res}_{t=0} f_4), \,\,
f_{0,-1}(=\mathrm{Res}_{t=0} f_5)
\end{align*}  
are all expressed by the parameters $\alpha_j \,\,(0\leq j \leq 5).$
\par
In Section 3, we treat the meromorphic solutions 
near $t=c \in \mathbb{C}^{*},$ 
whose residues  
are half integers. 
Thus, we can prove that 
$-\mathrm{Res}_{t=\infty} f_j-\mathrm{Res}_{t=0} f_j \in \mathbb{Z}, (j=0,1,2,3,4,5),$ 
which are the necessary conditions for $A_5^{(1)}(\alpha_i)_{0\leq i \leq 5}$ to have rational solutions. 
\par
In Section 4, 
we define the auxiliary function $H$ 
for a meromorphic solution of $A_5^{(1)}(\alpha_i)_{0\leq i \leq 5}$ 
and treat the Laurent series of $H$ at $t=\infty, \,\,0, \,\,c\in\mathbb{C}^{*}$. 
Especially, 
we calculated the constant terms $h_{\infty,0}, h_{0,0}$ of the Laurent series of $H$ at $t=\infty, 0$ 
and 
computed the residue of $H$ at $t=c.$ 
$h_{\infty,0}, h_{0,0}$ are then expressed with the parameters $\alpha_j (0\leq j \leq 5)$ 
and 
the residue of $H$ at $t=c$ is $\epsilon c,$ where $\epsilon=1/6, 1/12, 5/12.$ 
Thus, 
we can show that 
$6(h_{0,0}-h_{\infty,0})$ is a non-positive integer, 
which is a necessary condition for $A_5^{(1)}(\alpha_i)_{0\leq i \leq 5}$ to have rational solutions. 
\par
In Sections 5, 6 and 7, 
we deal with the necessary conditions for 
$A_5^{(1)}(\alpha_i)_{0\leq i \leq 5}$ to have rational solutions 
of Type A, Type B and Type C, respectively. 
For this purpose, 
mainly using the residue calculus of $f_j \,\,(0\leq j \leq 5),$ that is, the formula 
$-\mathrm{Res}_{t=\infty}f_j -\mathrm{Res}_{t=0} f_j \in\mathbb{Z} \,\,(0\leq j \leq 5),$ 
we express necessary conditions by the parameters. 
\par
In Section 8, we 
transform the parameters $(\alpha_j)_{0\leq j \leq 5}$ into the standard forms. 
For Type A, Type B and Type C, 
there exist two, three and four kinds of standard forms, respectively. 
For example, in some cases, 
the standard forms of the parameters for Type A, Type B, and Type C are given by
\begin{equation*}
(\alpha_0,\alpha_1,\alpha_2,\alpha_3,\alpha_4,\alpha_5)
=
\begin{cases}
(\alpha_0,1-\alpha_0,0,0,0,0),  \\
(\alpha_0,-\alpha_0+1/2,\alpha_0,-\alpha_0+1/2,\alpha_0,0,0), \\
(\alpha_4,-\alpha_4+1/3, \alpha_4,-\alpha_4+1/3,\alpha_4,-\alpha_4+1/3),
\end{cases}
\end{equation*}
respectively.
\par
In Section 9, 
we determine the rational solutions of Type A  
for the standard forms of the parameters. 
For the purpose, 
we use the residue calculus of $f_j \,\,(0\leq j \leq 5). $
\par
In Sections 10 and 11, 
we determine 
the rational solutions 
of Type B and Type C of 
$A_5^{(1)}(\alpha_i)_{0\leq i \leq 5}$ 
when the parameters are the standard forms, respectively. 
For the purpose, 
we mainly use the residue calculus of $H,$ that is, the formula $6(h_{\infty,0}-h_{0,0})\in\mathbb{Z}.$
\par
In Section 12, 
we completely classify the rational solutions of 
$A_5^{(1)}(\alpha_j)_{0\leq j \leq 5}$ 
and prove the main theorems for Type A, Type B and Type C, 
that is, 
Theorems \ref{thm:a5mainA}, 
\ref{thm:a5mainB} and \ref{thm:a5mainC}.

\section{Meromorphic Solutions at $t=\infty$}
In this section, 
for $A_5^{(1)}(\alpha_j)_{0\leq  j \leq 5},$ 
we treat the meromorphic solutions at $t=\infty$ 
and 
calculate the Laurent series of 
$f_j \,\,(0\leq j \leq 5)$ at $t=\infty$. 
The residues of $f_j \,\,(0\leq j \leq 5)$ at $t=\infty$ are then 
expressed by the parameters $\alpha_j \,(0\leq j \leq 5)$. 
For the purpose, 
we consider the following seven cases:
\newline
(0)\quad none of $(f_i)_{0\leq i \leq 5}$ has a pole at $t=\infty,$
\newline
(1)\quad one of $(f_i)_{0\leq i \leq 5}$ has a pole at $t=\infty,$
\newline
(2)\quad two of $(f_i)_{0\leq i \leq 5}$ have a pole at $t=\infty,$
\newline
(3)\quad three of $(f_i)_{0\leq i \leq 5}$ have a pole at $t=\infty,$
\newline
(4)\quad four of $(f_i)_{0\leq i \leq 5}$ have a pole at $t=\infty,$
\newline
(5)\quad five of $(f_i)_{0\leq i \leq 5}$ have a pole at $t=\infty,$
\newline
(6)\quad all of $(f_i)_{0\leq i \leq 5}$ have a pole at $t=\infty.$
\par
Since 
$f_0+f_2+f_4=t$ 
and 
$f_1+f_3+f_5=t,$ 
cases (0) and (1) are both impossible.

\subsection{Two of $(f_i)_{0\leq i \leq 5}$ have a pole at $t=\infty$}
In this subsection, 
we suppose that two of $(f_i)_{0\leq i \leq 5}$ have a pole at $t=\infty$ 
and 
calculate the Laurent series of $f_i \,\,(0\leq i \leq 5)$ at $t=\infty$ 
for $A_5^{(1)}(\alpha_i)_{0\leq i \leq 5}.$ 
Since 
$f_0+f_2+f_4=t$ 
and 
$f_1+f_3+f_5=t,$
by $\pi,$ 
we have only to consider the following two cases:
\newline
(1)\quad 
for some $i=0,1,2,3,4,5,$ $f_i, f_{i+1}$ both have a pole at $t=\infty,$
\newline
(2)\quad 
for some $i=0,1,2,3,4,5,$ $f_i, f_{i+3}$ both have a pole at $t=\infty.$

\subsubsection{$f_i, f_{i+1}$ have a pole at $t=\infty$}

\begin{proposition}
\label{prop:a5inf-01-1}
{\it
Suppose that 
for $A_5^{(1)}(\alpha_j)_{0\leq j \leq 5},$ 
there exists a solution such that 
for some $i=0,1,2,3,4,5,$ 
$f_i, f_{i+1}$ both have a pole at $t=\infty$ and 
$f_{i+2}, f_{i+3}, f_{i+4}, f_{i+5}$ 
are all holomorphic at $t=\infty.$ 
$f_i, f_{i+1}$ then both 
have a pole of order one at $t=\infty$. 
We denote this case by Type A (1).
}
\end{proposition}

\begin{proof}
The proposition follows from 
the fact that 
$f_0+f_2+f_4=t$ 
and 
$f_1+f_3+f_5=t.$
\end{proof}
In order to compute the residues, we have

\begin{proposition}
\label{prop:a5inf-01-2}
{\it
Suppose that 
for $A_5^{(1)}(\alpha_j)_{0\leq j \leq 5},$ 
there exists a solution  
such that 
for some $i=0,1,2,3,4,5,$ 
$f_i, f_{i+1}$ both have a pole at $t=\infty$ and 
$f_{i+2}, f_{i+3}, f_{i+4}, f_{i+5}$ 
are all holomorphic at $t=\infty.$ 
Then, 
\begin{equation*}
\begin{cases}
f_i
=
t
-\left(\alpha_{i+2}+\alpha_{i+4}\right)t^{-1}
+
\cdots \\
f_{i+1}
=
t
+\left(\alpha_{i+3}+\alpha_{i+5}\right)t^{-1}
+
\cdots \\
f_{i+2}
=
\alpha_{i+2} t^{-1}
+ \cdots \\
f_{i+3}
=
- \alpha_{i+3} t^{-1}
+ \cdots \\
f_{i+4}
=
\alpha_{i+4} t^{-1}
+ \cdots \\
f_{i+5}
=
- \alpha_{i+5} t^{-1}
+ \cdots.
\end{cases}
\end{equation*}
}
\end{proposition}

\begin{proof}
By $\pi,$ 
we assume that $f_0,f_1$ both have a pole at $t=\infty.$ 
Since 
$f_0+f_2+f_4=t$ 
and 
$f_1+f_3+f_5=t,$ 
it follows that $a_{\infty,1}=b_{\infty,1}=1.$ 
\par
By comparing the coefficients of the term $t^2$ in
\begin{equation*}
\frac{t}{2}
f_2^{\prime}
=
f_2
\left(
f_3f_4+f_3f_0+f_5 f_0-f_4f_5-f_4f_1-f_0 f_1
\right) 
+
\left(
\frac12
-\alpha_4-\alpha_0
\right)
f_2
+
\alpha_2
\left(
f_4+f_0
\right), 
\end{equation*}
we have $c_{\infty,0}=0.$ 
Moreover, by comparing the coefficients of the term $t$ in this equation, 
we get $c_{\infty,-1}=\alpha_2.$ 
\par
By comparing the coefficients of the term $t^2$ in
\begin{equation*}
\frac{t}{2}
f_3^{\prime}
=
f_3
\left(
f_4f_5+f_4f_1+f_0 f_1-f_5f_0-f_5f_2-f_1 f_2
\right) 
+
\left(
\frac12
-\alpha_5-\alpha_1
\right)
f_3
+
\alpha_3
\left(
f_5+f_1
\right), 
\end{equation*}
we have $d_{\infty,0}=0.$ 
Furthermore, 
by comparing the coefficients of the term $t$ in this equation, 
we get $d_{\infty,-1}=-\alpha_3.$
\par
By comparing the coefficients of the term $t^2$ in
\begin{equation*}
\frac{t}{2}
f_4^{\prime}
=
f_4
\left(
f_5f_0+f_5f_2+f_1f_2-f_0f_1-f_0f_3-f_2 f_3
\right) 
+
\left(
\frac12
-\alpha_0-\alpha_2
\right)
f_4
+
\alpha_4
\left(
f_0+f_2
\right), 
\end{equation*}
we have $e_{\infty,0}=0.$ 
Moreover, by comparing the coefficients of the term $t$ in this equation, 
we get $e_{\infty,-1}=\alpha_4.$ 
\par
By comparing the coefficients of the term $t^2$ in
\begin{equation*}
\frac{t}{2}
f_5^{\prime}
=
f_5
\left(
f_0f_1+f_0f_3+f_2f_3-f_1f_2-f_1f_4-f_3 f_4
\right) 
+
\left(
\frac12
-\alpha_1-\alpha_3
\right)
f_5
+
\alpha_5
\left(
f_1+f_3
\right),
\end{equation*}
we have $f_{\infty,0}=0.$ 
Furthermore,  by comparing the coefficients of the term $t$ in this equation, 
we get $f_{\infty,-1}=-\alpha_5.$
\par
Since
$f_0+f_2+f_4=t$ 
and 
$f_1+f_3+f_5=t,$ 
it follows that 
\begin{equation*}
a_{\infty,0}=0, \,\,a_{\infty,-1}=-\alpha_2-\alpha_4 \,\,\mathrm{and} \,\,
b_{\infty,0}=0, \,\,b_{\infty,-1}=\alpha_3+\alpha_5,
\end{equation*}
respectively.
\end{proof}

In order to prove the uniqueness of the Laurent series, 
we have

\begin{proposition}
\label{prop:a5inf-01-3}
{\it
Suppose that 
for
$A_5^{(1)}(\alpha_j)_{0\leq j \leq 5},$ 
there exists a solution 
such that   
for some $i=0,1,2,3,4,5,$ 
$f_i, f_{i+1}$ both have a pole at $t=\infty$ and 
$f_{i+2}, f_{i+3}, f_{i+4}, f_{i+5}$ 
are all holomorphic at $t=\infty.$ 
Then, 
it is unique.
}
\end{proposition}

\begin{proof}
By comparing the coefficients of the term $t^{-(k-2)} \,\,(k\geq 2)$ in
\begin{equation*}
\frac{t}{2}
f_2^{\prime}
=
f_2
\left(
f_3f_4+f_3f_0+f_5 f_0-f_4f_5-f_4f_1-f_0 f_1
\right) 
+
\left(
\frac12
-\alpha_4-\alpha_0
\right)
f_2
+
\alpha_2
\left(
f_4+f_0
\right), 
\end{equation*}
we have
\begin{align*}
c_{\infty,-k}
&=
\frac12 (k-2) c_{\infty,-(k-2)}\\
&
+
\sum
c_{\infty,-l}
(
d_{\infty,-m}e_{\infty, -n}+d_{\infty, -m}a_{\infty, -n}+f_{\infty, -m}a_{\infty, -n}   \\
&\hspace{30mm}
-e_{\infty, -m}f_{\infty, -n}-e_{\infty, -m}b_{\infty, -n}-a_{\infty, -m}b_{\infty, -n}
) \\
&+c_{\infty, -(k-2)}(d_{\infty, -1}+f_{\infty, -1}-e_{\infty, -1}-a_{\infty, -1}-b_{\infty, -1}) \\
&
+\left(
\frac12-\alpha_4-\alpha_0
\right)
c_{\infty,-(k-2)}
+\alpha_2(e_{\infty,-(k-2)}+a_{\infty,-(k-2)}),
\end{align*}
where 
the sum extends over the integers $l,m$ and $n$ for which $l+m+n=k-2$ and $l,m,n\geq 1.$
\par
In the same way, 
we have 
\begin{align*}
d_{\infty,-k}
=&
-\frac12 (k-2) d_{\infty,-(k-2)}\\
&
-
\sum
d_{\infty,-l}
(
e_{\infty,-m}f_{\infty, -n}+e_{\infty, -m}b_{\infty, -n}+a_{\infty, -m}b_{\infty, -n}   \\
&\hspace{30mm}
-f_{\infty, -m}a_{\infty, -n}-f_{\infty, -m}c_{\infty, -n}-b_{\infty, -m}c_{\infty, -n}
) \\
&-d_{\infty, -(k-2)}(e_{\infty, -1}+a_{\infty, -1}+b_{\infty, -1}-f_{\infty, -1}-c_{\infty, -1}) \\
&
-\left(
\frac12-\alpha_5-\alpha_1
\right)
d_{\infty,-(k-2)}
-\alpha_3(f_{\infty,-(k-2)}+b_{\infty,-(k-2)}),
\end{align*}
%%%%%%%%%%%%%%%%%%%%%%%%%
\begin{align*}
e_{\infty,-k}
&=
\frac12 (k-2) e_{\infty,-(k-2)}\\
&
+
\sum
e_{\infty,-l}
(
f_{\infty,-m}a_{\infty, -n}+f_{\infty, -m}c_{\infty, -n}+b_{\infty, -m}c_{\infty, -n}   \\
&\hspace{30mm}
-a_{\infty, -m}b_{\infty, -n}-a_{\infty, -m}d_{\infty, -n}-c_{\infty, -m}d_{\infty, -n}
) \\
&+e_{\infty, -(k-2)}(f_{\infty, -1}+c_{\infty, -1}-a_{\infty, -1}-b_{\infty, -1}-d_{\infty, -1})  \\
&
+\left(
\frac12-\alpha_0-\alpha_2
\right)
e_{\infty,-(k-2)}
+\alpha_4(a_{\infty,-(k-2)}+c_{\infty,-(k-2)}),
\end{align*}
%%%%%%%%%%%%%%%%%%%
\begin{align*}
f_{\infty,-k}
&=
-
\frac12 (k-2) f_{\infty,-(k-2)}\\
&
-
\sum
f_{\infty,-l}
(
a_{\infty,-m}b_{\infty, -n}+a_{\infty, -m}d_{\infty, -n}+c_{\infty, -m}d_{\infty, -n}   \\
&\hspace{30mm}
-b_{\infty, -m}c_{\infty, -n}-b_{\infty, -m}d_{\infty, -n}-d_{\infty, -m}e_{\infty, -n}
) \\
&-f_{\infty, -(k-2)}(a_{\infty, -1}+b_{\infty, -1}+d_{\infty, -1}-c_{\infty, -1}-e_{\infty, -1})  \\
&
-\left(
\frac12-\alpha_1-\alpha_3
\right)
f_{\infty,-(k-2)}
-\alpha_5(b_{\infty,-(k-2)}+d_{\infty,-(k-2)}).
\end{align*}
Thus, $c_{\infty,-k}, d_{\infty,-k}, e_{\infty, -k}, f_{\infty, -k} \,\,(k\geq2)$ 
are inductively determined. 
Moreover, 
since
$f_0+f_2+f_4=t$ 
and 
$f_1+f_3+f_5=t,$ 
$a_{\infty,-k}, b_{\infty,-k} \,\,(k\geq 2)$ 
are also inductively determined. 
\end{proof}

\subsubsection{$f_i, f_{i+3}$ have a pole at $t=\infty$}
We consider the case in which 
for some $i=0,1,2,3,4,5,$ 
$f_i, f_{i+3}$ both have a pole at $t=\infty.$ 
We can prove 
Propositions \ref{prop:a5inf-03-1}, \ref{prop:a5inf-03-2} and \ref{prop:a5inf-03-3} 
in the same way as 
Propositions \ref{prop:a5inf-01-1}, \ref{prop:a5inf-01-2} and \ref{prop:a5inf-01-3}, 
respectively.

\begin{proposition}
\label{prop:a5inf-03-1}
{\it 
Suppose that 
for $A_5^{(1)}(\alpha_j)_{0\leq j \leq 5},$ 
there exists a solution  
such that 
for some $i=0,1,2,3,4,5,$ 
$f_i, f_{i+3}$ both have a pole at $t=\infty$ and 
$f_{i+1}, f_{i+2}, f_{i+4}, f_{i+5}$ 
are all holomorphic at $t=\infty.$ 
$f_i, f_{i+3}$ then both 
have a pole of order one at $t=\infty$. 
We denote this case by Type A (2).
}
\end{proposition}

\begin{proposition}
\label{prop:a5inf-03-2}
{\it 
Suppose that 
for 
$A_5^{(1)}(\alpha_j)_{0\leq j \leq 5},$ there exists a solution 
such that  
for some $i=0,1,2,3,4,5,$ 
$f_i, f_{i+3}$ both have a pole at $t=\infty$ and 
$f_{i+1}, f_{i+2}, f_{i+4}, f_{i+5}$ 
are all holomorphic at $t=\infty.$ 
Then, 
\begin{equation*}
\begin{cases}
f_i
=
t
+\left(\alpha_{i+2}-\alpha_{i+4}\right)t^{-1}
+
\cdots \\
f_{i+1}
=
\alpha_{i+1}t^{-1}
+
\cdots \\
f_{i+2}
=
-\alpha_{i+2} t^{-1}
+ \cdots \\
f_{i+3}
=
t
+\left(\alpha_{i+5}-\alpha_{i+1}\right) t^{-1}
+ \cdots \\
f_{i+4}
=
\alpha_{i+4} t^{-1}
+ \cdots \\
f_{i+5}
=
- \alpha_{i+5} t^{-1}
+ \cdots.
\end{cases}
\end{equation*}
}
\end{proposition}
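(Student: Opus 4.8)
The plan is to mimic the proof of Proposition~\ref{prop:a5inf-01-2} line by line, exploiting the symmetry built into the system $(*)$. First I would use the automorphism $\pi$ to reduce to the concrete case $i=0$, so that $f_0, f_3$ have a pole at $t=\infty$ while $f_1, f_2, f_4, f_5$ are holomorphic there. By Proposition~\ref{prop:a5inf-03-1} the poles of $f_0$ and $f_3$ are simple, so I would write the Laurent ansatz
\begin{equation*}
\begin{cases}
f_0=a_{\infty,1}t+a_{\infty,0}+\sum_{k=1}^{+\infty}a_{\infty,-k}t^{-k}, \\
f_3=d_{\infty,1}t+d_{\infty,0}+\sum_{k=1}^{+\infty}d_{\infty,-k}t^{-k}, \\
f_1=b_{\infty,0}+\sum_{k=1}^{+\infty}b_{\infty,-k}t^{-k}, \\
f_2=c_{\infty,0}+\sum_{k=1}^{+\infty}c_{\infty,-k}t^{-k}, \\
f_4=e_{\infty,0}+\sum_{k=1}^{+\infty}e_{\infty,-k}t^{-k}, \\
f_5=g_{\infty,0}+\sum_{k=1}^{+\infty}g_{\infty,-k}t^{-k}.
\end{cases}
\end{equation*}

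Next I would extract the leading coefficients from the two linear constraints. Since $f_0+f_2+f_4=t$ and $f_1+f_3+f_5=t$, and since among $\{f_0,f_2,f_4\}$ only $f_0$ has a pole while among $\{f_1,f_3,f_5\}$ only $f_3$ does, I immediately get $a_{\infty,1}=d_{\infty,1}=1$ and $a_{\infty,0}+c_{\infty,0}+e_{\infty,0}=0$, $b_{\infty,0}+d_{\infty,0}+g_{\infty,0}=0$. Then I would feed the ansatz into the four differential equations governing the \emph{holomorphic} functions $f_1, f_2, f_4, f_5$ and compare coefficients degree by degree. Comparing the coefficient of $t^2$ in each of those four equations forces the constant terms of the holomorphic $f_j$ to vanish (the $t^2$ term can only come from the product $f_j\cdot(\text{quadratic in the two polar functions})$, whose leading part is proportional to the constant term of $f_j$), giving $b_{\infty,0}=c_{\infty,0}=e_{\infty,0}=g_{\infty,0}=0$. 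With these in hand the two linear constraints then force $a_{\infty,0}=d_{\infty,0}=0$ as well.

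The residues are then read off by comparing the coefficient of $t^1$ in the same four equations: this linear-in-residue computation yields $b_{\infty,-1}=\alpha_{1}$, $c_{\infty,-1}=-\alpha_{2}$, $e_{\infty,-1}=\alpha_{4}$, $g_{\infty,-1}=-\alpha_{5}$, and the constraints deliver $a_{\infty,-1}=-(c_{\infty,-1}+e_{\infty,-1})=\alpha_2-\alpha_4$ and $d_{\infty,-1}=-(b_{\infty,-1}+g_{\infty,-1})=\alpha_5-\alpha_1$, which are exactly the claimed residues (after shifting $i=0$ back to general $i$ via $\pi$). The main obstacle, as in the $f_i,f_{i+1}$ case, is purely bookkeeping: one must correctly identify, in each of the four scalar equations, precisely which bilinear terms $f_jf_k$ survive to a given order in $t$ once the pole/holomorphy pattern is imposed, since the polar functions $f_0, f_3$ contribute a factor of $t$ and shift the relevant order. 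The sign pattern $(+,-,+,-)$ for the residues of $f_{i+1},\dots,f_{i+5}$ should fall out uniformly from the alternating structure of the $\alpha_j(f_{*}+f_{*})$ inhomogeneous terms, so once the $t^2$-cancellations are verified the rest is a routine linear solve entirely parallel to Proposition~\ref{prop:a5inf-01-2}.
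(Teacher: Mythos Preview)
Your proposal is correct and matches the paper's approach exactly: the paper itself states that Proposition~\ref{prop:a5inf-03-2} is proved ``in the same way as Proposition~\ref{prop:a5inf-01-2}'', i.e.\ by reducing via $\pi$ to the case $i=0$, writing the Laurent ansatz, using $f_0+f_2+f_4=t$ and $f_1+f_3+f_5=t$ to fix the leading coefficients, and then comparing the $t^2$ and $t^1$ coefficients in the four equations for the holomorphic $f_1,f_2,f_4,f_5$ to obtain the constant terms and residues. Your bookkeeping of which bilinear terms contribute (in particular that $f_0f_3$ supplies the sole $t^2$ term in each bracket, with alternating sign) is accurate and yields precisely $b_{\infty,-1}=\alpha_1$, $c_{\infty,-1}=-\alpha_2$, $e_{\infty,-1}=\alpha_4$, $g_{\infty,-1}=-\alpha_5$, from which the constraints give the residues of $f_0$ and $f_3$.
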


\begin{proposition}
\label{prop:a5inf-03-3}
{\it 
Suppose that 
for $A_5^{(1)}(\alpha_j)_{0\leq j \leq 5},$ 
there exists a solution 
such that  
for some $i=0,1,2,3,4,5,$ 
$f_i, f_{i+3}$ both have a pole at $t=\infty$ and 
$f_{i+1}, f_{i+2}, f_{i+4}, f_{i+5}$ 
are all holomorphic at $t=\infty.$ 
It is then unique.
}
\end{proposition}

\subsection{Three of $(f_j)_{0\leq j \leq 5}$ have a pole at $t=\infty$}
In this subsection, 
we treat the case in which 
three of $(f_j)_{0\leq j \leq 5}$ have a pole at $t=\infty.$ 
By $\pi,$ 
we have only to consider the following three cases:
\newline
(1)\quad for some $i=0,1,2,3,4,5,$ 
$f_i, f_{i+1}, f_{i+2}$ all have a pole at $t=\infty,$
\newline
(2)\quad for some $i=0,1,2,3,4,5,$ 
$f_i, f_{i+1}, f_{i+3}$ all have a pole at $t=\infty,$
\newline
(3)\quad for some $i=0,1,2,3,4,5,$ 
$f_i, f_{i+1}, f_{i+4}$ all have a pole at $t=\infty.$

\subsubsection{$f_i, f_{i+1}, f_{i+2}$ have a pole at $t=\infty$}

\begin{proposition}
\label{prop:a5inf-012}
{\it  
For $A_5^{(1)}(\alpha_j)_{0\leq j \leq 5},$ 
there exists no solution such that 
for some $i=0,1,2,3,4,5,$ 
$f_{i}, f_{i+1}, f_{i+2}$ all 
have a pole at $t=\infty$ 
and $f_{i+3}, f_{i+4}, f_{i+5}$ are all holomorphic at $t=\infty.$
}
\end{proposition}

\begin{proof}
By $\pi,$ 
we assume that 
$f_0, f_1, f_2$ all have a pole of order $n_0, n_1, n_2$ at $t=\infty,$ 
respectively, 
where 
$n_0, n_1, n_2$ are all positive integers.
\par
By comparing the coefficients of the highest terms in
\begin{equation*}
\frac{t}{2}
f_0^{\prime}
=
f_0
\left(
f_1f_2+f_1f_4+f_3 f_4-f_2f_3-f_2f_5-f_4 f_5
\right) 
+
\left(
\frac12
-\alpha_2-\alpha_4
\right)
f_0
+
\alpha_0
\left(
f_2+f_4
\right), 
\end{equation*}
we have $a_{\infty, n_0} b_{\infty, n_1} c_{\infty, n_2}=0,$ 
which is a contradiction. 
\end{proof}

\subsubsection{$f_i, f_{i+1}, f_{i+3}$ have a pole at $t=\infty$}

\begin{proposition}
\label{prop:a5inf-013}
{\it 
For 
$A_5^{(1)}(\alpha_j)_{0\leq j \leq 5},$ 
there exists no solution such that 
for some $i=0,1,2,3,4,5,$ 
$f_{i}, f_{i+1}, f_{i+3}$ all 
have a pole at $t=\infty$ 
and 
$f_{i+2}, f_{i+4}, f_{i+5}$ 
are all holomorphic at $t=\infty.$
}
\end{proposition}

\begin{proof}
It can be proved in the same way as Proposition \ref{prop:a5inf-012}.

\end{proof}

\subsubsection{$f_i, f_{i+1}, f_{i+4}$ have a pole at $t=\infty$}

\begin{proposition}
\label{prop:a5inf-014}
{\it 
For $A_5^{(1)}(\alpha_j)_{0\leq j \leq 5},$ 
there exists no solution 
such that 
for some $i=0,1,2,3,4,5,$ 
$f_{i}, f_{i+1}, f_{i+4}$ all 
have a pole at $t=\infty$ 
and 
$f_{i+2}, f_{i+3},f_{i+5}$ 
are all holomorphic at $t=\infty.$
}
\end{proposition}

\begin{proof}
It can be proved in the same way as Proposition \ref{prop:a5inf-012}.

\end{proof}

\subsection{Four of $(f_j)_{0\leq j \leq 5}$ have a pole at $t=\infty$}
In this subsection, 
we deal with the case where 
four of $(f_j)_{0\leq j \leq 5}$ have a pole at $t=\infty.$ 
By $\pi,$ 
we have only to consider the following three cases:
\newline
(1)\quad for some $i=0,1,2,3,4,5,$ 
$f_i, f_{i+1}, f_{i+2}, f_{i+3}$ all have a pole at $t=\infty,$
\newline
(2)\quad for some $i=0,1,2,3,4,5,$ 
$f_i, f_{i+1}, f_{i+2}, f_{i+4}$ all have a pole at $t=\infty,$
\newline
(3)\quad for some $i=0,1,2,3,4,5,$ 
$f_i, f_{i+2}, f_{i+3}, f_{i+5}$ all have a pole at $t=\infty.$

\subsubsection{$f_i, f_{i+1}, f_{i+2}, f_{i+3}$ have a pole at $t=\infty$}

\begin{proposition}
\label{prop:a5inf-0123-1}
{\it 
Suppose that 
for $A_5^{(1)}(\alpha_j)_{0\leq j \leq 5},$ 
there exists a solution such that 
for some $i=0,1,2,3,4,5,$ 
$f_i, f_{i+1}, f_{i+2}, f_{i+3}$ all have a pole at $t=\infty$ and 
$f_{i+4}, f_{i+5}$ 
are both holomorphic at $t=\infty.$ 
$f_i, f_{i+1}, f_{i+2}, f_{i+3}$ then all 
have a pole of order one at $t=\infty.$ 
We denote this case as Type B.
}
\end{proposition}

\begin{proof}
By $\pi,$ 
we assume that 
$f_0, f_1, f_2, f_3$ all have a pole of order $n_0, n_1, n_2, n_3 $ at $t=\infty,$ 
where 
$n_0, n_1, n_2, n_3 $ are all positive integers. 
Since $f_0 +f_2 +f_4=t$ and $f_1 +f_3 +f_5=t,$ 
it follows that $n_0=n_2$ and $n_1=n_3,$ respectively. 
\par
By comparing the coefficients of the highest terms in
\begin{equation*}
\frac{t}{2}
f_0^{\prime}
=
f_0
\left(
f_1f_2+f_1f_4+f_3 f_4-f_2f_3-f_2f_5-f_4 f_5
\right) 
+
\left(
\frac12
-\alpha_2-\alpha_4
\right)
f_0
+
\alpha_0
\left(
f_2+f_4
\right), 
\end{equation*}
we have $b_{\infty, n_1} =d_{\infty, n_3},$ 
which implies that $n_1=n_3=1$ and $b_{\infty, 1}= d_{\infty, 1}=\displaystyle \frac12,$ 
because $f_1 +f_3 +f_5=t.$
%%%%%%%%%%%%%%%%%%%%%%%%%%%%%%
\par
By comparing the coefficients of the highest terms in
\begin{equation*}
\frac{t}{2}
f_2^{\prime}
=
f_2
\left(
f_3f_4+f_3f_0+f_5 f_0-f_4f_5-f_4f_1-f_0 f_1
\right) 
+
\left(
\frac12
-\alpha_4-\alpha_0
\right)
f_2
+
\alpha_2
\left(
f_4+f_0
\right),
\end{equation*}
we get $a_{\infty, n_0} =c_{\infty, n_2},$ 
which implies that $n_0=n_2=1$ and $a_{\infty, 1}= c_{\infty, 1}=\displaystyle \frac12,$ 
because $f_0 +f_2 +f_4=t.$
\end{proof}

In order to calculate the residues of the Laurent series, 
we have

\begin{proposition}
\label{prop:a5inf-0123-2}
{\it 
Suppose that 
for 
$A_5^{(1)}(\alpha_j)_{0\leq j \leq 5},$ 
there exists a solution 
such that  
for some $i=0,1,2,3,4,5,$ 
$f_i, f_{i+1}, f_{i+2}, f_{i+3}$ all have a pole at $t=\infty$ and 
$f_{i+4}, f_{i+5}$ 
are both holomorphic at $t=\infty.$ 
Then, 
\begin{equation*}
\begin{cases}
f_i
=
\displaystyle \frac12 t +
\left(
\alpha_{i+1}-\alpha_{i+3}-2\alpha_{i+4}-\alpha_{i+5}
\right)
t^{-1}
+ \cdots \\
\displaystyle f_{i+1}
=
\frac12 t +
\left(
-\alpha_i+\alpha_{i+2}-\alpha_{i+4}
\right)
t^{-1}
+ \cdots \\
\displaystyle f_{i+2}
=
\frac12 t +
\left(
-\alpha_{i+1}+\alpha_{i+3}+\alpha_{i+5}
\right) t^{-1}
+ \cdots \\
\displaystyle f_{i+3}
=
\frac12 t +
\left(
\alpha_i-\alpha_{i+2}+\alpha_{i+4}+2\alpha_{i+5}
\right)
t^{-1}
+ \cdots \\
\displaystyle f_{i+4}
=
2\alpha_{i+4}t^{-1}
+ \cdots \\
\displaystyle f_{i+5}
=
-2\alpha_{i+5}t^{-1}
+ \cdots.
\end{cases}
\end{equation*}
}
\end{proposition}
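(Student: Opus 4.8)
The plan is to determine the next-order coefficients of the Laurent series by the same coefficient-comparison method used in the proofs of Propositions \ref{prop:a5inf-01-2} and \ref{prop:a5inf-01-3}, now starting from the leading behavior already established in Proposition \ref{prop:a5inf-0123-1}. By $\pi$ I assume $i=0$, so that $f_0=\tfrac12 t+a_{\infty,0}+a_{\infty,-1}t^{-1}+\cdots$, and similarly for $f_1,f_2,f_3$, with $f_4=e_{\infty,0}+e_{\infty,-1}t^{-1}+\cdots$ and $f_5=g_{\infty,0}+g_{\infty,-1}t^{-1}+\cdots$ holomorphic. From Proposition \ref{prop:a5inf-0123-1} the leading coefficients of $f_0,f_1,f_2,f_3$ are all $\tfrac12$. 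The two linear constraints $f_0+f_2+f_4=t$ and $f_1+f_3+f_5=t$ will be used throughout to eliminate half of the unknowns: they force $e_{\infty,0}=-(a_{\infty,0}+c_{\infty,0})$, $g_{\infty,0}=-(b_{\infty,0}+d_{\infty,0})$, and analogous relations at each lower order.

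First I would pin down the constant terms. I expect to compare the coefficient of $t$ in the two equations governing the holomorphic functions $f_4$ and $f_5$ (the equations for $f_4^{\prime}$ and $f_5^{\prime}$ displayed in $(*)$): since the left-hand sides $\tfrac{t}{2}f_4^{\prime}$ and $\tfrac{t}{2}f_5^{\prime}$ contribute nothing at order $t$, the vanishing of the $t$-coefficient on the right gives linear relations among the constant terms $a_{\infty,0},b_{\infty,0},c_{\infty,0},d_{\infty,0},e_{\infty,0},g_{\infty,0}$. Combined with the two constraint relations above, I anticipate these determine $e_{\infty,0}=g_{\infty,0}=0$ and leave the constants of $f_0,\dots,f_3$ constrained; the residual freedom should be fixed by comparing the $t$-coefficients in the $f_0,f_1,f_2,f_3$ equations themselves. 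Then I would extract the residues by comparing the coefficient of $t^0$ (the constant term) in all six equations, which couples the order-$t^{-1}$ coefficients $a_{\infty,-1},\dots,g_{\infty,-1}$ linearly to the parameters $\alpha_j$. Solving this linear system — again using the constraint relations $a_{\infty,-1}+c_{\infty,-1}+e_{\infty,-1}=0$ and $b_{\infty,-1}+d_{\infty,-1}+g_{\infty,-1}=0$ together with $\sum_j\alpha_j=1$ — should yield exactly the residues asserted in the statement; in particular I expect $e_{\infty,-1}=2\alpha_4$ and $g_{\infty,-1}=-2\alpha_5$, with the four values for $f_0,\dots,f_3$ matching the displayed combinations.

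The main obstacle will be the bookkeeping of the quadratic terms. Because four of the six functions now grow like $\tfrac12 t$, the products $f_jf_k$ appearing inside the large parentheses of $(*)$ contribute at orders $t^2$, $t^1$, and $t^0$ simultaneously, so the cancellations that gave clean answers in the two-pole case (Proposition \ref{prop:a5inf-01-2}) are genuinely more delicate here: many leading $t^2$ contributions must cancel against each other before the $t^1$ and $t^0$ levels become tractable. I would therefore organize the computation by first verifying that the $t^2$-coefficient vanishes identically given $a_{\infty,1}=b_{\infty,1}=c_{\infty,1}=d_{\infty,1}=\tfrac12$ (a consistency check already implicit in Proposition \ref{prop:a5inf-0123-1}), then descend one order at a time. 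The linear-algebra step of solving the resulting $6\times 6$ system for the residues, modulo the two constraints and $\sum\alpha_j=1$, is routine once the coefficients are correctly assembled, and I expect the stated formulas to drop out directly. A uniqueness statement for this Laurent series, parallel to Proposition \ref{prop:a5inf-01-3}, would follow by the same inductive recursion on $t^{-k}$, but it is not required for the present proposition.
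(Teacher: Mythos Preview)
Your approach is essentially the same coefficient-comparison method the paper uses, and it will work. One small miscalibration worth noting: because the dominant part of the cubic bracket in each equation is of order $t^2$ (four of the six factors behave like $\tfrac12 t$), the informative comparisons are at one order higher than you state. In the $f_4$ and $f_5$ equations the $t^2$-coefficient (not $t$) already forces $e_{\infty,0}=g_{\infty,0}=0$, and then the $t$-coefficient (not $t^0$) gives $e_{\infty,-1}=2\alpha_4$, $g_{\infty,-1}=-2\alpha_5$ directly. Similarly, in the $f_0$ and $f_1$ equations the $t^2$-coefficient yields $b_{\infty,0}=d_{\infty,0}=0$ and $a_{\infty,0}=c_{\infty,0}=0$, while the $t$-coefficient gives the differences $d_{\infty,-1}-b_{\infty,-1}$ and $c_{\infty,-1}-a_{\infty,-1}$; combining these with the constraints $f_1+f_3+f_5=t$ and $f_0+f_2+f_4=t$ finishes the computation. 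So the $t^2$-level is not merely a consistency check, and you need only four of the six equations together with the two linear constraints rather than a full $6\times 6$ system.
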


\begin{proof}
By $\pi,$ 
we assume that $f_0, f_1, f_2, f_3$ all have a pole at $t=\infty.$ 
By comparing the coefficients of the terms $t^2$ and $t$ in 
\begin{equation*}
\frac{t}{2}
f_4^{\prime}
=
f_4
\left(
f_5f_0+f_5f_2+f_1f_2-f_0f_1-f_0f_3-f_2 f_3
\right) 
+
\left(
\frac12
-\alpha_0-\alpha_2
\right)
f_4
+
\alpha_4
\left(
f_0+f_2
\right),
\end{equation*} 
we have $e_{\infty,0}=0$ and $e_{\infty, -1}=2\alpha_4,$ 
respectively.
\par
By comparing the coefficients of the terms $t^2$ and $t$ in 
\begin{equation*}
\frac{t}{2}
f_5^{\prime}
=
f_5
\left(
f_0f_1+f_0f_3+f_2f_3-f_1f_2-f_1f_4-f_3 f_4
\right) 
+
\left(
\frac12
-\alpha_1-\alpha_3
\right)
f_5
+
\alpha_5
\left(
f_1+f_3
\right),
\end{equation*}
we get $f_{\infty,0}=0$ and $f_{\infty,-1}=-2\alpha_5,$ 
respectively.
\par
By comparing the coefficients of the terms $t^2$ and $t$ in 
\begin{equation*}
\frac{t}{2}
f_0^{\prime}
=
f_0
\left(
f_1f_2+f_1f_4+f_3 f_4-f_2f_3-f_2f_5-f_4 f_5
\right) 
+
\left(
\frac12
-\alpha_2-\alpha_4
\right)
f_0
+
\alpha_0
\left(
f_2+f_4
\right), 
\end{equation*}
we have $b_{\infty,0}=d_{\infty,0}=0$ and 
\begin{equation*}
d_{\infty, -1}-b_{\infty, -1}
=2(\alpha_0-\alpha_2+\alpha_4+\alpha_5),
\end{equation*}
which implies that 
\begin{equation*}
b_{\infty,-1}=-\alpha_0+\alpha_2-\alpha_4, \,\,
d_{\infty,-1}=\alpha_0-\alpha_2+\alpha_4+2\alpha_5,
\end{equation*}
because $f_1+f_3+f_5=t.$
\par
By comparing the coefficients of the terms $t^2$ and $t$ in 
\begin{equation*}
\frac{t}{2}
f_1^{\prime}
=
f_1
\left(
f_2f_3+f_2f_5+f_4 f_5-f_3f_4-f_3f_0-f_5 f_0
\right) 
+
\left(
\frac12
-\alpha_3-\alpha_5
\right)
f_1
+
\alpha_1
\left(
f_3+f_5
\right),
\end{equation*}
we get $a_{\infty,0}=c_{\infty,0}=0$ and 
\begin{equation*}
c_{\infty, -1}-a_{\infty, -1}
=2(-\alpha_1+\alpha_3+\alpha_4+\alpha_5),
\end{equation*}
which implies that 
\begin{equation*}
a_{\infty,-1}=\alpha_1-\alpha_3-2\alpha_4-\alpha_5, \,\,
c_{\infty,-1}=-\alpha_1+\alpha_3+\alpha_5,
\end{equation*}
because $f_0+f_2+f_4=t.$

\end{proof}

In order to prove the uniqueness of the Laurent series, 
we have

\begin{proposition}
\label{prop:a5inf-0123-3}
{\it 
Suppose that 
for $A_5^{(1)}(\alpha_j)_{0\leq j \leq 5},$ 
there exists a solution 
such that 
for some $i=0,1,2,3,4,5,$ 
$f_i, f_{i+1}, f_{i+2}, f_{i+3}$ all have a pole at $t=\infty$ and 
$f_{i+4}, f_{i+5}$ 
are both holomorphic at $t=\infty.$ 
It is then unique.
}
\end{proposition}

\begin{proof}
By $\pi,$ 
we assume that $f_0, f_1, f_2, f_3$ all have a pole at $t=\infty.$ 
\par
By comparing the coefficients of the terms $t^{-(k-2)} \,\,(k\geq 2)$ 
in 
\begin{equation*}
\frac{t}{2}
f_4^{\prime}
=
f_4
\left(
f_5f_0+f_5f_2+f_1f_2-f_0f_1-f_0f_3-f_2 f_3
\right) 
+
\left(
\frac12
-\alpha_0-\alpha_2
\right)
f_4
+
\alpha_4
\left(
f_0+f_2
\right),
\end{equation*} 
we have 
\begin{align*}
e_{-k}
&=
(k-2)e_{-(k-2)} \\
&+2
\sum
(
e_{\infty, -l}f_{\infty, -m}-e_{\infty, -l}a_{\infty, -m}-e_{\infty, -l}d_{\infty, -m}
)  \\
&+4
\sum
(e_{\infty, -l}f_{\infty, -m}a_{\infty,-n}-e_{\infty, -l}c_{\infty, -m}d_{\infty,-n})  \\
&+
2
\left(
\frac12-\alpha_0-\alpha_2-\alpha_4
\right)e_{\infty,-(k-2)},
\end{align*}
where the first sum extends over the positive integers $l,m$ for which $l+m=k-3,$ 
and 
the second sum extends over the positive integers $l,m,n$ for which $l+m+n=k-2.$ 
$e_{\infty,-k} \,\,(k\geq 2)$ are then inductively determined. 
\par
By comparing the coefficients of the terms $t^{-(k-2)} \,\,(k\geq 2)$ 
in 
\begin{equation*}
\frac{t}{2}
f_5^{\prime}
=
f_5
\left(
f_0f_1+f_0f_3+f_2f_3-f_1f_2-f_1f_4-f_3 f_4
\right) 
+
\left(
\frac12
-\alpha_1-\alpha_3
\right)
f_5
+
\alpha_5
\left(
f_1+f_3
\right),
\end{equation*}
we obtain
\begin{align*}
f_{\infty, -k}
&=
-(k-2)f_{\infty, -(k-2)}  \\
&-2
\sum
(
f_{\infty, -l}a_{\infty, -m}+f_{\infty, -l}d_{\infty, -m}-f_{\infty, -l}e_{\infty, -m}
)  \\
&-4
\sum
(f_{\infty, -l}a_{\infty, -m}b_{\infty,-n}-f_{\infty, -l}d_{\infty, -m}e_{\infty,-n}) \\
&
-
2
\left(
\frac12-\alpha_1-\alpha_3-\alpha_5
\right)f_{\infty,-(k-2)},
\end{align*}
which implies that $f_{\infty,-k} \,\,(k\geq 2)$ are inductively determined. 
\par
By comparing the coefficients of the terms $t^{-(k-2)} \,\,(k\geq 2)$ 
in 
\begin{equation*}
\frac{t}{2}
f_0^{\prime}
=
f_0
\left(
f_1f_2+f_1f_4+f_3 f_4-f_2f_3-f_2f_5-f_4 f_5
\right) 
+
\left(
\frac12
-\alpha_2-\alpha_4
\right)
f_0
+
\alpha_0
\left(
f_2+f_4
\right), 
\end{equation*}
we obtain
\begin{align*}
b_{\infty,-k}-d_{\infty, -k}
&=
-2(k-2)a_{\infty,-(k-2)}-2e_{\infty,-k}+f_{\infty,-k}  \\
&-4\sum(a_{\infty,-l}e_{\infty,-m}+d_{\infty,-l}e_{\infty,-m}) \\
&-8\sum(a_{\infty,-l}b_{\infty,-m}c_{\infty,-n}-a_{\infty,-l}e_{\infty,-m}f_{\infty,-n})  \\
&-4\left(\frac12-\alpha_0-\alpha_2-\alpha_4\right)a_{\infty,-k}, 
\end{align*}
which implies that $b_{\infty,-k}, \,\,d_{\infty,-k} \,\,(k\geq 2)$ are inductively determined, 
because $f_1+f_3+f_5=t.$
\par
By comparing the coefficients of the terms $t^{-(k-2)} \,\,(k\geq 2)$ 
in 
\begin{equation*}
\frac{t}{2}
f_1^{\prime}
=
f_1
\left(
f_2f_3+f_2f_5+f_4 f_5-f_3f_4-f_3f_0-f_5 f_0
\right) 
+
\left(
\frac12
-\alpha_3-\alpha_5
\right)
f_1
+
\alpha_1
\left(
f_3+f_5
\right),
\end{equation*}
we have
\begin{align*}
c_{\infty,-k}-a_{\infty,-k}
&=
-2(k-2)b_{\infty,-k}  \\
&-4\sum e_{\infty,-l}f_{\infty,-m}  \\
&-8\sum 
(
b_{\infty,-l}c_{\infty,-m}f_{\infty,-n}-b_{\infty,-l}f_{\infty, -m}a_{\infty, -n}
)  \\
&-4
\left(
\frac12-\alpha_1-\alpha_3-\alpha_5b_{\infty,-(k-2)}
\right),
\end{align*}
which implies that $a_{\infty,-k}, \,\,c_{\infty,-k} \,\,(k\geq 2)$ are inductively determined, 
because $f_0+f_2+f_4=t.$

\end{proof}

\subsubsection{$f_i, f_{i+1}, f_{i+2}, f_{i+4}$ have a pole at $t=\infty$}

\begin{proposition}
\label{prop:a5inf-0124-1}
{\it 
Suppose that 
for 
$A_5^{(1)}(\alpha_j)_{0\leq  j \leq 5},$ 
there exists a solution such that 
for some $i=0,1,2,3,4,5,$ 
$f_i, f_{i+1}, f_{i+2}, f_{i+4}$ all have a pole at $t=\infty$ and 
$f_{i+3}, f_{i+5}$ 
are both holomorphic at $t=\infty.$ 
$f_i, f_{i+1}, f_{i+2}, f_{i+4}$ then all 
have a pole of order one at $t=\infty.$ 
We denote this case as Type A (3).
}
\end{proposition}

\begin{proof}
By $\pi,$ 
we assume that 
$f_0, f_1, f_2, f_4$ all have a pole of order $n_0, n_1, n_2, n_4 $ at $t=\infty,$ 
where $n_0, n_1, n_2, n_4 $ are all positive integers. 
Since $f_1+f_3+f_5=t,$ it follows that $n_1=1$ and $b_{\infty,1}=1.$
\par
By comparing the coefficients of the highest terms in
\begin{equation*}
\frac{t}{2}
f_0^{\prime}
=
f_0
\left(
f_1f_2+f_1f_4+f_3 f_4-f_2f_3-f_2f_5-f_4 f_5
\right) 
+
\left(
\frac12
-\alpha_2-\alpha_4
\right)
f_0
+
\alpha_0
\left(
f_2+f_4
\right), 
\end{equation*}
we have $n_2=n_4$ and $c_{\infty, n_2}+e_{\infty, n_4}=0.$ 
\par
By comparing the coefficients of the highest terms in
\begin{equation*}
\frac{t}{2}
f_2^{\prime}
=
f_2
\left(
f_3f_4+f_3f_0+f_5 f_0-f_4f_5-f_4f_1-f_0 f_1
\right) 
+
\left(
\frac12
-\alpha_4-\alpha_0
\right)
f_2
+
\alpha_2
\left(
f_4+f_0
\right),
\end{equation*} 
we get $n_0=n_4$ and $a_{\infty,n_0}+e_{\infty, n_4}=0.$
\par
Therefore, we obtain 
\begin{equation*}
n_0=n_2=n_4=1, \,\,
a_{\infty,1}=c_{\infty,1}=1, \,\,e_{\infty,1}=-1,
\end{equation*}
because $f_0+f_2+f_4=t.$
\end{proof}

In order to compute the residues of the Laurent series, 
we have

\begin{proposition}
\label{prop:a5inf-0124-2}
{\it 
Suppose that 
for 
$A_5^{(1)}(\alpha_j)_{0\leq  j \leq 5},$ 
there exists a solution 
such that 
for some $i=0,1,2,3,4,5,$ 
$f_i, f_{i+1}, f_{i+2}, f_{i+4}$ all have a pole at $t=\infty$ and 
$f_{i+3}, f_{i+5}$ 
are both holomorphic at $t=\infty.$ 
Then, 
\begin{equation*}
\begin{cases}
f_i
=
t
+
\left(
-\alpha_{i+2}-2\alpha_{i+3}-\alpha_{i+4}
\right)
t^{-1}
+ \cdots \\
f_{i+1}
=
t
+
\left(
-\alpha_{i+3}+\alpha_{i+5}
\right)
t^{-1}
+ \cdots \\
f_{i+2}
=
t
+
\left(
\alpha_i+\alpha_{i+4}+2\alpha_{i+5}
\right)
t^{-1}
+ \cdots \\
f_{i+3}
=
\alpha_{i+3} t^{-1} 
+ \cdots \\
f_{i+4}
=
-t
+
\left(
-\alpha_i+\alpha_{i+2}+2\alpha_{i+3}-2\alpha_{i+5}
\right)
t^{-1}
+ \cdots \\
f_{i+5}
=
-\alpha_{i+5} t^{-1}
+ \cdots.
\end{cases}
\end{equation*}
}
\end{proposition}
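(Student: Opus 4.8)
The plan is to follow the template already used for Propositions \ref{prop:a5inf-01-2} and \ref{prop:a5inf-0123-2}: determine the residues in three passes, first the holomorphic pair, then the constant terms of the pole functions, and finally the pole residues, using the linear constraints $f_0+f_2+f_4=t$ and $f_1+f_3+f_5=t$ at each order to cut down the unknowns. By the rotation $\pi$ I would reduce to $i=0$, so $f_0,f_1,f_2,f_4$ have simple poles and $f_3,f_5$ are holomorphic; by Proposition \ref{prop:a5inf-0124-1} the leading coefficients are already known, so I write
\begin{equation*}
\begin{cases}
f_0=t+a_{\infty,0}+a_{\infty,-1}t^{-1}+\cdots, \\
f_1=t+b_{\infty,0}+b_{\infty,-1}t^{-1}+\cdots, \\
f_2=t+c_{\infty,0}+c_{\infty,-1}t^{-1}+\cdots, \\
f_3=d_{\infty,0}+d_{\infty,-1}t^{-1}+\cdots, \\
f_4=-t+e_{\infty,0}+e_{\infty,-1}t^{-1}+\cdots, \\
f_5=f_{\infty,0}+f_{\infty,-1}t^{-1}+\cdots.
\end{cases}
\end{equation*}

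First I would substitute this ansatz into the equations for the holomorphic functions $f_3$ and $f_5$. In each case the relevant bilinear combination has a nonzero $t^2$-part, so comparing coefficients of $t^2$ forces $d_{\infty,0}=f_{\infty,0}=0$, and comparing coefficients of $t$ (where against the leading $t$ of the combination only the source terms $\alpha_3(f_5+f_1)$ and $\alpha_5(f_1+f_3)$ survive) yields $d_{\infty,-1}=\alpha_3$ and $f_{\infty,-1}=-\alpha_5$. This is the same mechanism that produced the holomorphic residues in the earlier propositions.

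Next I would show that all constant terms of the pole functions vanish. The relation $f_1+f_3+f_5=t$ at order $t^0$ gives $b_{\infty,0}=0$ immediately. For the remaining ones I would substitute into the $f_0$- and $f_2$-equations. The one genuinely new feature here is that, because three poles have leading coefficient $+1$ while $f_4$ has leading coefficient $-1$, the leading $t^2$-terms of the quadratic combinations cancel; consequently the $t^2$-coefficient of each right-hand side reduces to the $t$-coefficient of its combination, which is linear in $a_{\infty,0},c_{\infty,0},e_{\infty,0}$. This yields $c_{\infty,0}+e_{\infty,0}=0$ from the $f_0$-equation and $a_{\infty,0}+e_{\infty,0}=0$ from the $f_2$-equation; together with $a_{\infty,0}+c_{\infty,0}+e_{\infty,0}=0$ (from $f_0+f_2+f_4=t$) these force $a_{\infty,0}=c_{\infty,0}=e_{\infty,0}=0$.

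Finally, with all constant terms zero, I would compare coefficients of $t$ in the $f_0$- and $f_2$-equations; each right-hand side $t$-coefficient now reduces to the constant term of the corresponding combination plus the $(\tfrac12-\cdots)$ and $\alpha_j(\cdots)$ contributions, and matching against the left-hand side coefficient $\tfrac12$ gives $c_{\infty,-1}+e_{\infty,-1}=\alpha_2+2\alpha_3+\alpha_4$ and $a_{\infty,-1}+e_{\infty,-1}=-\alpha_0-\alpha_4-2\alpha_5$. Combining these with the order-$t^{-1}$ constraints $a_{\infty,-1}+c_{\infty,-1}+e_{\infty,-1}=0$ and $b_{\infty,-1}+d_{\infty,-1}+f_{\infty,-1}=0$ solves the system, producing $a_{\infty,-1}=-\alpha_2-2\alpha_3-\alpha_4$, $e_{\infty,-1}=-\alpha_0+\alpha_2+2\alpha_3-2\alpha_5$, $c_{\infty,-1}=\alpha_0+\alpha_4+2\alpha_5$ and $b_{\infty,-1}=-\alpha_3+\alpha_5$; undoing the reduction by $\pi$ gives the stated formulas. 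I expect the main obstacle to be purely the bookkeeping of the cancellation above, namely tracking exactly which bilinear products survive at orders $t^2$, $t$ and $t^0$ once the leading $t^2$-terms cancel, and assembling the correct linear system; no idea beyond the template of the previous propositions should be required.
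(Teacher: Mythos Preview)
Your proposal is correct and follows essentially the same approach as the paper: reduce to $i=0$ via $\pi$, read off $d_{\infty,0},d_{\infty,-1},f_{\infty,0},f_{\infty,-1}$ from the $f_3$- and $f_5$-equations at orders $t^2$ and $t$, use $f_1+f_3+f_5=t$ to get $b_{\infty,0}$ and $b_{\infty,-1}$, and then use the $f_0$- and $f_2$-equations together with $f_0+f_2+f_4=t$ to solve for the remaining constants and residues. Your intermediate relations $a_{\infty,0}+e_{\infty,0}=0$ and $a_{\infty,-1}+e_{\infty,-1}=-\alpha_0-\alpha_4-2\alpha_5$ from the $f_2$-equation are in fact the correct ones (the paper prints $a_{\infty,0}+c_{\infty,0}$ and $a_{\infty,-1}+c_{\infty,-1}$ there, which appears to be a typo, as the stated final values satisfy your relations but not those).
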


\begin{proof}
By $\pi,$ 
we assume that $f_0, f_1, f_2, f_4$ all have a pole at $t=\infty.$ 
\par
By comparing the coefficients of the terms $t^2$ and $t$ 
in
\begin{equation*}
\frac{t}{2}
f_3^{\prime}
=
f_3
\left(
f_4f_5+f_4f_1+f_0 f_1-f_5f_0-f_5f_2-f_1 f_2
\right) 
+
\left(
\frac12
-\alpha_5-\alpha_1
\right)
f_3
+
\alpha_3
\left(
f_5+f_1
\right),
\end{equation*}
we have $d_{\infty,0}=0$ and $d_{\infty,-1}=\alpha_3,$ respectively.  
\par
By comparing the coefficients of the terms $t^2$ and $t$ 
in
\begin{equation*}
\frac{t}{2}
f_5^{\prime}
=
f_5
\left(
f_0f_1+f_0f_3+f_2f_3-f_1f_2-f_1f_4-f_3 f_4
\right) 
+
\left(
\frac12
-\alpha_1-\alpha_3
\right)
f_5
+
\alpha_5
\left(
f_1+f_3
\right),
\end{equation*}
we get $f_{\infty,0}=0$ and $f_{\infty,-1}=-\alpha_5,$ 
respectively. 
Therefore, 
since $f_1+f_3+f_5=t,$ it follows that $b_{\infty,0}=0$ and $b_{\infty,-1}=-\alpha_3+\alpha_5.$
\par
By comparing the coefficients of the terms $t^2$ and $t$ 
in
\begin{equation*}
\frac{t}{2}
f_0^{\prime}
=
f_0
\left(
f_1f_2+f_1f_4+f_3 f_4-f_2f_3-f_2f_5-f_4 f_5
\right) 
+
\left(
\frac12
-\alpha_2-\alpha_4
\right)
f_0
+
\alpha_0
\left(
f_2+f_4
\right),
\end{equation*}
we have $c_{\infty,0}+e_{\infty,0}=0$ 
and 
$c_{\infty,-1}+e_{\infty,-1}=\alpha_2+2\alpha_3+\alpha_4,$ 
respectively. 
\par
By comparing the coefficients of the terms $t^2$ and $t$ 
in
\begin{equation*}
\frac{t}{2}
f_2^{\prime}
=
f_2
\left(
f_3f_4+f_3f_0+f_5 f_0-f_4f_5-f_4f_1-f_0 f_1
\right) 
+
\left(
\frac12
-\alpha_4-\alpha_0
\right)
f_2
+
\alpha_2
\left(
f_4+f_0
\right),
\end{equation*} 
we get $a_{\infty,0}+c_{\infty,0}=0$ 
and 
$a_{\infty,-1}+c_{\infty,-1}=-\alpha_0-\alpha_4-2\alpha_5,$ 
respectively. 
\par
Therefore, since $f_0+f_2+f_4=t,$ 
it follows that 
\begin{equation*}
\begin{cases}
a_{\infty,-1}=-\alpha_2-2\alpha_3-\alpha_4, \\
c_{\infty,-1}=\alpha_0+\alpha_4+2\alpha_5, \\
e_{\infty,-1}=-\alpha_0+\alpha_2+2\alpha_3-2\alpha_5.
\end{cases}
\end{equation*}

\end{proof}

In order to show the uniqueness of the Laurent series, 
we have

\begin{proposition}
\label{prop:a5inf-0124-3}
{\it 
Suppose that 
for 
$A_5^{(1)}(\alpha_j)_{0\leq  j \leq 5},$ 
there exists a solution 
such that 
for some $i=0,1,2,3,4,5,$ 
$f_i, f_{i+1}, f_{i+2}, f_{i+4}$ all have a pole at $t=\infty$ and 
$f_{i+3}, f_{i+5}$ 
are both holomorphic at $t=\infty.$ 
It is then unique.
}
\end{proposition}

\begin{proof}
By $\pi,$ 
we assume that $f_0, f_1, f_2, f_4$ all have a pole at $t=\infty.$ 
\par
By comparing the coefficients of the terms $t^{-(k-2)} \,\,(k\geq 2)$ 
in
\begin{equation*}
\frac{t}{2}
f_3^{\prime}
=
f_3
\left(
f_4f_5+f_4f_1+f_0 f_1-f_5f_0-f_5f_2-f_1 f_2
\right) 
+
\left(
\frac12
-\alpha_5-\alpha_1
\right)
f_3
+
\alpha_3
\left(
f_5+f_1
\right),
\end{equation*}
we have 
\begin{align*}
d_{\infty, -k}
&=
\frac{k-2}{2} d_{\infty,-(k-2)}  \\
&-
\sum
(3d_{\infty,-l}f_{\infty,-m}+2d_{\infty,-l}c_{\infty,-m}+d_{\infty,-l}b_{\infty,-m})     \\
&
+2
\sum(d_{\infty,-l}e_{\infty,-m}f_{\infty,-m}-d_{\infty,-l}b_{\infty,-m}c_{\infty,-m})  \\
&+
\left(
\frac12-\alpha_1-\alpha_3-\alpha_5
\right)d_{\infty,-(k-2)},
\end{align*}
where the first sum extends over the positive integers $l,m$ 
for which $l+m=k-3,$ 
and the second sum extends over the positive integers $l,m,n$ 
for which $l+m+n=k-2.$ 
Therefore, it follows that 
$d_{\infty, -k} \,\,(k\geq 2)$ 
are inductively determined. 
\par
By comparing the coefficients of the terms $t^{-(k-2)} \,\,(k\geq 2)$ 
in
\begin{equation*}
\frac{t}{2}
f_5^{\prime}
=
f_5
\left(
f_0f_1+f_0f_3+f_2f_3-f_1f_2-f_1f_4-f_3 f_4
\right) 
+
\left(
\frac12
-\alpha_1-\alpha_3
\right)
f_5
+
\alpha_5
\left(
f_1+f_3
\right),
\end{equation*}
we have 
\begin{align*}
f_{\infty,-k}
&=
-\frac{k-2}{2}f_{\infty, -(k-2)}  \\
&-
\sum(3f_{\infty,-l}d_{\infty, m}+2_{\infty, -l}a_{\infty, -m}+f_{\infty,-l}b_{\infty, -m})  \\
&-2\sum(f_{\infty,-l}a_{\infty, -m}b_{\infty, -n}-f_{\infty,-l}d_{\infty, -m}e_{\infty, -n})  \\
&-\left(
\frac12-\alpha_1-\alpha_3-\alpha_5\right)f_{\infty, -(k-2)},
\end{align*}
which implies that $b_{\infty, -k}, \,\,f_{\infty, -k} \,\,(k\geq 2)$ 
are inductively determined, because $f_1+f_3+f_5=t.$
\par
By comparing the coefficients of the terms $t^{-(k-2)} \,\,(k\geq 2)$ 
in
\begin{equation*}
\frac{t}{2}
f_0^{\prime}
=
f_0
\left(
f_1f_2+f_1f_4+f_3 f_4-f_2f_3-f_2f_5-f_4 f_5
\right) 
+
\left(
\frac12
-\alpha_2-\alpha_4
\right)
f_0
+
\alpha_0
\left(
f_2+f_4
\right),
\end{equation*}
we have 
\begin{align*}
a_{\infty,-k}&=
\frac{k-2}{2}a_{\infty,-(k-2)}-2d_{\infty,-(k-2)}  \\
&+\sum(2a_{\infty,-l}f_{\infty,-m}+2d_{\infty,-l}e_{\infty,-m}+a_{\infty,-l}e_{\infty,-m})  \\
&+2
\sum
(a_{\infty,-l}b_{\infty,-m}c_{\infty,-n}-a_{\infty,-l}e_{\infty,-m}f_{\infty,-n}) \\
&
\left(
\frac12-\alpha_0-\alpha_2-\alpha_4\right)a_{\infty,-(k-2)},
\end{align*}
which implies that $a_{\infty, -k},  \,\,(k\geq 2)$ 
are inductively determined. 
\par
By comparing the coefficients of the terms $t^{-(k-2)} \,\,(k\geq 2)$ 
in
\begin{equation*}
\frac{t}{2}
f_2^{\prime}
=
f_2
\left(
f_3f_4+f_3f_0+f_5 f_0-f_4f_5-f_4f_1-f_0 f_1
\right) 
+
\left(
\frac12
-\alpha_4-\alpha_0
\right)
f_2
+
\alpha_2
\left(
f_4+f_0
\right),
\end{equation*} 
we obtain 
\begin{align*}
c_{\infty,-k}&=
-\frac{k-2}{2}c_{\infty,-(k-2)}-2f_{\infty,-(k-2)}  \\
&+\sum(2c_{\infty,-l}d_{\infty,-m}+2f_{\infty,-l}e_{\infty,-m}+c_{\infty,-l}c_{\infty,-m})  \\
&-2
\sum
(c_{\infty,-l}d_{\infty,-m}e_{\infty,-n}-c_{\infty,-l}a_{\infty,-m}b_{\infty,-n}) \\
&-
\left(
\frac12-\alpha_0-\alpha_2-\alpha_4\right)c_{\infty,-(k-2)},
\end{align*}
which implies that $c_{\infty, -k},  \,\,(k\geq 2)$ 
are inductively determined.

\end{proof}

\subsubsection{$f_i, f_{i+2}, f_{i+3}, f_{i+5}$ have a pole at $t=\infty$}

\begin{proposition}
\label{prop:a5inf-0235}
{\it 
For $A_5^{(1)}(\alpha_j)_{0\leq  j \leq 5},$ 
there exists no solution 
such that  
for some $i=0,1,2,3,4,5,$ 
$f_i, f_{i+2}, f_{i+3}, f_{i+5}$ all have a pole at $t=\infty$ and 
$f_{i+4}, f_{i+5}$ 
are both holomorphic at $t=\infty.$ 
}
\end{proposition}

\begin{proof}
By $\pi,$ 
we assume that $f_0, f_2, f_3, f_5$ all have a pole of order $n_0, n_2, n_3, n_5 $ at $t=\infty,$ 
where 
$n_0, n_2, n_2, n_3, n_5$ are all positive integers. 
Since $f_0+f_2+f_4=t$ and $f_1+f_3+f_5=t,$ 
it follows that $n_0=n_2$ and $n_3=n_5,$ respectively. 
We assume that $n_0=n_2\leq n_3=n_5,$ 
because we use $\pi^3$ if $n_0=n_2>n_3=n_5.$
\par 
By comparing the coefficients of the highest terms in 
\begin{equation*}
\frac{t}{2}
f_0^{\prime}
=
f_0
\left(
f_1f_2+f_1f_4+f_3 f_4-f_2f_3-f_2f_5-f_4 f_5
\right) 
+
\left(
\frac12
-\alpha_2-\alpha_4
\right)
f_0
+
\alpha_0
\left(
f_2+f_4
\right),
\end{equation*}
we have $d_{\infty, n_3}+f_{\infty, n_5}=0,$ 
which implies that $n_3=n_5\geq 2.$ 
\par 
By comparing the coefficients of the highest terms in 
\begin{equation*}
\frac{t}{2}
f_3^{\prime}
=
f_3
\left(
f_4f_5+f_4f_1+f_0 f_1-f_5f_0-f_5f_2-f_1 f_2
\right) 
+
\left(
\frac12
-\alpha_5-\alpha_1
\right)
f_3
+
\alpha_3
\left(
f_5+f_1
\right), 
\end{equation*}
we get $a_{\infty, n_0}+c_{\infty, n_2}=0,$ 
which implies that $n_0=n_2\geq 2.$ 
\par
By comparing the coefficients of the terms $t^{2n_0+n_3}, t^{2n_0+n_3-1}, \ldots, t^{2n_0+1}$ in 
\begin{equation*}
\frac{t}{2}
f_0^{\prime}
=
f_0
\left(
f_1f_2+f_1f_4+f_3 f_4-f_2f_3-f_2f_5-f_4 f_5
\right) 
+
\left(
\frac12
-\alpha_2-\alpha_4
\right)
f_0
+
\alpha_0
\left(
f_2+f_4
\right),
\end{equation*}
we have 
\begin{equation*}
d_{\infty, n_3}+f_{\infty, n_5}=
d_{\infty, n_3-1}+f_{\infty, n_5-1}=\cdots
=
d_1+f_1=0,
\end{equation*}
which is impossible because $f_1+f_3+f_5=t.$

\end{proof}

\subsection{Five of $(f_i)_{0\leq i \leq 5}$ have a pole at $t=\infty$}
In this subsection, 
we treat the case in which 
five of $(f_i)_{0\leq i \leq 5}$ have a pole at $t=\infty.$

\begin{proposition}
\label{prop:a5inf-01234}
{\it 
For 
$A_5^{(1)}(\alpha_j)_{0\leq j \leq 5},$ 
there exists no solution 
such that  
for some $i=0,1,2,3,4,5,$ 
$f_i, f_{i+1}, f_{i+2}, f_{i+3},  f_{i+4}$ all have a pole at $t=\infty$ and 
$f_{i+5}$ 
is holomorphic at $t=\infty.$ 
}
\end{proposition}

\begin{proof}
By $\pi,$ 
we assume that $f_0, f_1, f_2, f_3, f_4$ all have a pole of order $n_0, n_1, n_2, n_3, n_4$ at $t=\infty,$ 
where $n_0, n_1, n_2, n_3, n_4$ are all positive integers. 
\par
Since $f_0+f_2+f_4=t,$ 
we have only to consider the following four cases:
\newline
(1)\quad $n_0=n_2>n_4\geq 1,$ 
\newline
(2)\quad $n_2=n_4>n_0\geq 1,$
\newline
(3)\quad $n_4=n_0>n_2\geq 1,$
\newline
(4)\quad $n_0=n_2=n_4\geq 1.$
\par
If case (1) occurs, 
by comparing the coefficients of the highest terms in 
\begin{equation*}
\frac{t}{2}
f_1^{\prime}
=
f_1
\left(
f_2f_3+f_2f_5+f_4 f_5-f_3f_4-f_3f_0-f_5 f_0
\right) 
+
\left(
\frac12
-\alpha_3-\alpha_5
\right)
f_1
+
\alpha_1
\left(
f_3+f_5
\right), 
\end{equation*}
we have $a_{\infty, n_0}-c_{\infty, n_2}=0.$ 
On the other hand, since $f_0+f_2+f_4=t,$ 
it follows that $a_{\infty, n_0}+c_{\infty, n_2}=0,$ 
which is contradiction.
\par
In the same way, 
we can prove that cases (2) and (3) are impossible. 
Therefore, it follows that $n_0=n_2=n_4.$ 
\par
By comparing the coefficients of the highest terms in 
\begin{equation*}
\frac{t}{2}
f_1^{\prime}
=
f_1
\left(
f_2f_3+f_2f_5+f_4 f_5-f_3f_4-f_3f_0-f_5 f_0
\right) 
+
\left(
\frac12
-\alpha_3-\alpha_5
\right)
f_1
+
\alpha_1
\left(
f_3+f_5
\right), 
\end{equation*}
we have $c_{\infty, n_2}-e_{\infty, n_4}-a_{\infty, n_0}=0.$
\par 
If $n_0=n_2=n_4\geq 2, $ 
it follows that $a_{\infty, n_0}+c_{\infty, n_2}+ e_{\infty, n_4}=0$ 
because $f_0+f_2+f_4=t.$ 
However, these equations imply that $c_{\infty, n_2}=0,$ 
which is impossible. 
Therefore, it follows that $n_0=n_2=n_4=1.$
\par
Since $f_1+f_3+f_5=t,$ 
it follows that $n_1=n_3.$ 
If $n_1=n_3\geq 2,$ we have $b_{\infty, n_1}+d_{\infty, n_3}=0,$ 
because $f_1+f_3+f_5=t.$ 
By comparing the coefficients of the highest terms in 
\begin{equation*}
\frac{t}{2}
f_0^{\prime}
=
f_0
\left(
f_1f_2+f_1f_4+f_3 f_4-f_2f_3-f_2f_5-f_4 f_5
\right) 
+
\left(
\frac12
-\alpha_2-\alpha_4
\right)
f_0
+
\alpha_0
\left(
f_2+f_4
\right),
\end{equation*}
we get $b_{\infty, n_1}-d_{\infty, n_3}=0,$ 
which is impossible. Thus, it follows that $n_1=n_3=1.$ 
Moreover, 
since $f_1+f_3+f_5=t,$ it follows that $b_{\infty,1}+d_{\infty,1}=1.$
\par
By comparing the coefficients of the term $t^3$ in 
\begin{equation*}
\begin{cases}
\frac{t}{2}
f_0^{\prime}
=
f_0
\left(
f_1f_2+f_1f_4+f_3 f_4-f_2f_3-f_2f_5-f_4 f_5
\right) 
+
\left(
\frac12
-\alpha_2-\alpha_4
\right)
f_0
+
\alpha_0
\left(
f_2+f_4
\right),        \\
\frac{t}{2}
f_3^{\prime}
=
f_3
\left(
f_4f_5+f_4f_1+f_0 f_1-f_5f_0-f_5f_2-f_1 f_2
\right) 
+
\left(
\frac12
-\alpha_5-\alpha_1
\right)
f_3
+
\alpha_3
\left(
f_5+f_1
\right), \\
\frac{t}{2}
f_4^{\prime}
=
f_4
\left(
f_5f_0+f_5f_2+f_1f_2-f_0f_1-f_0f_3-f_2 f_3
\right) 
+
\left(
\frac12
-\alpha_0-\alpha_2
\right)
f_4
+
\alpha_4
\left(
f_0+f_2
\right),
\end{cases}
\end{equation*}
we obtain 
\begin{equation}
(*)
\begin{cases}
b_{\infty, 1}c_{\infty, 1}+b_{\infty, 1}e_{\infty, 1}+d_{\infty, 1}e_{\infty, 1}-c_{\infty, 1}d_{\infty, 1}=0 ,  \\
e_{\infty, 1}b_{\infty, 1}+a_{\infty, 1}b_{\infty, 1}-b_{\infty, 1}c_{\infty, 1}=0, \\
b_{\infty, 1}c_{\infty, 1}-a_{\infty, 1}b_{\infty, 1}-a_{\infty, 1}d_{\infty, 1}-c_{\infty, 1}d_{\infty, 1}=0,
\end{cases}
\end{equation}
respectively. 
The first and third equations in $(*)$ 
imply that $a_{\infty, 1}+e_{\infty, 1}=0, $ because $b_{\infty, 1}+d_{\infty, 1}=1.$ 
The second equation in $(*)$ then shows that 
$b_{\infty, 1}c_{\infty, 1}=0,$ 
which is a contradiction.

\end{proof}

\subsection{All of $(f_i)_{0\leq i \leq 5}$ have a pole at $t=\infty$}

\begin{proposition}
\label{prop:a5inf-012345-1}
{\it
Suppose that 
for 
$A_5^{(1)}(\alpha_j)_{0\leq  j \leq 5},$ 
there exists a solution 
such that 
all of $(f_i)_{0\leq i \leq 5}$ have a pole at $t=\infty.$ 
$f_0, f_{1}, f_{2}, f_{3}, f_4, f_5$ then all 
have a pole of order one at $t=\infty.$ 
We denote this case as Type C.
}
\end{proposition}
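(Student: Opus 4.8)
The plan is to show that the pole orders $n_i$ of all six components equal one. I would set up the bookkeeping as in \ref{prop:a5inf-01234}, writing $f_i=p_it^{n_i}+\cdots$ with $n_i\ge1$ and $p_i\neq0$, and I would record the two \emph{exact} polynomial identities coming from $f_0+f_2+f_4=t$ and $f_1+f_3+f_5=t$: if $\phi_i$ denotes the principal (degree $\ge1$) part of $f_i$, then $\phi_0+\phi_2+\phi_4=\phi_1+\phi_3+\phi_5=t$.

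First I would extract a structural constraint valid for every $i$. Since $f_{i+2}+f_{i+4}=t-f_i$, the $i$-th equation can be written
\begin{equation*}
\frac{t}{2} f_i' = f_i\,Q_i + \Bigl(\tfrac12-\alpha_i-\alpha_{i+2}-\alpha_{i+4}\Bigr)f_i + \alpha_i t,\qquad Q_i := (f_{i+1}-f_{i+5})(t-f_i)+f_{i+3}(f_{i+4}-f_{i+2}).
\end{equation*}
The left-hand side has degree exactly $n_i$ (leading coefficient $\tfrac{n_i}{2}p_i\neq0$), while every term on the right except $f_iQ_i$ has degree at most $n_i$. Hence the top of $f_iQ_i$ cannot be cancelled unless $Q_i$ is bounded at $t=\infty$, i.e. $\deg Q_i\le0$; and this must hold for all six indices at once.

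Next I would exploit these six conditions. A direct computation gives the exact identity $Q_i+Q_{i+3}=t\bigl[(f_{i+1}-f_{i+5})+(f_{i+4}-f_{i+2})\bigr]$, so boundedness of $Q_i$ and $Q_{i+3}$ forces $(f_{i+1}-f_{i+5})+(f_{i+4}-f_{i+2})=O(t^{-1})$; comparing principal parts yields $\phi_{i+1}+\phi_{i+4}=\phi_{i+2}+\phi_{i+5}$ for every $i$. Together with the two triangle identities this collapses to $\phi_i+\phi_{i+3}=\tfrac{2t}{3}$, so all six principal parts are governed by the two polynomials $A:=\phi_0-\tfrac t3$ and $B:=\phi_2-\tfrac t3$, and the claim reduces to $\deg A,\deg B\le1$. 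Substituting these relations back into the individual conditions $\deg Q_i\le0$ and comparing coefficients from the top down should force $A$ and $B$ to be linear, i.e. $n_i=1$ for all $i$.

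The hard part will be this last step. The highest-order coefficients of $Q_i$ cancel identically: the leading-coefficient relations extracted from $\deg Q_i\le0$ are satisfied by the whole family $\lambda_{i+3}=-\lambda_i,\ \lambda_{i+1}=\lambda_i+\lambda_{i+2}$ produced by \emph{any} $A,B$, so nothing about $\deg A,\deg B$ is visible at the top degree. One is therefore forced to descend to the sub-leading coefficients of $Q_i=O(1)$, where the constant terms of the $f_j$ (their residues at $t=\infty$) feed into the degree-$\ge1$ part of the products; the resulting bookkeeping splits into cases according to whether $\deg A=\deg B$ and whether the leading coefficients of $A$ and $2B$ cancel, and it is there that the contradiction with $\deg A\ge2$ or $\deg B\ge2$ finally appears. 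This case analysis, rather than any single coefficient comparison, is the crux of the argument.
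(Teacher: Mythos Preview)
Your approach is sound and genuinely different from the paper's. The paper first argues by cases (using the constraint $f_0+f_2+f_4=t$) that $n_0=n_2=n_4$ and $n_1=n_3=n_5$, writes out the system of leading-coefficient relations, and for the stubborn case where both common orders are $\ge2$ sets up a $6\times6$ matrix equation $A\mathbf u=0$ on the next-to-leading coefficients and runs an induction to contradict the triangle identities. Your rewriting via $Q_i$ and the slick identity $Q_i+Q_{i+3}=t\bigl[(f_{i+1}-f_{i+5})+(f_{i+4}-f_{i+2})\bigr]$ bypasses all of this case analysis and goes straight to the parametrisation $\phi_0=\tfrac t3+A$, $\phi_2=\tfrac t3+B$, $\phi_4=\tfrac t3-A-B$, $\phi_{i+3}=\tfrac{2t}{3}-\phi_i$. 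That is a real gain in clarity.

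Where you overshoot is in the diagnosis of the ``hard part''. You correctly note that the $t^{2N}$ coefficient of $Q_i$ vanishes under the $\lambda$-relations, but you then assume one must bring in the constant terms $g_i$ of the $f_i$ to see anything. In fact the entire pure-$\phi$ part of each $Q_i$ collapses: a direct computation with your parametrisation gives
\[
(\phi_1-\phi_5)(t-\phi_0)+\phi_3(\phi_4-\phi_2)=(A+2B)\cdot\tfrac t3,
\]
and similarly the pure-$\phi$ parts of $Q_1,Q_2$ are $(B-A)\tfrac t3$ and $-(2A+B)\tfrac t3$. Thus if $N:=\max(\deg A,\deg B)\ge2$, the pure-$\phi$ part of $Q_i$ has degree $\le N+1$, while every mixed term (one $\phi$-factor, one $g$-factor) is $O(t^{N})$. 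Hence $\deg Q_0\le0$ forces $\deg(A+2B)<N$, and likewise $\deg(B-A)<N$, $\deg(2A+B)<N$. Any two of these three combinations are linearly independent, so $A$ and $B$ themselves have degree $<N$, contradicting the definition of $N$. No examination of sub-leading constants or case splitting on $\deg A$ versus $\deg B$ is needed; your outline already contains a complete proof once this simplification is observed.
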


\begin{proof}
We assume that 
$f_0, f_1, f_2, f_3, f_4, f_5$ 
have a pole of order $n_0, n_1, n_2, n_3, n_4, n_5$ at $t=\infty,$ 
where $n_0, n_1, n_2, n_3, n_4, n_5$ are all positive integers. 
\par
Since $f_0+f_2+f_4=t,$ we consider the following four cases:
\newline
(1)\quad $n_0=n_2>n_4\geq 1,$ 
\newline
(2)\quad $n_2=n_4>n_0\geq 1,$
\newline
(3)\quad $n_4=n_0>n_2\geq 1,$
\newline
(4)\quad $n_0=n_2=n_4\geq 1.$
\par
We suppose that case (1) occurs. 
By comparing the coefficients of the highest term in 
\begin{equation*}
\frac{t}{2}
f_0^{\prime}
=
f_0
\left(
f_1f_2+f_1f_4+f_3 f_4-f_2f_3-f_2f_5-f_4 f_5
\right) 
+
\left(
\frac12
-\alpha_2-\alpha_4
\right)
f_0
+
\alpha_0
\left(
f_2+f_4
\right),      
\end{equation*}
we have $n_1=1$ and $b_{\infty, 1}=\displaystyle \frac12.$ 
Moreover, 
by comparing the coefficients of the highest terms in 
\begin{equation*}
\frac{t}{2}
f_1^{\prime}
=
f_1
\left(
f_2f_3+f_2f_5+f_4 f_5-f_3f_4-f_3f_0-f_5 f_0
\right) 
+
\left(
\frac12
-\alpha_3-\alpha_5
\right)
f_1
+
\alpha_1
\left(
f_3+f_5
\right), 
\end{equation*}
we get $a_{\infty, n_0}-c_{\infty, n_2}=0.$ 
On the other hand, 
we obtain $a_{\infty, n_0}+c_{\infty, n_2}=0,$ 
because $f_0+f_2+f_4=t.$ 
Thus, 
it follows that 
$a_{\infty, n_0}=c_{\infty, n_2}=0,$ 
which is a contradiction.
\par
In the same way, 
we can prove that 
cases (2) and (3) are impossible. Therefore, 
it follows that $n_0=n_2=n_4\geq 1.$ 
In the same way, we can show that 
$n_1=n_3=n_5\geq 1.$
\par
By comparing the coefficients of the highest terms in 
\begin{equation*}
\begin{cases}
\frac{t}{2}
f_0^{\prime}
=
f_0
\left(
f_1f_2+f_1f_4+f_3 f_4-f_2f_3-f_2f_5-f_4 f_5
\right) 
+
\left(
\frac12
-\alpha_2-\alpha_4
\right)
f_0
+
\alpha_0
\left(
f_2+f_4
\right) \\
\frac{t}{2}
f_1^{\prime}
=
f_1
\left(
f_2f_3+f_2f_5+f_4 f_5-f_3f_4-f_3f_0-f_5 f_0
\right) 
+
\left(
\frac12
-\alpha_3-\alpha_5
\right)
f_1
+
\alpha_1
\left(
f_3+f_5
\right) \\
\frac{t}{2}
f_2^{\prime}
=
f_2
\left(
f_3f_4+f_3f_0+f_5 f_0-f_4f_5-f_4f_1-f_0 f_1
\right) 
+
\left(
\frac12
-\alpha_4-\alpha_0
\right)
f_2
+
\alpha_2
\left(
f_4+f_0
\right) \\
\frac{t}{2}
f_3^{\prime}
=
f_3
\left(
f_4f_5+f_4f_1+f_0 f_1-f_5f_0-f_5f_2-f_1 f_2
\right) 
+
\left(
\frac12
-\alpha_5-\alpha_1
\right)
f_3
+
\alpha_3
\left(
f_5+f_1
\right) \\
\frac{t}{2}
f_4^{\prime}
=
f_4
\left(
f_5f_0+f_5f_2+f_1f_2-f_0f_1-f_0f_3-f_2 f_3
\right) 
+
\left(
\frac12
-\alpha_0-\alpha_2
\right)
f_4
+
\alpha_4
\left(
f_0+f_2
\right) \\
\frac{t}{2}
f_5^{\prime}
=
f_5
\left(
f_0f_1+f_0f_3+f_2f_3-f_1f_2-f_1f_4-f_3 f_4
\right) 
+
\left(
\frac12
-\alpha_1-\alpha_3
\right)
f_5
+
\alpha_5
\left(
f_1+f_3
\right),
\end{cases}
\end{equation*} 
we have
\begin{equation*}
(**)
\begin{cases}
b_{\infty, n_1}c_{\infty, n_2}+b_{\infty, n_1}e_{\infty, n_4}+d_{\infty, n_3}e_{\infty, n_4}
-c_{\infty, n_2}d_{\infty, n_3}-c_{\infty, n_2}f_{\infty, n_5}-e_{\infty, n_4}f_{\infty, n_5}=0, \\
c_{\infty, n_2}d_{\infty, n_3}+c_{\infty, n_2}f_{\infty, n_5}+e_{\infty, n_4}f_{\infty, n_5}
-d_{\infty, n_3}e_{\infty, n_4}-d_{\infty, n_3}a_{\infty, n_0}-f_{\infty, n_5}a_{\infty, n_0}=0, \\
d_{\infty, n_3}e_{\infty, n_4}+d_{\infty, n_3}a_{\infty, n_0}+f_{\infty, n_5}a_{\infty, n_0}
-e_{\infty, n_4}f_{\infty, n_5}-e_{\infty, n_4}b_{\infty, n_1}-a_{\infty, n_0}b_{\infty, n_1}=0, \\
e_{\infty, n_4}f_{\infty, n_5}+e_{\infty, n_4}b_{\infty, n_1}+a_{\infty, n_0}b_{\infty, n_1}
-f_{\infty, n_5}a_{\infty, n_0}-f_{\infty, n_5}c_{\infty, n_2}-b_{\infty, n_1}c_{\infty, n_2}=0, \\
f_{\infty, n_5}a_{\infty, n_0}+f_{\infty, n_5}c_{\infty, n_2}+b_{\infty, n_1}c_{\infty, n_2}
-a_{\infty, n_0}b_{\infty, n_1}-a_{\infty, n_0}d_{\infty, n_3}-c_{\infty, n_2}d_{\infty, n_3}=0, \\
a_{\infty, n_0}b_{\infty, n_1}+a_{\infty, n_0}d_{\infty, n_3}+c_{\infty, n_2}d_{\infty, n_3}
-b_{\infty, n_1}c_{\infty, n_2}-b_{\infty, n_1}e_{\infty, n_4}-d_{\infty, n_3}e_{\infty, n_4}=0. \\
\end{cases}
\end{equation*}
From the first and second equations in $(**),$ 
we have 
\begin{equation}
\label{eqn:012345-relation-1}
b_{\infty, n_1}c_{\infty, n_2}
+
b_{\infty, n_1}e_{\infty, n_4}
-
d_{\infty, n_3}a_{\infty, n_0}
-
f_{\infty, n_5}a_{\infty, n_0}
=0.
\end{equation}
\par
We consider the following four cases:
\newline
(1)\quad $n_0=n_2=n_4\geq 2$ and $n_1=n_3=n_5=1,$ 
\newline
(2)\quad $n_1=n_3=n_5\geq 2$ and $n_0=n_2=n_4=1,$
\newline
(3)\quad $n_1=n_3=n_5\geq 2$ and $n_0=n_2=n_4\geq 2,$ 
\newline
(4)\quad $n_0=n_2=n_4=1$ and $n_1=n_3=n_5=1.$ 
\par
If case (1) occurs, we get 
\begin{equation*}
a_{\infty, n_0}+c_{\infty, n_2}+e_{\infty, n_4}=0 \,\,
\mathrm{and} \,\,
b_{\infty, 1}+d_{\infty, 1}+f_{\infty, 1}=1,
\end{equation*}
because $f_0+f_2+f_4=t$ and $f_1+f_3+f_5=t,$ 
respectively. 
From the equation (\ref{eqn:012345-relation-1}), 
it follows that 
\begin{equation*}
-a_{\infty, n_0}
=b_{\infty, 1}(-a_{\infty, n_0})-a_{\infty, n_0}(1-b_{\infty, 1})
=b_{\infty, 1}(c_{\infty, n_2}+e_{\infty, n_4})-a_{\infty, n_0}(d_{\infty, 1}+f_{\infty, 1})
=0,
\end{equation*}
which is impossible. 
In the same way, 
we can prove that case (2) is impossible.
\par
We consider case (3). 
Since $f_0+f_2+f_4=t$ and $f_1+f_3+f_5=t,$ 
it then follows that 
\begin{equation*}
a_{\infty, n_0}+c_{\infty, n_2}+e_{\infty, n_4}=0 \,\, 
\mathrm{and} \,\, 
b_{\infty, n_1}+d_{\infty, n_3}+f_{\infty, n_5}=0,
\end{equation*} 
respectively.
By considering $(**)$ and comparing the coefficients of the terms $t^{2n_0+n_1}$ or $t^{2n_1+n_0}$ in 
\begin{equation*}
\begin{cases}
\frac{t}{2}
f_0^{\prime}
=
f_0
\left(
f_1f_2+f_1f_4+f_3 f_4-f_2f_3-f_2f_5-f_4 f_5
\right) 
+
\left(
\frac12
-\alpha_2-\alpha_4
\right)
f_0
+
\alpha_0
\left(
f_2+f_4
\right) \\
\frac{t}{2}
f_1^{\prime}
=
f_1
\left(
f_2f_3+f_2f_5+f_4 f_5-f_3f_4-f_3f_0-f_5 f_0
\right) 
+
\left(
\frac12
-\alpha_3-\alpha_5
\right)
f_1
+
\alpha_1
\left(
f_3+f_5
\right) \\
\frac{t}{2}
f_2^{\prime}
=
f_2
\left(
f_3f_4+f_3f_0+f_5 f_0-f_4f_5-f_4f_1-f_0 f_1
\right) 
+
\left(
\frac12
-\alpha_4-\alpha_0
\right)
f_2
+
\alpha_2
\left(
f_4+f_0
\right) \\
\frac{t}{2}
f_3^{\prime}
=
f_3
\left(
f_4f_5+f_4f_1+f_0 f_1-f_5f_0-f_5f_2-f_1 f_2
\right) 
+
\left(
\frac12
-\alpha_5-\alpha_1
\right)
f_3
+
\alpha_3
\left(
f_5+f_1
\right) \\
\frac{t}{2}
f_4^{\prime}
=
f_4
\left(
f_5f_0+f_5f_2+f_1f_2-f_0f_1-f_0f_3-f_2 f_3
\right) 
+
\left(
\frac12
-\alpha_0-\alpha_2
\right)
f_4
+
\alpha_4
\left(
f_0+f_2
\right) \\
\frac{t}{2}
f_5^{\prime}
=
f_5
\left(
f_0f_1+f_0f_3+f_2f_3-f_1f_2-f_1f_4-f_3 f_4
\right) 
+
\left(
\frac12
-\alpha_1-\alpha_3
\right)
f_5
+
\alpha_5
\left(
f_1+f_3
\right),
\end{cases}
\end{equation*} 
we have 
\begin{equation*}
(***)
\begin{cases}
a_{\infty, n_0}b_{\infty, n_1-1}
-2b_{\infty, n_1}c_{\infty, n_0-1}
+(c_{\infty, n_0}-e_{\infty, n_0})d_{\infty, n_1-1}
+2f_{\infty, n_1}e_{\infty, n_0-1}
-a_{\infty, n_0}f_{\infty, n_1-1}=0, \\
b_{\infty, n_1}c_{\infty, n_0-1}
-2c_{\infty, n_0}d_{\infty, n_1-1}
+(d_{\infty, n_1}-f_{\infty, n_1})e_{\infty, n_0-1}
+2a_{\infty, n_0}f_{\infty, n_1-1}
-b_{\infty, n_1}a_{\infty, n_0-1}=0, \\
c_{\infty, n_0}d_{\infty, n_1-1}
-2d_{\infty, n_1}e_{\infty, n_0-1}
+(e_{\infty, n_0}-a_{\infty, n_0})f_{\infty, n_1-1}
+2b_{\infty, n_1}a_{\infty, n_0-1}
-c_{\infty, n_0}b_{\infty, n_1-1}=0, \\
d_{\infty, n_1}e_{\infty, n_0-1}
-2e_{\infty, n_0}f_{\infty, n_1-1}
+(f_{\infty, n_1}-b_{\infty, n_1})a_{\infty, n_0-1}
+2c_{\infty, n_0}b_{\infty, n_1-1}
-d_{\infty, n_1}c_{\infty, n_0-1}=0, \\
e_{\infty, n_0}f_{\infty, n_1-1}
-2f_{\infty, n_1}a_{\infty, n_0-1}
+(a_{\infty, n_0}-c_{\infty, n_0})b_{\infty, n_1-1}
+2d_{\infty, n_1}c_{\infty, n_0-1}
-e_{\infty, n_0}d_{\infty, n_1-1}=0, \\
f_{\infty, n_1}a_{\infty, n_0-1}
-2a_{\infty, n_0}b_{\infty, n_1-1}
+(b_{\infty, n_1}-d_{\infty, n_1})c_{\infty, n_0-1}
+2e_{\infty, n_0}d_{\infty, n_1-1}
-f_{\infty, n_1}e_{\infty, n_0-1}=0. \\
\end{cases}
\end{equation*}
Moreover, we define the matrix $A$ and the vector $\mathbf{u}$ by 
\begin{equation*}
A=
\left(
\begin{array}{cccccc}
0                               & a_{\infty, n_0}                 & -2b_{\infty, n_1}                & 
c_{\infty, n_0}-e_{\infty, n_0} & 2f_{\infty, n_1}                & -a_{\infty, n_0}  \\
-b_{\infty, n_1}                & 0                               & b_{\infty, n_1}                   & 
-2c_{\infty, n_0}               & d_{\infty, n_1}-f_{\infty, n_1} & 2a_{\infty, n_0}    \\
2b_{\infty, n_1}                & -c_{\infty, n_0}                & 0                                 & 
c_{\infty, n_0}                 & -2d_{\infty, n_1}               & e_{\infty, n_0}-a_{\infty, n_0}     \\
f_{\infty, n_1}-b_{\infty, n_1} &2c_{\infty, n_2}                 & -d_{\infty, n_1}                  & 
0                               & d_{\infty, n_1}                 & -2e_{\infty, n_0}      \\
-2f_{\infty, n_1}               &a_{\infty, n_0}-c_{\infty, n_0}  &2d_{\infty, n_1}                   &
 -e_{\infty, n_0}               & 0                               & e_{\infty, n_0}                       \\
f_{\infty, n_1}                 &-2a_{\infty, n_0}                &b_{\infty, n_1}-d_{\infty, n_1}    &
2e_{\infty, n_0}                & -f_{\infty, n_1}                & 0                                                     \\
\end{array}
\right),
\end{equation*}
and 
\begin{equation*}
\mathbf{u}=
{}^{t}
(
a_{\infty, n_0-1}, b_{\infty, n_1-1}, c_{\infty, n_0-1},
d_{\infty, n_1-1}, e_{\infty, n_0-1}, f_{\infty, n_1-1}
),
\end{equation*}
respectively. Thus, the system of equations $(***)$ 
is given by 
\begin{equation*}
A \mathbf{u}=0.
\end{equation*}
\par
By the fundamental transformations of $A$ with respect to the rows, 
we have 
\begin{equation*}
A
\longrightarrow
\left(
\begin{array}{cccccc}
0                               & a_{\infty, n_0}                 & 0                                 & 
a_{\infty, n_0}                 & 0                               & a_{\infty, n_0}  \\
-b_{\infty, n_1}                & 0                               & -b_{\infty, n_1}                   & 
0                               & -b_{\infty, n_1}                & 0                         \\
0                               & -c_{\infty, n_0}                & 0                                 & 
-c_{\infty, n_0}                & 0                               & -c_{\infty, n_0}                 \\
0                               & 0                               & b_{\infty, n_1}                  & 
-c_{\infty, n_0}                & -f_{\infty, n_1}                & a_{\infty, n_0}      \\
0                               &0                                & 0                       &
0                               & 0                               & 0                       \\
f_{\infty, n_1}                 &0                                & 0                        &
-c_{\infty, n_0}                & 0                & a_{\infty, n_0}                                                     \\
\end{array}
\right),
\end{equation*}
which implies that 
\begin{equation*}
a_{\infty, n_0-1}+c_{\infty, n_0-1}+e_{\infty, n_0-1}=0 \,\,
\mathrm{and} \,\,
b_{\infty, n_1-1}+d_{\infty, n_1-1}+f_{\infty, n_1-1}=0.
\end{equation*}
By induction, 
we can prove that 
\begin{equation*}
\begin{cases}
a_{\infty, n_0}+c_{\infty, n_0}+e_{\infty, n_0}=
a_{\infty, n_0-1}+c_{\infty, n_0-1}+e_{\infty, n_0-1}=
\cdots=
a_{\infty, 1}+c_{\infty, 1}+e_{\infty, 1}=0, \\
b_{\infty, n_1}+d_{\infty, n_1}+f_{\infty, n_1}=
b_{\infty, n_1-1}+d_{\infty, n_1-1}+f_{\infty, n_1-1}=
\cdots=
b_{\infty, 1}+d_{\infty, 1}+f_{\infty, 1}=0,
\end{cases}
\end{equation*}
which is impossible because 
$f_0+f_2+f_4=t$ and 
$f_1+f_3+f_5=t.$

\end{proof}

In order to compute the residues of the Laurent series, 
we have

\begin{proposition}
\label{prop:a5inf-012345-2}
{\it
Suppose that 
for 
$A_5^{(1)}(\alpha_j)_{0\leq  j \leq 5},$ 
there exists a solution 
such that  
all of $(f_i)_{0\leq i \leq 5}$ have a pole at $t=\infty.$ 
Then, 
\begin{equation*}
\begin{cases}
f_0 
=
\frac13 t 
+
\left(
2\alpha_1+\alpha_2-\alpha_4-2\alpha_5
\right)t^{-1} + \cdots \\
f_1 
=
\frac13 t 
+
\left(
2\alpha_2+\alpha_3-\alpha_5-2\alpha_0
\right)t^{-1} + \cdots \\
f_2 
=
\frac13 t 
+
\left(
2\alpha_3+\alpha_4-\alpha_0-2\alpha_1
\right)t^{-1} + \cdots \\
f_3 
=
\frac13 t 
+
\left(
2\alpha_4+\alpha_5-\alpha_1-2\alpha_2
\right)t^{-1} + \cdots \\
f_4 
=
\frac13 t 
+
\left(
2\alpha_5+\alpha_0-\alpha_2-2\alpha_3
\right)t^{-1} + \cdots \\
f_5
=
\frac13 t 
+
\left(
2\alpha_0+\alpha_1-\alpha_3-2\alpha_4
\right)t^{-1} + \cdots. 
\end{cases}
\end{equation*}
}
\end{proposition}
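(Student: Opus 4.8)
The plan is to pin down the three leading coefficients of the simple-pole expansions one power at a time, comparing in each of the six equations of $(*)$ the coefficient of the appropriate power of $t$ and exploiting the linear constraints $f_0+f_2+f_4=t$ and $f_1+f_3+f_5=t$. By Proposition \ref{prop:a5inf-012345-1} every $f_i$ has a pole of order exactly one, so I would write $f_i=p_i t+q_i+s_i t^{-1}+\cdots$ with $p_i\neq 0$. The constraints immediately give $p_0+p_2+p_4=p_1+p_3+p_5=1$, together with $q_0+q_2+q_4=q_1+q_3+q_5=0$ and $s_0+s_2+s_4=s_1+s_3+s_5=0$, and the goal is to show $p_i=\tfrac13$, $q_i=0$, and that $s_i$ is the stated expression.

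First I would compare the coefficients of $t^3$ (the highest power). Since $\tfrac{t}{2}f_i^{\prime}=O(t)$, each equation forces the degree-three part of $f_i\cdot(\text{quadratic})$ to vanish; dividing by $p_i\neq 0$ gives a homogeneous quadratic relation among the $p_j$, namely the system obtained in the proof of Proposition \ref{prop:a5inf-012345-1}. The key device is to add the relation coming from the $f_i$-equation to the one from the $f_{i+1}$-equation and substitute $p_{i+2}+p_{i+4}=1-p_i$ and $p_{i+3}+p_{i+5}=1-p_{i+1}$: all quadratic terms collapse and one is left with $p_{i+1}-p_i=0$. Hence all $p_i$ coincide, and $p_0+p_2+p_4=1$ forces $p_i=\tfrac13$. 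With this fixed, comparing the coefficients of $t^2$ yields relations linear in the $q_j$ of the cyclic shape $2q_{i+1}-q_{i+2}+q_{i+4}-2q_{i+5}=0$; the same trick (add the $i$-th and $(i+1)$-th relations, use $q_{i+2}+q_{i+4}=-q_i$ and $q_{i+3}+q_{i+5}=-q_{i+1}$) gives $q_{i+1}=q_i$, so all constants are equal and $q_0+q_2+q_4=0$ makes them vanish. Thus $f_i=\tfrac13 t+s_i t^{-1}+\cdots$.

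Finally I would compare the coefficients of $t$. Because the constant terms are now known to vanish, the quadratic $Q_i$ multiplying $f_i$ has vanishing $t^2$- and $t^1$-coefficients, so the degree-one part of the right-hand side comes only from $\tfrac13 t$ times the constant term of $Q_i$, plus the explicitly linear terms; matching with the left-hand coefficient $\tfrac16$ produces the cyclic linear system
\begin{equation*}
2s_{i+1}-s_{i+2}+s_{i+4}-2s_{i+5}=3\alpha_{i+2}+3\alpha_{i+4}-6\alpha_i .
\end{equation*}
Its associated homogeneous system is exactly the one solved for the constants in the previous paragraph, so it admits only the trivial solution; hence this inhomogeneous system, together with $s_0+s_2+s_4=s_1+s_3+s_5=0$, has at most one solution. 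It therefore suffices to check that the expressions in the statement satisfy it, which is a direct substitution.

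The hard part will be the bookkeeping in the $t$-comparison, especially keeping track of precisely which cross-terms of $f_i\,Q_i$ survive once the constant terms are gone (only $\tfrac13 t$ against the constant term of $Q_i$, since the $t^2$-coefficient of $Q_i$ vanishes). Everything else is routine, and the conceptual engine throughout is the single observation that summing two consecutive equations and invoking the two linear constraints linearizes the quadratic relations and telescopes them into $(\,\cdot\,)_{i+1}=(\,\cdot\,)_i$.
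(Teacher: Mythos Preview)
Your proposal is correct and follows essentially the same approach as the paper: compare coefficients of $t^3$, $t^2$, and $t$ in turn, and at each stage exploit the two sum constraints $f_0+f_2+f_4=t$, $f_1+f_3+f_5=t$. The paper also sums consecutive equations of the $t^3$ system to force $p_i=\tfrac13$; for the $t^2$ and $t$ stages the paper instead writes the resulting linear system as $B\mathbf v=\mathbf w$ and row-reduces $B$, while you reuse the consecutive-sum trick throughout and finish the $t^{-1}$ coefficients by uniqueness plus verification---a slightly cleaner but equivalent bookkeeping choice.
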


\begin{proof}
By Proposition \ref{prop:a5inf-012345-2}, 
we set 
\begin{equation*}
\begin{cases}
f_0=a_{\infty, 1}t+a_{\infty, 0}+a_{\infty, -1}t^{-1}+\cdots, \\
f_1=b_{\infty, 1}t+b_{\infty, 0}+b_{\infty, -1}t^{-1}+\cdots, \\
f_2=c_{\infty, 1}t+c_{\infty, 0}+c_{\infty, -1}t^{-1}+\cdots, \\
f_3=c_{\infty, 1}t+c_{\infty, 0}+c_{\infty, -1}t^{-1}+\cdots, \\
f_4=d_{\infty, 1}t+d_{\infty, 0}+d_{\infty, -1}t^{-1}+\cdots, \\
f_5=f_{\infty, 1}t+f_{\infty, 0}+f_{\infty, -1}t^{-1}+\cdots, 
\end{cases}
\end{equation*}
where $a_{\infty, 1}b_{\infty, 1}c_{\infty, 1}d_{\infty, 1}e_{\infty, 1}f_{\infty, 1}\neq 0$ 
and 
$a_{\infty, 1}+c_{\infty, 1}+e_{\infty, 1}=1$ 
and 
$b_{\infty, 1}+d_{\infty, 1}+f_{\infty, 1}=1,$ 
because $f_0+f_2+f_4=t$ and $f_1+f_3+f_5=t.$  
By comparing the coefficients of the term $t^3$ in 
\begin{equation*}
\begin{cases}
\frac{t}{2}
f_0^{\prime}
=
f_0
\left(
f_1f_2+f_1f_4+f_3 f_4-f_2f_3-f_2f_5-f_4 f_5
\right) 
+
\left(
\frac12
-\alpha_2-\alpha_4
\right)
f_0
+
\alpha_0
\left(
f_2+f_4
\right) \\
\frac{t}{2}
f_1^{\prime}
=
f_1
\left(
f_2f_3+f_2f_5+f_4 f_5-f_3f_4-f_3f_0-f_5 f_0
\right) 
+
\left(
\frac12
-\alpha_3-\alpha_5
\right)
f_1
+
\alpha_1
\left(
f_3+f_5
\right) \\
\frac{t}{2}
f_2^{\prime}
=
f_2
\left(
f_3f_4+f_3f_0+f_5 f_0-f_4f_5-f_4f_1-f_0 f_1
\right) 
+
\left(
\frac12
-\alpha_4-\alpha_0
\right)
f_2
+
\alpha_2
\left(
f_4+f_0
\right) \\
\frac{t}{2}
f_3^{\prime}
=
f_3
\left(
f_4f_5+f_4f_1+f_0 f_1-f_5f_0-f_5f_2-f_1 f_2
\right) 
+
\left(
\frac12
-\alpha_5-\alpha_1
\right)
f_3
+
\alpha_3
\left(
f_5+f_1
\right) \\
\frac{t}{2}
f_4^{\prime}
=
f_4
\left(
f_5f_0+f_5f_2+f_1f_2-f_0f_1-f_0f_3-f_2 f_3
\right) 
+
\left(
\frac12
-\alpha_0-\alpha_2
\right)
f_4
+
\alpha_4
\left(
f_0+f_2
\right) \\
\frac{t}{2}
f_5^{\prime}
=
f_5
\left(
f_0f_1+f_0f_3+f_2f_3-f_1f_2-f_1f_4-f_3 f_4
\right) 
+
\left(
\frac12
-\alpha_1-\alpha_3
\right)
f_5
+
\alpha_5
\left(
f_1+f_3
\right),
\end{cases}
\end{equation*} 
we have
\begin{equation*}
(*)
\begin{cases}
b_{\infty, 1}c_{\infty,  1}+b_{\infty,  1}e_{\infty,  1}+d_{\infty,  1}e_{\infty,  1}
-c_{\infty,  1}d_{\infty,  1}-c_{\infty,  1}f_{\infty,  1}-e_{\infty,  1}f_{\infty,  1}=0, \\
c_{\infty,  1}d_{\infty,  1}+c_{\infty,  1}f_{\infty,  1}+e_{\infty, 1 }f_{\infty,  1}
-d_{\infty,  1}e_{\infty,  1}-d_{\infty, 1}a_{\infty, 1}-f_{\infty, 1}a_{\infty, 1}=0, \\
d_{\infty, 1}e_{\infty, 1}+d_{\infty, 1}a_{\infty, 1}+f_{\infty, 1}a_{\infty, 1}
-e_{\infty, 1}f_{\infty, 1}-e_{\infty, 1}b_{\infty, 1}-a_{\infty, 1}b_{\infty, 1}=0, \\
e_{\infty, 1}f_{\infty, 1}+e_{\infty, 1}b_{\infty, 1}+a_{\infty, 1}b_{\infty, 1}
-f_{\infty, 1}a_{\infty, 1}-f_{\infty, 1}c_{\infty, 1}-b_{\infty, 1}c_{\infty, 1}=0, \\
f_{\infty, 1}a_{\infty, 1}+f_{\infty, 1}c_{\infty, 1}+b_{\infty, 1}c_{\infty, 1}
-a_{\infty, 1}b_{\infty, 1}-a_{\infty, 1}d_{\infty, 1}-c_{\infty, 1}d_{\infty, 1}=0, \\
a_{\infty, 1}b_{\infty, 1}+a_{\infty, 1}d_{\infty, 1}+c_{\infty, 1}d_{\infty, 1}
-b_{\infty, 1}c_{\infty, 1}-b_{\infty, 1}e_{\infty, 1}-d_{\infty, 1}e_{\infty, 1}=0. \\
\end{cases}
\end{equation*}
Based on the sums of the first and second equations, the second and third, the third and fourth, the fourth and fifth, 
the fifth and sixth, the sixth and first in $(*),$ 
we have
\begin{equation*}
a_{\infty,1}=
b_{\infty,1}=
c_{\infty,1}=
d_{\infty,1}=
e_{\infty,1}=
f_{\infty,1},
\end{equation*}
which implies that 
\begin{equation*}
a_{\infty,1}=
b_{\infty,1}=
c_{\infty,1}=
d_{\infty,1}=
e_{\infty,1}=
f_{\infty,1}=
\frac13.
\end{equation*}
By comparing the coefficients of the term $t^2$ in 
\begin{equation*}
\begin{cases}
\frac{t}{2}
f_0^{\prime}
=
f_0
\left(
f_1f_2+f_1f_4+f_3 f_4-f_2f_3-f_2f_5-f_4 f_5
\right) 
+
\left(
\frac12
-\alpha_2-\alpha_4
\right)
f_0
+
\alpha_0
\left(
f_2+f_4
\right) \\
\frac{t}{2}
f_1^{\prime}
=
f_1
\left(
f_2f_3+f_2f_5+f_4 f_5-f_3f_4-f_3f_0-f_5 f_0
\right) 
+
\left(
\frac12
-\alpha_3-\alpha_5
\right)
f_1
+
\alpha_1
\left(
f_3+f_5
\right) \\
\frac{t}{2}
f_2^{\prime}
=
f_2
\left(
f_3f_4+f_3f_0+f_5 f_0-f_4f_5-f_4f_1-f_0 f_1
\right) 
+
\left(
\frac12
-\alpha_4-\alpha_0
\right)
f_2
+
\alpha_2
\left(
f_4+f_0
\right) \\
\frac{t}{2}
f_3^{\prime}
=
f_3
\left(
f_4f_5+f_4f_1+f_0 f_1-f_5f_0-f_5f_2-f_1 f_2
\right) 
+
\left(
\frac12
-\alpha_5-\alpha_1
\right)
f_3
+
\alpha_3
\left(
f_5+f_1
\right) \\
\frac{t}{2}
f_4^{\prime}
=
f_4
\left(
f_5f_0+f_5f_2+f_1f_2-f_0f_1-f_0f_3-f_2 f_3
\right) 
+
\left(
\frac12
-\alpha_0-\alpha_2
\right)
f_4
+
\alpha_4
\left(
f_0+f_2
\right) \\
\frac{t}{2}
f_5^{\prime}
=
f_5
\left(
f_0f_1+f_0f_3+f_2f_3-f_1f_2-f_1f_4-f_3 f_4
\right) 
+
\left(
\frac12
-\alpha_1-\alpha_3
\right)
f_5
+
\alpha_5
\left(
f_1+f_3
\right),
\end{cases}
\end{equation*} 
we get 
\begin{equation*}
(\square)
\begin{cases}
2b_{\infty, 0}-c_{\infty, 0}+e_{\infty, 0}-2f_{\infty, 0}=0, \\
2c_{\infty, 0}-d_{\infty, 0}+f_{\infty, 0}-2a_{\infty, 0}=0, \\
2d_{\infty, 0}-e_{\infty, 0}+a_{\infty, 0}-2b_{\infty, 0}=0, \\
2e_{\infty, 0}-f_{\infty, 0}+b_{\infty, 0}-2c_{\infty, 0}=0, \\
2f_{\infty, 0}-a_{\infty, 0}+c_{\infty, 0}-2d_{\infty, 0}=0, \\
2a_{\infty, 0}-b_{\infty, 0}+d_{\infty, 0}-2e_{\infty, 0}=0. 
\end{cases}
\end{equation*}
We define the matrix $B$ and the vector $\mathbf{v}_0$ by 
\begin{equation*}
B=
\left(
\begin{array}{cccccc}
0 &2 &-1 & 0 & 1 &2 \\
-2 & 0 &2 & -1 & 0 &1 \\
1 & -2 & 0 & 2 &-1 & 0  \\
0 & 1 & -2 & 0 & 2 & -1 \\
-1 & 0 & 1 &-2 & 0 &2  \\
2 & -1 & 0 & 1 & -2 & 0
\end{array}
\right),
\end{equation*}
and 
\begin{equation*}
\mathbf{v}_0=
{}^t
(
a_{\infty, 0}, b_{\infty, 0}, c_{\infty, 0}, 
d_{\infty,0},  e_{\infty, 0}, f_{\infty, 0}
),
\end{equation*}
respectively. 
The system of equations, $(\square),$ 
is then expressed by 
\begin{equation*}
B \mathbf{v}_0=0.
\end{equation*}
\par
By the fundamental transformations of $B$ 
with respect to the rows, 
we get 
\begin{equation*}
B
\longrightarrow
\left(
\begin{array}{cccccc}
0 & 0 & -1 & 0 & 1  & 0        \\
0 & 0 & 0  & 1 & 0  & -1     \\
1 & 0 & 0  &0  & -1 & 0     \\
0 & 0 & 0  &0  & 0  & 0  \\
0 & 0 & 0  &0   & 0  & 0  \\
0 & 1 & 0  &-1  & 0  & 0  
\end{array}
\right),
\end{equation*}
which implies that 
\begin{equation*}
a_{\infty, 0}=
c_{\infty, 0}=
e_{\infty, 0} \,\,
\mathrm{and} \,\,
b_{\infty, 0}=
d_{\infty, 0}=
f_{\infty, 0}. 
\end{equation*}
Therefore, we obtain 
\begin{equation*}
a_{\infty, 0}=
c_{\infty, 0}=
e_{\infty, 0}=0 \,\,
\mathrm{and} \,\,
b_{\infty, 0}=
d_{\infty, 0}=
f_{\infty, 0}=0,
\end{equation*}
because $f_0+f_2+f_4=t$ and $f_1+f_3+f_5=t.$
\par
By comparing the coefficients of the term $t$ in 
\begin{equation*}
\begin{cases}
\frac{t}{2}
f_0^{\prime}
=
f_0
\left(
f_1f_2+f_1f_4+f_3 f_4-f_2f_3-f_2f_5-f_4 f_5
\right) 
+
\left(
\frac12
-\alpha_2-\alpha_4
\right)
f_0
+
\alpha_0
\left(
f_2+f_4
\right) \\
\frac{t}{2}
f_1^{\prime}
=
f_1
\left(
f_2f_3+f_2f_5+f_4 f_5-f_3f_4-f_3f_0-f_5 f_0
\right) 
+
\left(
\frac12
-\alpha_3-\alpha_5
\right)
f_1
+
\alpha_1
\left(
f_3+f_5
\right) \\
\frac{t}{2}
f_2^{\prime}
=
f_2
\left(
f_3f_4+f_3f_0+f_5 f_0-f_4f_5-f_4f_1-f_0 f_1
\right) 
+
\left(
\frac12
-\alpha_4-\alpha_0
\right)
f_2
+
\alpha_2
\left(
f_4+f_0
\right) \\
\frac{t}{2}
f_3^{\prime}
=
f_3
\left(
f_4f_5+f_4f_1+f_0 f_1-f_5f_0-f_5f_2-f_1 f_2
\right) 
+
\left(
\frac12
-\alpha_5-\alpha_1
\right)
f_3
+
\alpha_3
\left(
f_5+f_1
\right) \\
\frac{t}{2}
f_4^{\prime}
=
f_4
\left(
f_5f_0+f_5f_2+f_1f_2-f_0f_1-f_0f_3-f_2 f_3
\right) 
+
\left(
\frac12
-\alpha_0-\alpha_2
\right)
f_4
+
\alpha_4
\left(
f_0+f_2
\right) \\
\frac{t}{2}
f_5^{\prime}
=
f_5
\left(
f_0f_1+f_0f_3+f_2f_3-f_1f_2-f_1f_4-f_3 f_4
\right) 
+
\left(
\frac12
-\alpha_1-\alpha_3
\right)
f_5
+
\alpha_5
\left(
f_1+f_3
\right),
\end{cases}
\end{equation*} 
we get 
\begin{equation*}
(\square\square)
\begin{cases}
2b_{\infty, -1}-c_{\infty, -1}+e_{\infty, -1}-2f_{\infty, -1}=3(-2\alpha_0+\alpha_2+\alpha_4), \\
2c_{\infty, -1}-d_{\infty, -1}+f_{\infty, -1}-2a_{\infty, -1}=3(-2\alpha_1+\alpha_3+\alpha_5), \\
2d_{\infty, -1}-e_{\infty, -1}+a_{\infty, -1}-2b_{\infty, -1}=3(-2\alpha_2+\alpha_4+\alpha_0), \\
2e_{\infty, -1}-f_{\infty, -1}+b_{\infty, -1}-2c_{\infty, -1}=3(-2\alpha_3+\alpha_5+\alpha_1), \\
2f_{\infty, -1}-a_{\infty, -1}+c_{\infty, -1}-2d_{\infty, -1}=3(-2\alpha_4+\alpha_0+\alpha_2), \\
2a_{\infty, -1}-b_{\infty, -1}+d_{\infty, -1}-2e_{\infty, -1}=3(-2\alpha_5+\alpha_1+\alpha_3). 
\end{cases}
\end{equation*}
Since $f_0+f_2+f_4=t$ and $f_1+f_3+f_5=t,$ 
from $(\square\square),$ 
we obtain 
\begin{equation*}
\begin{cases}
a_{\infty, -1}=2\alpha_1+\alpha_2-\alpha_4-2\alpha_5, \\
b_{\infty, -1}=2\alpha_2+\alpha_3-\alpha_5-2\alpha_0, \\
c_{\infty, -1}=2\alpha_3+\alpha_4-\alpha_0-2\alpha_1, \\
d_{\infty, -1}=2\alpha_4+\alpha_5-\alpha_1-2\alpha_2, \\
e_{\infty, -1}=2\alpha_5+\alpha_0-\alpha_2-2\alpha_3, \\
f_{\infty, -1}=2\alpha_0+\alpha_1-\alpha_3-2\alpha_4. 
\end{cases}
\end{equation*}

\end{proof}

In order to prove the uniqueness of the Laurent series, 
we have

\begin{proposition}
\label{prop:a5inf-012345-3}
{\it
Suppose that 
for 
$A_5^{(1)}(\alpha_j)_{0\leq  j \leq 5},$ 
there exists a solution $(f_i)_{0\leq i \leq 5}$ 
such that 
all of $(f_i)_{0\leq i \leq 5}$ have a pole at $t=\infty.$ 
It is then unique.
}
\end{proposition}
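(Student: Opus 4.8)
The plan is to imitate the uniqueness arguments of Propositions \ref{prop:a5inf-01-3}, \ref{prop:a5inf-0123-3} and \ref{prop:a5inf-0124-3}, with the extra care forced by the fact that all six series share the same leading coefficient $1/3$. By Propositions \ref{prop:a5inf-012345-1} and \ref{prop:a5inf-012345-2} I may set
\begin{equation*}
\begin{cases}
f_0=\tfrac13 t+a_{\infty,-1}t^{-1}+\sum_{k=2}^{+\infty}a_{\infty,-k}t^{-k}, \\
f_1=\tfrac13 t+b_{\infty,-1}t^{-1}+\sum_{k=2}^{+\infty}b_{\infty,-k}t^{-k}, \\
\vdots \\
f_5=\tfrac13 t+f_{\infty,-1}t^{-1}+\sum_{k=2}^{+\infty}f_{\infty,-k}t^{-k},
\end{cases}
\end{equation*}
where the leading coefficients equal $1/3$, the constant terms vanish, and the residues $a_{\infty,-1},\dots,f_{\infty,-1}$ are those already computed. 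I argue by induction on $k\geq 2$, assuming every coefficient of order $t^{-(k-1)}$ or higher has been determined.

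First I would substitute into each of the six equations and compare the coefficients of $t^{-(k-2)}$, writing $Q_i$ for the quadratic expression in parentheses multiplying $f_i$ on the right-hand side of the $i$-th equation. Because the $t^2$-coefficient of each $Q_i$ vanishes (this is exactly the relation forcing the leading coefficients to be $1/3$ in Proposition \ref{prop:a5inf-012345-2}) and its $t^1$-coefficient vanishes as well (since all constant terms are zero), the top-order unknowns $\mathbf{v}_{-k}={}^{t}(a_{\infty,-k},\dots,f_{\infty,-k})$ can enter only through the product of the leading part $\tfrac13 t$ of $f_i$ with the pairing of $\mathbf{v}_{-k}$ against the leading parts of the remaining $f_j$; every other contribution, together with the left-hand side $\tfrac{t}{2}f_i'$, involves only coefficients of order $\geq -(k-1)$ and is therefore known. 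This produces a linear system
\begin{equation*}
\tilde{B}\,\mathbf{v}_{-k}=\mathbf{w}_{-k},
\end{equation*}
in which $\mathbf{w}_{-k}$ depends only on the already-determined lower-order data and $\tilde{B}$ is, up to a nonzero scalar, the cyclic matrix governing the leading interaction — the same structure exhibited by the matrices $A$ and $B$ in Propositions \ref{prop:a5inf-012345-1} and \ref{prop:a5inf-012345-2}.

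Then I would adjoin the two relations coming from $f_0+f_2+f_4=t$ and $f_1+f_3+f_5=t$; since $t$ contributes only to the $t^1$ term, for $k\geq 2$ these read
\begin{equation*}
a_{\infty,-k}+c_{\infty,-k}+e_{\infty,-k}=0,
\qquad
b_{\infty,-k}+d_{\infty,-k}+f_{\infty,-k}=0.
\end{equation*}
Row reducing $\tilde{B}$ exactly as $B$ was reduced in Proposition \ref{prop:a5inf-012345-2} shows that $\ker\tilde{B}$ is spanned by ${}^{t}(1,0,1,0,1,0)$ and ${}^{t}(0,1,0,1,0,1)$. The two constraints above are transverse to this kernel, since on those basis vectors they evaluate to $(3,0)$ and $(0,3)$ respectively. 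Hence the system $\tilde{B}\mathbf{v}_{-k}=\mathbf{w}_{-k}$ augmented by the two constraints has a unique solution, determining $\mathbf{v}_{-k}$, and the induction closes.

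The main obstacle is precisely the singularity of $\tilde{B}$: the common leading coefficient $1/3$ makes the naive recursion underdetermined at each order, so one cannot simply read off $a_{\infty,-k},\dots,f_{\infty,-k}$ as in the two-pole cases of Propositions \ref{prop:a5inf-01-3} and \ref{prop:a5inf-03-3}. The point requiring genuine verification is that $\ker\tilde{B}$ is exactly the two-dimensional ``even/odd diagonal'' space and that it meets the two constraint hyperplanes only at the origin; once this is granted, solvability and uniqueness hold together. This is the same degeneracy that was already resolved, at the level of the residues, by the matrix $B$ in Proposition \ref{prop:a5inf-012345-2}.
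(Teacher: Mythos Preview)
Your proposal is correct and follows essentially the same approach as the paper: both set up the recursion by comparing coefficients of $t^{-(k-2)}$ in all six equations, obtain a linear system with coefficient matrix (a scalar multiple of) the same cyclic matrix $B$ already used in Proposition \ref{prop:a5inf-012345-2}, identify its two-dimensional kernel via the same row reduction, and then use the two constraints $a_{\infty,-k}+c_{\infty,-k}+e_{\infty,-k}=0$ and $b_{\infty,-k}+d_{\infty,-k}+f_{\infty,-k}=0$ coming from $f_0+f_2+f_4=t$ and $f_1+f_3+f_5=t$ to pin down the unique solution. The paper writes out the six right-hand sides explicitly where you summarize them as $\mathbf{w}_{-k}$, but the argument is the same.
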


\begin{proof}
By comparing the coefficients of the term $t^{-(k-2)} \,\,(k\geq 2)$ in 
\begin{equation*}
\begin{cases}
\frac{t}{2}
f_0^{\prime}
=
f_0
\left(
f_1f_2+f_1f_4+f_3 f_4-f_2f_3-f_2f_5-f_4 f_5
\right) 
+
\left(
\frac12
-\alpha_2-\alpha_4
\right)
f_0
+
\alpha_0
\left(
f_2+f_4
\right) \\
\frac{t}{2}
f_1^{\prime}
=
f_1
\left(
f_2f_3+f_2f_5+f_4 f_5-f_3f_4-f_3f_0-f_5 f_0
\right) 
+
\left(
\frac12
-\alpha_3-\alpha_5
\right)
f_1
+
\alpha_1
\left(
f_3+f_5
\right) \\
\frac{t}{2}
f_2^{\prime}
=
f_2
\left(
f_3f_4+f_3f_0+f_5 f_0-f_4f_5-f_4f_1-f_0 f_1
\right) 
+
\left(
\frac12
-\alpha_4-\alpha_0
\right)
f_2
+
\alpha_2
\left(
f_4+f_0
\right) \\
\frac{t}{2}
f_3^{\prime}
=
f_3
\left(
f_4f_5+f_4f_1+f_0 f_1-f_5f_0-f_5f_2-f_1 f_2
\right) 
+
\left(
\frac12
-\alpha_5-\alpha_1
\right)
f_3
+
\alpha_3
\left(
f_5+f_1
\right) \\
\frac{t}{2}
f_4^{\prime}
=
f_4
\left(
f_5f_0+f_5f_2+f_1f_2-f_0f_1-f_0f_3-f_2 f_3
\right) 
+
\left(
\frac12
-\alpha_0-\alpha_2
\right)
f_4
+
\alpha_4
\left(
f_0+f_2
\right) \\
\frac{t}{2}
f_5^{\prime}
=
f_5
\left(
f_0f_1+f_0f_3+f_2f_3-f_1f_2-f_1f_4-f_3 f_4
\right) 
+
\left(
\frac12
-\alpha_1-\alpha_3
\right)
f_5
+
\alpha_5
\left(
f_1+f_3
\right),
\end{cases}
\end{equation*} 
we have 
\begin{align*}
2b_{\infty, -k}-c_{\infty, -k}+e_{\infty, -k}-2f_{\infty, -k}
&=
-\frac92(k-2)a_{\infty,-(k-2)}
-9
\left(
\frac12-\alpha_0-\alpha_2-\alpha_4
\right)
a_{\infty, -(k-2)}  \\
&-3
\sum
(
a_{\infty, -l}b_{\infty, -m}+d_{\infty, -l}e_{\infty, -m}+e_{\infty, -l}a_{\infty, -m}  \\
&\quad\quad
-c_{\infty, -l}d_{\infty, -m}-a_{\infty, -l}c_{\infty, -m}-f_{\infty, -l}a_{\infty, -m}
)  \\
&-2
\sum(a_{\infty,-l}b_{\infty,-m}c_{\infty,-n}-a_{\infty,-l}e_{\infty,-m}f_{\infty,-n}),
\end{align*}
%%%%%%%%%%%%%%%%%%%%
\begin{align*}
2c_{\infty, -k}-d_{\infty, -k}+f_{\infty, -k}-2a_{\infty, -k}
&=
-\frac92(k-2)b_{\infty,-(k-2)}
-9
\left(
\frac12-\alpha_1-\alpha_3-\alpha_5
\right)
b_{\infty, -(k-2)}  \\
&-3
\sum
(
b_{\infty, -l}c_{\infty, -m}+e_{\infty, -l}f_{\infty, -m}+f_{\infty, -l}b_{\infty, -m}  \\
&\quad\quad
-d_{\infty, -l}e_{\infty, -m}-b_{\infty, -l}d_{\infty, -m}-a_{\infty, -l}b_{\infty, -m}
)  \\
&-2
\sum(b_{\infty,-l}c_{\infty,-m}d_{\infty,-n}-b_{\infty,-l}f_{\infty,-m}a_{\infty,-n}),
\end{align*}
%%%%%%%%%%%%%%%%%%%%%
\begin{align*}
2d_{\infty, -k}-e_{\infty, -k}+a_{\infty, -k}-2b_{\infty, -k}
&=
-\frac92(k-2)c_{\infty,-(k-2)}
-9
\left(
\frac12-\alpha_0-\alpha_2-\alpha_4
\right)
c_{\infty, -(k-2)}  \\
&-3
\sum
(
c_{\infty, -l}d_{\infty, -m}+f_{\infty, -l}a_{\infty, -m}+a_{\infty, -l}c_{\infty, -m}  \\
&\quad\quad
-e_{\infty, -l}f_{\infty, -m}-c_{\infty, -l}e_{\infty, -m}-b_{\infty, -l}c_{\infty, -m}
)  \\
&-2
\sum(c_{\infty,-l}d_{\infty,-m}e_{\infty,-n}-c_{\infty,-l}a_{\infty,-m}b_{\infty,-n}),
\end{align*}
%%%%%%%%%%%%%%%%%%%%%%%
\begin{align*}
2e_{\infty, -k}-f_{\infty, -k}+b_{\infty, -k}-2c_{\infty, -k}
&=
-\frac92(k-2)d_{\infty,-(k-2)}
-9
\left(
\frac12-\alpha_1-\alpha_3-\alpha_5
\right)
d_{\infty, -(k-2)}  \\
&-3
\sum
(
d_{\infty, -l}e_{\infty, -m}+a_{\infty, -l}b_{\infty, -m}+b_{\infty, -l}d_{\infty, -m}  \\
&\quad\quad
-f_{\infty, -l}a_{\infty, -m}-d_{\infty, -l}f_{\infty, -m}-c_{\infty, -l}d_{\infty, -m}
)  \\
&-2
\sum(d_{\infty,-l}e_{\infty,-m}f_{\infty,-n}-d_{\infty,-l}b_{\infty,-m}c_{\infty,-n}),
\end{align*}
%%%%%%%%%%%%%%%%%%%%%%%%%%
\begin{align*}
2f_{\infty, -k}-a_{\infty, -k}+c_{\infty, -k}-2d_{\infty, -k}
&=
-\frac92(k-2)e_{\infty,-(k-2)}
-9
\left(
\frac12-\alpha_0-\alpha_2-\alpha_4
\right)
e_{\infty, -(k-2)}  \\
&-3
\sum
(
e_{\infty, -l}f_{\infty, -m}+b_{\infty, -l}c_{\infty, -m}+c_{\infty, -l}e_{\infty, -m}  \\
&\quad\quad
-a_{\infty, -l}b_{\infty, -m}-e_{\infty, -l}a_{\infty, -m}-d_{\infty, -l}e_{\infty, -m}
)  \\
&-2
\sum(e_{\infty,-l}f_{\infty,-m}a_{\infty,-n}-e_{\infty,-l}c_{\infty,-m}d_{\infty,-n}),
\end{align*}
%%%%%%%%%%%%%%%%%%%%%%%%%%%
\begin{align*}
2a_{\infty, -k}-b_{\infty, -k}+d_{\infty, -k}-2e_{\infty, -k}
&=
-\frac92(k-2)f_{\infty,-(k-2)}
-9
\left(
\frac12-\alpha_1-\alpha_3-\alpha_5
\right)
f_{\infty, -(k-2)}  \\
&-3
\sum
(
f_{\infty, -l}a_{\infty, -m}+c_{\infty, -l}d_{\infty, -m}+d_{\infty, -l}f_{\infty, -m}  \\
&\quad\quad
-b_{\infty, -l}c_{\infty, -m}-f_{\infty, -l}b_{\infty, -m}-e_{\infty, -l}f_{\infty, -m}
)  \\
&-2
\sum(f_{\infty,-l}a_{\infty,-m}b_{\infty,-n}-f_{\infty,-l}d_{\infty,-m}e_{\infty,-n}),
\end{align*}
where 
the first sum extends over the positive integers $l,m$ for which $l+m=k-3,$ 
and 
the second sum extends over the positive integers $l,m,n$ for which $l+m+n=k-2.$  
\par
This system of equations with respect to 
$a_{\infty,-k}, b_{\infty,-k},c_{\infty,-k},d_{\infty,-k},e_{\infty,-k},f_{\infty,-k}$ 
is expressed by 
\begin{equation*}
B \mathbf{v}_{k}=\tilde{\mathbf{v}}_{k-1},
\end{equation*}
where $B$ is defined by 
\begin{equation*}
B=
\left(
\begin{array}{cccccc}
0 &2 &-1 & 0 & 1 &2 \\
-2 & 0 &2 & -1 & 0 &1 \\
1 & -2 & 0 & 2 &-1 & 0  \\
0 & 1 & -2 & 0 & 2 & -1 \\
-1 & 0 & 1 &-2 & 0 &2  \\
2 & -1 & 0 & 1 & -2 & 0
\end{array}
\right),
\end{equation*}
and 
$\mathbf{v}_{k}$ 
is defined by 
\begin{equation*}
\mathbf{v}_{k}=
^{t}
(
a_{\infty,-k}, b_{\infty,-k},c_{\infty,-k},d_{\infty,-k},e_{\infty,-k},f_{\infty,-k}
),
\end{equation*}
and 
all the components of $\tilde{\mathbf{v}}_{k-1}$ 
are expressed by 
\begin{equation*}
a_{\infty, -l}, \,\, b_{\infty, -l}, \,\,c_{\infty, -l}, \,\,
d_{\infty, -l}, \,\, e_{\infty, -l}, \,\, f_{\infty, -l} \,\,(1\leq l \leq k-1).
\end{equation*}
\par
By the fundamental transformations of $B$ 
with respect to the rows, 
we get 
\begin{equation*}
B
\longrightarrow
\left(
\begin{array}{cccccc}
0 & 0 & -1 & 0 & 1  & 0        \\
0 & 0 & 0  & 1 & 0  & -1     \\
1 & 0 & 0  &0  & -1 & 0     \\
0 & 0 & 0  &0  & 0  & 0  \\
0 & 0 & 0  &0   & 0  & 0  \\
0 & 1 & 0  &-1  & 0  & 0  
\end{array}
\right),
\end{equation*}
which implies that 
$
a_{\infty,-k}, \,\,b_{\infty,-k}, \,\, c_{\infty,-k}, \,\,d_{\infty,-k},\,\,e_{\infty,-k},\,\,f_{\infty,-k}\,\,
(k\geq 2) 
$ 
are inductively determined, 
because 
$f_0+f_2+f_4=t$ and $f_1+f_3+f_5=t.$
\end{proof}

\subsection{Summary}
In this subsection, 
we summarize the results of the Laurent series of a meromorphic solution $(f_i)_{0\leq i \leq 5}$ and 
show the basic properties of $(f_i)_{0\leq i \leq 5}.$ 
Furthermore, 
we give examples of the rational solution of $A_5(\alpha_i)_{0\leq i \leq 5}.$

\subsubsection{The Laurent series of  $(f_i)_{0\leq i \leq 5}$ at $t=\infty$}

\begin{proposition}
\label{prop:a5inf}
\it{
Suppose that 
for $A_5^{(1)}(\alpha_j)_{0\leq  j \leq 5},$ 
there exists a meromorphic solution near $t=\infty.$ 
Then, one of Type A (1), Type A (2), Type A (3), Type B and Type C occurs.
\newline
Type \,A \, (1) \quad 
for some $i = 0, 1, 2, 3, 4, 5,$ 
$f_j \,\,(j=0,1,2,3,4,5)$ 
are uniquely determined  
near $t=\infty$  
as:
\begin{equation*}
\begin{cases}
f_i
=
t
-\left(\alpha_{i+2}+\alpha_{i+4}\right)t^{-1}
+
\cdots \\
f_{i+1}
=
t
+\left(\alpha_{i+3}+\alpha_{i+5}\right)t^{-1}
+
\cdots \\
f_{i+2}
=
\alpha_{i+2} t^{-1}
+ \cdots \\
f_{i+3}
=
- \alpha_{i+3} t^{-1}
+ \cdots \\
f_{i+4}
=
\alpha_{i+4} t^{-1}
+ \cdots \\
f_{i+5}
=
- \alpha_{i+5} t^{-1}
+ \cdots;
\end{cases}
\end{equation*}
Type \, A \,(2) 
\quad
for some $i = 0, 1, 2, 3, 4, 5,$ 
$f_j \,\,(j=0,1,2,3,4,5)$ 
are uniquely determined  
near $t=\infty$  
as:
\begin{equation*}
\begin{cases}
f_i
=
t
+\left(\alpha_{i+2}-\alpha_{i+4}\right)t^{-1}
+
\cdots \\
f_{i+1}
=
\alpha_{i+1}t^{-1}
+
\cdots \\
f_{i+2}
=
-\alpha_{i+2} t^{-1}
+ \cdots \\
f_{i+3}
=
t
+\left(\alpha_{i+5}-\alpha_{i+1}\right) t^{-1}
+ \cdots \\
f_{i+4}
=
\alpha_{i+4} t^{-1}
+ \cdots \\
f_{i+5}
=
- \alpha_{i+5} t^{-1}
+ \cdots; 
\end{cases}
\end{equation*}
Type \,A \,(3) 
\quad
for some $i = 0, 1, 2, 3, 4, 5,$ 
$f_j \,\,(j=0,1,2,3,4,5)$ 
are uniquely determined  
near $t=\infty$  
as:
\begin{equation*}
\begin{cases}
f_i
=
t
+
\left(
-\alpha_{i+2}-2\alpha_{i+3}-\alpha_{i+4}
\right)
t^{-1}
+ \cdots \\
f_{i+1}
=
t
+
\left(
-\alpha_{i+3}+\alpha_{i+5}
\right)
t^{-1}
+ \cdots \\
f_{i+2}
=
t
+
\left(
\alpha_i+\alpha_{i+4}+2\alpha_{i+5}
\right)
t^{-1}
+ \cdots \\
f_{i+3}
=
\alpha_{i+3} t^{-1} 
+ \cdots \\
f_{i+4}
=
-t
+
\left(
-\alpha_i+\alpha_{i+2}+2\alpha_{i+3}-2\alpha_{i+5}
\right)
t^{-1}
+ \cdots \\
f_{i+5}
=
-\alpha_{i+5} t^{-1}
+ \cdots;
\end{cases}
\end{equation*}
Type \, B 
\quad 
for some $i = 0, 1, 2, 3, 4, 5,$ 
$f_j \,\,(j=0,1,2,3,4,5)$ 
are uniquely determined  
near $t=\infty$  
as:
\begin{equation*}
\begin{cases}
f_i
=
\frac12 t +
\left(
\alpha_{i+1}-\alpha_{i+3}-2\alpha_{i+4}-\alpha_{i+5}
\right)
t^{-1}
+ \cdots \\
f_{i+1}
=
\frac12 t +
\left(
-\alpha_i+\alpha_{i+2}-\alpha_{i+4}
\right)
t^{-1}
+ \cdots \\
f_{i+2}
=
\frac12 t +
\left(
-\alpha_{i+1}+\alpha_{i+3}+\alpha_{i+5}
\right) t^{-1}
+ \cdots \\
f_{i+3}
=
\frac12 t +
\left(
\alpha_i-\alpha_{i+2}+\alpha_{i+4}+2\alpha_{i+5}
\right)
t^{-1}
+ \cdots \\
f_{i+4}
=
2\alpha_{i+4}t^{-1}
+ \cdots \\
f_{i+5}
=
-2\alpha_{i+5}t^{-1}
+ \cdots;
\end{cases}
\end{equation*}
Type \,C 
\quad
$f_j \,\,(j=0,1,2,3,4,5)$ 
are uniquely determined  
near $t=\infty$  
as:
\begin{equation*}
\begin{cases}
f_0 
=
\frac13 t 
+
\left(
2\alpha_1+\alpha_2-\alpha_4-2\alpha_5
\right)t^{-1} + \cdots \\
f_1 
=
\frac13 t 
+
\left(
2\alpha_2+\alpha_3-\alpha_5-2\alpha_0
\right)t^{-1} + \cdots \\
f_2 
=
\frac13 t 
+
\left(
2\alpha_3+\alpha_4-\alpha_0-2\alpha_1
\right)t^{-1} + \cdots \\
f_3 
=
\frac13 t 
+
\left(
2\alpha_4+\alpha_5-\alpha_1-2\alpha_2
\right)t^{-1} + \cdots \\
f_4 
=
\frac13 t 
+
\left(
2\alpha_5+\alpha_0-\alpha_2-2\alpha_3
\right)t^{-1} + \cdots \\
f_5
=
\frac13 t 
+
\left(
2\alpha_0+\alpha_1-\alpha_3-2\alpha_4
\right)t^{-1} + \cdots. 
\end{cases}
\end{equation*}
}
We also denote Type A (1) by 
$(f_i,f_{i+1})_{\infty}$, 
Type A (2) by 
$(f_i, f_{i+3})_{\infty}$, 
Type A (3) by 
$(f_i,f_{i+1},f_{i+2},f_{i+4})_{\infty}$, 
Type B by 
$(f_i, f_{i+1}, f_{i+2}, f_{i+3}, f_{i+4})_{\infty}$, 
respectively.
\end{proposition}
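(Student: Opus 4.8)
The plan is to assemble this classification directly from the case analyses of Subsections 1.1--1.5, organized by the number $m$ of components of $(f_i)_{0\leq i\leq 5}$ having a pole at $t=\infty$. The two constraints $f_0+f_2+f_4=t$ and $f_1+f_3+f_5=t$ show that at least one member of $\{f_0,f_2,f_4\}$ and at least one member of $\{f_1,f_3,f_5\}$ must have a pole; this rules out $m=0$ and $m=1$, and also forbids, for $m=2$, both poles lying in a single triple. Hence only the cases $m=2,3,4,5,6$ survive, and within each I would use the cyclic symmetry $\pi$ to reduce to the finitely many index-patterns already enumerated in the corresponding subsection.

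First I would eliminate the non-occurring patterns. For $m=3$, Propositions \ref{prop:a5inf-012}, \ref{prop:a5inf-013} and \ref{prop:a5inf-014} show that none of the three $\pi$-classes $f_i,f_{i+1},f_{i+2}$, $f_i,f_{i+1},f_{i+3}$, $f_i,f_{i+1},f_{i+4}$ is realizable. For $m=4$, Proposition \ref{prop:a5inf-0235} discards the pattern $f_i,f_{i+2},f_{i+3},f_{i+5}$; and for $m=5$, Proposition \ref{prop:a5inf-01234} excludes the single $\pi$-class. What remains is exactly five patterns: $f_i,f_{i+1}$ and $f_i,f_{i+3}$ for $m=2$, the patterns $f_i,f_{i+1},f_{i+2},f_{i+3}$ and $f_i,f_{i+1},f_{i+2},f_{i+4}$ for $m=4$, and the full pattern for $m=6$.

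Next I would read off order, residues and uniqueness for each surviving pattern from the matching triple of propositions: Type A(1) from \ref{prop:a5inf-01-1}--\ref{prop:a5inf-01-3}, Type A(2) from \ref{prop:a5inf-03-1}--\ref{prop:a5inf-03-3}, Type B from \ref{prop:a5inf-0123-1}--\ref{prop:a5inf-0123-3}, Type A(3) from \ref{prop:a5inf-0124-1}--\ref{prop:a5inf-0124-3}, and Type C from \ref{prop:a5inf-012345-1}--\ref{prop:a5inf-012345-3}. In each triple the first proposition pins down the pole order and leading coefficient (order one with coefficient $1$, or $\frac12$, $-1$, $\frac13$ as appropriate), the second computes the $t^{-1}$-coefficients, and the third shows the remaining coefficients are determined by induction, giving uniqueness of the Laurent expansion. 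Since the $t^{-1}$-coefficients produced by the second proposition of each triple agree verbatim with those displayed in the statement, the expansions listed are precisely the ones obtained, and the proposition follows.

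The only genuine obstacle---if one were proving this from scratch rather than citing the subsections---lies in the $m=6$ case (Type C), where one must first exclude all pole orders $n_i\geq 2$ and then solve the resulting homogeneous linear systems (the matrices $A$ and $B$ of Subsection 1.5) to force every leading coefficient to equal $\frac13$ and to run the inductive uniqueness argument; that analysis is already supplied by Propositions \ref{prop:a5inf-012345-1}--\ref{prop:a5inf-012345-3}. Here the remaining work is purely organizational: checking that the five surviving patterns are mutually $\pi$-inequivalent and jointly exhaust every realizable configuration with $m\geq 2$, so that the list of Types A(1), A(2), A(3), B, C is complete and non-redundant.
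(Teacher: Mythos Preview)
Your proposal is correct and matches the paper's approach exactly: Proposition~\ref{prop:a5inf} is presented in the paper as a summary without a separate proof, the content being assembled precisely from the case-by-case analysis of Subsections~1.1--1.5 along the lines you describe. The only remark is that the paper leaves the organizational bookkeeping (exhaustion and $\pi$-inequivalence of the five surviving patterns) implicit in the structure of Section~1, whereas you have made it explicit.
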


\subsubsection{Basic properties of meromorphic solutions at $t=\infty$}
By using the uniqueness of the Laurent series at $t=\infty$, 
we show that $f_i \,\,(0\leq i \leq 5)$ are odd functions. 
\begin{proposition}
\label{prop:a5odd}
\it{
Suppose that 
for $A_5^{(1)}(\alpha_i)_{0\leq i \leq 5}$, 
there exists a meromorphic solution near $t=\infty.$ 
$f_i \, (i=0,1,2,3,4,5)$ 
are then odd functions.
}
\end{proposition}

\begin{proof}
$A^{(1)}_5(\alpha_j)_{0\leq j \leq 5}$ 
is invariant under the transformation defined by 
\begin{equation*}
s_{-1}:
t 
\longrightarrow 
-t, \quad
f_j 
\longrightarrow
-f_j 
\quad
(0\leq j \leq 5).
\end{equation*}
Each of 
Type A, Type B and Type C on Proposition \ref{prop:a5inf} 
is also invariant under $s_{-1}$. 
Then 
$ f_j ( t ) = - f_j ( - t ) \,\,(0\leq j \leq 4)$, 
because the Laurent series of $f_j$ at $t=\infty$ for each type is unique. 
Therefore, $ f_j $ are odd functions. 
\end{proof}
%%%%%%%%%%%%%%%
Furthermore, based on the uniqueness of the Laurent series at $t=\infty$, we have

\begin{proposition}
\label{prop:uniqueness}
Suppose that 
for $A_5^{(1)}(\alpha_j)_{0\leq  j \leq 5},$ 
there exists a meromorphic solution near $t=\infty.$ 
\newline
\newline
Type \,A \, (1): 
$f_i,f_{i+1}$ have a pole at $t=\infty$ and 
$f_{i+2},f_{i+3},f_{i+4},f_{i+5}$ are all holomorphic at $t=\infty$ 
for some $i = 0, 1, 2, 3, 4, 5.$ 
Then, 
\begin{equation*}
\begin{cases}
f_{i+2} \equiv 0 & {\it if} \,\, \alpha_{i+2}=0 \\
f_{i+3} \equiv 0 & {\it if} \,\, \alpha_{i+3}=0 \\
f_{i+4} \equiv 0 & {\it if} \,\, \alpha_{i+4}=0 \\
f_{i+5} \equiv 0 & {\it if} \,\,  \alpha_{i+5}=0. \\
\end{cases}
\end{equation*}
Type \, A \,(2): 
$f_i, f_{i+3}$ both have a pole at $t=\infty$ 
and 
$f_{i+1},f_{i+2},f_{i+4},f_{i+5}$ 
are all holomorphic at $t=\infty$ 
for some $i = 0, 1, 2, 3, 4, 5.$ 
Then, 
\begin{equation*}
\begin{cases}
f_{i+1} \equiv 0 & {\it if} \,\, \alpha_{i+1}=0 \\
f_{i+2} \equiv 0 & {\it if} \,\, \alpha_{i+2}=0 \\
f_{i+4} \equiv 0 & {\it if} \,\, \alpha_{i+4}=0 \\
f_{i+5} \equiv 0 & {\it if} \,\,  \alpha_{i+5}=0. \\
\end{cases}
\end{equation*}
Type \,A \,(3): 
$f_i,f_{i+1},f_{i+2},f_{i+4}$ all have a pole at $t=\infty$ 
and 
$f_{i+3},f_{i+5}$ 
are both holomorphic at $t=\infty$ 
for some $i = 0, 1, 2, 3, 4, 5.$ 
Then, 
\begin{equation*}
\begin{cases}
f_{i+3} \equiv 0 & {\it if} \,\, \alpha_{i+3}=0 \\
f_{i+5} \equiv 0 & {\it if} \,\,  \alpha_{i+5}=0. \\
\end{cases}
\end{equation*}
Type \, B: 
$f_i, f_{i+1}, f_{i+2},f_{i+3}$ all have a pole at $t=\infty$ 
and $f_{i+4},f_{i+5}$ 
are both holomorphic at $t=\infty$ 
for some $i = 0, 1, 2, 3, 4, 5.$ 
Then, 
\begin{equation*}
\begin{cases}
f_{i+4} \equiv 0 & {\it if} \,\, \alpha_{i+4}=0 \\
f_{i+5} \equiv 0 & {\it if} \,\,  \alpha_{i+5}=0. \\
\end{cases}
\end{equation*}
\end{proposition}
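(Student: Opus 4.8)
The plan is to deduce each vanishing statement from the uniqueness of the Laurent expansion at $t=\infty$ established in Propositions \ref{prop:a5inf-01-3}, \ref{prop:a5inf-03-3}, \ref{prop:a5inf-0124-3} and \ref{prop:a5inf-0123-3} and summarized in Proposition \ref{prop:a5inf}. The underlying principle is that a rational function is completely determined by its Laurent series at $t=\infty$; hence, to prove that a regular $f_j$ vanishes identically, it suffices to show that, under the hypothesis $\alpha_j=0$, every coefficient of the Laurent expansion of $f_j$ at $t=\infty$ is zero. Throughout I would, as in the preceding proofs, use $\pi$ to reduce to a convenient representative index (for instance $i=0$), so that the four types can be treated one at a time with explicit equations.

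The key structural observation is that in the $A_5^{(1)}$ system the equation governing each $f_j$ has the shape
\[
\frac{t}{2}f_j^{\prime}=f_j\cdot Q_j+c_j\,f_j+\alpha_j\big(f_a+f_b\big),
\]
where $Q_j$ is the quadratic expression in the remaining $f$'s, $c_j=\tfrac12-\alpha_a-\alpha_b$ is constant, and $f_a,f_b$ are the two relevant neighbours: every term on the right except the last carries $f_j$ itself as a factor. Consequently the recursion determining the Laurent coefficients of a regular $f_j$—written out in the appropriate uniqueness proposition—splits into a part homogeneous in the coefficients of $f_j$ together with at most one inhomogeneous contribution proportional to $\alpha_j$ (in Types A\,(1) and A\,(2)); in Types A\,(3) and B the recursion is already homogeneous in the coefficients of $f_j$ and the dependence on $\alpha_j$ enters only through the residue. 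Moreover the residue of $f_j$, computed in Propositions \ref{prop:a5inf-01-2}, \ref{prop:a5inf-03-2}, \ref{prop:a5inf-0124-2} and \ref{prop:a5inf-0123-2}, is always an integer multiple of $\alpha_j$ (namely $\pm\alpha_j$ in Types A\,(1), A\,(2) and A\,(3), and $\pm2\alpha_j$ in Type B), while the constant term of $f_j$ at $t=\infty$ vanishes in every case.

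With these two facts the argument is a direct induction. Suppose $\alpha_j=0$ in the case at hand. Then the $t^{-1}$-coefficient of $f_j$, being a multiple of $\alpha_j$, vanishes, and the constant term of $f_j$ at $t=\infty$ is already known to vanish. Feeding this into the recursion of the appropriate uniqueness proposition, the inhomogeneous term (when present) drops out since it is proportional to $\alpha_j$, and each remaining term contains a lower-order coefficient of $f_j$; hence by induction on the order every coefficient of the Laurent expansion of $f_j$ is zero. Since a rational function is determined by its expansion at $t=\infty$, we conclude $f_j\equiv 0$. Running this verbatim for $f_{i+2},f_{i+3},f_{i+4},f_{i+5}$ in Type A\,(1), for $f_{i+1},f_{i+2},f_{i+4},f_{i+5}$ in Type A\,(2), for $f_{i+3},f_{i+5}$ in Type A\,(3), and for $f_{i+4},f_{i+5}$ in Type B yields all the asserted implications.

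The main obstacle is bookkeeping rather than conceptual: one must confirm, for each type and each regular index, that in the corresponding recursion the only dependence on $\alpha_j$ is through the residue and (in Types A\,(1), A\,(2)) a single term proportional to $\alpha_j$, and that all other terms genuinely retain a factor from the expansion of $f_j$. This is precisely the factored form displayed above, so the verification reduces to reading off the recursions already derived in the uniqueness propositions, with no new computation beyond checking this pattern in each listed case.
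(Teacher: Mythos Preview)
Your proposal is correct and matches the paper's approach. The paper does not give a detailed proof of this proposition; it simply prefaces the statement with ``from the uniqueness of the Laurent series at $t=\infty$,'' leaving the reader to supply the argument you have written out. Your structural observation---that the equation for each $f_j$ reads $\tfrac{t}{2}f_j'=f_j\cdot Q_j+c_jf_j+\alpha_j(f_a+f_b)$, so that the recursion for the coefficients of a regular $f_j$ is homogeneous in those coefficients once the $\alpha_j$-term is removed---is exactly what makes the induction go through, and it is visible in the explicit recursions displayed in the proofs of Propositions~\ref{prop:a5inf-01-3}, \ref{prop:a5inf-0123-3} and \ref{prop:a5inf-0124-3}.
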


\subsubsection{Examples of rational solutions}
By considering 
Proposition \ref{prop:a5inf}, 
we give examples of a rational solution of $A_5^{(1)}(\alpha_i)_{0\leq i \leq 5}$.

\begin{proposition}
\label{prop:tri}
\it{
Type \, A \,(1) 
\quad
for some $i=0, 1, 2, 3, 4, 5,$ 
\begin{equation*}
\begin{cases}
f_i=f_{i+1}=t, \,\, 
f_{i+2}=f_{i+3}=f_{i+4}=f_{i+5} \equiv 0 \\
\alpha_i+\alpha_{i+1}=1, \,\, 
\alpha_{i+2}=\alpha_{i+3}=\alpha_{i+4}=\alpha_{i+5}=0;
\end{cases}
\end{equation*}
Type \, A \,(2)
\quad 
for some $i=0, 1, 2, 3, 4, 5,$ 
\begin{equation*}
\begin{cases}
f_i=f_{i+3}=t, \,\,
f_{i+1}=f_{i+2}=f_{i+4}=f_{i+5} \equiv 0  \\
\alpha_i+\alpha_{i+3}=1, \,\,
\alpha_{i+1}=\alpha_{i+2}=\alpha_{i+4}=\alpha_{i+5}=0;
\end{cases}
\end{equation*}
Type \, A\,(3) 
\quad 
for some $i=0, 1, 2, 3, 4, 5,$ 
\begin{equation*}
\begin{cases}
f_i=f_{i+1}=f_{i+2}=t, \, f_{i+3} \equiv 0, \, 
f_{i+4}=-t, \, f_{i+5} \equiv 0  \\
\alpha_{i}=\alpha_{i+2}, \,
\alpha_{i}+\alpha_{i+4}=0, \,
\alpha_{i+3}=\alpha_{i+5}=0;
\end{cases}
\end{equation*}
Type \, B 
\quad
for some $i=0, 1, 2, 3, 4, 5,$ 
\begin{equation*}
\begin{cases}
f_{i}=f_{i+1}=f_{i+2}=f_{i+3}=\frac12 t, \,
f_{i+4}=f_{i+5} \equiv 0, \\
\alpha_{i}=\alpha_{i+2}, \, \alpha_{i+1}=\alpha_{i+3}, \,
\alpha_{i+4}=\alpha_{i+5}=0;
\end{cases}
\end{equation*}
Type \, C 
\begin{equation*}
\begin{cases}
f_0=f_1=f_2=f_3=f_4=f_5=\frac13 t, \\
\alpha_0=\alpha_2=\alpha_4, \,
\alpha_1=\alpha_3=\alpha_5.
\end{cases}
\end{equation*}
}
\end{proposition}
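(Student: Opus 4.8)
The plan is to verify each of the five claimed families by direct substitution into the system $(*)$, after first reducing to a single representative index. By the cyclic symmetry $\pi$, which shifts every subscript of $f_j$ and $\alpha_j$ by one and which the excerpt has recorded as a B\"acklund transformation, it suffices to treat the case $i=0$ for Types A(1), A(2), A(3) and B; Type C carries no free index. In each case the first thing I would check is that the proposed functions satisfy the two linear constraints $f_0+f_2+f_4=t$ and $f_1+f_3+f_5=t$, which is immediate from the explicit values (for instance $\tfrac13 t+\tfrac13 t+\tfrac13 t=t$ in Type C, and $\tfrac12 t+\tfrac12 t+0=t$ in Type B), and likewise that the stated relations are compatible with $\sum_j\alpha_j=1$.

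Next I would substitute the explicit $f_j$ into each of the six differential equations. Two structural observations reduce this to routine bookkeeping. First, whenever an index $j$ has $f_j\equiv 0$ together with $\alpha_j=0$, the $j$-th equation holds trivially: its left-hand side vanishes, the first two terms on the right both carry the factor $f_j$, and the last term carries the factor $\alpha_j$. In every one of Types A(1), A(2), A(3) and B the vanishing components pair off with vanishing parameters (e.g. $f_{i+2}=\dots=f_{i+5}\equiv 0$ with $\alpha_{i+2}=\dots=\alpha_{i+5}=0$ in Type A(1)), so this disposes at once of all the equations indexed by those components. Second, for the surviving pole components the degree-two-in-$t$ part of the cubic bracket cancels identically because of the alternating sign pattern: in Type C all six equal products $\left(\tfrac13 t\right)^2$ cancel three against three, while in the other types exactly two products survive (such as $f_1f_2$ against $f_1f_4$, or $f_0f_1$ against $f_4f_1$) and they are equal in magnitude and opposite in sign. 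Hence only the linear-in-$t$ contributions remain in each nontrivial equation.

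The residual linear check is where the parameter constraints do their work, and it is the one step I would carry out with care. For each pole component the coefficient $\left(\tfrac12-\alpha_{*}-\alpha_{*}\right)$ combines with the term $\alpha_{*}(f_{*}+f_{*})$, and the stated relations force the sum to reproduce exactly $\tfrac{t}{2}f_j'$; in most equations this cancellation is transparent, since one of $\alpha_{*}=0$ or $f_{*}+f_{*}=0$ kills the inhomogeneous term outright. The genuinely non-obvious case is the equation of $f_{i+4}=-t$ in Type A(3): there the linear remainder is $\left(\tfrac12-\alpha_{i}-\alpha_{i+2}\right)(-t)+\alpha_{i+4}(f_i+f_{i+2})=-\tfrac{t}{2}+(\alpha_i+\alpha_{i+2}+2\alpha_{i+4})\,t$, and matching the left-hand side $-\tfrac{t}{2}$ requires precisely the relations $\alpha_i=\alpha_{i+2}$ and $\alpha_i+\alpha_{i+4}=0$ to cancel the surviving $\alpha_i t$ terms against one another. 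I expect this single equation to be the only place needing real attention; the remaining verifications are a one-line linear identity per equation once the quadratic products have been sorted.
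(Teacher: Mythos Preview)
Your proposal is correct and is exactly the direct calculation the paper has in mind; the paper's own proof consists of the single sentence ``It can be proved by direct calculation.'' Your write-up simply makes explicit the bookkeeping (use of $\pi$ to fix $i=0$, trivial vanishing when $f_j\equiv 0$ and $\alpha_j=0$, cancellation of the quadratic products, and the residual linear identities), so there is nothing to add.
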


\begin{proof}
It can be proved by direct calculation.
\end{proof}

\section{Meromorphic Solutions at $t=0$}
In this section, 
we prove Proposition \ref{prop:a5zero}, where 
we calculate the Laurent series of $f_i \,\,(i =0,1,2,3,4,5)$ at $t=0,$ 
whose residues are expressed 
by the parameters $\alpha_j \,(j=0,1,2,3,4,5).$ 
%\par
Furthermore, in Corollary \ref{coro:reg}, 
we show that 
under some conditions, 
by some B\"acklund transformations, 
a meromorphic solution at $t=0$ can be transformed into 
a holomorphic solution at $t=0,$ that is, 
a solution 
such that all of $(f_j)_{0\leq j \leq 5}$ are holomorphic at $t=0.$

\begin{proposition}
\it{
\label{prop:a5zero}
Suppose that for $A_5^{(1)}(\alpha_j)_{0\leq j \leq 5},$ 
there exists a meromorphic solution at $t=0.$ 
Also, assume that some of $(f_j)_{0 \leq j \leq 5}$ have a pole at $t=0.$ 
Either of the following then occurs:
\newline
(1)\quad $f_i,f_{i+2}$ both have a pole at $t=0$ and $f_{i+1}, f_{i+3}, f_{i+4}, f_{i+5} $ are all holomorphic at $t=0$ 
for some $i=0, 1, 2, 3, 4, 5,$
\newline
(2)\quad $f_i,f_{i+2},f_{i+3},f_{i+5}$ all have a pole at $t=0$ and $f_{i+1}, f_{i+4}$ are both holomorphic at $t=0$ for some $i=0,1,2,3,4,5.$
\par
If $f_i,f_{i+2}$ both have a pole at $t=0$ for some $i=0, 1, 2, 3, 4, 5,$ 
\begin{equation*}
\begin{cases}
f_i
=
\left(
\alpha_{i+1}-\alpha_{i+3}-\alpha_{i+5}
\right)
t^{-1} + \cdots  \\
f_{i+1}
=\displaystyle
\frac
{
\alpha_{i+1}
}
{
\alpha_{i+1}-\alpha_{i+3}-\alpha_{i+5}
}
t + \cdots \\
f_{i+2}
=
\left(
-\alpha_{i+1}+\alpha_{i+3}+\alpha_{i+5}
\right)
t^{-1} + \cdots \\
f_{i+3}
=\displaystyle
\frac{-\alpha_{i+3}}
{
\alpha_{i+1}-\alpha_{i+3}-\alpha_{i+5}
}
t + \cdots \\
f_{i+4}
=
e_{0,i+4} t + \cdots \\
f_{i+5}
=\displaystyle
\frac{-\alpha_{i+5}}
{\alpha_{i+1}-\alpha_{i+3}-\alpha_{i+5}}
t + \cdots,
\end{cases} 
\end{equation*}
where 
\begin{equation*}
e_{0,i+4} 
=
\begin{cases}
\displaystyle \frac{\alpha_4}{1+\alpha_1-\alpha_3-\alpha_5} \quad {\it if} \, \alpha_1-\alpha_3-\alpha_5\neq -1, \\
{\it arbitrary \,\,constant \,\,and} \,\, \alpha_4=0 \,\,{\it if} \,\alpha_1-\alpha_3-\alpha_5=-1.
\end{cases}
\end{equation*}
\par
If $f_i,f_{i+2},f_{i+3},f_{i+5}$ all have a pole at $t=0$ for some $i=0,1,2,3,4,5,$
\begin{equation*}
\begin{cases}
f_{i}
=
\left(
\alpha_{i}-\alpha_{i+3}-2\alpha_{i+4}-\alpha_{i+5}
\right)t^{-1} + \cdots \\
f_{i+1}
=\displaystyle
\frac{\alpha_{i+1}}
{\alpha_{i}-\alpha_{i+3}-2\alpha_{i+4}-\alpha_{i+5}}
t + \cdots \\
f_{i+2}
=
\left(
-\alpha_{i}+\alpha_{i+3}+2\alpha_{i+4}+\alpha_{i+5}
\right)t^{-1} + \cdots \\
f_{i+3}
=
\left(
-\alpha_{i}-2\alpha_{i+1}-\alpha_{i+2}+\alpha_{i+4}
\right)t^{-1} + \cdots \\
f_{i+4}
=\displaystyle
\frac{\alpha_{i+4}}
{-\alpha_{i}-2\alpha_{i+1}-\alpha_{i+2}+\alpha_{i+4}}
t + \cdots \\
f_{i+5}
=
\left(
\alpha_{i}+2\alpha_{i+1}+\alpha_{i+2}-\alpha_{i+4}
\right)t^{-1} + \cdots. 
\end{cases}
\end{equation*}
We denote case (1) by $(f_i,f_{i+2})_0$ and 
case (2) by $(f_i,f_{i+2},f_{i+3},f_{i+5})_0$, respectively.
}
\end{proposition}

\begin{proof}
It can be proved by direct calculation.
\end{proof}

By Proposition \ref{prop:a5zero}, 
we can transform a meromorphic solution of $A_5^{(1)}(\alpha_j)_{0\leq j \leq 5}$ 
into a holomorphic solution at $t=0$. 

\begin{corollary}
\label{coro:reg}
\it{
Suppose that 
$(f_i)_{0\leq i \leq 5}$ 
is a meromorphic solution of $A_5^{(1)}(\alpha_i)_{0\leq i \leq 5}$ at $t=0$. 
\newline
(1) 
\quad 
If $f_i,f_{i+2}$ both have a pole at $t=0$  
and $\alpha_{i+1} \neq 0$ for some $i=0, 1, 2, 3, 4, 5,$ 
then 
all of 
$(s_{i+1}(f_j))_{0\leq j \leq 5}$ are 
holomorphic $t=0.$
\newline
(2) 
\quad 
If $f_i,f_{i+2},f_{i+3},f_{i+5}$ all have a pole at $t=0$ 
and 
$\alpha_{i+1} \alpha_{i+4} \neq 0$ 
for some $i=0,1,2,3,4,5,$ then 
all of 
$(s_{i+1} s_{i+4}(f_j))_{0\leq j \leq 5}$ 
are holomorphic at $t=0.$
}
\end{corollary}

\begin{proof}
We deal with case (1).  Case (2) can be proved in the same way. 
By $\pi,$ 
we first assume that $f_0,f_2$ both have a pole at $t=0$ and $\alpha_1\neq 0.$ 
We then set 
\begin{equation*}
\begin{cases}
f_0=a_{0,-1}t^{-1}+a_{0,0}+a_{0,1}t+\cdots, \\
f_1=b_{0,1}t+\cdots, \\
f_2=c_{0,-1}t^{-1}+c_{0,0}+c_{0,1}t+\cdots, \\
f_3=d_{0,0}+d_{0,1}t+\cdots, \\
f_4=e_{0,0}+e_{0,1}t+\cdots, \\
f_5=f_{0,0}+f_{0,1}t+\cdots, \\
\end{cases}
\end{equation*}
where 
$$
a_{0,-1}=\alpha_{1}-\alpha_{3}-\alpha_{5}, \,\,
b_{0,1}=\frac
{
\alpha_{1}
}
{
\alpha_{1}-\alpha_{3}-\alpha_{5}
}, \,\,
c_{0,-1}=-(\alpha_{1}-\alpha_{3}-\alpha_{5}).
$$
From the definition of $s_1,$ 
it then follows that 
$
s_1(f_l)=f_l, \,\,(l=1,3,4,5), 
$
and 
\begin{align*}
&s_1(f_0)=f_0-\alpha_1/f_1=(a_{0,-1}t^{-1}+a_{0,0}+a_{0,1}t+\cdots)-
\frac{\alpha_1}{\alpha_1/a_{0,-1}t(1+\cdots)}   \\
 &s_1(f_2)=f_2+\alpha_1/f_1=(-a_{0,-1}t^{-1}+c_{0,0}+c_{0,1}t+\cdots)+
\frac{\alpha_1}{\alpha_1/a_{0,-1}t(1+\cdots)},   \\
\end{align*}
which implies that all of $s_1(f_j)_{0\leq j \leq 5}$ 
are holomorphic at $t=0.$
\end{proof}

\section{Meromorphic Solutions at $t=c\in \mathbb{C}^{*}$}
In this section, 
we prove 
Proposition \ref{prop:a5c}, 
where 
we calculate the Laurent series of $f_j \,\,(j=0,1,2,3,4,5)$ at $t=c \in \mathbb{C}^{*},$ 
whose residues are half integers. 
Furthermore, from the residue theorem, 
we prove Corollary \ref{coro:a5res}, 
which we use 
in order to 
obtain necessary conditions for 
$A_5^{(1)}(\alpha_j)_{0\leq j \leq 5}$ to have rational solutions.

\begin{proposition}
\it{
\label{prop:a5c}
Suppose that 
for $A_5^{(1)}(\alpha_j)_{0\leq  j \leq 5},$ 
there exists a meromorphic solution at $t=c\in\mathbb{C}^{*}.$ 
Moreover, 
assume that some of $(f_j)_{0\leq j \leq 5}$ have a pole at $t=c.$ 
Either of the following then occurs:
\newline
(1)\quad $f_i, f_{i+2}$ both have a pole at $t=c$ and 
$f_{i+1}, f_{i+3}, f_{i+4},f_{i+5}$ are all holomorphic at $t=c$ for some $i=0, 1, 2, 3, 4, 5,$
\newline
(2)\quad $f_i,f_{i+2},f_{i+3},f_{i+5}$ all have a pole at $t=c$ and $f_{i+1},f_{i+4}$ are both holomorphic at $t=c$ for some $i=0,1,2,3,4,5.$
\par
If $f_i, f_{i+2}$ both have a pole at $t=c$ for some $i=0, 1, 2, 3, 4, 5,$ 
one of the following then occurs:
\begin{equation*}
(a)
\begin{cases}
f_{i}
=
\frac12 (t-c)^{-1} 
+
\Bigl\{
\frac{c}{2}
-
\frac{1}{4c}
+
\frac{1}{2c}
\left(
\alpha_{i+1}
-
\alpha_{i+3}
-
\alpha_{i+5}
\right)
\Bigr\}
+
\cdots \\
f_{i+1}
=
c 
+
\left(
1+2\alpha_{i+3}+2\alpha_{i+5}
\right)
(t-c)
+
\cdots \\
f_{i+2}
=
-\frac12 (t-c)^{-1}
+
\Bigl\{
\frac{c}{2}
+
\frac{1}{4c}
-
\frac{1}{2c}
\left(
\alpha_{i+1}
-
\alpha_{i+3}
-
\alpha_{i+5}
\right)
\Bigr\}
+\cdots \\
f_{i+3}
=
-2\alpha_{i+3}(t-c)
+ \cdots \\
f_{i+4}
=
\frac{2\alpha_{i+4}}{3}
(t-c)
+\cdots \\
f_{i+5}
=
-2\alpha_{i+5}
(t-c)
+\cdots,
\end{cases} 
\end{equation*}
\begin{equation*}
(b) 
\begin{cases}
f_{i}
=
-\frac12 (t-c)^{-1} \\
\hspace{10mm}
+
\Bigl\{
\frac{1}{2c}
\left(
\alpha_{i}+\alpha_{i+2}+\alpha_{4}-\frac12
\right)
+\frac{\alpha_{i+1}}{c}
-
\frac{q_{0,i+4}}{2c}
\left(q_{0,i+3}-q_{0,i+5}+c \right)
+\frac{c}{2}
\Bigr\}
+\cdots \\
f_{i+1}
=
-2\alpha_{i+1}
(t-c)+\cdots \\
f_{i+2}
=
\frac12 (t-c)^{-1} \\
\hspace{10mm}
+
\Bigl\{
\frac{-1}{2c}
\left(
\alpha_{i}+\alpha_{i+2}+\alpha_{4}-\frac12
\right)
-\frac{\alpha_{i+1}}{c}
+
\frac{q_{0,i+4}}{2c}
\left(q_{0,i+3}-q_{0,i+5}-c \right)
+\frac{c}{2}
\Bigr\}
+\cdots \\
f_{i+3}
=
q_{0,i+3} 
+
\Bigl\{
q_{0,i+3} q_{0,i+5}
\left(
\frac{4 q_{0,i+4}}{c}
-2
\right)
+
\frac{4 q_{0,i+3}}{c} \alpha_{i+1} \\
\hspace{40mm}
+
\frac{2 q_{0,i+3}}{c}
\left(
\frac12 - \alpha_{i+1}-\alpha_{i+3}-\alpha_{i+5}
\right)
+2\alpha_{i+3}
\Bigr\}(t-c) + \cdots \\
f_{i+4}
=
q_{0,i+4}
+
\Big\{
\frac{2}{c}
q_{0,i+4}
(c-q_{0,i+4})(q_{0,i+5}-q_{0,i+3})  \\
\hspace{40mm}
-\frac{q_{0,i+4}}{c}
\left(
\alpha_{i+1}-\alpha_{i+3}-\alpha_{i+5}
\right)
+2\alpha_{i+4}
\Bigr\}
(t-c)
+\cdots  \\
f_{i+5}
=
q_{0,i+5} 
+
\Bigl\{
q_{0,i+3} q_{0,i+5}
\left(2
-
\frac{4 q_{0,i+4}}{c}
\right)
+
\frac{4 q_{0,i+5}}{c} \alpha_{i+1} \\
\hspace{40mm}
+
\frac{2 q_{0,i+5}}{c}
\left(
\frac12 - \alpha_{i+1}-\alpha_{i+3}-\alpha_{i+5}
\right)
+2\alpha_{i+5}
\Bigr\}(t-c) + \cdots, 
\end{cases}
\end{equation*}
where $q_{0,i+3}, \,\,q_{0,i+4}, \,\,q_{0,i+5}$ are arbitrary constants.
\par
If $f_i,f_{i+2},f_{i+3},f_{i+5}$ all have a pole at $t=c$ for some $i=0,1,2,3,4,5,$ 
one of the following then occurs:
\begin{align*}
%& &
&(c) 
\begin{cases}
f_{i}
=
-\frac12 (t-c)^{-1}
+
\Bigl\{
\frac{1}{2c}
\left(
\alpha_{i}+2\alpha_{i+1}+\alpha_{i+2}-\alpha_{i+4}-\frac12
\right)
+\frac{c}{2}
\Bigr\}
+ \cdots \\
f_{i+1}
=
-2\alpha_{i+1} (t-c) +\cdots \\
f_{i+2}
=
\frac12 (t-c)^{-1}
+
\Bigl\{
\frac{-1}{2c}
\left(
\alpha_{i}+2\alpha_{i+1}+\alpha_{i+2}-\alpha_{i+4}-\frac12
\right)
+\frac{c}{2}
\Bigr\}
+ \cdots \\
f_{i+3}
=
-\frac12 (t-c)^{-1} 
+
\Bigl\{
\frac{1}{2c}
\left(
-\alpha_{i+1}+\alpha_{i+3}+2\alpha_{i+4}+\alpha_{i+5}-\frac12
\right)
+\frac{c}{2}
\Bigr\}
+ \cdots \\
f_{i+4}
=
-2\alpha_{i+4} (t-c) +\cdots \\
f_{i+5}
=
\frac12 (t-c)^{-1} 
+
\Bigl\{
\frac{-1}{2c}
\left(
-\alpha_{i+1}+\alpha_{i+3}+2\alpha_{i+4}+\alpha_{i+5}-\frac12
\right)
+\frac{c}{2}
\Bigr\}
+ \cdots,
\end{cases}
\\
&(d) 
\begin{cases}
f_{i}
=
-\frac32 (t-c)^{-1} +O (t-c) \\
f_{i+1}
=
-\frac23 \alpha_{i+1} (t-c) +\cdots \\
f_{i+2}
=
\frac32 (t-c)^{-1} + O(t-c) \\
f_{i+3}
=
\frac12 (t-c)^{-1}
+
\Bigl\{
\frac{c}{2}
-
\frac{1}{2c}
\left(
\alpha_{i+1}-\alpha_{i+3}-2\alpha_{i+4}-\alpha_{i+5}+\frac32
\right)
\Bigr\}
+\cdots \\
f_{i+4}
=
c 
+ 
\left(
1+2\alpha_{i}+4\alpha_{i+1}+2\alpha_{i+2}
\right)
(t-c) + \cdots \\
f_{i+5}
=
-\frac12 (t-c)^{-1}
+
\Bigl\{
\frac{c}{2}
+
\frac{1}{2c}
\left(
\alpha_{i+1}-\alpha_{i+3}-2\alpha_{i+4}-\alpha_{i+5}+\frac32
\right)
\Bigr\}
+\cdots,
\end{cases}
\\
&
(e) 
\begin{cases}
f_{i}
=
\frac12 (t-c)^{-1}
+
\Bigl\{
\frac{c}{2}
+
\frac{-1}{2c}
\left(
-\alpha_{i+1}+\alpha_{i+3}+2\alpha_{i+4}+\alpha_{i+5}
+\frac12
\right)
\Bigr\} + \cdots \\
f_{i+1}
=
c + 
\left(
1+2\alpha_{i+3}+4\alpha_{i+4}+2\alpha_{i+5}
\right)(t-c) +\cdots \\
f_{i+2}
=
-\frac12 (t-c)^{-1}
+
\Bigl\{
\frac{c}{2}
+
\frac{1}{2c}
\left(
-\alpha_{i+1}+\alpha_{i+3}+2\alpha_{i+4}+\alpha_{i+5}
+\frac12
\right)
\Bigr\} +\cdots \\
f_{i+3}
=
-\frac32 (t-c)^{-1} + O(t-c) \\
f_{i+4}
=
-\frac23 \alpha_{i+4} (t-c) +\cdots \\
f_{i+5}
=
\frac32 (t-c)^{-1} + O(t-c). 
\end{cases}
\end{align*}
We denote case (a) by $(f_i,f_{i+2})(I)$, 
case (b) by $(f_i,f_{i+2})(II)$, 
case (c) by $(f_i,f_{i+2},f_{i+3},f_{i+5})(I)$, 
case (d) by $(f_i,f_{i+2},f_{i+3},f_{i+5})(II)$, 
and 
case (e) by $(f_i,f_{i+2},f_{i+3},f_{i+5})(III)$, 
respectively.
}
\end{proposition}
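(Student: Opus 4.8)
The plan is to carry out the ``direct calculation'' systematically, treating each of the two pole configurations ($f_i,f_{i+2}$ polar, and $f_i,f_{i+2},f_{i+3},f_{i+5}$ polar) by substituting a Laurent ansatz into the six equations of $(*)$ and matching coefficients order by order. By the rotation $\pi$ it suffices to analyse $i=0$, i.e.\ to treat the configurations $(f_0,f_2)$ and $(f_0,f_2,f_3,f_5)$, and to transport the result around $\mathbb{Z}/6\mathbb{Z}$ afterward.

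First I would pin down the admissible pole structure from the linear relations $f_0+f_2+f_4=t$ and $f_1+f_3+f_5=t$. Since $t$ is holomorphic and nonzero at a finite $c\in\mathbb{C}^{*}$, inside each triple $\{f_0,f_2,f_4\}$ and $\{f_1,f_3,f_5\}$ the principal parts must cancel. Hence if $f_0$ carries a pole of order $m$ with leading coefficient $a$, then $f_2$ (resp.\ $f_4$) must carry the compensating principal part; this is exactly what forces the paired shape of the listed cases and, in the four-pole case, ties the $(f_i,f_{i+2})$ singularity to the $(f_{i+3},f_{i+5})$ singularity.

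Next comes the indicial step. Writing each equation of $(*)$ schematically as $\tfrac{t}{2}f_j'=f_j Q_j+(\text{linear in }f)$, the left side contributes a $(t-c)^{-m-1}$ term equal to $\tfrac{c}{2}(-m\,a)$, while the leading contribution of $f_jQ_j$ is quadratic in the residues of the polar functions. Matching the most singular coefficients forces $m=1$ and, together with the equations governing the regular functions evaluated at $t=c$, yields a closed algebraic system for the residues alone. Its solutions are precisely the half-integer values $\pm\tfrac12$ in the two-pole case and the combinations $(\pm\tfrac12,\pm\tfrac12)$, $(\pm\tfrac32,\mp\tfrac12)$ in the four-pole case; this discreteness is the origin of the branchings (1)/(2) and (3)/(4)/(5). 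Having fixed the residues, I would then read off the displayed constant and linear coefficients by descending one order at a time, solving the resulting linear recursions for the remaining coefficients, with the values of the regular functions $f_{i+1},f_{i+4}$ at $t=c$ tied down through the two linear relations.

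The main obstacle — and the only genuinely delicate point — is the resonant branch (case (2), where the free constants $q_{0,i+3},q_{0,i+4},q_{0,i+5}$ appear). At such a branch the coefficient matrix governing the recursion drops rank, so several lower-order coefficients are left undetermined; here I would have to verify directly that the surviving free constants, subject to the linear relations $f_0+f_2+f_4=t$ and $f_1+f_3+f_5=t$, are consistent with all six equations, i.e.\ that the compatibility conditions at the resonant order hold identically. This is the step where the bookkeeping of which triple-products contribute at each order must be done with care; everything else is a mechanical unwinding of the recursion, and the same template applies verbatim to each of the five cases.
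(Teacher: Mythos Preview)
Your outline is correct and matches the paper's approach exactly: the paper's proof consists solely of the sentence ``It can be proved by direct calculation,'' and what you have described is precisely that calculation spelled out in detail. There is nothing to compare, since the paper provides no further argument.
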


\begin{proof}
It can be proved by direct calculation.
\end{proof}

%{\bf Remark}
%Kametaka, Noda, Fukui and Hirano
%\cite{Kametaka-Noda-Fukui-Hirano} 
%$studied poles of rational solutions of the second Painlev\'e equation, 
%which is also a solution of Toda equation.

\begin{corollary}
\label{coro:a5res}
\it{
Suppose that $A_5^{(1)}(\alpha_j)_{0\leq  j \leq 5}$ has a rational solution. 
It then follows that 
$$
-\mathrm{Res}_{t=\infty} f_j -\mathrm{Res}_{t=0} f_j \,\,(j=0,1,2,3,4,5).
$$
}
\end{corollary}

\begin{proof}
If $A_5^{(1)}(\alpha_j)_{0\leq  j \leq 5}$ has a rational solution,  
it follows 
from Corollary \ref{prop:a5odd} that 
$f_i \,\,(i=0,1,2,3,4,5)$ are odd functions. 
Therefore, if $c \in \mathbb{C}^{*}$ is a pole of $f_j$ for some $j=0,1,2,3,4,5,$ 
$-c$ is also a pole of $f_j$ with the same residue. 
\par
Suppose that 
$\pm c_1, \pm c_2, \ldots, \pm c_k \in\mathbb{C}^{*}$ are poles of $(f_j)_{0\leq j \leq 5}.$ 
It then follows from Propositions \ref{prop:a5inf} and \ref{prop:a5zero} that 
\begin{equation*}
\begin{cases}
\displaystyle f_0=a_{\infty,1}t+a_{0,-1}t^{-1}+\sum_{l=1}^k \left(\frac{\epsilon_{0,l}}{t-c_{l}}+\frac{\epsilon_{0,l}}{t+c_{l}} \right), \,\,
\displaystyle f_1=b_{\infty,1}t+b_{0,-1}t^{-1}+\sum_{l=1}^k \left(\frac{\epsilon_{1,l}}{t-c_{l}}+\frac{\epsilon_{1,l}}{t+c_{l}}\right), \\
\displaystyle f_2=c_{\infty,1}t+c_{0,-1}t^{-1}+\sum_{l=1}^k \left(\frac{\epsilon_{2,l}}{t-c_{l}}+\frac{\epsilon_{2,l}}{t+c_{l}}\right), \,\,
\displaystyle f_3=d_{\infty,1}t+d_{0,-1}t^{-1}+\sum_{l=1}^k \left(\frac{\epsilon_{3,l}}{t-c_{l}}+\frac{\epsilon_{3,l}}{t+c_{l}}\right), \\
\displaystyle f_4=e_{\infty,1}t+e_{0,-1}t^{-1}+\sum_{l=1}^k \left(\frac{\epsilon_{4,l}}{t-c_{l}}+\frac{\epsilon_{4,l}}{t+c_{l}}\right), \,\,
\displaystyle f_5=f_{\infty,1}t+f_{0,-1}t^{-1}+\sum_{l=1}^k \left(\frac{\epsilon_{5,l}}{t-c_{l}}+\frac{\epsilon_{5,l}}{t+c_{l}}\right),
\end{cases}
\end{equation*}
where $\epsilon_{i,j} \,\,(0\leq i \leq 5, \,1\leq j \leq k)$ are all half integers. 
Thus, 
by comparing the coefficients of the term $t^{-1}$ of the Laurent series of $f_j \,(0\leq j \leq 5)$ at $t=\infty,$  
we have 
\begin{equation*}
\begin{cases}
\displaystyle a_{\infty, -1}=a_{0,-1}+2\sum_{l=1}^k \epsilon_{0,l}, \quad
\displaystyle b_{\infty, -1}=b_{0,-1}+2\sum_{l=1}^k \epsilon_{1,l} \\
\displaystyle c_{\infty, -1}=c_{0,-1}+2\sum_{l=1}^k \epsilon_{2,l}, \quad
\displaystyle d_{\infty, -1}=d_{0,-1}+2\sum_{l=1}^k \epsilon_{3,l} \\
\displaystyle e_{\infty, -1}=e_{0,-1}+2\sum_{l=1}^k \epsilon_{4,l}, \quad
\displaystyle f_{\infty, -1}=f_{0,-1}+2\sum_{l=1}^k \epsilon_{5,l}, 
\end{cases}
\end{equation*}
which proves the corollary. 

\end{proof}

\section{The Laurent Series of The Auxiliary Function $H$}
In this section, 
we define the auxiliary function $H$ 
and study the properties of $H$. 
This section consists of five subsections. 
In Subsection 4.1, 
following Noumi and Yamada \cite{NoumiYamada-B}, 
we introduce the Hamiltonians $h_{i} \,\,(0\leq i \leq 5)$ for $A_5^{(1)}(\alpha_j)_{0\leq j \leq 5}$ 
and 
define the auxiliary function $H.$ 
In Subsections 4.2, 4.3 and 4.4, 
based on the meromorphic solutions at $t=\infty,$ $t=0$ and $t=c\in\mathbb{C}^{*},$ 
we calculate the Laurent series of $H$ at $t=\infty,$ $t=0$ and $t=c\in\mathbb{C}^{*},$ 
respectively. 
Especially, 
we compute the constant terms of the Laurent series of $H$ at $t=\infty,$ $t=0$ 
and the residue of $H$ at $t=c\in\mathbb{C}^{*}.$
In Subsection 4.5, 
based on the residue calculus of $H,$ 
we obtain a necessary condition for $A_5^{(1)}(\alpha_j)_{0\leq j \leq 5}$ to have 
a rational solution.

\subsection{The definition of the auxiliary function $H$} 
In this subsection, 
following Noumi and Yamada \cite{NoumiYamada-B}, 
we introduce the Hamiltonians $h_i \,\,(i=0,1,2,3,4,5)$ of $A_5^{(1)}(\alpha_j)_{0\leq j \leq 5}$ 
and define the auxiliary function $H$. 
\par
Noumi and Yamada \cite{NoumiYamada-B} defined 
the Hamiltonians $h_i \,\,(i=0,1,2,3,4,5)$ of $A_5^{(1)}(\alpha_j)_{0\leq j \leq 5}$ 
by 
\begin{align*}
h_{i}
&=
\sum_{j=0}^{5}
f_{j}f_{j+1}f_{j+2}f_{j+3} \\
&+\frac13
\left(\alpha_{i+1}+2\alpha_{i+2}+\alpha_{i+4}-\alpha_{i+5}\right)
f_{i}f_{i+1} 
+\frac13
\left(\alpha_{i+1}+2\alpha_{i+2}+3\alpha_{i+3}+\alpha_{i+4}+2\alpha_{i+5}\right)
f_{i+1}f_{i+2} \\
&-\frac13
\left(2\alpha_{i+1}+\alpha_{i+2}-\alpha_{i+4}+\alpha_{i+5}\right)
f_{i+2}f_{i+3}
+\frac13
\left(\alpha_{i+1}-\alpha_{i+2}+\alpha_{i+4}+2\alpha_{i+5}\right)
f_{i+3}f_{i+4}  \\
&-\frac13
\left(2\alpha_{i+1}+\alpha_{i+2}+3\alpha_{i+3}+2\alpha_{i+4}+\alpha_{i+5}\right)
f_{i+4}f_{i+5}
+\frac13
\left(\alpha_{i+1}-\alpha_{i+2}-2\alpha_{i+4}-\alpha_{i+5}\right)
f_{i+5}f_{i} \\
&+
\frac13
\left(\alpha_{i+1}-\alpha_{i+2}+\alpha_{i+4}-\alpha_{i+5}\right)
f_{i}f_{i+3}
+\frac13
\left(\alpha_{i+1}+2\alpha_{i+2}+\alpha_{i+4}+2\alpha_{i+5}\right)
f_{i+1}f_{i+4}  \\
&-\frac13
\left(2\alpha_{i+1}+\alpha_{i+2}+2\alpha_{i+4}+\alpha_{i+5}\right)
f_{i+2}f_{i+5} +\frac14(\alpha_{i+1}+\alpha_{i+3}+\alpha_{i+5})^2.
\end{align*} 
%%%%%%%%%%%%%%%
We then define $\tilde{h}_{i}$ and the axillary function $H$ by 
\begin{equation*}
\tilde{h}_{i}=h_{i}-\frac14(\alpha_{i+1}+\alpha_{i+3}+\alpha_{i+5})^2,
\end{equation*}
and 
\begin{equation*}
H=\frac16 
\left(\tilde{h}_{0}+\tilde{h}_{1}+\tilde{h}_{2}+\tilde{h}_{3}+\tilde{h}_{4}+\tilde{h}_{5}
\right),
\end{equation*}
respectively. 
\par
If $A_5^{(1)}(\alpha_i)_{0\leq i \leq 5}$ has a rational solution $(f_i)_{0\leq i \leq 5},$ 
it follows 
from 
Proposition \ref{prop:a5inf} and \ref{prop:a5zero} that 
$H$ can be expanded as 
\begin{equation*}
H 
=
\begin{cases}
h_{\infty, 4}t^4 +
h_{\infty, 2}t^2 +
h_{\infty, 0} + \cdots 
&
\textrm{at \, \, $t=\infty,$} \\
h_{0, -2} t^{-2} 
+ h_{0,0} + \cdots 
&
\textrm{at \,\, $t=0.$}
\end{cases}
\end{equation*}

\subsection{The Laurent series of $H$ at $t=\infty$}
In this subsection, 
we calculate the constant term $h_{\infty,0}$ of $H$ at $t=\infty$ 
by using the meromorphic solutions at $t=\infty$ in Proposition \ref{prop:a5inf}. 
\begin{proposition}
\label{prop:hinf}
\it{
Suppose that 
for $A_5^{(1)}(\alpha_j)_{0\leq j \leq 5},$ 
there exists a meromorphic solution at $t=\infty.$ 
Then, one of Type A (1), Type A (2), Type A (3), Type B and Type C occurs. 
\newline
Type A (1): 
$f_i, f_{i+1}$ both have a pole at $t=\infty$ for some $i=0, 1, 2, 3, 4, 5.$ 
Then, 
\begin{equation*}
h_{\infty,0}
=
-\frac16
(2\alpha_{i+2}+\alpha_{i+3}+\alpha_{i+4}+2\alpha_{i+5})
+\alpha_{i+2}\alpha_{i+3}+\alpha_{i+4}\alpha_{i+5}
+\alpha_{i+2}\alpha_{i+5}.
\end{equation*}
Type A (2): 
$f_{i}, f_{i+3}$ both have a pole at $t=\infty$ for some $i=0, 1, 2, 3, 4, 5.$ 
Then, 
\begin{equation*}
h_{\infty,0}
=
\alpha_{i+1}\alpha_{i+2}+\alpha_{i+4}\alpha_{i+5}
+\frac16
(-\alpha_{i+1}-\alpha_{i+2}-\alpha_{i+4}-\alpha_{i+5}).
\end{equation*}
Type A (3): 
$f_{i}, f_{i+1}, f_{i+2}, f_{i+4}$ all have a pole at $t=\infty$ for some $i=0, 1, 2, 3, 4, 5.$ 
Then, 
\begin{equation*}
h_{\infty, 0}=
\frac16
(
-1+\alpha_{i+1}-3\alpha_{i+3}-\alpha_{i+4}-3\alpha_{i+5}
)+
\alpha_{i+3}(\alpha_i+\alpha_{i+4}+\alpha_{i+5})+
\alpha_{i+5}(\alpha_{i+2}+\alpha_{i+3}+\alpha_{i+4}).
\end{equation*}
\newline
Type B:
%\newline
$f_i, f_{i+1}, f_{i+2}, f_{i+3}$ 
have a pole at $t=\infty$ for some $i=0, 1, 2, 3, 4, 5.$ 
Then, 
\begin{equation*}
h_{\infty, 0}
=
\frac{1}{12}
(\beta^i_{-1}-\gamma^i_{-1})
+\frac16(\phi^i_{-1}-\epsilon^i_{-1})
+\frac14((\beta^i_{-1})^2+(\gamma^i_{-1})^2)
-\frac12\epsilon^i_{-1}\phi^i_{-1},
\end{equation*}
where 
\begin{equation*}
\beta^i_{-1}
=-\alpha_{i}+\alpha_{i+2}-\alpha_{i+4}, \,\,
\gamma^i_{-1}
=-\alpha_{i+1}+\alpha_{i+3}+\alpha_{i+5}, \,\,
\epsilon^i_{-1}=2\alpha_{i+4}, \,\,
\phi^i_{-1}=-2\alpha_{i+5}.
\end{equation*}
\newline
Type C: 
all of $(f_i)_{0\leq i \leq 5}$ have a pole at $t=\infty.$ 
Then, 
\begin{equation*}
h_{\infty, 0}
=
\frac13
(
2x^2+2y^2+2z^2+2\omega^2
+xy-2yz+z\omega
-2y\omega+x\omega-2xz
),
\end{equation*}
where 
\begin{equation*}
x=\alpha_2-\alpha_4, \,
y=\alpha_3-\alpha_5, \,
z=\alpha_0-\alpha_4, \,
\omega=\alpha_1-\alpha_5.
\end{equation*}
}
\end{proposition}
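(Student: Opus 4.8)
The plan is to prove all five formulas by one and the same mechanism: substitute the Laurent expansions of $f_0,\dots ,f_5$ at $t=\infty$ furnished by Proposition \ref{prop:a5inf} into the definition $H=\frac16\sum_{i=0}^{5}\tilde h_i$ and read off the coefficient of $t^0$. First I would cut down the bookkeeping by noting that every $h_i$ contains the one and the same symmetric quartic $Q:=\sum_{j=0}^{5}f_jf_{j+1}f_{j+2}f_{j+3}$, and that the passage $h_i\mapsto\tilde h_i$ removes exactly the constant $\frac14(\alpha_{i+1}+\alpha_{i+3}+\alpha_{i+5})^2$. Hence
\[
H=Q+\tfrac16\sum_{i=0}^{5}q_i,\qquad q_i:=\bigl(\text{part of }h_i\text{ quadratic in the }f_j\bigr),
\]
so that $h_{\infty,0}$ is the $t^0$-coefficient of $Q$ plus $\frac16$ times the sum of the $t^0$-coefficients of the $q_i$. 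Each of the five pole configurations of Proposition \ref{prop:a5inf} is then treated separately, but by the identical recipe.

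For the bilinear contribution I would use that, by Proposition \ref{prop:a5odd}, every $f_j$ is odd, so its expansion has the shape $f_j=c_jt+\rho_jt^{-1}+s_jt^{-3}+\cdots$, where the leading coefficient $c_j$ and the residue $\rho_j$ are read off from Proposition \ref{prop:a5inf}. Because only odd powers occur, each product $f_af_b$ has only even powers, and its constant term is simply $c_a\rho_b+c_b\rho_a$, a homogeneous linear polynomial in the $\alpha_j$. Multiplying by the (also linear) coefficient standing in front of $f_af_b$ in $q_i$ and summing over the finitely many terms and over $i$ is then a purely mechanical computation that never calls on any coefficient beyond the $c_j$ and $\rho_j$.

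The one delicate point, and the main obstacle, is the constant term of the quartic $Q$. Since each factor has top degree one, the only exponent patterns of four odd factors summing to zero are $(1,1,-1,-1)$ and $(1,1,1,-3)$. The first yields the harmless products $c_ac_b\rho_u\rho_v$, built from already-known data; the second yields $c_ac_bc_d\,s_e$ and would require the unknown $t^{-3}$-coefficients $s_e$. I expect to show that these $(1,1,1,-3)$ contributions never survive into $h_{\infty,0}$: in Types A\,(1) and A\,(2) at most two of the $c_j$ are nonzero, so no consecutive quadruple carries three degree-one factors and the pattern simply cannot occur; in Type A\,(3) the three unit factors appear with signs $+1,+1,-1$, and summing over the six cyclic quadruples the $s_e$-terms cancel in pairs; in Types B and C every surviving $s_e$ enters only through the full cyclic sum $\sum_k s_k$, which vanishes because $f_0+f_2+f_4=t$ and $f_1+f_3+f_5=t$ force $\sum_{k\ \mathrm{even}}s_k=\sum_{k\ \mathrm{odd}}s_k=0$ at order $t^{-3}$.

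Once this cancellation is in hand, no data beyond $c_j$ and $\rho_j$ is needed, and the raw output of the substitution is a \emph{homogeneous} quadratic form in the $\alpha_j$. The final step is to reconcile this with the stated expressions using the affine relation $\alpha_0+\alpha_1+\alpha_2+\alpha_3+\alpha_4+\alpha_5=1$: the apparent linear parts of the stated formulas are obtained by homogenizing, i.e.\ by replacing a factor $1$ by $\sum_j\alpha_j$ (and, for Types B and C, by collecting the result into the auxiliary combinations $\beta^i_{-1},\gamma^i_{-1},\epsilon^i_{-1},\phi^i_{-1}$, respectively $x,y,z,\omega$). I regard the argument as conceptually routine; the real work is the sign- and index-tracking through $Q$ and the six-fold average, and the clean verification of the $(1,1,1,-3)$-cancellation, which is precisely what keeps $h_{\infty,0}$ expressible in the $\alpha_j$ alone.
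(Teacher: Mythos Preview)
Your proposal is correct and is essentially the approach the paper takes: the paper offers no proof for this proposition, leaving it as a direct substitution of the Laurent expansions of Proposition~\ref{prop:a5inf} into the definition of $H$. Your organization---separating the common quartic $Q$ from the averaged quadratic part, and especially your explicit verification that the $(1,1,1,-3)$ contributions cancel in each type so that no $t^{-3}$-coefficients are needed---is exactly what makes this ``direct calculation'' tractable, and your cancellation arguments for Types A(3), B, and C check out.
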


\subsection{The Laurent series of $H$ at $t=0$}
In this subsection, 
we compute the constant term $h_{0,0}$ of $H$ at $t=0$ 
using the meromorphic solutions at $t=0$ in Proposition \ref{prop:a5zero}.

\begin{proposition}
\label{prop:h0}
\it{
Suppose that for $A_5^{(1)}(\alpha_j)_{0\leq j \leq 5},$ 
there exists a meromorphic solution at $t=0.$ 
\par
(1)\quad If all of $(f_j)_{0\leq j \leq 5}$ are holomorphic at $t=0,$ 
$$
h_{0,0}=0,
$$
because $f_j \,\,(j=0,1,2,3,4,5)$ are all odd functions.
\par
(2)\quad 
If for some $i=0,1,2,3,4,5,$ case $(f_i, f_{i+2})_0$ occurs in Proposition \ref{prop:a5zero}, 
\begin{equation*}
h_{0,0}
=
\frac13 \alpha_{i+1}+
\left(
\frac16-\alpha_{i+1}
\right)
(\alpha_{i+3}+\alpha_{i+5}).
\end{equation*}
\par
(3)\quad
If for some $i=0,1,2,3,4,5,$ case $(f_i, f_{i+2}, f_{i+3}, f_{i+5})_0$ occurs in Proposition \ref{prop:a5zero}, 
\begin{equation*}
h_{0,0}
=
\frac16
-2\alpha_{i+1} \alpha_{i+4}
+
\frac16 (\alpha_{i+1}+\alpha_{i+4}).
\end{equation*}
}
\end{proposition}

{\bf Remark} 
\quad 
In Proposition \ref{prop:a5zero}, 
we find it impossible to 
compute all the coefficients 
of the Laurent series of $(f_j)_{0\leq j \leq 5}$ at $t=0.$ 
However, we can obtain the relations of the coefficients, 
because $f_0+f_2+f_4=t$ and $f_1+f_3+f_5=t.$ 
With them, 
we can then compute $h_{0,0}.$

\subsection{The Laurent series of $H$ at $t=c \in \mathbb{C}^{*}$}
In this subsection, 
we compute the residues of $H$ at $t=c \in \mathbb{C}^{*}$ 
using the Laurent series of 
$f_j \,\,(0\leq j \leq 5)$ at $t=c \in \mathbb{C}^{*}$ in Proposition \ref{prop:a5c}.

\begin{proposition}
\label{prop:hc}
\it{
Suppose that 
for $A_5^{(1)}(\alpha_j)_{0\leq j \leq 5},$ 
there exists a meromorphic solution at $t=c\in\mathbb{C}^{*}.$ 
Moreover, 
assume that some of $(f_j)_{0\leq j \leq 5}$ have a pole at $t=c.$ 
Either of the following then occurs:
\newline
(1)\quad 
for some $i=0, 1, 2, 3, 4, 5,$
\begin{equation*}
\mathrm{Res}_{t=c} H = \begin{cases}
             \frac{1}{6}c  &  \textrm{in case of $(f_{i}, f_{i+2})(I)$ in Proposition \ref{prop:a5c},} \\ 
             \frac{1}{12}c &  \textrm{in case of $(f_{i}, f_{i+2})(II)$ in Proposition \ref{prop:a5c},}
           \end{cases}
\end{equation*}
\newline
(2)\quad 
for some $i=0, 1, 2, 3, 4, 5,$
\begin{equation*}
\mathrm{Res}_{t=c} H = \begin{cases}
              \frac{1}{6}c   &  \textrm{in case of $(f_{i}, f_{i+2}, f_{i+3}, f_{i+5})(I)$ in Proposition \ref{prop:a5c}, }  \\
              \frac{5}{12}c  &  \textrm{in case of $(f_{i}, f_{i+2}, f_{i+3}, f_{i+5})(II)$ in Proposition \ref{prop:a5c},} \\
              \frac{5}{12}c  &  \textrm{in case of $(f_{i}, f_{i+2}, f_{i+3}, f_{i+5})(III)$ in Proposition \ref{prop:a5c}. }
           \end{cases}
\end{equation*}
}
\end{proposition}

{\bf Remark-1} 
\quad 
In Proposition \ref{prop:a5c}, 
we find it impossible to 
compute all the coefficients 
of the Laurent series of $(f_j)_{0\leq j \leq 5}$ at $t=c.$ 
However, we can obtain the relations of the coefficients, 
because $f_0+f_2+f_4=t$ and $f_1+f_3+f_5=t.$ 
With them, 
we can then compute the residues of $H$ at $t=c.$
\newline
\par
{\bf Remark-2}
\quad 
We first defined the auxiliary function $H$ by 
$$
H=\sum_{j=0}^{5}
f_{j}f_{j+1}f_{j+2}f_{j+3}
$$ 
in the same way as we \cite{Matsuda1} did in case of the Noumi and Yamada system of type $A_4^{(1)}.$ 
However, 
the residues of $H$ at $t=c\in\mathbb{C}^{*}$ depend on the parameters $\alpha_j \,(0\leq j \leq 5).$ 
That is the reason why we adopted the complicated definition of  the auxiliary function $H$ 
in this paper.

\subsection{Rational solutions and the auxiliary function $H$}
In this subsection, 
by the residue calculus of $H,$
we obtained the necessary condition for $A_5^{(1)}(\alpha_j)_{0\leq j \leq 5}$ 
to have a rational solution.

\begin{proposition}
\label{prop:hplus}
\it{
(1)\quad 
Suppose that 
for $A_5^{(1)}(\alpha_j)_{0\leq j \leq 5},$ 
there exists a rational solution. 
$6(h_{0,0}-h_{\infty, 0})$ is then a non-positive integer. 
\newline
(2)\quad 
Suppose that 
for $A_5^{(1)}(\alpha_j)_{0\leq j \leq 5},$ 
there exists a rational solution 
such that 
some of 
$(f_i)_{0\leq i \leq 5}$ 
have poles in $\mathbb{C}^{*}.$ 
$
6(h_{0,0}-h_{\infty, 0})
$ 
is then a negative integer. 
\newline
(3)\quad
Suppose that 
for $A_5^{(1)}(\alpha_j)_{0\leq j \leq 5},$ 
there exists a rational solution and 
$h_{0,0}-h_{\infty, 0}=0.$ 
All of $(f_i)_{0\leq i \leq 5}$ 
are then holomorphic in $\mathbb{C}^{*}$.
}
\end{proposition}

\begin{proof}
We first treat case (1) and assume that $\pm c_1, \pm c_2, \ldots, c_n \in \mathbb{C}^{*}$ are poles of $(f_i)_{0\leq i \leq 5}.$ 
By Propositions \ref{prop:hinf}, \ref{prop:h0} and \ref{prop:hc}, 
we then get
\begin{equation*}
H
=
h_{\infty, 4}t^4 +
h_{\infty, 2}t^2 +
h_{\infty, 0} +
h_{0, -2} t^{-2} +
\sum_{k=1}^n 
\left(
\frac{\epsilon_k c_k}{t-c_k}-
\frac{\epsilon_k c_k}{t+c_k}
\right),
\end{equation*}
where $12\epsilon_k \,\,(1\leq k \leq n)$ are all positive integers. 
\par
By comparing the constant terms of 
the Laurent series of $H$ at $t=0$, 
we obtain 
\begin{equation*}
h_{\infty, 0}
-2\sum_{k=1}^n \epsilon_k 
=h_{0,0}.
\end{equation*} 
Therefore, we have
\begin{equation*}
\label{eqn:h-relation}
-h_{\infty, 0}+h_{0,0}
=
-2\sum_{k=1}^n \epsilon_k 
=
-\frac16 m, \cdots (\ast)
\end{equation*}
for a positive integer $m,$ which proves case (1). 
\par
Cases (2) and (3) can be proved by equation $(*)$. 
\end{proof}

\section{Necessary Conditions for Type A}
In this section, we prove Proposition \ref{prop:necA}, 
in which we show the necessary conditions for $A_5^{(1)}(\alpha_i)_{0\leq i \leq 5}$ 
to have a rational solution of Type A 
and transform the rational solution into a holomorphic solution, 
a solution such that 
all of $(f_j)_{0\leq j \leq 5}$ are holomorphic at $t=0.$ 

\begin{proposition}
\label{prop:necA}
\it{
Suppose that 
for 
$A_5^{(1)}(\alpha_j)_{0\leq j \leq 5},$ 
there exists 
a rational solution of Type A. 
By some B\"acklund transformations, 
the solution 
can then be transformed into 
a holomorphic solution at $t=0$. 
Furthermore, 
the parameters 
$\alpha_i \,\,(i=0,1,2,3,4,5)$ 
satisfy one of the following five conditions: 
\newline
(1)\quad for some $i=0, 1, 2, 3, 4, 5,$ 
$
\alpha_{i+2}, \, \alpha_{i+3}, \, 
\alpha_{i+4}, \, \alpha_{i+5} 
\in \mathbb{Z}; 
$
\newline
(2)\quad for some $i=0, 1, 2, 3, 4, 5,$ 
$
\alpha_{i+1}, \, \alpha_{i+2}, \,
\alpha_{i+4}, \, \alpha_{i+5} \,
\in \mathbb{Z};
$
\newline
(3)\quad for some $i=0, 1, 2, 3, 4, 5,$ 
$
\alpha_{i+3}, \, \alpha_{i+5}, \, 
\alpha_i+\alpha_{i+4}, \, 
\alpha_i-\alpha_{i+2} 
\in \mathbb{Z};
$
\newline
(4)\quad for some $i=0, 1, 2, 3, 4, 5,$ 
$
\alpha_{i+3}+\alpha_{i+4}, \, 
\alpha_{i+4}+\alpha_{i+5}, \,
\alpha_i+\alpha_{i+1}, \,
\alpha_i-\alpha_{i+4} 
\in \mathbb{Z};
$
\newline
(5)\quad for some $i=0, 1, 2, 3, 4, 5,$  
$
\alpha_{i}+\alpha_{i+1}, \,
\alpha_{i}+\alpha_{i+5}, \,
\alpha_{i+2}+\alpha_{i+3}, \,
\alpha_{i+3}+\alpha_{i+4}, \,
\alpha_i+\alpha_{i+3} \in \mathbb{Z}.
$
\par
Especially, 
one of cases (1), (2) and (3) occurs if all of $(f_j)_{0\leq j \leq 5}$ are holomorphic at $t=0.$ 
}
\end{proposition}

\begin{proof}
For the proof, 
we have to first consider the following three cases: 
\newline
Type A (1): 
$f_i, f_{i+1}$ both have a pole at $t=\infty$ for some $i=0, 1, 2, 3, 4, 5,$ 
\newline
Type A (2): 
$f_{i}, f_{i+3}$ both have a pole at $t=\infty$ for some $i=0, 1, 2, 3, 4, 5,$ 
\newline
Type A (3): 
$f_{i}, f_{i+1}, f_{i+2}, f_{i+4}$ all have a pole at $t=\infty$ for some $i=0, 1, 2, 3, 4, 5.$ 
\newline
We treat Type A (3). The other cases can be proved in the same way. 
By $\pi,$ 
we may then assume that  $f_0, f_1, f_2, f_4$ all 
have a pole at $t=\infty.$ 
\par
Proposition \ref{prop:a5zero} shows that 
the behaviors of $(f_k)_{0\leq k \leq 5}$ at $t=0$ are one of the following:
\newline
(1)\quad all of $(f_j)_{0\leq j \leq 5}$ are holomorphic at $t=0,$
\newline
(2)\quad $f_j, f_{j+2}$ both have a pole at $t=0$ for some $j=0,1,2,3,4,5,$ 
\newline
(3)\quad $f_j, f_{j+2}, f_{j+3}, f_{j+5}$ all have a pole at $t=0$ for some $j=0,1,2,3,4,5.$ 
\newline 
We deal with case (2) for $j=1,$ that is, 
the case where $f_3, f_5$ both have a pole at $t=0.$ 
Thus, 
we treat the case where $f_0, f_1, f_2, f_4$ all 
have a pole at $t=\infty$ and  $f_3, f_5$ both have a pole at $t=0.$ 
The other cases can be proved in the same way. 
\par
From Corollary \ref{coro:a5res}, 
it follows that 
\begin{equation*}
\mathrm{Res}_{t=\infty} f_i \in \mathbb{Z} \,\, (i=0, 1, 2, 4),  \,\,
-\mathrm{Res}_{t=\infty} f_i - \mathrm{Res}_{t=0} f_i \in \mathbb{Z} \,\, (i=3, 5),
\end{equation*}
which implies that 
\begin{equation*}
\alpha_0+\alpha_1, \,
\alpha_3-\alpha_5, \,
\alpha_1+\alpha_2, \,
\alpha_0+\alpha_2-\alpha_4+\alpha_5 
\in \mathbb{Z}
\end{equation*}
from Propositions \ref{prop:a5inf} and \ref{prop:a5zero}. 
\par
We suppose that $\alpha_4 \neq 0$. From Corollary \ref{coro:reg}, 
it follows that 
$s_4$ 
transforms $(f_i)_{0\leq i \leq 5}$ into a holomorphic solution. 
Therefore, 
$s_4(f_i) \, (i=0, 1, 2, 4)$ 
have a pole at $t=\infty$ and 
all of 
$(s_4(f_j))_{0\leq j \leq 4}$ 
are holomorphic at $t=0$. 
We express this fact by 
\begin{alignat*}{5}
&t=\infty & \,\,  &(f_0, f_1, f_2, f_4)_{\infty} & \,\, &\stackrel{s_4}{\longrightarrow} &  \,\,
&(f_0, f_1, f_2, f_4)_{\infty}   \\
&t=0      &   &(f_3, f_5)_0                  &  &\stackrel{s_4}{\longrightarrow} &      
&\mathrm{holomorphic}.
\end{alignat*} 
We set 
$
\hat{\alpha}_j:=s_4(\alpha_j) 
\,\,
(j=0,1,2,3,4,5).
$ 
Since all of 
$(s_4(f_i))_{0\leq i \leq 5}$ are holomorphic at $t=0$, 
it follows 
from Proposition \ref{prop:a5inf} and Corollary \ref{coro:a5res} that 
\begin{equation*}
\hat{\alpha}_0+\hat{\alpha}_4, 
\hat{\alpha}_0-\hat{\alpha}_2, 
\hat{\alpha}_3, \hat{\alpha}_5 \in \mathbb{Z}.
\end{equation*} 
Therefore, 
we have 
\begin{equation*}
\alpha_3+\alpha_4, 
\alpha_4+\alpha_5, 
\alpha_0+\alpha_1, 
\alpha_0-\alpha_4 \in \mathbb{Z}, 
\alpha_4 \neq 0,
\end{equation*}
which is the condition (4) in the proposition. 
\par
We suppose that $\alpha_4=0$. 
Since $f_3$ has a pole at $t=0$ and $f_3 \not\equiv 0,$ 
it follows from 
Proposition \ref{prop:uniqueness} that 
$\alpha_3 \neq 0,$ which implies that 
\begin{alignat*}{7} 
&t=\infty                        & \,\,
&(f_0, f_1, f_2, f_4)_{\infty}   & \,\,
&\stackrel{s_3}{\longrightarrow} & \,\,
&(f_0, f_1)_{\infty}  \\
&t=0                             & 
&(f_3, f_5)_0                    & 
&\stackrel{s_3}{\longrightarrow} & 
&(f_3, f_5)_0  \\
&                                & 
&(\alpha_j)_{0\leq j \leq 5}     & 
&\stackrel{s_3}{\longrightarrow} & 
&(
\alpha_0, 
\alpha_1, 
\alpha_2+\alpha_3, 
-\alpha_3, 
\alpha_3, 
\alpha_5
).
\end{alignat*}
We set 
$
\hat{\alpha}_j
=s_3(\alpha_j) \,\,(0\leq j \leq 5).
$ 
It follows 
from Corollary \ref{coro:a5res} that 
\begin{equation*}
\hat{\alpha}_2, 
\hat{\alpha}_4, 
\hat{\alpha}_0-\hat{\alpha}_3, 
\hat{\alpha}_0+\hat{\alpha}_5 \in \mathbb{Z}.
\end{equation*} 
Therefore, we obtain $\alpha_j \in \mathbb{Z} \, (j=0,1,2,3,4,5).$ 
By $s_4$, we also have 
\begin{alignat*}{7}
&t=\infty                          & \,\,
&(f_0, f_1)_{\infty}               & \,\,
&\stackrel{s_4}{\longrightarrow}   & \,\,
&(f_5, f_0, f_1, f_3)_{\infty} \\
&t=0                               & 
&(f_3, f_5)_0                      & 
&\stackrel{s_4}{\longrightarrow}   & 
&\mathrm{holomorphic}.
\end{alignat*}
\end{proof}

%%%%%%%%%%%%%%%%%%%%
\section{Necessary Conditions for Type B}
In this section, 
we prove Proposition \ref{prop:necB}, 
which 
shows the necessary conditions for $A_5^{(1)}(\alpha_i)_{0\leq i \leq 5}$ 
to have a rational solution of Type B, 
and transform the solution into a holomorphic solution at $t=0$.

\begin{proposition}
\label{prop:necB}
\it{
Suppose that for $A_5^{(1)}(\alpha_j)_{0\leq j \leq 5},$ 
there exists a rational solution of Type B. 
By some B\"acklund transformations, 
the solution can then be transformed into a holomorphic solution at $t=0$. 
%\newline
\newline
(1) \,
if 
$f_{i}, f_{i+1}, f_{i+2}, f_{i+3}$ all 
have a pole at $t=\infty$ 
and 
all of $(f_j)_{0\leq j \leq 5}$ 
are holomorphic at $t=0$ for some $i=0,1,2,3,4,5,$  
\begin{equation*}
-\alpha_{i}+\alpha_{i+2}-\alpha_{i+4}, \,
-\alpha_{i+1}+\alpha_{i+3}+\alpha_{i+5}, \,
2\alpha_{i+4}, \, -2\alpha_{i+5} \in \mathbb{Z};
\end{equation*}
(2) \,
if 
$f_{i}, f_{i+1}, f_{i+2}, f_{i+3}$ all 
have a pole at $t=\infty,$ 
$f_{i}, f_{i+2}$ both have a pole at $t=0$ 
and $\alpha_{i+1}$ is not zero for some $i=0,1,2,3,4,5,$ 
\begin{equation*}
-\alpha_i+\alpha_{i+2}-\alpha_{i+4}, \,
\alpha_{i+1}+\alpha_{i+3}+\alpha_{i+5}, \, 
2\alpha_{i+4}, \, -2\alpha_{i+5}, 
\in \mathbb{Z};
\end{equation*}
(3) \,
if 
$f_{i}, f_{i+1}, f_{i+2}, f_{i+3}$ all 
have a pole at $t=\infty,$ 
$f_{i+1}, f_{i+3}$ both 
have a pole at $t=0$ 
and $\alpha_{i+2}$ is not zero for some $i=0,1,2,3,4,5,$  
\begin{equation*}
-\alpha_{i}-\alpha_{i+2}-\alpha_{i+4}, \,
-\alpha_{i+1}+\alpha_{i+3}+\alpha_{i+5}, \,
2\alpha_{i+4}, \, -2\alpha_{i+5}, 
\in \mathbb{Z};
\end{equation*}
(4) \,
if 
$f_{i}, f_{i+1}, f_{i+2}, f_{i+3}$ all 
have a pole at $t=\infty,$ 
$f_{i+2}, f_{i+4}$ both 
have a pole at $t= 0$ 
and $\alpha_{i+3}$ is not zero for some $i=0,1,2,3,4,5,$  
\begin{equation*}
-\alpha_{i}+\alpha_{i+2}-\alpha_{i+4}\in\mathbb{Z}, \, 
-\alpha_{i+1}-\alpha_{i+3}+\alpha_{i+5}\in\mathbb{Z}, \,
2\alpha_{i+3}+2\alpha_{i+4}\in\mathbb{Z}, \,
-2\alpha_{i+5}\in\mathbb{Z}; 
\end{equation*}
(5) \,
if 
$f_{i}, f_{i+1}, f_{i+2}, f_{i+3}$ all 
have a pole at $t=\infty,$ 
$f_{i+3}, f_{i+5}$ both 
have a pole at $t=0$ 
and $\alpha_{i+4}$ is not zero for some $i=0,1,2,3,4,5,$ 
\begin{equation*}
\alpha_{i+1}-\alpha_{i+3}-\alpha_{i+5}, \,
-\alpha_i+\alpha_{i+2}-\alpha_{i+4}, \,
2\alpha_{i+3}+2\alpha_{i+4}, \,
-2\alpha_{i+4} 
\in \mathbb{Z};
\end{equation*}
(6) \,
if 
$f_{i}, f_{i+1}, f_{i+2}, f_{i+3}$ all 
have a pole at $t=\infty,$ 
$f_{i+4}, f_{i}$ both 
have a pole at $t= 0$ 
and $\alpha_{i+5}$ is not zero for some $i=0,1,2,3,4,5,$ 
\begin{equation*}
-\alpha_{i+1}+\alpha_{i+3}+\alpha_{i+5}, \,
\alpha_i-\alpha_{i+2}+\alpha_{i+4}+2\alpha_{i+5}, \, 
-2\alpha_{i+5}, \,
-2\alpha_{i}-2\alpha_{i+5}  
\in \mathbb{Z}; 
\end{equation*}
(7) \,
if 
$f_{i}, f_{i+1}, f_{i+2}, f_{i+3}$ all 
have a pole at $t=\infty,$ 
$f_{i+5}, f_{i+1}$ both 
have a pole at $t=0$ 
and $\alpha_i$ is not zero for some $i=0,1,2,3,4,5,$  
\begin{equation*}
\alpha_{i}+\alpha_{i+2}-\alpha_{i+4}, \, 
-\alpha_{i+1}+\alpha_{i+3}+\alpha_{i+5}, \, 
2\alpha_{i+4}, \,
-2\alpha_{i}-2\alpha_{i+5} \in \mathbb{Z}; 
\end{equation*}
(8) \,
if 
$f_{i}, f_{i+1}, f_{i+2}, f_{i+3}$ all 
have a pole at $t=\infty,$ 
$f_{i}, f_{i+2}, f_{i+3}, f_{i+5}$ all 
have a pole at $t=0$ 
and $\alpha_{i+1},\alpha_{i+4}$ are not zero for some $i=0,1,2,3,4,5,$  
\begin{equation*}
-\alpha_{i+1}-\alpha_{i+3}-2\alpha_{i+4}-\alpha_{i+5}, \,
-\alpha_i+\alpha_{i+2}-\alpha_{i+4}, \,
2\alpha_{i+3}+2\alpha_{i+4}, \,
-2\alpha_{i+4} 
\in \mathbb{Z};
\end{equation*}
(9) \,
if 
$f_{i}, f_{i+1}, f_{i+2}, f_{i+3}$ all 
have a pole at $t=\infty,$ 
$f_{i+1}, f_{i+3}, f_{i+4}, f_{i}$ all 
have a pole at $t=0$ 
and $\alpha_{i+2},\alpha_{i+5}$ are not zero for some $i=0,1,2,3,4,5,$  
\begin{equation*}
-\alpha_i-\alpha_{i+2}-\alpha_{i+4}-2\alpha_{i+5}, \,
-\alpha_{i+1}+\alpha_{i+3}-\alpha_{i+5}, \,
-2\alpha_{i+5}, \,
-2\alpha_i-2\alpha_{i+5}, 
\in \mathbb{Z};
\end{equation*}
(10) \,
if 
$f_{i}, f_{i+1}, f_{i+2}, f_{i+3}$ all 
have a pole at $t=\infty,$ 
$f_{i+2}, f_{i+4}, f_{i+5}, f_{i+1}$ all 
have a pole at $t=0$ 
and $\alpha_{i+3},\alpha_i$ are not zero for some $i=0,1,2,3,4,5,$ 
\begin{equation*}
\alpha_i+\alpha_{i+2}-\alpha_{i+4}, \,
-\alpha_{i+1}-\alpha_{i+3}+\alpha_{i+5}, \,
2\alpha_{i+3}+2\alpha_{i+4}, \,
-2\alpha_{i}-2\alpha_{i+5} 
\in \mathbb{Z}; 
\end{equation*}
(11) \,\, 
if for $A_5^{(1)}(\alpha_j)_{0\leq j \leq 5}$ there exists a rational solution 
and none of cases (1), (2), $\ldots,$ (10)  occurs, 
$$
2\alpha_0, \,\,2\alpha_1, \,\,
2\alpha_2, \,\,2\alpha_3, \,\,
2\alpha_4, \,\,2\alpha_5 \in\mathbb{Z}.
$$
}
\end{proposition}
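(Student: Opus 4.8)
The plan is to follow verbatim the strategy already used for Proposition~\ref{prop:necA}, combining the explicit residues at $t=\infty$ and $t=0$ with the residue theorem (Corollary~\ref{coro:a5res}) and, wherever a distinguished parameter is nonzero, a B\"acklund transformation that regularizes the solution at $t=0$ (Corollary~\ref{coro:reg}). First, by applying a suitable power of $\pi$, I may assume the pole configuration at $t=\infty$ is the Type B locus with $f_0,f_1,f_2,f_3$ singular, so that their residues at $t=\infty$ are read off directly from Proposition~\ref{prop:a5inf}. The eleven cases then correspond to the possible pole configurations at $t=0$: by Proposition~\ref{prop:a5zero} any pole locus at the origin is of the form $(f_j,f_{j+2})_0$ or $(f_j,f_{j+2},f_{j+3},f_{j+5})_0$, and intersecting these six, respectively three, configurations with the fixed Type B locus at infinity produces exactly the lists (2)--(7) and (8)--(10); case (1) is the regular-at-origin situation, and case (11) is the residual degenerate possibility.

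For each of (1)--(10) I would first extract preliminary integrality relations from Corollary~\ref{coro:a5res}, which asserts that $-\mathrm{Res}_{t=\infty}f_j-\mathrm{Res}_{t=0}f_j\in\Z$ for every $j$. Substituting the residues supplied by Propositions~\ref{prop:a5inf} and~\ref{prop:a5zero} and then simplifying with the normalization $\alpha_0+\alpha_1+\alpha_2+\alpha_3+\alpha_4+\alpha_5=1$ converts each relation into a linear integrality condition on the $\alpha_i$; this is the same reduction that turns the four residue conditions of the Type A computation into the quantities $\alpha_0+\alpha_1,\ \alpha_3-\alpha_5,\dots$. To sharpen these to the stated conditions, I would use the nonvanishing hypothesis on the distinguished parameter to apply the appropriate simple reflection $s_{j+1}$ (respectively the product $s_{j+1}s_{j+4}$) furnished by Corollary~\ref{coro:reg}, obtaining a solution regular at $t=0$ whose residues at infinity are governed by $\hat{\alpha}_k:=s(\alpha_k)$. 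Reapplying Corollary~\ref{coro:a5res} to this transformed solution and translating back through the action of $s$ on the parameters yields the final integrality conditions; case (1) requires only the first step, since no transformation is needed.

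The genuinely delicate point is the degenerate bookkeeping underlying case (11) together with the borderline subcases where the distinguished parameter vanishes. When, say, $\alpha_{j+1}=0$ (or $\alpha_{j+4}=0$), the regularizing reflection is unavailable; but since the corresponding $f$ has an honest pole at $t=0$ and therefore $f\not\equiv 0$, Proposition~\ref{prop:uniqueness} forces one of the singular functions to vanish identically, which either collapses the configuration onto one of the already-treated cases or contradicts the standing pole assumption. Carrying this reduction through, exactly as the $\alpha_4=0$ branch is handled for Type~A, is where the main care is required. Iterating these collapses until none of (1)--(10) can be invoked leaves only the possibility that the surviving residue relations, whose values at $t=c\in\C^{*}$ are half-integers by Proposition~\ref{prop:a5c}, force $2\alpha_j\in\Z$ for every $j$, which is precisely case (11). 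I expect the residue computations themselves to be routine; the real obstacle is organizing the case tree so that each transformation's simultaneous effect on the pole structure at $t=\infty$ and $t=0$ and on the parameter vector is tracked consistently throughout.
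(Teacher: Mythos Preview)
Your overall strategy for cases (1)--(10) under the nonvanishing hypothesis matches the paper exactly: normalize via $\pi$, read off the residues at $\infty$ and $0$ from Propositions~\ref{prop:a5inf} and~\ref{prop:a5zero}, apply Corollary~\ref{coro:a5res}, and when the distinguished parameter is nonzero use the reflection supplied by Corollary~\ref{coro:reg} to regularize at $t=0$ and reapply case~(1) to the transformed parameters $\hat\alpha_j$.

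The gap is in your handling of the degenerate subcases where the distinguished parameter vanishes. Your invocation of Proposition~\ref{prop:uniqueness} does not work here as it did for Type~A. For Type~B with $i=0$, Proposition~\ref{prop:uniqueness} only controls $f_4$ and $f_5$ (the functions holomorphic at $\infty$); it says nothing when $\alpha_1=0$. In case~(2) with $\alpha_1=0$, the functions with poles at $t=0$ are $f_0,f_2$, both singular at $\infty$ and hence outside the scope of Proposition~\ref{prop:uniqueness}, so the mechanism you borrow from the Type~A argument---deducing a second nonzero parameter from a nonvanishing $f$ and collapsing the configuration---is unavailable.

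The paper's actual treatment of $\alpha_1=0$ in case~(2) instead splits on whether a \emph{neighboring} parameter $\alpha_0$ or $\alpha_2$ is nonzero. If $\alpha_0\neq 0$ one applies $s_0$ first (this preserves both the $(f_0,f_2)_0$ locus and the Type~B locus at $\infty$, but sends $\alpha_1\mapsto\alpha_0\neq 0$), then $s_1$ to regularize; the resulting integrality conditions combine with the preliminary ones already extracted from the original configuration to force $2\alpha_j\in\Z$ for all $j$. The case $\alpha_2\neq 0$ is symmetric. If $\alpha_0=\alpha_1=\alpha_2=0$ one argues from $\mathrm{Res}_{t=0}f_0=-\alpha_3-\alpha_5\neq 0$ that $\alpha_3$ or $\alpha_5$ is nonzero and regularizes via $s_5s_3$, $s_5s_4s_3$, or $s_3s_4s_5$. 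This neighbor-shifting device, not Proposition~\ref{prop:uniqueness}, is what drives the degenerate branches and produces case~(11); the rest of your plan is sound.
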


\begin{proof}
We prove cases (1) and (2). 
The other cases can be proved in the same way. 
\newline
Case (1)
\quad
By $\pi$, we assume that 
$f_0, f_1, f_2, f_3$ all have a pole at $t=\infty$ and 
all of $(f_i)_{0\leq i \leq 5}$ are holomorphic at $t=0$. 
From Proposition \ref{prop:a5inf} and Corollary \ref{coro:a5res} 
it follows that 
\begin{equation}
\label{eqn:breg}
-\alpha_0+\alpha_2-\alpha_4, \, 
-\alpha_1+\alpha_3+\alpha_5, \,
2\alpha_4, \, -2\alpha_5 \in \mathbb{Z}.
\end{equation}
%\newline
Case (2)
\quad
By $\pi$, we assume that 
$f_0, f_1, f_2, f_3$ all have a pole at $t=\infty$ and 
$f_0, f_2$ both have a pole at $t=0$. 
\par
When $\alpha_1 \neq 0$, 
we obtain 
\begin{align*}
&t=\infty                         & 
&(f_0, f_1, f_2, f_3)_{\infty}    & 
&\stackrel{s_1}{\longrightarrow}  & 
&(f_0, f_1, f_2, f_3)_{\infty}  \\
&t=0                              & 
&(f_0, f_2)_0                     & 
&\stackrel{s_1}{\longrightarrow}  & 
&\mathrm{holomorphic} \\
&                                 & 
&(\alpha_j)_{0\leq j \leq 5}      & 
&\stackrel{s_1}{\longrightarrow}  & 
&(
\alpha_0+\alpha_1, 
-\alpha_1, 
\alpha_2+\alpha_1, 
\alpha_3, 
\alpha_4, 
\alpha_5
).
\end{align*}
We set $\hat{\alpha}_j:=s_1(\alpha_j) \,\,(j=0,1,2,3,4,5).$ 
Equation (\ref{eqn:breg}) implies that 
\begin{equation*}
\hat{\alpha}_1-\hat{\alpha}_3-\hat{\alpha}_5, \,
\hat{\alpha}_0-\hat{\alpha}_2+\hat{\alpha}_4, \,
2\hat{\alpha}_4, \, 2\hat{\alpha}_5 \in \mathbb{Z}.
\end{equation*}
Therefore, we obtain 
\begin{equation*}
-\alpha_0+\alpha_{2}-\alpha_{4}, \,
\alpha_{1}+\alpha_{3}+\alpha_{5}, \,
2\alpha_{4}, \, 2\alpha_{5} 
\in \mathbb{Z}.
\end{equation*}
\par
We suppose that $\alpha_1 =0$. 
We show that $(\alpha_i)_{0\leq i\leq 5}$ is in $\frac12 \mathbb{Z}^6$ 
and $(f_i)_{0\leq i \leq 5}$ can be transformed into a holomorphic solution at $t=0$. 
From Propositions \ref{prop:a5inf}, \ref{prop:a5zero} 
and Corollary \ref{coro:a5res}, 
it follows that 
\begin{equation}
\label{eqn:bcon1}
2\alpha_4, \,
2\alpha_5, \,
\alpha_0-\alpha_2+\alpha_4 \in \mathbb{Z}.
\end{equation}
\par
If $\alpha_1=0$ and $ \alpha_0 \neq 0,$ 
we have 
\begin{align*}
&t=\infty                          & 
&(f_0, f_1, f_2, f_3)_{\infty}     & 
&\stackrel{s_0}{\longrightarrow}   & 
&(f_0, f_1, f_2, f_3)_{\infty}  \\
&t=0                               & 
&(f_0, f_2)_0                      & 
&\stackrel{s_0}{\longrightarrow}   & 
&(f_0, f_2)_0 \\
&                                  & 
&(\alpha_j)_{0\leq j \leq 5}       & 
&\stackrel{s_0}{\longrightarrow}   & 
&
(
-\alpha_0, 
\alpha_0, 
\alpha_2, 
\alpha_3, 
\alpha_4, 
\alpha_5+\alpha_0
),
\end{align*}
and
\begin{align*}
&t=\infty                         & 
&(f_0, f_1, f_2, f_3)_{\infty}    & 
&\stackrel{s_1}{\longrightarrow}  & 
&(f_0, f_1, f_2, f_3)_{\infty}  \\
&t=0                              & 
&(f_0, f_2)_0                     & 
&\stackrel{s_1}{\longrightarrow}  & 
&\mathrm{holomorphic} \\
&                                 & 
&
(
-\alpha_0, 
\alpha_0, 
\alpha_2, 
\alpha_3, 
\alpha_4, 
\alpha_5+\alpha_0
) & 
&\stackrel{s_1}{\longrightarrow}  & 
&
(
0, 
-\alpha_0, 
\alpha_2+\alpha_0, 
\alpha_3, 
\alpha_4, 
\alpha_5+\alpha_0
).
\end{align*}
We set $\tilde{\alpha}_j=s_1  s_0(\alpha_j)\,(j=0,1,2,3,4,5).$ 
$(\tilde{\alpha}_j)_{0\leq j \leq 5}$ also satisfy (\ref{eqn:breg}). 
Therefore, we get
\begin{equation}
\label{eqn:bcon2}
-2\alpha_0-\alpha_3-\alpha_5, \,
-\alpha_0-\alpha_2+\alpha_4, \,
2\alpha_4, \,
2\alpha_5+2\alpha_0 \in  \mathbb{Z}.
\end{equation}
Equations (\ref{eqn:bcon1}) and (\ref{eqn:bcon2}) imply that 
$
2\alpha_j \in \mathbb{Z} \,\,(j=0,1,2,3,4,5).
$
\par
If $\alpha_1=0$ and $\alpha_2 \neq 0,$
we can prove that 
$(\alpha_j)_{0\leq j \leq 5}$ 
is in $\frac12 \mathbb{Z}^6$ 
and $(f_i)_{0\leq i \leq 5}$ can be transformed 
into a holomorphic solution at $t=0$ 
in the same way.
\par
We suppose that $\alpha_0=\alpha_1=\alpha_2=0$. 
Equation (\ref{eqn:bcon1}) implies that 
$
\alpha_4, \, 2\alpha_3, \, 2\alpha_5 \in \mathbb{Z}, 
$ 
which is case (11). 
From Proposition \ref{prop:a5zero}, 
it follows that 
$
\mathrm{Res}_{t=0} f_0
=-\alpha_3-\alpha_5.
$
Since $f_0$ has a pole at $t=0$ with the first order, 
it follows that 
$\alpha_3 \neq 0$ or $\alpha_5 \neq 0$. 
When $\alpha_3\neq 0$ and $\alpha_5 \neq 0,$ 
$s_5  s_3$ transforms 
the rational solution 
into a holomorphic solution at $t=0.$ 
If $\alpha_3 \neq 0$ and $\alpha_5=0,$ 
the solution 
can be transformed 
into a holomorphic solution at $t=0$ 
by 
$s_5  s_4  s_3$. 
If $\alpha_5 \neq 0$ and $\alpha_3=0,$ 
$(f_i)_{0\leq i \leq 5}$ 
can be transformed 
into a holomorphic solution at $t=0$ 
by 
$s_3  s_4  s_5$.
\end{proof}

\section{Necessary Conditions for Type C}
In this section, we prove 
Proposition \ref{prop:necC}, 
which shows the necessary conditions for  
$A_5^{(1)}(\alpha_i)_{0\leq i \leq 5}$ to have 
a rational solution of Type C, 
and transform the solution into a holomorphic solution at $t=0$.

\begin{proposition}
\label{prop:necC}
\it{
Suppose that $A_5^{(1)}(\alpha_j)_{0\leq j \leq 5}$ 
has a rational solution of Type C. 
The solution can then be transformed into a holomorphic solution at $t=0.$ 
%\newline
\newline
(1) \quad
if all of $(f_i)_{0\leq i \leq 5}$ 
are holomorphic at $t=0$, 
then, 
\begin{equation*}
\alpha_2-\alpha_4
\equiv
\frac{n}{3}, \,
\alpha_3-\alpha_5
\equiv 
\frac{m+n}{3}, \,
\alpha_0-\alpha_4 
\equiv
\frac{2m}{3}, \,
\alpha_1-\alpha_5
\equiv
\frac{n}{3} \,
\mathrm{mod} \,
\mathbb{Z} \, (m,n=0, \pm1);
\end{equation*}
\newline
(2) \quad
if 
$f_k, f_{k+2}$ both have a pole at $t=0$ 
for some $k=0, 1, 2, 3, 4, 5,$ 
then, 
\begin{alignat*}{13}
\alpha_{k+1} &+\alpha_{k+2} & &-\alpha_{k+4} &  &\equiv  \frac{n}{3},   & \quad
&\alpha_{k+3} & &-\alpha_{k+5}    &  &\equiv  \frac{m+n}{3}   & \,\,
&\mathrm{mod} \, \mathbb{Z},     \\
\alpha_{k}   &+\alpha_{k+1} & &-\alpha_{k+4} &  &\equiv  \frac{2m}{3},   &  \quad
-&\alpha_{k+1} & &-\alpha_{k+5}   &  &\equiv  \frac{n}{3}      & 
&\mathrm{mod} \, \mathbb{Z},  
\,\,
&\alpha_{k+1} \neq 0, 
\end{alignat*}
or 
for some $j=0,1,2,3,4,5,$ 
\begin{equation*}
(
\alpha_{j}, \alpha_{j+1},\alpha_{j+2}, \alpha_{j+3}, 
\alpha_{j+4}, \alpha_{j+5}
)
\equiv
\frac{p}{3}
(1,0,1,0,1,0)
+
\frac{q}{3}
(1,0,-1,-1,0,1)\, \mathrm{mod} \, \mathbb{Z} \, 
(p,q=0, \pm1);
\end{equation*}
(3) \quad
if 
$f_k, f_{k+2}, f_{k+3}, f_{k+5}$ all have a pole at $t=0$ 
for some $k=0, 1, 2, 3, 4, 5,$ then, 
\begin{alignat*}{15}
\alpha_{k+1}   &+\alpha_{k+2} & &+\alpha_{k+4} &           &\equiv \frac{n}{3},   & \quad
                &\alpha_{k+3} & &-\alpha_{k+5} &           &\equiv \frac{m+n}{3}  & \,\,
&\mathrm{mod} \, \mathbb{Z},  \\
\alpha_k       &+\alpha_{k+1} & &+\alpha_{k+4} &           &\equiv \frac{2m}{3},  & \quad
-\alpha_{k+1}  -&\alpha_{k+4} & &-\alpha_{k+5} &           &\equiv \frac{n}{3}    & 
&\mathrm{mod} \, \mathbb{Z},\,
\alpha_{k+1}, \alpha_{k+4} \neq 0,
\end{alignat*}
or 
for some $j=0,1,2,3,4,5,$ 
\begin{equation*}
(
\alpha_{j}, \alpha_{j+1},\alpha_{j+2}, 
\alpha_{j+3}, \alpha_{j+4}, \alpha_{j+5}
)
\equiv
\frac{p}{3}
(0, 1, 1, 1,0, 0)+
\frac{q}{3}
(1,1, 0, 0, 0, 1) \,
\mathrm{mod} \, \mathbb{Z}. 
\end{equation*}
}
\end{proposition}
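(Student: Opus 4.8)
The plan is to derive every congruence from the single mechanism furnished by Corollary \ref{coro:a5res}: for each index $j$ the quantity $-\mathrm{Res}_{t=\infty} f_j -\mathrm{Res}_{t=0} f_j$ is an integer. Since the solution is of Type C, the residues at $t=\infty$ are read off directly from Proposition \ref{prop:a5inf}, so that $-\mathrm{Res}_{t=\infty} f_{j}$ equals the $t^{-1}$-coefficient $2\alpha_{j+1}+\alpha_{j+2}-\alpha_{j+4}-2\alpha_{j+5}$, while the residues at $t=0$ are supplied by Proposition \ref{prop:a5zero} according to which of the $f_i$ are singular there. Thus, once the regularization at $t=0$ has been arranged, the whole statement becomes linear algebra modulo $\mathbb{Z}$.

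First I would dispose of case (1), where nothing is singular at $t=0$ and hence $\mathrm{Res}_{t=0} f_j=0$ for all $j$. Corollary \ref{coro:a5res} then forces each $2\alpha_{j+1}+\alpha_{j+2}-\alpha_{j+4}-2\alpha_{j+5}$ into $\mathbb{Z}$. Introducing the variables $x=\alpha_2-\alpha_4$, $y=\alpha_3-\alpha_5$, $z=\alpha_0-\alpha_4$, $\omega=\alpha_1-\alpha_5$ of Proposition \ref{prop:hinf}, these six integrality conditions take the shape $x+2\omega,\ -2x-\omega,\ 2x-2z+y,\ -x+z-2y \in\mathbb{Z}$ (and two more). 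Pairwise elimination — for instance $2(x+2\omega)+(-2x-\omega)=3\omega$ — yields $3x,3y,3z,3\omega\in\mathbb{Z}$ together with the mutual relations $x\equiv\omega$ and $z\equiv 2y+x$ mod $\mathbb{Z}$, and these are exactly repackaged by the parametrization with $m,n\in\{0,\pm1\}$. This step is routine once the substitution is performed.

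Next I would reduce cases (2) and (3) to case (1) by Bäcklund transformations. When the distinguished parameters are nonzero, namely $\alpha_{k+1}\neq 0$ in case (2) and $\alpha_{k+1},\alpha_{k+4}\neq 0$ in case (3), Corollary \ref{coro:reg} shows that $s_{k+1}$, respectively $s_{k+1}s_{k+4}$, carries the solution to one that is still of Type C but regular at $t=0$. Applying the case (1) conclusion to the transformed parameters $\hat\alpha_j=s_{k+1}(\alpha_j)$ (resp.\ $s_{k+1}s_{k+4}(\alpha_j)$), computed from the Bäcklund table, and pulling the resulting congruences back through the linear action of that reflection produces precisely the first family of congruences listed in (2) and (3); this simultaneously establishes the asserted regularization at $t=0$.

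The hard part will be the degenerate subcases in which $\alpha_{k+1}$ or $\alpha_{k+4}$ vanishes, so that the regularizing reflection collapses to the identity. As in the proof of Proposition \ref{prop:necB}, these must be handled by chaining several simple reflections, tracking how the poles at $t=0$ described by Proposition \ref{prop:a5zero} migrate under each $s_i$ and invoking Proposition \ref{prop:uniqueness} to locate the next nonzero parameter, until a regular solution is reached. The residue conditions collected along such a chain no longer decouple into the generic pattern; instead they pin $(\alpha_j)_{0\leq j\leq 5}$ to the exceptional families written in (2) and (3), namely $\frac{p}{3}(1,0,1,0,1,0)+\frac{q}{3}(1,0,-1,-1,0,1)$ and $\frac{p}{3}(0,1,1,1,0,0)+\frac{q}{3}(1,1,0,0,0,1)$ with $p,q\in\{0,\pm1\}$. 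Checking that these are the only surviving possibilities, and that each genuinely admits a regularizing composition of reflections, is where the bulk of the casework concentrates.
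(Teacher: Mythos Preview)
Your proposal is correct and follows essentially the same route as the paper: case (1) is handled by the linear algebra in $x,y,z,\omega$ exactly as the paper does, and cases (2)--(3) are reduced to case (1) via $s_{k+1}$ (resp.\ $s_{k+1}s_{k+4}$) when the relevant parameters are nonzero, with the degenerate subcases treated by chaining further reflections and collecting residue constraints along the way. One minor correction: in the degenerate chains the tool that forces some subsequent $\alpha_j$ to be nonzero is Proposition \ref{prop:a5zero} (the residue of the pole at $t=0$ must be nonzero), not Proposition \ref{prop:uniqueness}.
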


\begin{proof}
We prove cases (1) and (2). Case (3) can be proved in the same way.
\newline
Case (1) \quad
From Proposition \ref{prop:a5inf} and Corollary \ref{coro:a5res}, 
it follows that 
\begin{align}
2\alpha_1+\alpha_2-\alpha_4-2\alpha_5 &= m_0  \label{eqn:m_0}  \\
2\alpha_2+\alpha_3-\alpha_5-2\alpha_0 &= m_1 \label{eqn:m_1}   \\
2\alpha_3+\alpha_4-\alpha_0-2\alpha_1 &= m_2 \label{eqn:m_2}   \\
2\alpha_4+\alpha_5-\alpha_1-2\alpha_2 &= m_3 \label{eqn:m_3}   \\
2\alpha_5+\alpha_0-\alpha_2-2\alpha_3 &= m_4 \label{eqn:m_4}   \\
2\alpha_0+\alpha_1-\alpha_3-2\alpha_4 &= m_5, \label{eqn:m_5} 
\end{align}
where $m_0,m_1,m_2,m_3,m_4,m_5$ are all integers. 
Equations (\ref{eqn:m_0}) and (\ref{eqn:m_3}) imply that 
\begin{equation*}
3\alpha_1-3\alpha_5 \in \mathbb{Z}, 
\quad
3\alpha_2-3\alpha_4 \in  \mathbb{Z}.
\end{equation*}
Equations (\ref{eqn:m_0}), (\ref{eqn:m_1}) and (\ref{eqn:m_2}) imply that 
$$
3\alpha_3-3\alpha_5 \in \mathbb{Z}.
$$
Equations (\ref{eqn:m_1}), (\ref{eqn:m_2}) and (\ref{eqn:m_3}) imply that 
$$
3\alpha_0-3\alpha_4 \in \mathbb{Z}.
$$
We then define 
\begin{equation*}
\alpha_2-\alpha_4 = \frac{n_0}{3}, \, 
\alpha_3-\alpha_5 = \frac{n_1}{3}, \,
\alpha_0-\alpha_4 = \frac{n_2}{3}, \,
\alpha_1-\alpha_5 = \frac{n_3}{3}, 
\end{equation*}
where $n_0, n_1, n_2, n_3$ are all integers. 
By substituting the above equations into 
(\ref{eqn:m_0}), (\ref{eqn:m_1}), (\ref{eqn:m_2}), (\ref{eqn:m_3}), (\ref{eqn:m_4}) and (\ref{eqn:m_5}), 
we get 
\begin{align*}
n_0+2n_3      &\equiv 0 \quad \mathrm{mod} \, 3      \\
2n_0+n_1-2n_2 &\equiv 0 \quad \mathrm{mod} \, 3      \\
2n_1-n_2-2n_3 &\equiv 0 \quad \mathrm{mod} \, 3      \\
n_0+2n_1-n_2  &\equiv 0 \quad \mathrm{mod} \, 3.
\end{align*} 
By solving this system of equations in $\mathbb{Z}/3\mathbb{Z}$, 
we obtain 
\begin{equation*}
(n_0, n_1, n_2, n_3)
\equiv
n(1, 1, 0, 1)+
m(0, 1, -1, 0) \,\,\mathrm{mod} \, 3,
\end{equation*}
where $m,n =0, \pm1.$ We then prove the case (1).
\newline
\newline
Case (2) \quad 
By $\pi$, we assume that $f_0, f_2$ both have a pole at $t=0$. 
From Propositions \ref{prop:a5inf}, \ref{prop:a5zero} and Corollary \ref{coro:a5res}, 
it follows that 
\begin{equation}
\label{eqn:ccon1}
\alpha_1+2\alpha_3-\alpha_5, \,
-3\alpha_3+3\alpha_5, \,
-\alpha_2+\alpha_3+\alpha_4, 
\alpha_0-\alpha_4-\alpha_5 \in \mathbb{Z}.
\end{equation}
\par
We suppose that $\alpha_1 \neq 0$. 
We then get 
\begin{align*}
&t=\infty                         & 
&(f_i)_{0\leq i \leq 5}           & 
&\stackrel{s_1}{\longrightarrow}  & 
&(f_i)_{0\leq i \leq 5}  \\
&t=0                              & 
&(f_0, f_2)_0                     & 
&\stackrel{s_1}{\longrightarrow}  & 
&\mathrm{holomorphic}  \\
&                                 & 
&(\alpha_i)_{0\leq i \leq 5}      & 
&\stackrel{s_1}{\longrightarrow}  & 
&(
\alpha_0+\alpha_1, 
-\alpha_1, 
\alpha_2+\alpha_1, 
\alpha_3, 
\alpha_4, 
\alpha_5
).
\end{align*}
Since all of $s_1(f_i)_{0\leq i \leq  5}$ are holomorphic at $t=0,$ 
the parameters 
$s_1(\alpha_i)_{0\leq i \leq 5}$ 
satisfy the condition of case (1). 
Therefore, we obtain 
\begin{equation*}
\alpha_1-\alpha_2-\alpha_4 \equiv \frac{n}{3}, \,
\alpha_3-\alpha_5 \equiv \frac{m+n}{3},  \,
\alpha_0+\alpha_1-\alpha_4 \equiv \frac{-m}{3}, \,  
-\alpha_1-\alpha_5 \equiv \frac{n}{3} \,\,
\mathrm{mod} \, \mathbb{Z}, \,
\end{equation*}
\par
We suppose that $\alpha_1=0$. 
Equation (\ref{eqn:ccon1}) implies that 
\begin{equation}
\label{eqn:ccon2}
-\alpha_3+2\alpha_5, \,
-3\alpha_3+3\alpha_5, \,
-\alpha_2+\alpha_3+\alpha_4, \,
\alpha_0-\alpha_4-\alpha_5 \in \mathbb{Z}. 
\end{equation}
If $\alpha_1=0$ and $\alpha_0 \neq 1,$ 
we get 
\begin{align*}
&t=\infty                         & 
&(f_i)_{0 \leq i \leq 5}          & 
&\stackrel{s_0}{\longrightarrow}  & 
&(f_i)_{0 \leq i \leq 5}  \\
&t=0                              & 
&(f_0, f_2)_0                     & 
&\stackrel{s_0}{\longrightarrow}  & 
&(f_0, f_2)_0  \\
&                                 & 
&(\alpha_i)_{0\leq i \leq 5}      & 
&\stackrel{s_0}{\longrightarrow}  & 
&(
-\alpha_0, 
\alpha_0, 
\alpha_2, 
\alpha_3, 
\alpha_4, 
\alpha_5+\alpha_0
).
\end{align*}
We set $\hat{\alpha}_j:=s_0(\alpha_j) _{0\leq j \leq 5}$. 
Equation (\ref{eqn:ccon1}) implies that 
\begin{equation}
\label{eqn:ccon3}
\hat{\alpha}_1+2\hat{\alpha}_3-\hat{\alpha}_5, \, 
-3\hat{\alpha}_3+3\hat{\alpha}_5, \,
-\hat{\alpha}_2+\hat{\alpha}_3+\hat{\alpha}_4, \,
\hat{\alpha}_0-\hat{\alpha}_4-\hat{\alpha}_5 \in \mathbb{Z}.
\end{equation}
Therefore, equations (\ref{eqn:ccon2}) and (\ref{eqn:ccon3}) imply that 
\begin{equation}
\label{eqn:ccon4}
2\alpha_3-\alpha_5, \,
-3\alpha_3+3\alpha_5, \,
3\alpha_0, \,
-\alpha_2+\alpha_3+\alpha_4, \,
\alpha_0-\alpha_4-\alpha_5 \in \mathbb{Z}.
\end{equation}
By $s_1$, we get 
\begin{align*}
&t=\infty                          & 
&(f_i)_{0\leq i\leq 5}             & 
&\stackrel{s_1}{\longrightarrow}   & 
&(f_i)_{0\leq i \leq 5}   \\
&t=0                               & 
&(f_0, f_2)_0                      & 
&\stackrel{s_1}{\longrightarrow}   & 
&\mathrm{holomorphic}  \\
&                                  & 
&(
-\alpha_0, 
\alpha_0,
\alpha_2, 
\alpha_3, 
\alpha_4, 
\alpha_5+\alpha_0
) & 
&\stackrel{s_1}{\longrightarrow}   & 
&(
0, 
-\alpha_0, 
\alpha_2+\alpha_0, 
\alpha_3, 
\alpha_4, 
\alpha_5+\alpha_0
).
\end{align*}
We set $\tilde{\alpha}_j:=s_1  s_0 (\alpha_j)_{0\leq j \leq 5}$. 
Since 
all of 
$s_1  s_0 (f_i)_{0\leq i \leq 5}$ have a pole at $t=\infty$ 
and 
are holomorphic at $t=0,$ 
it follows that 
\begin{equation}
\label{eqn:ccon6}
3\tilde{\alpha}_2-3\tilde{\alpha}_4, 
3\tilde{\alpha}_3-3\tilde{\alpha}_5, 
3\tilde{\alpha}_0-3\tilde{\alpha}_4, 
3\tilde{\alpha}_1-3\tilde{\alpha}_5 \in \mathbb{Z}.
\end{equation} 
Equations (8), (\ref{eqn:ccon4}) and (\ref{eqn:ccon6}) 
imply that 
\begin{align*}
&3\alpha_0, \, \alpha_1(=0), \, 3\alpha_2, \, 3\alpha_3, \, 3\alpha_4, \, 3\alpha_5 \in \mathbb{Z},  \\
&
\alpha_0-\alpha_2+\alpha_5, \, 
-\alpha_2+\alpha_3+\alpha_4, \,
\alpha_0-\alpha_4-\alpha_5, \,
2\alpha_3+2\alpha_5 \in \mathbb{Z}.
\end{align*}
We set $\alpha_k=\frac{n_k}{3}, \, n_k \in \mathbb{Z}, \,(k=0, 2, 3, 4, 5).$ 
Therefore, 
we have 
\begin{align*}
n_0-n_2+n_5       &\equiv 0 \quad \mathrm{mod} \, 3 \\
-n_2+n_3+n_4      &\equiv 0 \quad \mathrm{mod} \, 3 \\
n_0-n_4-n_5       &\equiv 0 \quad \mathrm{mod} \, 3 \\
n_3+n_5           &\equiv 0 \quad \mathrm{mod} \, 3.
\end{align*}
By solving this system of equations in $\mathbb{Z}/3\mathbb{Z}$, 
we get 
\begin{equation}
\label{eqn:c0}
(\alpha_0, \alpha_2, \alpha_3, \alpha_4, \alpha_5)
\equiv
\frac{p}{3}
(1, 1, 0, 1, 0)
+
\frac{q}{3}
(1, -1, -1, 0, 1) \,
\mathrm{mod} \, \mathbb{Z} \,(p,q=0,\pm1).
\end{equation}
\par
If $\alpha_1=0$ and $\alpha_2\neq 0,$ 
we can prove that the parameters 
satisfy the condition (\ref{eqn:c0}) in the same way.
\par
We suppose that $\alpha_0=\alpha_1=\alpha_2=0$. 
Equation (\ref{eqn:ccon1}) implies that 
$
\alpha_3, \, \alpha_4, \, \alpha_5 \in \mathbb{Z}.
$ 
The parameters $(0, 0, 0, \alpha_3, \alpha_4, \alpha_5)$ 
then satisfy the condition (\ref{eqn:c0}). 
From Proposition \ref{prop:a5zero}, 
it follows that 
$
\mathrm{Res}_{t=0} f_0 = -\alpha_3-\alpha_5.
$ 
Since $f_0$ has a pole at $t=0$ with the first order, 
it follows that $\alpha_3 \neq 0$ or $\alpha_5 \neq 0$. 
If $\alpha_3 \neq 0 $ and $\alpha_5 \neq 0$, 
the solution 
can be transformed into a holomorphic solution at $t=0$ 
by $s_5  s_3$. 
If $\alpha_3 \neq 0$ and $\alpha_5 =0,$ 
the solution 
can be transformed into a holomorphic solution at $t=0$ by 
$s_5  s_4  s_3$. 
If $\alpha_3=0$ and $ \alpha_5 \neq 0,$ 
the solution 
can be transformed into a holomorphic solution at $t=0$ by 
$s_3  s_4  s_5$.	
\end{proof}

\section{The Standard Forms of The Parameters for Rational Solutions}
In Sections 5, 6 and 7, 
we have obtained the necessary conditions for $A_5^{(1)}(\alpha_j)_{0\leq j \leq 5}$ to have rational solutions of Type A, Type B and Type C, 
and 
expressed them by the parameters. 
In this section, 
using B\"acklund transformations,  
we transform the parameters into the standard forms. 
\par
This section consists of four subsections. 
In Subsection 8.1, 
following Noumi and Yamada \cite{NoumiYamada-A}, 
we introduce the shift operators of the parameters, $\alpha_j \,\,(0\leq j \leq 5).$ 
In Subsections 8.2, 8.3 and 8.4, 
we treat the necessary conditions for Type A, Type B and Type C and 
transform the parameters 
into the standard forms.

\subsection{Shift operators}
In this subsection, 
following Noumi and Yamada \cite{NoumiYamada-A}, 
we introduce the shift operators of the parameters, $\alpha_j \,\,(0\leq j \leq 5).$ 
Noumi and Yamada \cite{NoumiYamada-A} 
defined the shift operators of the parameters in the following way:
\begin{proposition}
\it{
For any $i=0,1,2,3,4,5,$ 
$T_i$ 
denote 
the shift operators  
which are defined by 
\begin{align*}
&T_1 = \pi s_5 s_4 s_3 s_2 s_1,      & 
&T_2 = s_1 \pi s_5 s_4 s_3 s_2,      & 
&T_3 = s_2 s_1 \pi s_5 s_4 s_3,       \\
&T_4 = s_3 s_2 s_1 \pi s_5 s_4,      & 
&T_5 = s_4 s_3 s_2 s_1 \pi s_5,      & 
&T_6 = s_5 s_4 s_3 s_2 s_1 \pi. 
\end{align*}
Then, 
$$
T_i (\alpha_{i-1}) 
= 
\alpha_{i-1}+1_, \, 
T_i(\alpha_i) 
=\alpha_i-1, \, 
T_i(\alpha_j)=\alpha_j \, (j \neq i - 1, i).
$$
}
\end{proposition}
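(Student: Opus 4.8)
The plan is to verify the assertion by a direct computation of the composite action of the generators on the parameters, using the transformation table for $s_0,\dots,s_5,\pi$, and to cut the labor down to a single case by exploiting the cyclic symmetry supplied by $\pi$. Observe first that $\pi$ acts on the parameters simply by $\pi(\alpha_j)=\alpha_{j+1}$ (indices mod $6$) and fixes constants, while each $s_k$ acts by the affine reflection $s_k(\alpha_k)=-\alpha_k$, $s_k(\alpha_{k\pm1})=\alpha_{k\pm1}+\alpha_k$, and $s_k(\alpha_j)=\alpha_j$ otherwise. Since all of these rules (apart from the constants) are homogeneous and linear, I expect the only place an actual integer can enter is through the normalization $\alpha_0+\alpha_1+\alpha_2+\alpha_3+\alpha_4+\alpha_5=1$.

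First I would settle the base case $T_1=\pi s_5 s_4 s_3 s_2 s_1$, computing $T_1(\alpha_j)=\pi\bigl(s_5(s_4(s_3(s_2(s_1(\alpha_j))))))\bigr)$ by substitution, applying the rightmost factor first. Applying $s_1$, then $s_2$, and so on up to $s_4$ keeps every image a homogeneous linear form; tracking the six images simultaneously, one finds that through $s_4$ one has $s_4\cdots s_1(\alpha_0)=\alpha_0+\alpha_1+\alpha_2+\alpha_3+\alpha_4$ and $s_4\cdots s_1(\alpha_1)=-(\alpha_1+\alpha_2+\alpha_3+\alpha_4)$, the remaining images being short linear forms. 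The decisive step is the final reflection $s_5$: it sends these two to $\alpha_0+\alpha_1+\alpha_2+\alpha_3+\alpha_4+2\alpha_5$ and $-(\alpha_1+\alpha_2+\alpha_3+\alpha_4+\alpha_5)$, which by $\sum_j\alpha_j=1$ collapse to $1+\alpha_5$ and $\alpha_0-1$. Applying $\pi$, which shifts every index by one and leaves the constants untouched, then yields $T_1(\alpha_0)=\alpha_0+1$, $T_1(\alpha_1)=\alpha_1-1$, and $T_1(\alpha_j)=\alpha_j$ for $j=2,3,4,5$, exactly as claimed.

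To reach the remaining operators I would avoid repeating this computation and instead use the conjugation relation $\pi T_i\pi^{-1}=T_{i+1}$. This rests on $\pi s_k=s_{k+1}\pi$ (indices mod $6$), which is immediate from the table and may be checked on each $\alpha_j$; writing $T_1=\pi w$ with $w=s_5 s_4 s_3 s_2 s_1$ and conjugating term by term gives $\pi w\pi^{-1}=s_0 s_5 s_4 s_3 s_2$, whence $\pi T_1\pi^{-1}=\pi\cdot s_0 s_5 s_4 s_3 s_2=s_1\pi s_5 s_4 s_3 s_2=T_2$, and likewise $\pi T_i\pi^{-1}=T_{i+1}$ for all $i$. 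Because $\pi(\alpha_j)=\alpha_{j+1}$, conjugation by $\pi$ simply translates the shift pattern by one index: from $T_i(\alpha_{i-1})=\alpha_{i-1}+1$ and $T_i(\alpha_i)=\alpha_i-1$ one computes $T_{i+1}(\alpha_i)=\pi T_i(\alpha_{i-1})=\pi(\alpha_{i-1}+1)=\alpha_i+1$ and $T_{i+1}(\alpha_{i+1})=\pi T_i(\alpha_i)=\alpha_{i+1}-1$, with all other parameters fixed. An induction on $i$ from the base case then gives the statement for every $T_i$ (reading all indices mod $6$, so that $\alpha_6=\alpha_0$ in the case $i=6$).

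The main obstacle is not conceptual but organizational: one must fix the composition convention unambiguously, here the rightmost factor acting first with each generator acting by substitution into the already-computed images, and then carry six images through five reflections without arithmetic slips. The one genuinely substantive point is recognizing that the integer shift is produced solely by feeding the normalization $\alpha_0+\cdots+\alpha_5=1$ into the step where $s_5$ is applied; the homogeneity of the reflections guarantees that no spurious constant can arise anywhere else, which is also why the shifts are exactly $\pm1$ and affect only the adjacent pair $\alpha_{i-1},\alpha_i$.
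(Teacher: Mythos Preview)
Your proof is correct. The paper does not actually supply a proof of this proposition; it simply quotes the result from Noumi and Yamada \cite{NoumiYamada-A}, so there is nothing to compare your argument against. Your direct verification of $T_1$ followed by the conjugation identity $\pi T_i\pi^{-1}=T_{i+1}$ (via $\pi s_k=s_{k+1}\pi$) is exactly the standard way to establish such shift relations in $\tilde W(A_l^{(1)})$, and your observation that the integer shift enters only through the normalization $\sum_j\alpha_j=1$ at the $s_5$ step is the key point.
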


\subsection{The standard forms of the parameters for rational solutions of Type A}
In this subsection, 
let us first suppose that 
for $A_5^{(1)}(\alpha_j)_{0\leq j \leq 5},$ 
there exists a rational solution of Type A. 
From Proposition \ref{prop:necA}, it follows that 
the solution and parameters 
can be transformed 
so that 
all of $(f_j)_{0\leq j \leq 5}$ are holomorphic at $t=0$ 
and 
the parameters 
satisfy one of the following conditions:
\newline
(1) \, for some $i=0, 1, 2, 3, 4, 5,$ 
$
\alpha_{i+2}, \, \alpha_{i+3}, \, 
\alpha_{i+4}, \, \alpha_{i+5} 
\in \mathbb{Z}; 
$
\newline
(2) \, for some $i=0, 1, 2, 3, 4, 5,$ 
$
\alpha_{i+1}, \, \alpha_{i+2}, \,
\alpha_{i+4}, \, \alpha_{i+5} \,
\in \mathbb{Z};
$
\newline
(3) \, for some $i=0, 1, 2, 3, 4, 5,$ 
$
\alpha_{i+3}, \, \alpha_{i+5}, \, 
\alpha_i+\alpha_{i+4}, \, 
\alpha_i-\alpha_{i+2} 
\in \mathbb{Z}.
$
\par
In the following proposition, 
by some B\"acklund transformations, 
we transform the three kinds of parameters into 
the two standard forms, 
%\begin{equation*}
$
(\alpha_0, 1-\alpha_0, 0, 0, 0, 0)
$ 
and 
$
(\alpha_0, 0, 0, 1-\alpha_0, 0, 0).
$
%\end{equation*}

\begin{proposition}
\label{prop:standardA}
\it{
Suppose that for $A_5^{(1)}(\alpha_j)_{0\leq  j \leq 5},$ 
there exists a rational solution of Type A.  
By some B\"acklund transformations, 
the solution 
can then be transformed into one 
of the following parameters:
\begin{equation*}
\mathrm{(i)} \,\,
(\alpha_0, 1-\alpha_0, 0, 0, 0, 0); 
\quad
\mathrm{(ii)} \,\,
(\alpha_0, 0, 0, 1-\alpha_0, 0, 0).
\end{equation*}
}
\end{proposition}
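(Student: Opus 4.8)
The plan is to start from the three arithmetic conditions furnished by Proposition \ref{prop:necA} and to strip away all the integer entries using the shift operators $T_1,\dots,T_6$ of Subsection 8.1, thereby collapsing each case onto a one–parameter family. First I would apply $\pi$, which cyclically permutes $(\alpha_0,\dots,\alpha_5)$, to rotate indices so that the index $i$ occurring in Proposition \ref{prop:necA} is $0$; this costs nothing. The mechanism I rely on is that, since $T_i$ sends $\alpha_{i-1}\mapsto\alpha_{i-1}+1$, $\alpha_i\mapsto\alpha_i-1$ and fixes the remaining parameters, an arbitrary word in the $T_i$ (each of which is itself a composition of the $s_j$ and $\pi$, hence a genuine B\"acklund transformation) realizes translation of $(\alpha_0,\dots,\alpha_5)$ by an arbitrary integer vector $n=(n_0,\dots,n_5)$ with $\sum_j n_j=0$. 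The normalization $\sum_j\alpha_j=1$ is preserved throughout.

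For case (1), where $\alpha_2,\alpha_3,\alpha_4,\alpha_5\in\mathbb{Z}$, I would choose the translation with $n_j=-\alpha_j$ for $j=2,3,4,5$ and $n_0,n_1\in\mathbb{Z}$ subject to $\sum_j n_j=0$; this zeroes the last four entries, and because $\sum_j\alpha_j=1$ the new $\alpha_0,\alpha_1$ then satisfy $\alpha_0+\alpha_1=1$ automatically, which is exactly form (i). Case (2), where $\alpha_1,\alpha_2,\alpha_4,\alpha_5\in\mathbb{Z}$ and hence $\alpha_0+\alpha_3\in\mathbb{Z}$, is handled identically with the two surviving slots in positions $0$ and $3$, producing form (ii). In both cases no reflection is needed, since the integer entries are precisely the ones the standard form is meant to annihilate.

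The genuinely different situation is case (3), where only $\alpha_3,\alpha_5\in\mathbb{Z}$ while $\alpha_0+\alpha_4,\alpha_0-\alpha_2\in\mathbb{Z}$, so that modulo $\mathbb{Z}$ one has $\alpha_2\equiv\alpha_0$, $\alpha_4\equiv-\alpha_0$, and, from $\sum_j\alpha_j=1$, also $\alpha_1\equiv-\alpha_0$. Now four slots are non-integral and translations alone cannot clear them, so I would first apply the reflection $s_1$. A short computation shows that $s_1$ sends $\alpha_0\mapsto\alpha_0+\alpha_1$ and $\alpha_2\mapsto\alpha_2+\alpha_1$, both of which land in $\mathbb{Z}$ because $\alpha_1\equiv-\alpha_0$; the resulting pattern has $\alpha_0,\alpha_2,\alpha_3,\alpha_5\in\mathbb{Z}$ with the free parameter now occupying slots $1$ and $4$. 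Applying $\pi$ rotates this exactly into the integer pattern of case (2), after which the case-(2) argument delivers form (ii).

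The step I expect to be the main obstacle is the degenerate branch $\alpha_1=0$ of case (3): there $f_1\equiv0$ by Proposition \ref{prop:uniqueness}, so $s_1$ acts as the identity and the folding above is unavailable. The resolution is that $\alpha_1=0$ forces $\alpha_0\in\mathbb{Z}$ and hence $\alpha_2,\alpha_4\in\mathbb{Z}$, so all six parameters are integers summing to $1$; a single translation by a sum-zero integer vector then carries them to $(1,0,0,0,0,0)$, which is form (i) with $\alpha_0=1$. The delicate bookkeeping that the routine calculation must confirm is that this degeneracy is the only one, and that every reflection invoked is legitimate, i.e.\ that the relevant $f_j$ is not identically zero whenever we need $s_j$ to act nontrivially.
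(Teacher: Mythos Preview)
Your argument is correct and follows the same overall strategy as the paper: rotate with $\pi$, then clear integer slots with the shift operators $T_i$ in cases (1) and (2), and use a reflection in case (3). The paper carries out cases (1) and (2) exactly as you do.

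In case (3) your tactic differs from the paper's. The paper first shifts to the explicit normal form $(\alpha_0,1-\alpha_0,\alpha_0,0,-\alpha_0,0)$ and then, when $\alpha_0\neq0$, applies the specific word $\pi^2 s_4 s_5 s_3 s_4$ to reach form~(ii); the degenerate branch is $\alpha_0=0$ after normalization. You instead apply the single reflection $s_1$ at the outset (when $\alpha_1\neq0$), observe that this makes $\alpha_0,\alpha_2,\alpha_3,\alpha_5$ integral, and then feed the result into case~(2); your degenerate branch is $\alpha_1=0$ before any shift. Your route is a bit more conceptual (one reflection plus reduction to a previous case rather than an ad hoc word of length four), while the paper's gives an explicit sequence one can check by hand. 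Both are valid.

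One cosmetic remark: in the degenerate branch you invoke Proposition~\ref{prop:uniqueness} to conclude $f_1\equiv0$ when $\alpha_1=0$, but that proposition only asserts this for specific slots in specific Type~A subcases. You don't actually need it: when $\alpha_1=0$ the reflection $s_1$ already acts trivially on the parameters by its defining formula, regardless of whether $f_1\equiv0$, so the folding is unavailable for exactly the reason you state.
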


\begin{proof}
Case (1) \quad 
By $\pi$, we assume that 
$
\alpha_{2}, \alpha_{3}, 
\alpha_{4}, \alpha_{5} 
\in \mathbb{Z}.
$ 
First, by $T_6$, we have $\alpha_5=0$. 
Second, by $T_4$, we get $\alpha_4=0$. 
Third, by $T_3$, we obtain $\alpha_3=0$. 
Lastly, by $T_2$, we have $\alpha_2=0$. 
Since $\sum_{k=0}^5 \alpha_k=1$, 
it follows that 
$
(\alpha_i)_{0\leq i \leq 5}
\longrightarrow
(\alpha_0, 1-\alpha_0, 0, 0, 0, 0).
$
\newline
Case (2)
\quad 
By $\pi$, we assume that 
$
\alpha_{1}, \alpha_{2}, 
\alpha_{4}, \alpha_{5} 
\in \mathbb{Z}.
$ 
First, by $T_6$, we have $\alpha_5=0$. 
Second, by $T_4$, we get $\alpha_4=0$. 
Third, by $T_3$, we obtain $\alpha_2=0$. 
Lastly, by $T_1$, we have $\alpha_1=0$. 
Since $\sum_{k=0}^5 \alpha_k=1$, 
it follows that 
$
(\alpha_i)_{0\leq i \leq 5}
\longrightarrow
(\alpha_0, 0, 0, 1-\alpha_0, 0, 0).
$
\newline
Case (3)
\quad
By $\pi$, 
we suppose that 
$
\alpha_0+\alpha_4, 
\alpha_0-\alpha_2, 
\alpha_3, \alpha_5 \in \mathbb{Z}.
$ 
This condition is equivalent to the following condition: 
$
\alpha_0+\alpha_4, 
\alpha_2+\alpha_4, 
\alpha_3, \alpha_5 \in \mathbb{Z}.
$ 
First, 
by $T_4, T_5,$ we have $\alpha_3=0, \alpha_5=0$, respectively. 
Second, 
by $T_1, T_2,$ 
we get $\alpha_0+\alpha_4=0, \alpha_2+\alpha_4=0,$ respectively. 
Since $\sum_{k=0}^5\alpha_k=1$, 
it follows that
$
(\alpha_i)_{0\leq i \leq 5}
\longrightarrow
(\alpha_0, 1-\alpha_0, \alpha_0, 0, -\alpha_0, 0).
$ 
If $\alpha_0 \neq 0,$ 
by 
$\pi^2  s_4  s_5  s_3  s_4$, 
we get 
$
(\alpha_0, 1-\alpha_0, \alpha_0, 0, -\alpha_0, 0)
\rightarrow
(\alpha_0, 0, 0, 1-\alpha_0, 0, 0).
$ 
If $\alpha_0=0$, 
we have 
$
(\alpha_0, 1-\alpha_0, \alpha_0, 0, -\alpha_0, 0)
=
(0, 1, 0, 0, 0, 0 ).
$
\end{proof}

\subsection{The standard forms of the parameters for Type B}
In this subsection, 
we suppose that for $A_5^{(1)}(\alpha_j)_{0\leq j \leq 5},$  
there exists a rational solution of Type B.  
From Proposition \ref{prop:necB}, 
it follows that by some B\"acklund transformations, 
the solutions and parameters 
can be transformed so that 
all of $(f_j)_{0\leq j \leq 5}$ are holomorphic at $t=0$ 
and 
the parameters 
satisfy the following condition: 
for some $i=0,1,2,3,4,5,$  
$$
-\alpha_{i}+\alpha_{i+2}-\alpha_{i+4}, \,
-\alpha_{i+1}+\alpha_{i+3}+\alpha_{i+5}, \,
2\alpha_{i+4}, \, -2\alpha_{i+5} \in \mathbb{Z}.
$$
\par
By $\pi$, we assume that 
$
2\alpha_{4}, 2\alpha_{5}, 
-\alpha_0+\alpha_{2}-\alpha_{4}, 
-\alpha_{1}+\alpha_{3}+\alpha_{5} \in \mathbb{Z}.
$ 
First, 
by $T_4$ and $T_6,$ we have 
$
\alpha_4=0, \, 1/2 \,\,
\mathrm{and} \,\,
\alpha_5=0, \, -1/2,
$ 
respectively.
Second, 
by $T_1  T_2$ and $T_2  T_3,$ 
we get 
$
\beta^0_{-1}=-\alpha_0+\alpha_2-\alpha_4 = 0, \, 1
$ 
and 
$
\gamma^0_{-1}=-\alpha_1+\alpha_3+\alpha_5 = 0, \, 1,
$
respectively. We then obtain 
\begin{equation*} 
\epsilon^0_{-1}=2\alpha_4=0, \, 1,  
\quad
\phi^0_{-1}=-2\alpha_5=0, \, 1,  
\quad
\beta^0_{-1}=0,1, 
\quad 
\gamma^0_{-1}=0, \, 1.
\end{equation*}
Therefore, 
we have only to consider the $2^4=16$ cases.
\begin{proposition}
\it{
\label{prop:standardB}
Suppose that 
for $A_5^{(1)}(\alpha_j)_{0\leq j \leq 5},$ 
there exists a rational solution of Type B. 
By some B\"acklund transformations, 
the parameters 
can be 
transformed into the following three types:
\begin{equation*}
\mathrm{(1)}
\,\,
(\alpha_0,-\alpha_0+1/2,\alpha_0,-\alpha_0+1/2,0,0), \,\,
\mathrm{(2)}
\,\,
(1/2, 0, 1/2, \alpha_0, 0, -\alpha_0), \,\,
\mathrm{(3)}
\,\,
(\alpha_0, 0, -\alpha_0+1, 0, 0, 0).
\end{equation*}
\par
The parameters in the sixteen cases 
can be transformed into the parameters of (1) 
if and only if  
$\alpha_i \,\,(i=0,1,2,3,4,5)$ satisfy 
the following condition: 
$
\beta^0_{-1}+\gamma^0_{-1} \equiv 0 \,\, \mathrm{mod} \,2.
$
\par
The parameters in the sixteen cases 
can be transformed into the parameters of (2) 
if and only if 
$\alpha_i \,\,(i=0,1,2,3,4,5)$ satisfy 
one of 
the following conditions:
\begin{equation*}
(\beta^0_{-1},\gamma^0_{-1},\epsilon^0_{-1},\phi^0_{-1})
=
(0,1,1,0),
(0,1,0,1),
(0,1,1,1),
(1,0,1,0),
(1,0,0,1),
(1,0,1,1).
\end{equation*}
\par
The parameters in the sixteen cases 
can be transformed into the parameters of (3) 
if and only if 
$\alpha_i \,\,(i=0,1,2,3,4,5)$ satisfy one of 
the following conditions: 
$
(\beta^0_{-1},\gamma^0_{-1},\epsilon^0_{-1},\phi^0_{-1})
=
(0,1,0,0),
(1,0,0,0).
$
}
\end{proposition}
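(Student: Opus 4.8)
The plan is to prove the proposition by an explicit finite case analysis over the sixteen representatives singled out in the paragraph preceding the statement. Recall that $T_4$, $T_6$, $T_1 T_2$ and $T_2 T_3$ have already normalized the four quantities $\epsilon^0_{-1}=2\alpha_4$, $\phi^0_{-1}=-2\alpha_5$, $\beta^0_{-1}=-\alpha_0+\alpha_2-\alpha_4$ and $\gamma^0_{-1}=-\alpha_1+\alpha_3+\alpha_5$ (introduced in Proposition \ref{prop:hinf}) to lie in $\{0,1\}$; together with $\sum_{k=0}^5\alpha_k=1$ this pins down every $\alpha_i$ as an explicit affine function of the single free parameter $\alpha_0$. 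First I would tabulate, for each of the sixteen choices of $(\beta^0_{-1},\gamma^0_{-1},\epsilon^0_{-1},\phi^0_{-1})\in\{0,1\}^4$, the corresponding one-parameter family $(\alpha_0,\dots,\alpha_5)$. For example $(0,0,0,0)$ already yields the type (1) vector $(\alpha_0,-\alpha_0+\tfrac12,\alpha_0,-\alpha_0+\tfrac12,0,0)$, so that case needs no further work.

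Next, for each case I would exhibit a word in the generators $s_i$, $\pi$ and the shift operators $T_i$ carrying the representative to the appropriate standard form. Integer discrepancies are cleared by the $T_i$, which translate two adjacent parameters by $\pm1$: thus $T_2$ sends the $(1,1,0,0)$ vector $(\alpha_0,-\alpha_0-\tfrac12,\alpha_0+1,-\alpha_0+\tfrac12,0,0)$ to the type (1) form. The cases with $\epsilon^0_{-1}=1$ or $\phi^0_{-1}=1$, and all the odd-sum cases, typically cannot be reduced by the $T_i$ and $\pi$ alone, since a half-integer or a generic entry must first be relocated; here a reflection $s_i$ is invoked, which acts nontrivially precisely because the relevant $f_i$ is not identically zero (cf.\ Proposition \ref{prop:uniqueness}). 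For instance the type (3) case $(1,0,0,0)$, namely $(\alpha_0,-\alpha_0,\alpha_0+1,-\alpha_0,0,0)$, is carried to $(\alpha_0,0,-\alpha_0+1,0,0,0)$ by applying $s_1$, then $\pi$, then $T_2^{-1}$. Carrying this out for all sixteen cases, the eight with $\beta^0_{-1}+\gamma^0_{-1}$ even should reduce to type (1), the six odd-sum cases with $(\epsilon^0_{-1},\phi^0_{-1})\neq(0,0)$ to type (2), and the two odd-sum cases with $\epsilon^0_{-1}=\phi^0_{-1}=0$ to type (3); since $8+6+2=16$ this exhausts every representative.

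The three classification equivalences then follow: the lists for types (1), (2) and (3) are disjoint and cover all sixteen cases, so a representative reduces to a given type exactly when its $(\beta^0_{-1},\gamma^0_{-1},\epsilon^0_{-1},\phi^0_{-1})$ appears in the corresponding list. I expect the main obstacle to be twofold. The first is the bookkeeping of finding a correct reduction word in each of the sixteen cases, especially where a half-integer or a generic entry must be moved by a reflection before the shift operators can finish; the delicate structural point is that the parity of $\beta^0_{-1}+\gamma^0_{-1}$ governs whether the half-integers $\epsilon^0_{-1},\phi^0_{-1}$ can be absorbed into the type (1) pattern or instead survive into types (2) and (3). The second is making the ``only if'' direction fully rigorous, which needs the three standard forms to lie in distinct B\"acklund orbits; this is cleanest to read off from the mutually distinct regular solutions attached to each form in the sufficiency sections, rather than from a single Weyl-invariant, since the naive candidate $\beta^0_{-1}+\gamma^0_{-1}\bmod 2$ is not literally preserved by $\pi$. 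The remaining verifications are routine and are completed by direct calculation.
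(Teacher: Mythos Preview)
Your proposal is correct and follows essentially the same approach as the paper: an explicit case-by-case analysis over the sixteen normalized representatives, exhibiting for each a word in $s_i$, $\pi$, $T_i$ carrying it to the appropriate standard form. The paper's proof works out only three of the sixteen cases explicitly (one landing in each type, namely $(\beta^0_{-1},\gamma^0_{-1},\epsilon^0_{-1},\phi^0_{-1})=(1,1,1,1)$, $(0,1,0,1)$, and $(0,1,0,0)$) and declares the remaining thirteen analogous; your plan is the same in spirit but slightly more systematic in organizing the sixteen cases by the parity of $\beta^0_{-1}+\gamma^0_{-1}$.

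One point worth noting: you are more careful than the paper about the ``only if'' direction. The paper's proof establishes only that each representative \emph{can} be reduced to the listed type; it does not argue that the three standard forms lie in distinct B\"acklund orbits, so the mutual exclusivity implicit in the ``if and only if'' is left unaddressed. Your suggestion to deduce this from the distinct behavior of the three forms under the sufficiency analysis of Section 10 (Propositions \ref{prop:bpara1}, \ref{prop:bpara2}, \ref{prop:bpara3}) is the natural fix and is not circular, since those propositions do not rely on the ``only if'' part of Proposition \ref{prop:standardB}. In fact the main theorem (Theorem \ref{thm:a5mainB}) only ever uses the ``if'' direction, so the paper's omission is harmless for the overall argument, but your observation is a genuine refinement.
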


\begin{proof}
We prove that the proposition is true in the following three cases:
\newline
(i) \quad
$
\beta^0_{-1}=\gamma^0_{-1}= 
\epsilon^0_{-1}=\phi^0_{-1}=1;
$
\newline
(ii) \quad
$
\beta^0_{-1}=0, \gamma^0_{-1}=1, 
\epsilon^0_{-1}=0, \phi^0_{-1}=1;
$ 
\newline
(iii) \quad
$
\beta^0_{-1}=0, \gamma^0_{-1}=1, 
\epsilon^0_{-1}=\phi^0_{-1}=0.
$ 
\newline
The other cases can be proved in the same way.
\newline
Case (i) 
\quad
Since 
$
\beta^0_{-1}=\gamma^0_{-1}= 
\epsilon^0_{-1}=\phi^0_{-1}=1,
$ 
it follows that 
$\alpha_2=\alpha_0+3/2, \alpha_3=\alpha_1+3/2, 
\alpha_4=1/2, \alpha_{5}=-1/2.
$
Since $\sum_{j=0}^5 \alpha_j =1$, it follows that $\alpha_1=-\alpha_0-1.$ 
By $s_2  \pi^{-1}  T_4^{-1}  s_1  s_4  T_2^{-1}$, 
we get 
\begin{equation*}
(\alpha_0, -\alpha_0-1, \alpha_0+3/2, -\alpha_0+1/2, 1/2, -1/2)
\longrightarrow
(\alpha_0, -\alpha_0+1/2, \alpha_0, -\alpha_0+1/2, 0, 0).
\end{equation*}
\newline
Case (ii) 
\quad
Since 
$
\beta^0_{-1}=0, \gamma^0_{-1}=1, 
\epsilon^0_{-1}=0, \phi^0_{-1}=1,
$ 
it follows that 
$
\alpha_2=\alpha_0, \alpha_3=\alpha_1+3/2, \alpha_4=0, \alpha_5=-1/2.
$ 
Since $\sum_{j=0}^5 \alpha_j =1$, we have $\alpha_1=-\alpha_0.$ 
By $\pi^3  s_2  T_5^{-1}  T_4^{-1},$ 
we get 
\begin{equation*}
(\alpha_0, -\alpha_0, \alpha_0, -\alpha_0+3/2, 0, -1/2)
\longrightarrow
(1/2, 0, 1/2,\alpha_0, 0, -\alpha_0).
\end{equation*}
\newline
Case (iii) 
\quad
Since 
$
\beta^0_{-1}=0, \gamma^0_{-1}=1, 
\epsilon^0_{-1}=\phi^0_{-1}=0,
$ 
it follows that 
$
\alpha_2=\alpha_0, \alpha_3=\alpha_1+1,\alpha_4=\alpha_5=0.
$ 
Since $\sum_{j=0}^5 \alpha_j =1$, we have $\alpha_1=-\alpha_0.$ 
By $\pi^{-1}  s_1$, we get 
\begin{equation*}
(\alpha_0, -\alpha_0, \alpha_0, -\alpha_0+1, 0, 0)
\longrightarrow
(\alpha_0, 0, -\alpha_0+1, 0, 0, 0).
\end{equation*}
\end{proof}

\subsection{The standard forms of the parameters for Type C}
In this subsection, 
we suppose that for $A_5^{(1)}(\alpha_j)_{0\leq j \leq 5},$  
there exists a rational solution of Type C. 
From Proposition \ref{prop:necC}, 
it follows that by some B\"acklund transformations, 
the solutions and parameters  
can be transformed 
so that 
all of $(f_j)_{0\leq j \leq 5}$ are holomorphic at $t=0$ 
and 
$(\alpha_j)_{0\leq j \leq 5}$ 
satisfy the following condition:
\begin{equation*}
x=\alpha_2-\alpha_4
\equiv
\frac{n}{3}, \,
y=\alpha_3-\alpha_5
\equiv 
\frac{m+n}{3}, \,
z=\alpha_0-\alpha_4 
\equiv
\frac{2m}{3}, \,
w=\alpha_1-\alpha_5
\equiv
\frac{n}{3} \,
\mathrm{mod} \,
\mathbb{Z} \, (m,n=0, \pm1).
\end{equation*}
\par
We consider the constant term $h_{\infty,0}$ of the Laurent series of $H$ at $t=\infty$ 
and get the following lemma:
\begin{lemma}
\it{
Suppose that for $A_5^{(1)}(\alpha_j)_{0\leq j \leq 5},$ 
there exists a rational solution of Type C, 
all of which are holomorphic at $t=0$. 
Then 
$m=n.$
}
\end{lemma}

\begin{proof}
We set 
$
x=(n+3k_0)/3, \,\,
y=(m+n+3k_1)/3, \,\,
z=(2m+3k_2)/3, \,\,
\omega=(n+3k_3)/3,
$
where 
$k_j \in \mathbb{Z} \,(j=0,1,2,3).$ 
From Proposition \ref{prop:hinf}, 
it follows that 
$
h_{\infty,0}
=
1/27\cdot
(6n^2+6m^2-3mn+9l),\,\,\,l \in \mathbb{Z}.
$
The proof of 
Proposition \ref{prop:hplus} shows that 
$
-2\sum_{k=1}^s \epsilon_k
=
-h_{\infty,0}+h_{0,0}
=
-1/27\cdot(6n^2+6m^2-3mn+9l),
$
where 
$\epsilon_k=1/6, 1/12, 5/12 \,\, 
(1 \leq k \leq s).$ 
We then obtain 
$
2n^2+2m^2-mn \equiv 0 \,\,
\mathrm{mod}\, 3.
$
Therefore, we have $m \equiv n \,\mathrm{mod} \, 3$.
\end{proof} 
For a holomorphic solution at $t=0$ of Type C, 
we set 
$$
\chi:=
x+y+z+\omega
=\alpha_0+\alpha_1+\alpha_2+\alpha_3
-2\alpha_4-2\alpha_5.
$$
By $T_4$, we have 
$\chi=0, \pm 1$.  
By $T_1,T_2,T_3,$ we get 
$
z=2n/3, \,
\omega=n/3, \, 
x=n/3, \,\,(n=0,\pm1),
$ 
respectively. 
Since $T_1,T_2,T_3$ all preserve the value of $\chi,$ 
we can determine the value of $y.$ 
Thus, 
we have only to consider the following 
$3\times3=9$ cases:
$$
\chi=0,\pm1, \,
x=n/3, \,
z=2n/3, \,
\omega=n/3, \, 
\,\,(n=0,\pm1).
$$

\begin{proposition}
\label{prop:standardC}
\it{
Suppose that 
for $A_5^{(1)}(\alpha_j)_{0\leq  j \leq 5},$ 
there exists a rational solution of Type C. 
By some B\"acklund transformations, 
the parameters can then be transformed into one of the following 
parameters:
\newline
(1) \,
$
(
\alpha_4,
-\alpha_4+1/3,
\alpha_4,
-\alpha_4+1/3,
\alpha_4,
-\alpha_4+1/3
),
$
\quad
(2) \,
$
(-\alpha_4+1/3,1/3,1/3,\alpha_4,0,0),
$
\newline
(3) \,
$
(\alpha_4,0,0,1-\alpha_4,0,0),
$
\hspace{42mm}
(4) \,
$(\alpha_4,1/3,1/3,-\alpha_4+1/3,0,0).$
\par
The parameters of the nine cases can be transformed 
into the parameters of (1) if and only if 
$$
(\chi,n)
=
(0,0),(0,-1),(-1,0),(-1,1).
$$
\par
The parameters of the nine cases can be transformed 
into the parameters of (2) if and only if 
$$
(\chi,n)
=
(0,1),(-1,-1). 
$$
\par
The parameters of the nine cases can be transformed 
into the parameters of (3) if and only if 
$$
(\chi,n)
=
(1,0).
$$
\par
The parameters of the nine cases can be transformed 
into the parameters of (4) if and only if 
$$
(\chi,n)
=
(1,1),(1,-1).
$$
}
\end{proposition}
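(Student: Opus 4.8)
The plan is to follow the same reduction-to-normal-form strategy already carried out for Propositions \ref{prop:standardA} and \ref{prop:standardB}. The preceding lemma forces $m = n$, and the shift operators have already been used to pin $\chi \in \{0, \pm 1\}$ (via $T_4$) and $x = n/3$, $z = 2n/3$, $\omega = n/3$ (via $T_1, T_2, T_3$), so that $y = \chi - 4n/3$ is determined and only the nine pairs $(\chi, n)$ with $\chi, n \in \{0, \pm 1\}$ survive. Since $\sum_i \alpha_i = 1$, each such pair fixes the vector $(\alpha_0, \ldots, \alpha_5)$ up to the single free parameter $\alpha_4$. Thus the problem collapses to nine explicit one-parameter families, and for each I would exhibit a word in $s_0, \ldots, s_5, \pi$ (equivalently, in the $T_i$ together with the reflections) carrying it to the asserted standard form.

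First I would tabulate the nine vectors $(\alpha_0, \ldots, \alpha_5)$ explicitly from the data $(\chi, n)$. Then, case by case, I would reduce: for $(\chi, n) \in \{(0,0), (0,-1), (-1,0), (-1,1)\}$ to the symmetric form (1), in which $\alpha_0 = \alpha_2 = \alpha_4$ and $\alpha_1 = \alpha_3 = \alpha_5$; for $\{(0,1), (-1,-1)\}$ to form (2); for $(1,0)$ to form (3); and for $\{(1,1), (1,-1)\}$ to form (4). As in the proof of Proposition \ref{prop:standardB}, it is enough to carry a few representative pairs through in full and remark that the remaining ones are strictly analogous, the computations being mechanical applications of the Bäcklund transformation table.

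The principal obstacle is bookkeeping rather than a single hard idea, but two points need genuine care. First, a simple reflection $s_i$ acts nontrivially only when $\alpha_i \neq 0$ and is otherwise the identity, so at each step where I wish to apply $s_i$ I must either know $\alpha_i \neq 0$ or branch on its vanishing, exactly as the proofs of Propositions \ref{prop:necA} through \ref{prop:necC} branch on vanishing parameters; choosing reduction words that avoid spurious vanishing is the delicate part. Second, for each case I must select a word that actually terminates at the designated form and not at a neighbouring one, so some experimentation with the order of the $s_i$ and $T_i$ is unavoidable.

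Finally, the four "if and only if" clauses are read off from the completed analysis: the nine reductions partition the pairs $(\chi, n)$ among the four standard forms, and each clause simply records the subset landing on the corresponding form, exactly as the analogous clauses in Proposition \ref{prop:standardB} record the partition of its sixteen cases. The one point to keep honest is that reading a clause as a genuine nonequivalence — "can be transformed into (1) and into no other" — presupposes that forms (1)--(4) lie in distinct Bäcklund orbits. I expect this to be the subtle step: $\chi$ is shifted only in multiples of $3$ by $T_4, T_6$ and preserved by $T_1, T_2, T_3, T_5$, but it is not preserved by the reflections, so $(\chi, n)$ is a convenient tracking datum rather than a full orbit invariant. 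I would therefore justify the pairwise inequivalence either by the explicit solutions constructed and compared in Sections 9--11, or by exhibiting a bona fide invariant and evaluating it on each of the four forms, and confirm that it is constant along the specific words used in the forward reductions so that the nine-to-four assignment is well defined.
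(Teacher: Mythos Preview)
Your plan is essentially the paper's own proof: it treats four representative pairs $(\chi,n)=(0,-1),(0,1),(1,0),(1,1)$, writes out the explicit parameter vector in each case, and exhibits a short word in $s_0,\ldots,s_5,\pi$ (and the $T_i$) carrying it to the asserted standard form, declaring the remaining five cases analogous.

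Two of your worries are unnecessary here. First, the branching on ``$\alpha_i=0$ forces $s_i=\mathrm{id}$'' never arises: that convention is triggered only when $f_i\equiv 0$, and for a Type~C solution every $f_i$ has a pole at $t=\infty$, so no $f_i$ vanishes identically and every $s_i$ acts genuinely throughout the reduction. Second, the paper reads the ``if and only if'' clauses exactly as you first suggested---as recording the nine-to-four partition produced by the explicit reductions---and does not separately argue pairwise inequivalence of the four forms; that separation is supplied later by Propositions~\ref{prop:cpara1}--\ref{prop:cpara4}, which show that form~(1) admits a Type~C solution while forms~(2)--(4) do not.
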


\begin{proof}
We prove the proposition is true in the following four cases:
\begin{equation*}
\mathrm{(i)} \,\,(\chi,n)=(0,-1),  
\mathrm{(ii)} \,\,(\chi,n)=(0,1), 
\mathrm{(iii)} \,\,(\chi,n)=(1,0), 
\mathrm{(iv)} \,\,(\chi,n)=(1,1).
\end{equation*}
The other cases can be proved in the same way.
\newline
Case (i) \quad
Since $(\chi,n)=(0,-1)$, we have
$
\alpha_0=\alpha_4+1/3, 
\alpha_1=\alpha_5-1/3,
\alpha_2=\alpha_4-1/3, 
\alpha_3=\alpha_5+1/3.
$ 
Since $\sum_{k=0}^5 \alpha_k=1$, we get
$\alpha_4+\alpha_5=1/3.$ 
Therefore, we obtain
\begin{equation*}
(
\alpha_0,
\alpha_1,
\alpha_2,
\alpha_3,
\alpha_4,
\alpha_5
)
=
(
\alpha_4+1/3, 
-\alpha_4,
\alpha_4-1/3,
-\alpha_4+2/3,
\alpha_4,
-\alpha_4+1/3
).
\end{equation*}
By $s_1  s_2  s_1$, 
we have
\begin{equation*}
(
\alpha_4+1/3, 
-\alpha_4,
\alpha_4-1/3,
-\alpha_4+2/3,
\alpha_4,
-\alpha_4+1/3
)
\longrightarrow
(
\alpha_4,
-\alpha_4+1/3,
\alpha_4,
-\alpha_4+1/3,
\alpha_4,
-\alpha_4+1/3
).
\end{equation*}
%\newline
Case (ii) \,
Since $(\chi,n)=(0,1)$, we get 
$
\alpha_0=\alpha_4-1/3,
\alpha_1=\alpha_5+1/3,
\alpha_2=\alpha_4+1/3,
\alpha_3=\alpha_5-1/3.
$
Since $\sum_{k=0}^5 \alpha_k=1$, we obtain 
$\alpha_4+\alpha_5=1/3.$ We then have
\begin{equation*}
(
\alpha_0,
\alpha_1,
\alpha_2,
\alpha_3,
\alpha_4,
\alpha_5
)
=
(
\alpha_4-1/3,
-\alpha_4+2/3,
\alpha_4+1/3,
-\alpha_4, 
\alpha_4,
-\alpha_4+1/3
).
\end{equation*}
By $s_0  s_3$, we get
$
(
\alpha_4-1/3,
-\alpha_4+2/3,
\alpha_4+1/3,
-\alpha_4, 
\alpha_4,
-\alpha_4+1/3
)
\longrightarrow
(-\alpha_4+1/3,1/3,1/3,\alpha_4,0,0).
$
\newline
Case (iii) \,
Since $(\chi,n)=(1,0)$, we obtain
$
\alpha_0=\alpha_4,
\alpha_1=\alpha_5,
\alpha_2=\alpha_4,
\alpha_3=\alpha_5+1.
$ 
Since $\sum_{k=0}^5 \alpha_k=1$, we have 
$\alpha_4+\alpha_5=0.$ Therefore, we get
$
(
\alpha_0,
\alpha_1,
\alpha_2,
\alpha_3,
\alpha_4,
\alpha_5
)
=
(
\alpha_4,
-\alpha_4,
\alpha_4,
-\alpha_4+1,
\alpha_4,
-\alpha_4
).
$ 
By $\pi^{-1}  T_4  s_4  s_1$, 
we obtain 
$
(
\alpha_4,
-\alpha_4,
\alpha_4,
-\alpha_4+1,
\alpha_4,
-\alpha_4
)
\longrightarrow
(\alpha_4,0,0,1-\alpha_4,0,0).
$
\newline
Case (iv) \,
Since $(\chi,n)=(1,1)$, we have 
$
\alpha_0=\alpha_4-1/3,
\alpha_1=\alpha_5+1/3,
\alpha_2=\alpha_4+1/3,
\alpha_3=\alpha_5+2/3.
$
Since $\sum_{k=0}^5 \alpha_k=1$, 
we get 
$\alpha_4+\alpha_5=0.$ Therefore, 
we obtain 
\begin{equation*}
(
\alpha_0,
\alpha_1,
\alpha_2,
\alpha_3,
\alpha_4,
\alpha_5
)
=
(
\alpha_4-1/3,
-\alpha_4+1/3,
\alpha_4+1/3,
-\alpha_4+2/3,
\alpha_4,
-\alpha_4
).
\end{equation*}
By $\pi^{-1}  s_1  s_0  s_4  s_5  s_0,$ 
we have 
\begin{equation*}
(
\alpha_4-1/3,
-\alpha_4+1/3,
\alpha_4+1/3,
-\alpha_4+2/3,
\alpha_4,
-\alpha_4
)
\longrightarrow
(
\alpha_4,
1/3,
1/3,
-\alpha_4+1/3,
0,0).
\end{equation*}
\end{proof}

\section{The Standard Forms of The Parameters and Rational Solutions of Type A}
In this section, 
we determine the rational solutions of Type A of 
$A_5^{(1)}(\alpha_0, 1-\alpha_0, 0, 0, 0, 0)$ 
and 
$A_5^{(1)}(\alpha_0, 0, 0, 1-\alpha_0, 0, 0),$ 
whose parameters are the standard forms of Type A.  
\par
This section consists of three subsections. 
In Subsection 9.1, 
we prove the lemmas in order to study 
the rational solutions of Type A of 
$A_5^{(1)}(\alpha_0, 1-\alpha_0, 0, 0, 0, 0)$ 
and 
$A_5^{(1)}(\alpha_0, 0, 0, 1-\alpha_0, 0, 0).$ 
\par
In Subsections 9.2 and 9.3, 
we determine the rational solutions of Type A 
of 
$A_5^{(1)}(\alpha_0, 1-\alpha_0, 0, 0, 0, 0)$ 
and 
$A_5^{(1)}(\alpha_0, 0, 0, 1-\alpha_0, 0, 0),$ 
respectively.

\subsection{Lemmas about rational solutions of Type A}
In this subsection, 
we prove lemmas in order to study 
the rational solutions of Type A of 
$A_5^{(1)}(\alpha_0, 1-\alpha_0, 0, 0, 0, 0)$ 
and 
$A_5^{(1)}(\alpha_0, 0, 0, 1-\alpha_0, 0, 0).$ 
\begin{lemma}
\label{lem:zeropole1}
\it{
Suppose that for some $i=0, 1, 2, 3, 4, 5,$ 
$
(
\alpha_i, 
\alpha_{i+1}, 
\alpha_{i+2}, 
\alpha_{i+3}, 
\alpha_{i+4}, 
\alpha_{i+5}
)
=
(1,0,0,0,0,0)
$
and 
for $
A_5^{(1)}(
\alpha_j
)_{0\leq j \leq 5},
$ 
there exists a rational solution. 
Then, 
\begin{equation*}
(f_{i+1}, f_{i+3}), 
(f_{i+3}, f_{i+5}), 
(f_{i+5}, f_{i+1}), 
(f_{i+2}, f_{i+4}, f_{i+5}, f_{i+1}), 
\end{equation*}
can have a pole at $t=0$.
}
\end{lemma}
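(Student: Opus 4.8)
The plan is to fix the configuration at $t=0$ by classifying it through Proposition \ref{prop:a5zero} and then discarding every configuration whose required leading residues vanish or are otherwise inconsistent. First I would use $\pi$ to reduce to the case $i=0$, so that $\alpha_0=1$ and $\alpha_1=\cdots=\alpha_5=0$. By Proposition \ref{prop:a5zero} any pole of the solution at $t=0$ occurs in one of two shapes: the two--pole shape $(f_k,f_{k+2})_0$ or the four--pole shape $(f_k,f_{k+2},f_{k+3},f_{k+5})_0$, and in each case the leading residues are explicit linear forms in the $\alpha_j$. Substituting $(\alpha_j)=(1,0,0,0,0,0)$ into these forms and keeping only the configurations in which \emph{every} function that is supposed to carry a pole actually has nonzero residue should cut the list down to the four stated cases.

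For the two--pole shape, Proposition \ref{prop:a5zero}(1) gives $\mathrm{Res}_{t=0}f_k=\alpha_{k+1}-\alpha_{k+3}-\alpha_{k+5}$. I would simply evaluate this for $k=0,\dots,5$: it is $0$ for $k=0,2,4$ and equals $-1,-1,1$ for $k=1,3,5$. Hence the only genuine two--pole configurations are $(f_1,f_3)_0,\ (f_3,f_5)_0,\ (f_5,f_1)_0$, which are the first three entries of the list. For the four--pole shape there are three patterns, distinguished by the antipodal pair of regular indices, namely $\{0,3\}$, $\{1,4\}$ and $\{2,5\}$. Plugging the parameters into Proposition \ref{prop:a5zero}(2) I expect: for regular pair $\{2,5\}$ the formulas give $\mathrm{Res}_{t=0}f_0=\mathrm{Res}_{t=0}f_4=0$, so two of the four poles are spurious and the configuration collapses to $(f_1,f_3)_0$ (nothing new); for regular pair $\{0,3\}$ all four residues are nonzero, $\mathrm{Res}_{t=0}(f_1,f_2,f_4,f_5)=(-1,-2,2,1)$, which is exactly the genuine four--pole configuration $(f_2,f_4,f_5,f_1)_0$, the last entry of the list.

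The hard part will be the remaining pattern, regular pair $\{1,4\}$, i.e. poles at $f_0,f_2,f_3,f_5$, because here too all four residues come out nonzero ($\mathrm{Res}_{t=0}(f_0,f_2,f_3,f_5)=(1,-1,-1,1)$) so it is not excluded by the residue test. To rule it out I would use that the two regular functions $f_1,f_4$ both have vanishing parameter, $\alpha_1=\alpha_4=0$. When $f_1$ and $f_4$ are moreover regular at $t=\infty$, Proposition \ref{prop:uniqueness} forces $f_1\equiv f_4\equiv 0$; then $f_0+f_2=t$ and $f_3+f_5=t$, and substituting $f_2=t-f_0$ into the $f_0$--equation collapses the whole system to a single Riccati equation
\begin{equation*}
\frac{t}{2}\,f_0^{\prime}= t f_0^2-\Bigl(t^2+\tfrac12\Bigr)f_0+t .
\end{equation*}
Expanding $f_0=r t^{-1}+\cdots$ (odd) in this equation, the coefficient of $t^{-1}$ yields $-\tfrac{r}{2}=r^2-\tfrac{r}{2}$, hence $r=0$; this contradicts $\mathrm{Res}_{t=0}f_0=1$, so the configuration with regular pair $\{1,4\}$ cannot occur.

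Finally I would have to treat the case in which $f_1$ or $f_4$ has a pole at $t=\infty$ (so that Proposition \ref{prop:uniqueness} does not directly force them to vanish). For this I expect to combine the value $h_{0,0}=\tfrac16$ coming from Proposition \ref{prop:h0} for this configuration with the list of possible constant terms $h_{\infty,0}$ from Proposition \ref{prop:hinf}, using the inequality $h_{\infty,0}\ge h_{0,0}$ of Proposition \ref{prop:hplus} together with the integrality constraints of Corollary \ref{coro:a5res} to eliminate the surviving types at $t=\infty$. I anticipate that this $H$--function bookkeeping, and the verification that $f_1,f_4$ really cannot simultaneously be poleless at $t=0$ and present at $t=\infty$, is the genuinely delicate step, whereas the two--pole count and the degenerate four--pole pattern are routine residue computations.
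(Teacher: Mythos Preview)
The paper's own proof is nothing more than the residue calculation you carry out in your first two paragraphs: it plugs $(\alpha_j)=(1,0,\dots,0)$ into the residue formulas of Proposition~\ref{prop:a5zero} and reads off the list. Your third and fourth paragraphs, where you work to exclude the four-pole configuration with regular pair $\{1,4\}$ (poles at $f_0,f_2,f_3,f_5$), go \emph{beyond} what the paper does. You are right that for $k=0$ in Proposition~\ref{prop:a5zero}(2) one gets $\mathrm{Res}_{t=0}(f_0,f_2,f_3,f_5)=(1,-1,-1,1)$, all nonzero, so this configuration is \emph{not} eliminated by the residue test alone; the paper's proof simply does not address it. In that sense you have spotted a case the paper glosses over, and your approach here is more careful than the paper's, not less.

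That said, your proposed elimination of this extra case is only a sketch. The Riccati reduction is clean once $f_1\equiv f_4\equiv 0$, but forcing that requires knowing the type at $t=\infty$, and your plan to handle the remaining infinity types via the $H$-function and Corollary~\ref{coro:a5res} is not actually carried out. If you want a self-contained proof of the lemma exactly as stated, you will need to complete that case-by-case analysis at $t=\infty$. A cheaper alternative is to weaken the lemma by adding $(f_i,f_{i+2},f_{i+3},f_{i+5})$ to the list of admissible pole configurations and then verify that in every place the lemma is invoked (Lemmas~\ref{lem:a5Apoint1}, \ref{lem:bpoint2}, and the Type~C lemma for $A_5(1,0,0,0,0,0)$) this extra configuration is already killed by the other constraints present there; in the Type~A and Type~B applications, for instance, one has $f_5\equiv 0$ or $f_0\equiv 0$ from Proposition~\ref{prop:uniqueness}, which rules it out immediately.
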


\begin{proof}
We calculate the residues of $f_j \,\,(j=0,1,2,3,4,5)$ at $t=0$ 
in Proposition \ref{prop:a5zero} 
and 
obtain the lemma.
\end{proof}

In the following four lemmas, 
we determine the rational solutions of Type A 
of 
$A_5^{(1)}(0,1,0,0,0,0),$ 
$A_5^{(1)}(1/2,1/2,0,0,0,0),$ 
$A_5^{(1)}(1/2,0,0,1/2,0,0)$ 
and 
$A_5^{(1)}(1/3,0,0,2/3,0,0).$  

\begin{lemma}
\label{lem:a5Apoint1}
\it{
Suppose that 
for $A_5^{(1)}(0,1,0,0,0,0),$ 
there exists a rational solution $(f_i)_{0\leq i \leq 5}$ 
of Type A.
Then, 
%\begin{equation*}
$
(f_0, f_1, f_2, f_3, f_4, f_5)
=
(t,t,0,0,0,0), \,\,
(0,t,t,0,0,0), \,\,
(0,t,0,0,t,0),  \,\,
(t,t,t,0,-t,0).
$
%\end{equation*}
}
\end{lemma}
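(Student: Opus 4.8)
The plan is to run through the three sub-types of Type A furnished by Proposition \ref{prop:a5inf} and to eliminate all but four pole configurations at $t=\infty$ by means of the constant term of the auxiliary function $H$. Since the solution is of Type A, for some $i \in \mathbb{Z}/6\mathbb{Z}$ exactly one of the patterns occurs: $f_i,f_{i+1}$ (Type A (1)), or $f_i,f_{i+3}$ (Type A (2)), or $f_i,f_{i+1},f_{i+2},f_{i+4}$ (Type A (3)) have a pole at $t=\infty$, the remaining $f_j$ being regular there. For the parameters $(\alpha_0,\dots,\alpha_5)=(0,1,0,0,0,0)$ the residues listed in Proposition \ref{prop:a5inf} are completely explicit, so each sub-type reduces to a finite list of candidate configurations indexed by $i$ (with the identification $i\sim i+3$ in Type A (2)).

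The decisive tool is the constant term $h_{\infty,0}$, and the first step is to establish the uniform bound $h_{\infty,0}\ge 0$. Since $(\alpha_1,\alpha_2,\alpha_3,\alpha_4,\alpha_5,\alpha_0)=(1,0,0,0,0,0)$, Lemma \ref{lem:zeropole1} applied with cyclic index $i=1$ forces the pole set at $t=0$ to be empty or one of $(f_2,f_4)_0$, $(f_4,f_0)_0$, $(f_0,f_2)_0$, $(f_3,f_5,f_0,f_2)_0$; substituting each into Proposition \ref{prop:h0} gives $h_{0,0}\in\{0,\tfrac16,\tfrac13\}$, hence $h_{0,0}\ge 0$. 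On the other hand, the computation in the proof of Proposition \ref{prop:hplus} yields $h_{0,0}-h_{\infty,0}=-2\sum_k\epsilon_k\le 0$, that is $h_{\infty,0}\ge h_{0,0}\ge 0$. Thus every admissible configuration must satisfy $h_{\infty,0}\ge 0$.

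Next I would tabulate $h_{\infty,0}$ over all candidate configurations using the three formulas of Proposition \ref{prop:hinf} specialized to $(0,1,0,0,0,0)$. The outcome is that $h_{\infty,0}=0$ occurs exactly for Type A (1) at $i=0$ and $i=1$, for Type A (2) at $\{f_1,f_4\}$, and for Type A (3) at $i=0$, while every remaining configuration gives $h_{\infty,0}<0$ (the values being $-\tfrac16,-\tfrac13,-\tfrac23$). By the bound of the previous paragraph, all configurations with $h_{\infty,0}<0$ are impossible. At each of the four survivors every $f_j$ regular at $t=\infty$ has vanishing residue, so Proposition \ref{prop:uniqueness} forces $f_j\equiv 0$; moreover $h_{0,0}=h_{\infty,0}=0$ shows via Proposition \ref{prop:hplus} that the solution is regular throughout $\mathbb{C}^{*}$ and at $t=0$, so the poled functions are polynomials, and their Laurent expansions at $t=\infty$ together with $f_0+f_2+f_4=t$ and $f_1+f_3+f_5=t$ pin them down to $\pm t$. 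This produces precisely $(t,t,0,0,0,0)$, $(0,t,t,0,0,0)$, $(0,t,0,0,t,0)$ and $(t,t,t,0,-t,0)$, whose existence is guaranteed by Proposition \ref{prop:tri}.

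The main obstacle is bookkeeping rather than conceptual: one must carry out the full enumeration of configurations and residues without sign errors, and in particular confirm that $h_{0,0}\ge 0$ really does hold for every pole pattern at $t=0$ allowed by Lemma \ref{lem:zeropole1}. The one point that genuinely needs the auxiliary function is this: a discarded configuration could a priori conceal a nonzero function (essentially $f_1$) that is regular at $t=\infty$ yet carries poles in $\mathbb{C}^{*}$, and the inequality $h_{\infty,0}\ge h_{0,0}\ge 0$ is exactly what excludes this uniformly, so that no separate Riccati-type analysis of such a leftover function is required.
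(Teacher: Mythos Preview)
Your proposal is correct and follows essentially the same route as the paper: both arguments eliminate the inadmissible pole configurations at $t=\infty$ by showing $h_{0,0}-h_{\infty,0}>0$ and invoking Proposition~\ref{prop:hplus}, and both identify the four surviving configurations via the explicit residue data of Proposition~\ref{prop:a5inf}. The only organisational difference is that you establish the bound $h_{0,0}\ge 0$ once, up front, from the full list in Lemma~\ref{lem:zeropole1}, and then sweep through all $h_{\infty,0}$ values, whereas the paper treats each inadmissible configuration separately (first using Proposition~\ref{prop:uniqueness} to force several $f_j\equiv 0$, which prunes the possible behaviour at $t=0$ before computing $h_{0,0}$); for the four admissible configurations the paper goes straight from the uniqueness of the Laurent series at $t=\infty$ together with Proposition~\ref{prop:tri}, rather than passing through regularity on $\mathbb{C}$ as you do.
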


\begin{proof}
If $(f_0,f_1),(f_1,f_2),(f_1,f_4),(f_0,f_1,f_2,f_4)$ have a pole at $t=\infty$, 
it follows 
from Proposition \ref{prop:a5inf} that 
$$
(f_0, f_1, f_2, f_3, f_4, f_5)
=
(t,t,0,0,0,0), \,\,
(0,t,t,0,0,0), \,\,
(0,t,0,0,t,0),  \,\,
(t,t,t,0,-t,0),
$$ 
respectively.
\par
We assume that $f_2,f_3$ both have a pole at $t=\infty$ 
and show a contradiction. 
The other cases can be proved in the same way. 
\par
If $f_2,f_3$ both have a pole at $t=\infty$, 
it follows 
from Proposition \ref{prop:uniqueness} that 
$f_4=f_5=f_0 \equiv 0.$ 
If some of $(f_i)_{0\leq i \leq 5}$ 
have a pole at $t=0,$ 
it follows from Proposition \ref{prop:a5zero} that 
$(f_i,f_{i+2})$ or $(f_i,f_{i+2},f_{i+3},f_{i+5})$ 
can have a pole at $t=0$ for some $i=0,1,2,3,4,5.$ 
Since $f_4=f_5=f_0\equiv 0,$ 
only $(f_1,f_3)$ can have a pole at $t=0.$ 
If $(f_1,f_3)$ have a pole at $t=0,$ we get 
$-h_{\infty,0}+h_{0,0}=1/6,$ 
which contradicts Proposition \ref{prop:hplus}.
\end{proof}

\begin{lemma}
\label{lem:a5Apoint2}
\it{
For 
$A_5^{(1)}(1/2,1/2,0,0,0,0),$ 
there exists a rational solution of Type A. 
Then, 
$$
(f_0,f_1,f_2,f_3,f_4,f_5) 
=
(t,t,0,0,0,0),
$$ 
and it is unique.
}
\end{lemma}

\begin{proof}
If $f_0,f_1$ both have a pole at $t=\infty$, 
it follows 
from 
Proposition \ref{prop:a5inf} that 
$
(f_0,f_1,f_2,f_3,f_4,f_5)
=
(t,t,0,0,0,0).
$
\par
We assume that $(f_1,f_2) \, \mathrm{or} \,(f_2,f_3)$ have a pole at $t=\infty$ and 
show a contradiction. The other cases can be proved in the same way.
\par
When $(f_1,f_2)$ have a pole at $t=\infty$, 
it follows 
from 
Propositions \ref{prop:hinf} and \ref{prop:h0} that 
$-h_{\infty,0}+h_{0,0} >0,$ 
which contradicts Proposition \ref{prop:hplus}. 
\par
We suppose that $(f_2,f_3)$ have a pole at $t=\infty.$ 
From Propositions \ref{prop:a5inf} and \ref{prop:uniqueness}, 
it follows that 
\begin{equation*}
-\mathrm{Res}_{t=\infty}f_2=-1/2, 
-\mathrm{Res}_{t=\infty}f_3=1/2, 
f_4=f_5\equiv 0, 
-\mathrm{Res}_{t=\infty}f_0=1/2, 
-\mathrm{Res}_{t=\infty}f_1=1/2. 
\end{equation*}
If some of $(f_i)_{0\leq i \leq 5}$ 
have a pole at $t=0,$ 
it follows 
from Proposition \ref{prop:a5zero} that 
$(f_i,f_{i+2})$ or $(f_i,f_{i+2},f_{i+3},f_{i+5})$ 
can have a pole at $t=0$ for some $i=0,1,2,3,4,5.$ 
Since $f_4=f_5 \equiv 0,$ 
$(f_0,f_2)$ or $(f_1,f_3)$ 
can have a pole at $t=0.$ 
When $(f_0,f_2)$ have a pole at $t=0,$ 
it follows 
from Propositions \ref{prop:hinf} and \ref{prop:h0} that 
$-h_{\infty,0}+h_{0,0}=1/6,$ 
which contradicts Proposition \ref{prop:hplus}. 
When $(f_1,f_3)$ have a pole at $t=0,$ 
we get $-h_{\infty,0}+h_{0,0}=1/6,$ 
which contradicts Proposition \ref{prop:hplus}.
\end{proof}

The following two lemmas can be proved in the same way.

\begin{lemma}
\label{lem:a5Apoint3}
\it{
For 
$A_5^{(1)}(1/2,0,0,1/2,0,0),$ 
there exists a rational solution of Type A. 
Then, 
\begin{equation*}
(f_0,f_1,f_2,f_3,f_4,f_5)
=
(t,0,0,t,0,0),
\end{equation*}
and it is unique.
}
\end{lemma}

\begin{lemma}
\label{lem:a5Apoint4}
\it{
For 
$A_5^{(1)}(1/3,0,0,2/3,0,0),$ 
there exists a rational solution of Type A.
Then, 
\begin{equation*}
(f_0,f_1,f_2,f_3,f_4,f_5)
=
(t,0,0,t,0,0),
\end{equation*}
and it is unique.
}
\end{lemma}

\subsection{Rational solutions of Type A of $A_5^{(1)}(\alpha_0,1-\alpha_0,0,0,0,0)$}
In this subsection, 
we decide the rational solutions of Type A of $A_5^{(1)}(\alpha_0,1-\alpha_0,0,0,0,0).$

\begin{proposition}
\it{
\label{prop:a5A1}
Suppose that 
for 
$A_5^{(1)}(\alpha_0,1-\alpha_0,0,0,0,0),$ 
there exist a rational solution of Type A. 
Then, 
$$
(f_0,f_1,f_2,f_3,f_4,f_5)=(t,t,0,0,0,0), 
$$ 
or by some B\"acklund transformations, 
the parameters and solution can be transformed 
so that either of the following occurs:
\newline
(1) \quad $(\alpha_0,\alpha_1,\alpha_2,\alpha_3, \alpha_4,\alpha_5)=(0,1,0,0,0,0)$ 
and 
$$
(f_0, f_1, f_2, f_3, f_4, f_5)
=
(t,t,0,0,0,0), \,\,
(0,t,t,0,0,0), \,\, 
(0,t,0,0,t,0),  \,\,
(t,t,t,0,-t,0),
$$
(2)\quad $(\alpha_0,\alpha_1,\alpha_2,\alpha_3, \alpha_4,\alpha_5)=(1/2,1/2,0,0,0,0)$ 
and 
%\begin{equation*}
$
(f_0,f_1,f_2,f_3,f_4,f_5) 
=
(t,t,0,0,0,0).
$
%\end{equation*}
}
\end{proposition}

\begin{proof}
We treat the case where $(f_0,f_1), (f_1,f_2),(f_2,f_3,f_4,f_0)$ have a pole at $t=\infty$. 
The other cases can be proved in the same way.
\par
If $(f_0,f_1)$ have a pole at $t=\infty$, 
it follows 
from Proposition \ref{prop:a5inf} that 
$
(f_0,f_1,f_2.f_3,f_4,f_5)
=
(t,t,0,0,0,0).
$
\par
If $(f_1,f_2)$ have a pole at $t=\infty$, 
it follows 
from Proposition \ref{prop:a5inf} that 
$$
-\mathrm{Res}_{t=\infty} f_1=0, 
-\mathrm{Res}_{t=\infty} f_2=\alpha_0,
-\mathrm{Res}_{t=\infty} f_0=-\alpha_0, 
f_3=f_4=f_5\equiv 0.
$$ 
If some of $(f_i)_{0\leq i \leq 5}$ 
have a pole at $t=0,$ 
it follows 
from Proposition \ref{prop:a5zero} that 
$(f_i,f_{i+2})$ or $(f_i,f_{i+2},f_{i+3},f_{i+5})$ 
can have a pole at $t=0$ for some $i=0,1,2,3,4,5.$ 
Since $f_3=f_4=f_5\equiv 0,$ 
it follows 
from Proposition \ref{prop:a5zero} that 
only $(f_0,f_2)$ can have a pole at $t=0.$ 
\par
When $(f_0,f_2)$ have a pole at $t=0$, 
$
-h_{\infty,0}+h_{0,0}
=
-\left(-1/3\cdot\alpha_0 \right)+
\left(-1/3\cdot\alpha_0+1/3 \right)
=1/3,
$ 
which contradicts Proposition \ref{prop:hplus}. 
\par
When $(f_0,f_2)$ do not have a pole at $t=0$, 
all of $(f_i)_{0\leq i \leq 5}$ 
are holomorphic at $t=0.$ 
It then follows 
from Corollary \ref{coro:a5res} that 
$\alpha_0 \in \mathbb{Z}.$ 
By $T_1$, we have 
$
(\alpha_0,1-\alpha_0,0,0,0,0)
\longrightarrow
(0,1,0,0,0,0).
$
\par
If $(f_2,f_3,f_4,f_0)$ have a pole at $t=\infty$, 
it follows 
from Propositions \ref{prop:a5inf} and \ref{prop:uniqueness} 
that 
\begin{align*}
&-\mathrm{Res}_{t=\infty} f_2=-\alpha_0,   & 
&-\mathrm{Res}_{t=\infty} f_3=1-\alpha_0,  & 
&-\mathrm{Res}_{t=\infty} f_4=-\alpha_0+2,  \\
&\hspace{18mm}    f_5 \equiv 0,    & 
&-\mathrm{Res}_{t=\infty} f_0=2\alpha_0-2, & 
&-\mathrm{Res}_{t=\infty} f_1=\alpha_0-1,
\end{align*}
which implies that  
$2\alpha_0 \in  \mathbb{Z}$ 
from Proposition \ref{prop:a5zero} and Corollary \ref{coro:a5res}. 
By $T_1$, we obtain 
\begin{equation*}
(\alpha_0,1-\alpha_0,0,0,0,0)
\longrightarrow
(0,1,0,0,0,0) \,\,
\mathrm{or} \,\,
(1/2,1/2,0,0,0,0).
\end{equation*}
\end{proof}

\subsection{Rational solutions of $A_5^{(1)}(\alpha_0,0,0,1-\alpha_0,0,0)$}
In this subsection, 
we determine the rational solutions of Type A of $A_5^{(1)}(\alpha_0,0,0,1-\alpha_0,0,0).$

\begin{proposition}
\label{prop:a5A2}
\it{
Suppose that 
for 
$A_5^{(1)}(\alpha_0,0,0,1-\alpha_0,0,0),$ 
there exist a rational solution of Type A. 
Then, 
$$
(f_0,f_1,f_2,f_3,f_4,f_5)
=
(t,0,0,t,0,0)
$$ 
or by some B\"acklund transformations, 
the parameters and solution 
can be transformed so that 
one of the following occurs:
\newline
(1)\quad $(\alpha_0,\alpha_1,\alpha_2,\alpha_3,\alpha_4,\alpha_5)= (0,1,0,0,0,0),$
and 
$$
(f_0, f_1, f_2, f_3, f_4, f_5)
=
(t,t,0,0,0,0), \,\,
(0,t,t,0,0,0), \,\, 
(0,t,0,0,t,0),  \,\,
(t,t,t,0,-t,0),
$$
(2)\quad 
$(\alpha_0,\alpha_1,\alpha_2,\alpha_3,\alpha_4,\alpha_5)=(1/2,0,0,1/2,0,0)$ 
and 
$(f_0,f_1,f_2,f_3,f_4,f_5 )=(t,0,0,t,0,0),$  
\newline
(3)\quad 
$(\alpha_0,\alpha_1,\alpha_2,\alpha_3,\alpha_4,\alpha_5)=(1/3,0,0,2/3,0,0)$ 
and 
$(f_0,f_1,f_2,f_3,f_4,f_5 )=(t,0,0,t,0,0).$ 
}
\end{proposition}

\begin{proof}
We treat the case where $(f_0,f_3), (f_0,f_1,f_2,f_4)$ have a pole at $t=\infty$. 
The other cases can be proved in the same way.
\par
If $(f_0,f_3)$ have a pole at $t=\infty$, from Proposition \ref{prop:a5inf}, we get 
$$
(f_0,f_1,f_2,f_3,f_4,f_5)
=
(t,0,0,t,0,0).
$$
\par
If $(f_0,f_1,f_2,f_4)$ have a pole at $t=\infty$, 
it follows 
from Proposition \ref{prop:a5inf} that 
\begin{align*}
&-\mathrm{Res}_{t=\infty}f_0=2\alpha_0-2, & 
&-\mathrm{Res}_{t=\infty}f_1=\alpha_0-1,  & 
&-\mathrm{Res}_{t=\infty}f_2=\alpha_0, \\
&-\mathrm{Res}_{t=\infty}f_3=1-\alpha_0,  & 
&-\mathrm{Res}_{t=\infty}f_4=-3\alpha_0+2, & 
&-\mathrm{Res}_{t=\infty}f_5=0,
\end{align*}
which implies that 
$\alpha_0 \in \frac12 \mathbb{Z}$ or $\alpha_0 \in \frac13 \mathbb{Z}$ 
from Proposition \ref{prop:a5zero} and Corollary \ref{coro:a5res}. 
By $T_1  T_2  T_3$ or $\pi,$ we obtain 
\begin{equation*}
(\alpha_0,0,0,1-\alpha_0,0,0)
\longrightarrow
(0,1,0,0,0,0), \,\,
(1/2,0,0,1/2,0,0), \,
\mathrm{or} \,
(1/3,0,0,2/3,0,0).
\end{equation*}
Therefore, 
the proposition follows from Lemmas \ref{lem:a5Apoint1}, \ref{lem:a5Apoint2}, 
\ref{lem:a5Apoint3} and \ref{lem:a5Apoint4}. 
\end{proof}

\section{The Standard Forms of The Parameters and Rational Solutions of Type B}
In this section, 
we determine the rational solutions of Type B of 
$A_5^{(1)}(\alpha_j)_{0\leq j \leq 5}$ 
if the parameters are the standard forms, 
that is, 
if one of the following occurs:
\newline
(1)\quad $(\alpha_0,\alpha_1,\alpha_2,\alpha_3,\alpha_4,\alpha_5)=(\alpha_0,-\alpha_0+1/2,\alpha_0,-\alpha_0+1/2,0,0)$, 
\newline
(2)\quad $(\alpha_0,\alpha_1,\alpha_2,\alpha_3,\alpha_4,\alpha_5)=(1/2, 0, 1/2, \alpha_0, 0, -\alpha_0),$ 
\newline
(3)\quad $(\alpha_0,\alpha_1,\alpha_2,\alpha_3,\alpha_4,\alpha_5)=(\alpha_0, 0, -\alpha_0+1, 0, 0, 0).$ 
\par
This section consists of four subsections. 
In Subsection 10.1, 
we prove the lemmas in order to 
study the rational solutions of Type B if the parameters are the standard forms. 
In Subsections 10.2, 10.3 and 10.4, 
we determine the rational solutions of Type B of 
$A_5^{(1)}(\alpha_0,-\alpha_0+1/2,\alpha_0,-\alpha_0+1/2,0,0),$ 
$A_5^{(1)}(1/2, 0, 1/2, \alpha_0, 0, -\alpha_0)$ 
and 
$A_5^{(1)}(\alpha_0, 0, -\alpha_0+1, 0, 0, 0),$ 
respectively.

\subsection{Lemmas about rational solutions of Type B}
In this subsection, 
we prove the lemmas in order to 
determine the rational solutions of Type B of 
$A_5^{(1)}(\alpha_0,-\alpha_0+1/2,\alpha_0,-\alpha_0+1/2,0,0)$, 
$A_5^{(1)}(1/2, 0, 1/2, \alpha_0, 0, -\alpha_0)$ 
and 
$A_5^{(1)}(\alpha_0, 0, -\alpha_0+1, 0, 0, 0).$ 

\begin{lemma}
\label{lem:zeropole2}
\it{
For $A_5^{(1)}(1/2, 0,1/2, 0, 0, 0),$ 
there exists a rational solution. 
Then, 
all of $(f_i)_{0\leq i\leq 5}$ are holomorphic at $t=0$, 
or 
$(f_3, f_5)$ have a pole at $t=0$.
}
\end{lemma}

\begin{proof}
By using Proposition \ref{prop:a5zero}, 
we calculate the residues of $f_i\,\,(i=0,1,2,3,4,5)$ at $t=0$ 
when 
$$
(\alpha_0,\alpha_1,\alpha_2,\alpha_3,\alpha_4,\alpha_5)
=
(1/2, 0,1/2, 0, 0, 0).
$$
We then get the lemma.
\end{proof}

\begin{lemma}
\label{lem:bpoint1}
\it{
Suppose that for 
$A_5^{(1)}(1/2, 0,1/2, 0, 0, 0),$ there exists rational solutions 
of Type B. 
Then 
\begin{equation*}
(f_0,f_1,f_2,f_3,f_4,f_5) 
=
(t/2,t/2,t/2,t/2,0,0),  \,\,
(t/2,t/2,t/2,0,0,t/2).
\end{equation*}
}
\end{lemma}

\begin{proof}
Suppose that 
$f_i, f_{i+1}, f_{i+2}, f_{i+3}$ all have a pole at $t=\infty$ 
for some $i=0, 1, 2, 3, 4, 5.$ 
If $(f_0,f_1,f_2,f_3)$ or $(f_5,f_0,f_1,f_2)$ 
have a pole at $t=\infty,$ 
it follows 
from Proposition \ref{prop:a5inf} and Proposition \ref{prop:uniqueness} that 
\begin{equation*}
(f_0,f_1,f_2,f_3,f_4,f_5)
=
(t/2,t/2,t/2,t/2,0,0),  \,\,
(t/2,t/2,t/2,0,0,t/2),
\end{equation*}
respectively.
\par
We assume that $(f_2, f_3, f_4, f_5)$ have a pole at $t=\infty$ 
and show a contradiction. 
The other cases can be proved in the same way. 
\par
From Propositions \ref{prop:a5inf} and \ref{prop:uniqueness}, 
it follows that 
\begin{equation*}
-\mathrm{Res}_{t=\infty} f_2=-1, 
-\mathrm{Res}_{t=\infty} f_3=-1, 
-\mathrm{Res}_{t=\infty} f_4=0, 
-\mathrm{Res}_{t=\infty} f_5=1, 
-\mathrm{Res}_{t=\infty} f_0=1, 
f_1 \equiv 0.
\end{equation*}
It follows 
from Proposition \ref{prop:hinf} that 
$h_{\infty,0}=0.$ 
Moreover, Lemma \ref{lem:zeropole2} shows that 
all of $(f_i)_{0\leq i \leq 5}$ 
are holomorphic at $t=0$ 
or 
$(f_3,f_5)$ have a pole at $t=0.$
\par
If all of $(f_i)_{0\leq i \leq 5}$ are holomorphic 
at $t=0$, it follows from Proposition \ref{prop:h0} that 
$-h_{\infty,0}+h_{0,0}=0.$ 
Thus 
Proposition \ref{prop:hplus} shows that 
all of $(f_i)_{0\leq i \leq 5}$ 
are holomorphic in $\mathbb{C}^{*},$ 
which contradicts the residue theorem. 
\par
If $(f_3, f_5)$ have a pole at $t=0$, 
it follows 
from Proposition \ref{prop:h0} that 
$
-h_{\infty,0}+h_{0,0}=1/6, 
$
which contradicts Proposition \ref{prop:hplus}.
\end{proof}

In order to determine the rational solutions of Type B 
of 
$A_5^{(1)}(\alpha_0, 0, -\alpha_0+1, 0, 0, 0),$  
we have the following two lemmas:

\begin{lemma}
\label{lem:bpoint2}
\it{
For 
$A_5^{(1)}(1, 0, 0, 0, 0, 0),$ there exists no rational solution 
of Type B.
}
\end{lemma}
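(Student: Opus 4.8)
The plan is to split on the position of the block of four functions that are singular at $t=\infty$. By Proposition \ref{prop:a5inf} a Type B solution has, for some $i$, poles of $f_i,f_{i+1},f_{i+2},f_{i+3}$ at $t=\infty$ and $f_{i+4},f_{i+5}$ regular there, so I treat $i=0,1,\dots,5$ in turn. Since $\alpha_1=\cdots=\alpha_5=0$, Proposition \ref{prop:uniqueness} shows that for $i=1$ only $f_5\equiv0$ and for $i=2$ only $f_1\equiv0$, whereas for $i=0,3,4,5$ both regular functions vanish identically, so two adjacent $f_j$ are zero. For $i=1,2$ I expect a one-line contradiction: Proposition \ref{prop:hinf} gives $h_{\infty,0}=-\tfrac16$, while for these parameters every value of $h_{0,0}$ permitted by Proposition \ref{prop:h0} is nonnegative, contradicting $h_{0,0}-h_{\infty,0}\le0$ from Proposition \ref{prop:hplus}.

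The core is the case $i=0$, to which the remaining cases will be reduced. Here $f_4\equiv f_5\equiv0$, so the system closes up on $f_0,f_1,f_2,f_3$ with $f_0+f_2=f_1+f_3=t$; setting $g=\tfrac12(f_0-f_2)$ and $h=\tfrac12(f_1-f_3)$ turns it into a closed pair of first-order ODEs in $g,h$, and Type B is precisely the demand that $g,h$ be rational and vanish at $t=\infty$. First I would bound the finite poles. Proposition \ref{prop:hinf} gives $h_{\infty,0}=\tfrac16$; since each $\epsilon_k$ in Proposition \ref{prop:hplus} is at least $\tfrac1{12}$, the identity $h_{\infty,0}-h_{0,0}=2\sum_k\epsilon_k$ leaves at most one pole pair $\pm c$ in $\mathbb{C}^{*}$, and if it occurs then $h_{0,0}=0$. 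Second, the residue of $f_0$ in the configuration $(f_0,f_2)_0$ of Proposition \ref{prop:a5zero} is $\alpha_1-\alpha_3-\alpha_5=0$, so that configuration cannot occur; the only admissible pole at $t=0$ is $(f_1,f_3)_0$, for which $h_{0,0}=\tfrac16$.

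These two observations together confine all finite poles to a single one of the pairs $\{f_0,f_2\}$, $\{f_1,f_3\}$: when a $\mathbb{C}^{*}$ pole is present we have $h_{0,0}=0$, hence the solution is regular at $t=0$, so the pole at $t=0$ cannot load the other pair. The complementary pair then has its only pole at $t=\infty$; being odd with leading coefficient $\tfrac12$ these two functions must equal $\tfrac12 t$, i.e. $g\equiv0$ or $h\equiv0$. A direct check of the reduced equations finishes the case: $g\equiv0$ forces $h=-t^{-1}$, which fails the second equation, and $h\equiv0$ forces $g\equiv0$ and then the first equation reads $0=t$. For $i=3,4,5$ the two adjacent vanishing functions reduce the system, via the cyclic form-invariance of $(*)$, to the $A_3^{(1)}$ Painlev\'e system with parameters a cyclic shift of $(1,0,0,0)$; its own cyclic symmetry identifies this with the $i=0$ reduced system, so an admissible solution there would yield one for $i=0$, already excluded.

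The step I expect to be most delicate is the pole bookkeeping for $i=0$: one must verify that once $f_4\equiv f_5\equiv0$ none of the four-function pole configurations of Propositions \ref{prop:a5zero} and \ref{prop:a5c} can occur, and that the lone admissible $\mathbb{C}^{*}$ pair together with the lone admissible pole at $t=0$ cannot simultaneously make both $g$ and $h$ singular. This is exactly where the vanishing of the $(f_0,f_2)_0$ residue is decisive, and it is what ultimately forces $g\equiv0$ or $h\equiv0$ and hence the contradiction.
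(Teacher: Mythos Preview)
Your proposal is correct and shares the paper's core strategy---use the Hamiltonian identity $h_{\infty,0}-h_{0,0}=2\sum_k\epsilon_k$ (Propositions \ref{prop:hinf}--\ref{prop:hplus}) to pin down the finite poles, then finish by a direct check---but your endgame is organized differently and is somewhat cleaner. The paper treats only $i=0$ explicitly and splits into two subcases: when the solution is regular at $t=0$ it identifies the lone admissible $\mathbb{C}^{*}$-pole as $(f_1,f_3)(II)$, writes out the resulting partial fractions, and derives a contradiction from the Laurent constraint $f_2(c)=0$ in Proposition \ref{prop:a5c}; when $(f_1,f_3)_0$ occurs it writes out the explicit formula and substitutes into $(*)$. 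Your passage to the reduced pair $(g,h)$ and the observation that the pole bookkeeping forces $g\equiv0$ or $h\equiv0$ unify these two subcases and avoid any appeal to Proposition \ref{prop:a5c}. Your treatment of $i=1,2$ via $h_{\infty,0}=-\tfrac16$ is the same one--line Hamiltonian argument the paper uses elsewhere (e.g.\ Lemma \ref{lem:bpoint1}), and your $A_3^{(1)}$-cyclic reduction for $i=3,4,5$ makes explicit what the paper leaves as ``in the same way''. The only point to spell out carefully in a full write-up is the one you already flag: with $f_4\equiv f_5\equiv0$, the single admissible $\epsilon_k=\tfrac1{12}$ pole must be of type $(f_0,f_2)(II)$ or $(f_1,f_3)(II)$, which is immediate since all other two-function and four-function configurations of Proposition \ref{prop:a5c} would require $f_4$ or $f_5$ to have a pole.
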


\begin{proof}

We treat the case where $(f_0, f_1, f_2, f_3)$ have a pole at $t=\infty$. 
The other cases can be proved in the same way.
\par
From 
Propositions \ref{prop:a5inf} and \ref{prop:uniqueness}, 
it follows that 
\begin{equation}
\label{eqn:bpoint}
-\mathrm{Res}_{t=\infty} f_0 =0, 
-\mathrm{Res}_{t=\infty} f_1 =-1, 
-\mathrm{Res}_{t=\infty} f_2 =0, 
-\mathrm{Res}_{t=\infty} f_3 =1, 
f_4 \equiv f_5 \equiv 0.
\end{equation}
Lemma \ref{lem:zeropole1} shows that 
$(f_1,f_3), (f_3,f_5),(f_5,f_1),(f_2,f_4,f_5,f_1)$ 
can have a pole at $t=0.$ 
Since $f_4 \equiv f_5 \equiv 0,$ 
all of 
$(f_i)_{0\leq i \leq 5}$ are holomorphic at $t=0$ 
or 
$(f_1,f_3)$ have a pole at $t=0$.
\par
Suppose that all of 
$(f_j)_{0\leq j \leq 5}$ are holomorphic at $t=0$. 
It then follows from Propositions \ref{prop:hinf} and \ref{prop:h0} that 
$-h_{\infty,0}+h_{0,0}=-1/6.$ 
Suppose that 
$\pm c_1, \pm c_2,\ldots, \pm c_n \in \mathbb{C}^{*}$ are poles of $f_i$ for some $i=0,1,2,3,4,5$ 
because $f_j \,\,(j=0,1,2,3,4,5)$ are odd functions from Corollary \ref{prop:a5odd}. 
Let $\epsilon_k c_k \,\,(k=1,2,\ldots,n)$ be the residue of $H$ at $t=c_k$. 
It then follows 
from the proof of Proposition \ref{prop:hplus} that 
%\begin{equation*}
$
-h_{\infty,0}+h_{0,0}=-2 \sum_{k=1}^n \epsilon_k.
$
%\end{equation*}
Proposition \ref{prop:hc} shows that $\epsilon_k \,\,(k=1,2,\ldots, n)$ are 
$1/6$ or $1/12$ or $5/12.$ 
Since $-h_{\infty,0}+h_{0,0}=-1/6,$ 
$H$ has poles at $t= \pm c$ for some $c \in \mathbb{C}^{*}$ 
and 
the residues of $H$ at $t=\pm c$ are $\pm c/12.$ 
It then follows from Proposition \ref{prop:hc} that 
$(f_1, f_3)(II)$ occurs 
for $t=\pm c \in \mathbb{C}^{*}$, 
because $f_4=f_5\equiv 0.$ 
Therefore, we have 
\begin{equation*}
f_0=\frac{t}{2} ,  \,
f_1=\frac{t}{2} -\frac12 \frac{1}{t-c} -\frac12 \frac{1}{t+c},  \,
f_2=\frac{t}{2},  \,
f_3=\frac{t}{2} +\frac12 \frac{1}{t-c} +\frac12 \frac{1}{t+c}, \,
f_4 \equiv f_5 \equiv 0. 
\end{equation*}
On the other hand, 
from Proposition \ref{prop:a5c}, 
it follows 
that 
the constant term of the Taylor series of $f_2$ at $t= \pm c$ 
is zero, 
which is a contradiction because $f_2(\pm c)=\pm c/2.$
\par
When $(f_1, f_3)$ have a pole at $t=0$, 
it follows 
from Lemma \ref{prop:h0} that 
$
-h_{\infty, 0}+h_{0,0}
=
-1/6+1/6=0.
$ 
From Proposition \ref{prop:hc}, 
all of $(f_i)_{0\leq i \leq 5}$ are then holomorphic in $\mathbb{C}^{*}$. 
Thus, 
it follows 
from equation (\ref{eqn:bpoint}) and the residue theorem 
that 
%\begin{equation*}
$
f_0=t/2,\,  
f_1=t/2 -1/t, \,
f_2=t/2, \,
f_3=t/2 +1/t, \,
f_4 \equiv f_5 \equiv 0.
$
%\end{equation*}
By substituting this solution into $A_5^{(1)}(1, 0, 0, 0, 0, 0)$, 
we can prove the contradiction.
\end{proof}

The following lemma can be proved in the same way.

\begin{lemma}
\label{lem:bpoint3}
\it{
For 
$
A_5^{(1)}
(1/2,1/2,0,0,0,0) \, and
\,
A_5^{(1)} 
(1/2,0,0,1/2,0,0),
$ 
there exists no
rational solution of Type B.
}
\end{lemma}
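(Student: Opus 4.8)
The plan is to mimic the proof of Lemma \ref{lem:bpoint2} verbatim in structure, running the argument separately for $A_5(\tfrac12,\tfrac12,0,0,0,0)$ and $A_5(\tfrac12,0,0,\tfrac12,0,0)$. First I would assume a rational solution of Type B exists, so that $f_i,f_{i+1},f_{i+2},f_{i+3}$ have a pole at $t=\infty$ for some $i$, and use $\pi$ to reduce to a single representative placement of the four poles (the remaining placements being handled identically). For that placement I would read off the residues at $t=\infty$ from the Type B formulas in Proposition \ref{prop:a5inf}, substituting the specific parameter values, and then invoke Proposition \ref{prop:uniqueness} to conclude that the two regular-at-infinity functions $f_{i+4},f_{i+5}$ are identically zero whenever the corresponding $\alpha_{i+4},\alpha_{i+5}$ vanish. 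For both parameter sets at least one, and generically both, of these functions drop out, which sharply restricts the admissible pole configurations at $t=0$.

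Next I would determine the possible behavior at $t=0$ using Proposition \ref{prop:a5zero}: only the configurations $(f_j,f_{j+2})_0$ or $(f_j,f_{j+2},f_{j+3},f_{j+5})_0$ can occur, and those involving an identically-zero function are excluded. For each surviving configuration I would compute $h_{\infty,0}$ from the Type B case of Proposition \ref{prop:hinf} and $h_{0,0}$ from Proposition \ref{prop:h0}, and then test the quantity $-h_{\infty,0}+h_{0,0}$ against Proposition \ref{prop:hplus}, which forces $6(h_{0,0}-h_{\infty,0})$ to be a negative integer. As in Lemma \ref{lem:bpoint2}, I expect most placements to yield $-h_{\infty,0}+h_{0,0}>0$ or a non-integer value, giving an immediate contradiction with Proposition \ref{prop:hplus}; these cases dispose of the bulk of the case analysis cheaply.

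The remaining delicate cases are those where $-h_{\infty,0}+h_{0,0}=0$, forcing all rational functions to be regular in $\mathbb{C}^{*}$, and the borderline value $-h_{\infty,0}+h_{0,0}=-\tfrac16$, which by Proposition \ref{prop:hc} forces exactly one conjugate pair $\pm c$ of poles of type $(f_j,f_{j+2})(II)$. In the regular-in-$\mathbb{C}^{*}$ case I would reconstruct each $f_j$ explicitly from its residues at $t=\infty$ and $t=0$ together with oddness (Proposition \ref{prop:a5odd}) and the residue theorem, and then substitute the resulting candidate directly into the system $(*)$ with the given parameters to exhibit an inconsistency. In the one-pair case I would instead write down the forced partial-fraction form of the $f_j$ and compare the constant term of the Taylor expansion at $t=\pm c$ with the value dictated by Proposition \ref{prop:a5c}, exactly the contradiction $f_2(\pm c)=\pm\tfrac{c}{2}$ that closed Lemma \ref{lem:bpoint2}.

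The main obstacle will be the bookkeeping in this last step: for each of the two parameter sets one must correctly enumerate the surviving placements and, for each, either perform the explicit substitution into $(*)$ or match the constant-term data from Proposition \ref{prop:a5c} against the residue-theorem reconstruction. The conceptual content is identical to Lemma \ref{lem:bpoint2}, so the difficulty is purely in carrying the residue and Hamiltonian computations through with the altered values $\alpha_0=\alpha_1=\tfrac12$ (respectively $\alpha_0=\alpha_3=\tfrac12$) rather than $\alpha_0=1$; once those numbers are in hand, the Hamiltonian obstruction from Proposition \ref{prop:hplus} and the constant-term obstruction from Proposition \ref{prop:a5c} eliminate every case, proving that neither equation admits a Type B rational solution.
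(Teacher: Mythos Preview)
Your proposal is correct and is exactly the approach the paper takes: the paper's proof consists solely of the remark that the lemma can be proved in the same way as Lemma \ref{lem:bpoint2}, and you have spelled out precisely that program.
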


\subsection{Rational solutions of Type B of $A_5^{(1)}(\alpha_0, -\alpha_0+1/2, \alpha_0, -\alpha_0+1/2, 0, 0)$}

In this subsection, we determine 
the rational solutions of Type B of $A_5^{(1)}(\alpha_0, -\alpha_0+1/2, \alpha_0, -\alpha_0+1/2, 0, 0).$

\begin{proposition}
\it{
\label{prop:bpara1}
Suppose that 
for 
$A_5^{(1)}(\alpha_0, -\alpha_0+1/2, \alpha_0, -\alpha_0+1/2, 0, 0),$ 
there exists a rational solution of Type B. 
Then, 
$$
(f_0,f_1, f_2, f_3,f_4,f_5)=(t/2,t/2,t/2,t/2,0,0),
$$ 
or 
by some B\"acklund transformations, 
the parameters and solution can be transformed so that 
one of the following occurs:
\newline
(1)\quad 
$(\alpha_0,\alpha_1,\alpha_2,\alpha_3,\alpha_4,\alpha_5)=(1/2,0,1/2,0,0,0)$ 
and 
$
(f_0,f_1,f_2,f_3,f_4,f_5)=
(
t/2,
t/2,
t/2,
t/2,
0,0
)
$,
\newline
(2)\quad 
$(\alpha_0,\alpha_1,\alpha_2,\alpha_3,\alpha_4,\alpha_5)=(0,1/2,0,1/2,0,0)$ 
and 
$
(f_0,f_1,f_2,f_3,f_4,f_5)=
(
t/2,
t/2,
t/2,
t/2,
0,0
).
$
}
\end{proposition}

\begin{proof}
We treat the case where $(f_0,f_1,f_2,f_3)$ or $(f_1,f_2,f_3,f_4)$ have a pole at $t=\infty$.
The other cases can be proved in the same way.
\par
If $(f_0,f_1,f_2,f_3)$ have a pole at $t=\infty$, 
it follows 
from Propositions \ref{prop:a5inf} and \ref{prop:uniqueness} that 
$$
(f_0,f_1,f_2,f_3,f_4,f_5)
=
(
t/2,
t/2,
t/2,
t/2,
0,0
).
$$
\par
If $(f_1,f_2,f_3,f_4)$ have a pole at $t=\infty$, 
it follows 
from Propositions \ref{prop:a5inf} and \ref{prop:uniqueness} that 
\begin{equation*}
-\mathrm{Res}_{t=\infty} f_1=-\mathrm{Res}_{t=\infty} f_2=-\mathrm{Res}_{t=\infty} f_3=0, 
-\mathrm{Res}_{t=\infty} f_4=2\alpha_0, f_5 \equiv 0, 
-\mathrm{Res}_{t=\infty} f_0=-2\alpha_0.
\end{equation*}
Moreover, when $(f_4,f_0)$ have a pole at $t=0$, 
we have 
%\begin{equation*}
$
-h_{\infty,0}+h_{0,0}=
-
\left(
-1/3 \cdot \alpha_0
\right)
+
\left(
-1/3 \cdot \alpha_0+1/6
\right)
=1/6,
$
%\end{equation*}
which contradicts Proposition \ref{prop:hplus}. 
When $(f_4,f_0)$ do not have a pole at $t=0$, 
it follows 
from Corollary \ref{coro:a5res} that 
$2\alpha_0 \in  \mathbb{Z}$. 
\par
When $\alpha_0 \in \mathbb{Z},$ 
by $\pi^{-1}  T_1  T_3$, 
we obtain 
%\begin{equation*}
$
(\alpha_0, -\alpha_0+1/2, \alpha_0, -\alpha_0+1/2, 0, 0)
\longrightarrow
(1/2,0,1/2,0,0,0). 
$
%\end{equation*}
Therefore, 
the proposition follows 
from Lemma \ref{lem:bpoint1}. 
\par
When $\alpha_0-1/2 \in  \mathbb{Z}$, 
by $T_1  T_3$, we have 
%\begin{equation*}
$
(\alpha_0, -\alpha_0+1/2, \alpha_0, -\alpha_0+1/2, 0, 0)
\longrightarrow
(1/2,0,1/2,0,0,0). 
$
%\end{equation*}
Therefore, 
the proposition follows 
from Lemma \ref{lem:bpoint1}. 
\end{proof}

\subsection{Rational solutions of Type B of $A_5^{(1)}(1/2, 0, 1/2,\alpha_0, 0, -\alpha_0)$}
In this subsection, 
we determine the rational solutions of Type B 
of 
$A_5^{(1)}(1/2, 0, 1/2,\alpha_0, 0, -\alpha_0)$.

\begin{proposition}
\it{
\label{prop:bpara2}
Suppose that 
for $A_5^{(1)}(1/2, 0, 1/2,\alpha_0, 0, -\alpha_0),$ 
there exists a rational solution of Type B. 
$2\alpha_0$ is then an integer. 
Furthermore, 
by some B\"acklund transformations, 
the parameters and solution can be transformed so that 
either of the following occurs:
\newline
(1)\quad 
$(\alpha_0,\alpha_1,\alpha_2,\alpha_3,\alpha_4,\alpha_5)=(1/2,0,1/2,0,0,0)$ 
and 
$
(f_0,f_1,f_2,f_3,f_4,f_5)=
(
t/2,
t/2,
t/2,
t/2,
0,0
)
$,
\newline
(2)\quad 
$(\alpha_0,\alpha_1,\alpha_2,\alpha_3,\alpha_4,\alpha_5)=(0,1/2,0,1/2,0,0)$ 
and 
$
(f_0,f_1,f_2,f_3,f_4,f_5)=
(
t/2,
t/2,
t/2,
t/2,
0,0
).
$
}
\end{proposition}

\begin{proof} 
We treat the case where $(f_1, f_2, f_3, f_4)$ have a pole at $t=\infty$. 
The other cases can be proved in the same way. 
\par
From Proposition \ref{prop:a5inf}, 
it follows that 
\begin{align*}
&-\mathrm{Res}_{t=\infty} f_1 =  2\alpha_0,           & 
&-\mathrm{Res}_{t=\infty} f_2 =  2\alpha_0,           & 
&-\mathrm{Res}_{t=\infty} f_3 =  0,                      \\
&-\mathrm{Res}_{t=\infty} f_4 =-2\alpha_0+1,          & 
&-\mathrm{Res}_{t=\infty} f_5=-2\alpha_0,             & 
&-\mathrm{Res}_{t=\infty}f_0 =-1.
\end{align*}
\par
When $(f_2, f_4, f_5, f_1)$ have a pole at $t=0$, 
it follows 
from Propositions \ref{prop:hinf} and \ref{prop:h0} that 
\begin{equation*}
-h_{\infty,0}+h_{0,0}=-\left(\alpha_0^2-1/2 \cdot \alpha_0-1/6 \right)+
\left( \alpha_0^2-1/2 \cdot \alpha_0+1/6 \right)
=1/3,
\end{equation*}
which contradicts Proposition \ref{prop:hplus}.
\par
Then we can suppose that $(f_2, f_4, f_5, f_1)$ do not have a pole at $t=0,$ 
which implies that 
from Proposition \ref{prop:a5zero} and Corollary \ref{coro:a5res}, 
$2\alpha_0 \in  \mathbb{Z}.$ 
If $\alpha_0 \in \mathbb{Z}$, 
by $T_4  T_5$, we obtain 
$
(1/2, 0, 1/2, \alpha_0, 0, -\alpha_0) 
\rightarrow
(1/2, 0, 1/2, 0, 0, 0).
$
Therefore, 
the proposition follows 
from Lemma \ref{lem:bpoint1}. 
\par
If $\alpha_0 -1/2 \in \mathbb{Z}$, 
by $T_4  T_5$, we have 
$
(1/2, 0, 1/2, \alpha_0, 0, -\alpha_0) 
\rightarrow
(1/2, 0, 1/2, 1/2, 0, -1/2).
$
By $\pi^{-2}  s_4  s_5$, 
we get 
$
(1/2, 0, 1/2, 1/2, 0, -1/2)
\rightarrow
(1/2, 0, 1/2, 0, 0, 0).
$
Therefore, 
the proposition follows 
from Lemma \ref{lem:bpoint1}. 
\end{proof}

\subsection{Rational solutions of Type B of $A_5^{(1)}(\alpha_0,0,-\alpha_0+1, 0, 0, 0)$}

In this subsection, 
we determine 
the rational solutions of Type B of $A_5^{(1)}(\alpha_0,0,-\alpha_0+1, 0, 0, 0).$

\begin{proposition}
\label{prop:bpara3}
\it{
Suppose that 
$A_5^{(1)}(\alpha_0,0,-\alpha_0+1, 0, 0, 0)$ 
has a rational solution of Type B. 
$2\alpha_0$ is then an integer. Furthermore, 
by some B\"acklund transformations, 
the parameters and solution can be transformed so that 
either of the following occurs:
\newline
(1)\quad 
$(\alpha_0,\alpha_1,\alpha_2,\alpha_3,\alpha_4,\alpha_5)=(1/2,0,1/2,0,0,0)$ 
and 
$
(f_0,f_1,f_2,f_3,f_4,f_5)=
(
t/2,
t/2,
t/2,
t/2,
0,0
)
$,
\newline
(2)\quad 
$(\alpha_0,\alpha_1,\alpha_2,\alpha_3,\alpha_4,\alpha_5)=(0,1/2,0,1/2,0,0)$ 
and 
$
(f_0,f_1,f_2,f_3,f_4,f_5)=
(
t/2,
t/2,
t/2,
t/2,
0,0
).
$
}
\end{proposition}

\begin{proof}
We treat the case where $(f_0, f_1, f_2, f_3)$ have a pole at $t=\infty$. 
The other cases can be proved in the same way. 
\par
From Propositions \ref{prop:a5inf} and \ref{prop:uniqueness}, 
it follows that 
\begin{equation*}
-\mathrm{Res}_{t=\infty} f_0 =0, 
-\mathrm{Res}_{t=\infty} f_1 =-2\alpha_0+1, 
-\mathrm{Res}_{t=\infty} f_2 =0, 
-\mathrm{Res}_{t=\infty} f_3 =2\alpha_0-1, 
f_4 \equiv f_5 \equiv 0.
\end{equation*}
\par
When $(f_1, f_3)$ have a pole at $t=0$, 
it follows 
from Propositions \ref{prop:a5zero}, 
\ref{prop:hinf} and 
\ref{prop:h0} that 
\begin{equation*}
-h_{\infty,0}+h_{0,0}
=
-\left(\alpha_0^2-7/6\cdot\alpha_0+1/3 \right)
+\left(\alpha_0^2-7/6\cdot\alpha_0+1/3 \right)
=0,
\end{equation*}
which implies that 
all of 
$(f_i)_{0\leq i \leq 5}$ are holomorphic in $\mathbb{C}^{*}$ 
from Proposition \ref{prop:hplus}.
Therefore, 
from Proposition \ref{prop:a5zero}, 
it follows 
that 
\begin{equation*}
f_0=t/2, \,
f_1=t/2 +t^{-1}(-2\alpha_0+1), \,
f_2=t/2, \,
f_3=t/2  +t^{-1}(2\alpha_0-1), \, 
f_4\equiv f_5 \equiv 0.
\end{equation*} 
By substituting this solution into $A_5^{(1)}(\alpha_0,0,-\alpha_0+1,0,0,0)$, 
we get $\alpha_0=1/2$, which is contradiction. 
\par
When $(f_1, f_3)$ do not have a pole at $t=0$, 
it follows 
from Corollary \ref{coro:a5res} that 
$2\alpha_0 \in  \mathbb{Z}.$ 
If $\alpha_0 \in \mathbb{Z}$, by $\pi^{-2}  T_1  T_2$, 
we have 
$
(\alpha_0,0,-\alpha_0+1,0,0,0)
\rightarrow
(1,0,0,0,0,0).
$ 
Therefore, 
for $A_5^{(1)}(1,0,0,0,0,0),$ there exists a rational solution Type B, 
which contradicts Lemma \ref{lem:bpoint2}.
\par
If $\alpha_0 -1/2\in  + \mathbb{Z}$, by $T_1  T_2$, 
we get 
$
(\alpha_0,0,-\alpha_0+1,0,0,0)
\rightarrow
(1/2,0,1/2,0,0,0).
$ 
Therefore,  
the proposition follows from Lemma \ref{lem:bpoint1}. 
\end{proof}

\section{The Standard Forms of The Parameters and Rational Solutions of Type C}
In this section, 
we classify the rational solutions of Type C of  
$A_5^{(1)}(\alpha_j)_{0\leq j \leq 5}$ if the parameters are the standard forms, 
that is, 
if one of the following occurs:
\newline
(1) \quad $
(\alpha_0,\alpha_1,\alpha_2,\alpha_3,\alpha_4,\alpha_5)=
(
\alpha_4, 
-\alpha_4+1/3, 
\alpha_4, 
-\alpha_4+1/3, 
\alpha_4, 
-\alpha_4+1/3
),
$ 
\newline
(2) \quad  $(\alpha_0,\alpha_1,\alpha_2,\alpha_3,\alpha_4,\alpha_5)=(-\alpha_4+1/3,1/3,1/3,\alpha_4,0,0)$, 
\newline
(3) \quad  $(\alpha_0,\alpha_1,\alpha_2,\alpha_3,\alpha_4,\alpha_5)=(\alpha_4,0,0,1-\alpha_4,0,0)$, 
\newline
(4) \quad  
$(\alpha_0,\alpha_1,\alpha_2,\alpha_3,\alpha_4,\alpha_5)=(\alpha_4,1/3,1/3,-\alpha_4+1/3,0,0).$ 
\par 
This section consists of five subsections. 
In Subsection 11.1, 
we prove the lemmas for the classifications. 
In Subsections 11.2, 11.3, 11.4 and 11.5, 
we determine 
the rational solutions of Type C of 
$
A_5^{(1)}
(
\alpha_4, 
-\alpha_4+1/3, 
\alpha_4, 
-\alpha_4+1/3, 
\alpha_4, 
-\alpha_4+1/3
),
$ 
$A_5^{(1)}(-\alpha_4+1/3,1/3,1/3,\alpha_4,0,0)$, 
$A_5^{(1)}(\alpha_4,0,0,1-\alpha_4,0,0)$, 
and 
$A_5^{(1)}(\alpha_4,1/3,1/3,-\alpha_4+1/3,0,0)$ 
respectively.

\subsection{Lemmas about rational solutions of Type C}
In this subsection, 
we prove the lemmas in order to  
classify the rational solutions of Type C for the standard forms.

\begin{lemma}
\it{
For 
$A_5^{(1)}(1,0,0,0,0,0),$ there exists no rational solution 
of Type C.
}
\end{lemma}

\begin{proof}
Suppose that for  
$A_5^{(1)}(1,0,0,0,0,0),$ 
there exists a rational solution of Type C. 
From Lemma \ref{lem:zeropole1}, 
it follows that 
all of $(f_i)_{0\leq i \leq 5}$ are holomorphic at $t=0$ 
or $(f_1,f_3), \, (f_3,f_5), \, (f_5,f_1), \, (f_2,f_4,f_5,f_1)$ 
have a pole at $t=0$. 
We prove that 
the proposition is true 
if all of $(f_i)_{0\leq i \leq 5}$ are holomorphic at $t=0.$ 
The other cases can be proved in the same way.
\par
We assume that all of $(f_i)_{0\leq i \leq 5}$ are holomorphic at $t=0.$ 
From Proposition \ref{prop:a5inf}, 
it follows that 
\begin{equation}
\label{eqn:cpointres1}
-\mathrm{Res}_{t=\infty} f_0=0, 
-\mathrm{Res}_{t=\infty} f_1=-2,
-\mathrm{Res}_{t=\infty} f_2=-1, 
\end{equation}
and
\begin{equation}
\label{eqn:cpointres2}
-\mathrm{Res}_{t=\infty} f_3=0, 
-\mathrm{Res}_{t=\infty} f_4=1, 
-\mathrm{Res}_{t=\infty} f_5=2.
\end{equation}
From Propositions \ref{prop:hinf} and \ref{prop:h0}, 
it follows that 
$
-h_{\infty,0}+h_{0,0}=-2/3.
$ 
We show that this contradicts 
Proposition \ref{prop:a5c}. 
\par
Let $\pm c_1, \pm c_2, \ldots, \pm c_n \in \mathbb{C}^{*}$ be poles 
of $(f_j)_{0\leq j \leq 5}.$ 
From the proof of Proposition \ref{prop:hplus}, 
it follows that 
$$
-h_{\infty,0}+h_{0,0}=
-2 
\sum_{k=1}^n \epsilon_k, 
$$ 
where $\pm \epsilon_k c_k \,\,(1\leq k \leq n)$ 
are the residues of $H$ at $t=\pm c_k,$ respectively 
and 
$\epsilon_k =1/6, 1/12, 5/12.$ 
We then consider the following two cases:
%\newline
\newline
(1)\quad $n=2, \,\, \epsilon_1=1/6,$ 
\newline
(2)\quad $n=4, \,\, 
\epsilon_1=\epsilon_2=\epsilon_3=\epsilon_4=1/12,$
\newline
(3)\quad $n=3, \,\,
\epsilon_1=1/6, \,\,\epsilon_2=\epsilon_3=1/12.$ 
%\newline
\par
If case (1) occurs, it follows 
from Proposition \ref{prop:hc} that 
$(f_i,f_{i+2})(I)$ or 
$(f_i,f_{i+2},f_{i+3},f_{i+5})(I)$ 
can occur for some $i=0,1,2,3,4,5$ and $\pm c \in \mathbb{C}^{*}.$ 
In case of $(f_i,f_{i+2})(I)$ and $(f_i,f_{i+2},f_{i+3},f_{i+5})(I)$, 
the residues $(f_j)_{0\leq j \leq 5}$ at $t=\pm c_1, \pm c_2$ are $1/2$ or $-1/2,$ 
which contradicts equations (\ref{eqn:cpointres1}), (\ref{eqn:cpointres2}) and the residue theorem. 
\par
If case (2) occurs, it follows from Proposition \ref{prop:hc} that 
$(f_{i_1}, f_{i_{1}+2})(II)$, 
$(f_{i_2}, f_{i_{2}+2})(II)$, 
$(f_{i_3}, f_{i_{3}+2})(II)$, 
$(f_{i_4}, f_{i_{4}+2})(II)$ 
can occur for some $i_1,i_2,i_3,i_4=0,1,2,3,4,5$ 
and 
$\pm c_1, \pm c_2, \pm c_3, \pm c_4 \in \mathbb{C}^{*}.$ 
In this case, 
the residues $(f_j)_{0\leq j \leq 5}$ at $t=\pm c_1, \pm c_2, \pm c_3, \pm c_4 \in \mathbb{C}^{*}$ 
are $1/2$ or $-1/2,$ 
which contradicts equations (\ref{eqn:cpointres1}), (\ref{eqn:cpointres2}) and the residue theorem. 
\par
If case (3) occurs, it follows from Proposition \ref{prop:hc} that 
either of the following occurs:
\newline
(i)\quad $(f_{i_1}, f_{i_{1}+2})(I), (f_{i_2}, f_{i_{2}+2})(II), (f_{i_3}, f_{i_{3}+2})(II)$ for some $i_1,i_2,i_3=0,1,2,3,4,5,$ 
\newline
(ii)\quad $(f_{i_1},f_{i_1+2},f_{i_1+3},f_{i_1+5})(I), (f_{i_2}, f_{i_{2}+2})(II), (f_{i_3}, f_{i_{3}+2})(II)$ for some $i_1,i_2,i_3=0,1,2,3,4,5.$ 
\newline
In both cases (i) and (ii), 
the residues of $(f_j)_{0\leq j \leq 5}$ at $t=\pm c_1, \pm c_2, \pm c_3 \in \mathbb{C}^{*}$ 
are $1/2$ or $-1/2,$ 
which contradicts equations (\ref{eqn:cpointres1}), (\ref{eqn:cpointres2}) and the residue theorem. 

\end{proof}

\begin{lemma}
\label{lem:cpoint1}
\it{
For 
$A_5^{(1)}(1/3,1/3,1/3,0,0,0),$ 
there exists no 
rational solution of Type C.
}
\end{lemma}

\begin{proof}
Suppose that for $A_5^{(1)}(1/3,1/3,1/3,0,0,0)$ 
there exists a rational solution of Type C. 
From Propositions \ref{prop:hinf} and \ref{prop:h0}, 
it follows that 
$
-h_{\infty,0}+h_{0,0}
=
-4/27, 
-1/27,
-5/54, 
1/54, 
1/18,
-1/18,
$ 
which contradicts 
Proposition \ref{prop:hplus}. 
\end{proof}

\begin{lemma}
\it{
For 
$A_5^{(1)}(1/3,0,0,2/3,0,0),$ 
there exists no 
rational solution of Type C.
}
\end{lemma}

\begin{proof}
It can be proved in the same way as Lemma \ref{lem:cpoint1}.
\end{proof}

\subsection{Rational solutions of Type C of $
A_5^{(1)}
(
\alpha_4, 
-\alpha_4+1/3, 
\alpha_4, 
-\alpha_4+1/3, 
\alpha_4, 
-\alpha_4+1/3
)
$} 
In this subsection, we determine the 
rational solutions of Type C of 
$
A_5^{(1)}
(
\alpha_4, 
-\alpha_4+1/3, 
\alpha_4, 
-\alpha_4+1/3, 
\alpha_4, 
-\alpha_4+1/3
).
$

\begin{proposition}
\label{prop:cpara1} 
\it{
Suppose that 
for 
$
A_5^{(1)}
(
\alpha_4, 
-\alpha_4+1/3, 
\alpha_4, 
-\alpha_4+1/3, 
\alpha_4, 
-\alpha_4+1/3
),
$ 
there exists a rational solution 
of Type C. Then,   
\begin{equation*}
(f_0,f_1,f_2,f_3,f_4,f_5)
=
(t/3,t/3,t/3,t/3,t/3,t/3),
\end{equation*}
and it is unique.
}
\end{proposition}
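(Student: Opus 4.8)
The plan is to treat existence and uniqueness separately. For existence I would simply invoke Proposition \ref{prop:tri}: for the parameters $(\alpha_0,\dots,\alpha_5)=(\alpha_4,-\alpha_4+\frac13,\alpha_4,-\alpha_4+\frac13,\alpha_4,-\alpha_4+\frac13)$ one has $\alpha_0=\alpha_2=\alpha_4$ and $\alpha_1=\alpha_3=\alpha_5$, which is exactly the condition under which $(f_i)=(\frac{t}{3},\dots,\frac{t}{3})$ is a solution of $A_5(\alpha_i)_{0\le i\le 5}$ (this can also be checked by direct substitution). This solution is manifestly of Type C, so the existence half is immediate.

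For uniqueness the first step is to record $h_{\infty,0}$. Applying Proposition \ref{prop:hinf} with $x=\alpha_2-\alpha_4$, $y=\alpha_3-\alpha_5$, $z=\alpha_0-\alpha_4$, $\omega=\alpha_1-\alpha_5$, each of these four quantities vanishes for the present parameters, so $h_{\infty,0}=0$. Now let $(f_i)_{0\le i\le 5}$ be an arbitrary Type C rational solution. By Proposition \ref{prop:a5zero} its behaviour at $t=0$ is one of three kinds: all $f_i$ regular at $t=0$, or $(f_k,f_{k+2})$ has a pole there, or $(f_k,f_{k+2},f_{k+3},f_{k+5})$ has a pole there. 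In the regular case, the $f_i$ are odd (Proposition \ref{prop:a5odd}), so Proposition \ref{prop:h0}(1) gives $h_{0,0}=0$ and hence $-h_{\infty,0}+h_{0,0}=0$. Proposition \ref{prop:hplus} then forces every $f_i$ to be regular on $\mathbb{C}^{*}$ too; being regular on all of $\mathbb{C}$ with only a simple pole at $t=\infty$ (Type C), each $f_i$ is linear, Proposition \ref{prop:a5inf} fixes its leading coefficient at $\frac13$, and oddness kills the constant term. Thus $(f_i)=(\frac{t}{3},\dots,\frac{t}{3})$, which proves both uniqueness and that no second Type C solution exists in this branch.

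The remaining, genuinely harder, task is to exclude poles at $t=0$. Here I would compute $h_{0,0}$ from Proposition \ref{prop:h0}(2)--(3); using the alternation $\alpha_{\mathrm{even}}=\alpha_4$, $\alpha_{\mathrm{odd}}=-\alpha_4+\frac13$ one finds after simplification $-h_{\infty,0}+h_{0,0}=2\alpha_4(\frac13-\alpha_4)$ in the two-pole case (and an analogous rational function of $\alpha_4$ in the four-pole case). On the other hand, Corollary \ref{coro:a5res} applied to the function carrying the pole at $t=0$ pins $\alpha_4$ down modulo $\mathbb{Z}$; substituting $\alpha_4\equiv\frac13$ back shows that $6(-h_{\infty,0}+h_{0,0})$ is a nonpositive integer that vanishes precisely when the residue at $t=0$ vanishes, i.e. when there is in fact no pole. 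When it is strictly negative, Proposition \ref{prop:hplus} produces genuine poles of $H$ in $\mathbb{C}^{*}$, whose residues are exactly those catalogued in Proposition \ref{prop:hc}.

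At that point I would run the residue-theorem bookkeeping just as in the proof of Lemma \ref{lem:cpoint1}, comparing the values $\mathrm{Res}_{t=\infty}f_i$ against the admissible half-integer residues at the symmetric pole pairs $t=\pm c$ supplied by Proposition \ref{prop:a5c}, and deriving a contradiction; the finitely many exceptional integer translates of $\alpha_4$ that survive would be reduced by the shift operators $T_i$ to the forbidden parameter tuples $(\frac13,\frac13,\frac13,0,0,0)$ and $(\frac13,0,0,\frac23,0,0)$ already shown in Subsection 11.1 to admit no Type C solution. I expect this final step — the simultaneous control of the $\mathbb{C}^{*}$-poles through $H$ and of the exceptional values of the continuous parameter $\alpha_4$ — to be the main obstacle, since it is the only place where the free parameter interacts nontrivially with the discrete residue data, and it is where the bulk of the computation will concentrate.
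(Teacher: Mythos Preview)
Your existence argument is correct and matches the paper's implicit reliance on Proposition~\ref{prop:tri}. But for uniqueness you are working far too hard and, more importantly, overlooking that the work has already been done. Proposition~\ref{prop:a5inf-012345-3} (summarised in Proposition~\ref{prop:a5inf}) shows that a rational solution of Type~C is uniquely determined by its Laurent expansion at $t=\infty$, and that expansion is itself uniquely determined by the parameters $(\alpha_i)$. Hence \emph{any} Type~C rational solution of $A_5(\alpha_4,-\alpha_4+\tfrac13,\alpha_4,-\alpha_4+\tfrac13,\alpha_4,-\alpha_4+\tfrac13)$ must coincide with the one you have already exhibited. That is the whole argument; no analysis of poles at $t=0$, of $H$, or of residue bookkeeping is needed. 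This is why the paper states the proposition without proof.

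Your alternative route through $h_{\infty,0}$, $h_{0,0}$ and Proposition~\ref{prop:hplus} is not wrong in spirit, but it is incomplete as written: the claim that Corollary~\ref{coro:a5res} forces $\alpha_4\equiv\tfrac13\pmod{\mathbb Z}$ depends on which pair $(f_k,f_{k+2})$ is singular at $t=0$ and must be checked case by case; the four-pole case is not worked out; and the ``residue-theorem bookkeeping'' at the end is only sketched, with the promised reduction to the forbidden parameter tuples of Subsection~11.1 not actually carried through. None of these gaps is fatal, but closing them is genuine work that the one-line appeal to Proposition~\ref{prop:a5inf-012345-3} renders unnecessary.
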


\begin{proof}
The proposition follows from 
Proposition \ref{prop:a5inf}.

\end{proof}

\subsection{Rational solutions of Type C of 
$
A_5^{(1)}
(-\alpha_4+1/3,1/3,1/3,\alpha_4,0,0)
$
}

In this subsection, 
we determine the rational solutions of Type C of  
$
A_5^{(1)}
(-\alpha_4+1/3,1/3,1/3,\alpha_4,0,0).
$

\begin{proposition}
\label{prop:cpara2} 
\it{
For $
A_5^{(1)}
(-\alpha_4+1/3,1/3,1/3,\alpha_4,0,0)
$ 
there exists no rational solution of Type C.
}
\end{proposition}

\begin{proof}
Suppose that 
for $
A_5^{(1)}
(-\alpha_4+1/3,1/3,1/3,\alpha_4,0,0),
$ 
there exists a rational solution of Type C. 
From Proposition \ref{prop:a5inf}, 
it follows that 
\begin{align*}
&-\mathrm{Res}_{t=\infty} f_0=1,         & 
&-\mathrm{Res}_{t=\infty} f_1=3\alpha_4, & 
&-\mathrm{Res}_{t=\infty} f_2=3\alpha_4-1, \\
&-\mathrm{Res}_{t=\infty} f_3=-1,        & 
&-\mathrm{Res}_{t=\infty} f_4=-3\alpha_4,& 
&-\mathrm{Res}_{t=\infty} f_5=-3\alpha_4+1.
\end{align*}
\par
When $(f_2,f_4,f_5,f_1)$ have a pole at $t=0$, 
it follows 
from Propositions \ref{prop:hinf} and \ref{prop:h0} that 
\begin{equation*}
-h_{\infty,0}+h_{0,0}
=
-\left(2\alpha_0^2-2/3\cdot\alpha_4+2/9 \right)
+\left(2\alpha_0^2-2/3 \cdot\alpha_4+2/9 \right)
=0,
\end{equation*}
which implies that 
all of 
$(f_i)_{0\leq i \leq 5}$ are holomorphic in $\mathbb{C}^{*}$ 
from Proposition \ref{prop:hplus}.
On the other hand, from Proposition \ref{prop:a5zero}, 
it follows that 
%\begin{equation*}
$
-\mathrm{Res}_{t=\infty} f_4
-\mathrm{Res}_{t=0} f_4
=
-1, 
-\mathrm{Res}_{t=\infty} f_5
-\mathrm{Res}_{t=0} f_5
=
1,
$
%\end{equation*}
which contradicts the residue theorem.
\par
When $(f_2,f_4,f_5,f_1)$ do not have a pole at $t=0$, 
we have $3\alpha_4 \in \mathbb{Z}$. 
If $\alpha_4 \in \mathbb{Z},$ 
by $T_1  T_2  T_3,$ 
we get
\begin{equation*}
(-\alpha_4+1/3,1/3,1/3,\alpha_4,0,0)
\longrightarrow
(1/3,1/3,1/3,0,0,0).
\end{equation*} 
If $\alpha_4 -1/3 \in \mathbb{Z},$ 
by $T_1  T_2  T_3,$ and $\pi$, 
we obtain 
%\begin{equation*}
$
(-\alpha_4+1/3,1/3,1/3,\alpha_4,0,0)
\longrightarrow
(1/3,1/3,1/3,0,0,0).
$
%\end{equation*} 
If $\alpha_4 +1/3 \in \mathbb{Z},$
by $T_3$ and $s_1  s_2$, 
we have 
%\begin{equation*}
$
(-\alpha_4+1/3,1/3,1/3,\alpha_4,0,0)
\longrightarrow
(1/3,1/3,1/3,0,0,0).
$
%\end{equation*}  
Therefore, 
the proposition follows 
from Lemma \ref{lem:cpoint1}.
\end{proof}

\subsection{Rational solutions of Type C of 
$A_5^{(1)}(\alpha_4,0,0,1-\alpha_4,0,0)$
}
In this subsection, 
we determine the rational solutions of Type C of 
$A_5^{(1)}(\alpha_4,0,0,1-\alpha_4,0,0).$

\begin{proposition}
\label{prop:cpara3} 
\it{
For 
$A_5^{(1)}(\alpha_4,0,0,1-\alpha_4,0,0),$ 
there exists no rational solution of Type C.
}
\end{proposition}

\begin{proof}
It 
can be proved in the same way as 
Proposition \ref{prop:cpara2}.
\end{proof}

\subsection{Rational solutions of Type C of 
$A_5^{(1)}(\alpha_4,1/3,1/3,-\alpha_4+1/3,0,0)$ 
}
In this subsection, 
we determine the rational solutions of Type C of 
$A_5^{(1)}(\alpha_4,1/3,1/3,-\alpha_4+1/3,0,0).$

\begin{proposition}
\label{prop:cpara4} 
\it{
For 
$A_5^{(1)}(\alpha_4,1/3,1/3,-\alpha_4+1/3,0,0),$ 
there exists no rational solution of Type C.
}
\end{proposition}

\begin{proof}
It 
can be proved in the same way as 
Proposition \ref{prop:cpara2}.
\end{proof}

\section{Main Theorems for Type A, Type B and Type C}
In this section, 
we obtain the main theorems for the rational solutions of Types A, B and C of 
$A_5^{(1)}(\alpha_j)_{0\leq j \leq 5}. $

\subsection{Complete classification of rational solutions of Type A}
In this subsection, we 
classify 
the rational solutions of Type A 
of $A_5^{(1)}(\alpha_j)_{0\leq j \leq 5}.$

\begin{theorem}
\label{thm:a5mainA}
\it{
For a rational solution of Type A of $A_5^{(1)}(\alpha_j)_{0\leq j \leq 5},$ 
by some B\"acklund transformations, 
the parameters and solution 
can be transformed so that one of the following occurs:
\newline
(a-1)\quad $(\alpha_0,\alpha_1,\alpha_2,\alpha_3,\alpha_4,\alpha_5)=(\alpha_0,1-\alpha_0,0,0,0,0),$ 
and 
$
(f_0,f_1,f_2,f_3,f_4,f_5)=(t,t,0,0,0,0),
$
\newline
(a-2)\quad $(\alpha_0,\alpha_1,\alpha_2,\alpha_3,\alpha_4,\alpha_5)=(\alpha_0,0,0,1-\alpha_0,0,0),$ 
and 
$
(f_0,f_1,f_2,f_3,f_4,f_5)=(t,0,0,t,0,0),
$
\newline
(a-3)\quad $(\alpha_0,\alpha_1,\alpha_2,\alpha_3,\alpha_4,\alpha_5)=(0,1,0,0,0,0),$ 
and 
\begin{align*}
(f_0, f_1, f_2, f_3, f_4, f_5)
&=
(t,t,0,0,0,0), \,\,
(0,t,t,0,0,0), \,\,
(0,t,0,0,t,0),  \,\,
(t,t,t,0,-t,0).
\end{align*}
The orbit of the parameters in cases (a-1), (a-2) and (a-3) 
by the B\"acklund transformation group $\tilde{W}(A_5^{(1)})$ 
consists of the parameters which satisfy 
one of the following five conditions:
\newline
(1) \quad for some $i=0,1,2,3,4,5,$
$
\alpha_{i+2}, \alpha_{i+3}, 
\alpha_{i+4}, \alpha_{i+5} 
\in \mathbb{Z};
$
\newline
(2) \quad for some $i=0,1,2,3,4,5,$
$
\alpha_{i+1}, \alpha_{i+2}, 
\alpha_{i+4}, \alpha_{i+5} 
\in \mathbb{Z};
$
\newline
(3) \quad for some $i=0,1,2,3,4,5,$
$
\alpha_{i+3}, \alpha_{i+5}, 
\alpha_i+\alpha_{i+4}, 
\alpha_i-\alpha_{i+2} 
\in \mathbb{Z};
$
\newline
(4) \quad for some $i=0,1,2,3,4,5,$
$
\alpha_{i+3}+\alpha_{i+4}, 
\alpha_{i+4}+\alpha_{i+5}, 
\alpha_i+\alpha_{i+1}, 
\alpha_i-\alpha_{i+4} 
\in \mathbb{Z};
$
\newline
(5) \quad for some $i=0,1,2,3,4,5,$
$
\alpha_{i}+\alpha_{i+1}, 
\alpha_{i}+\alpha_{i+5}, 
\alpha_{i+2}+\alpha_{i+3}, 
\alpha_{i+3}+\alpha_{i+4}, 
\alpha_i+\alpha_{i+3} \in \mathbb{Z}. 
$

}
\end{theorem}

\begin{proof}
Proposition \ref{prop:necA}  
shows that 
one of 
cases 
(1), (2), $\ldots,$ (5) occurs
if for $A_5^{(1)}(\alpha_j)_{0\leq j \leq 5},$ 
there exists a rational solution of Type A. 
Proposition \ref{prop:standardA} 
proves that 
if for $A_5^{(1)}(\alpha_i)_{0\leq i \leq 5},$ 
there exists a rational solution of Type A, 
the parameters can be transformed into the two standard forms, 
$
(\alpha_0, 1-\alpha_0, 0, 0, 0,0) 
$ 
and 
$
(\alpha_0, 0, 0, 1-\alpha_0, 0,0).
$ 
Propositions
\ref{prop:a5A1} and \ref{prop:a5A2} 
prove that 
$A_5^{(1)}(\alpha_0, 1-\alpha_0, 0, 0, 0,0) $ 
and 
$A_5^{(1)}(\alpha_0, 0, 0, 1-\alpha_0, 0,0)$ 
have rational solutions of (a-1) and (a-2), 
and 
show that case (a-3) happens when $\alpha_0\in\mathbb{Z}$. 

\end{proof}

{\bf Remark}
\newline
The rational solutions 
$(t,t,0,0,0,0), \,(t,0,0,t,0,0)$ 
correspond to the rational solutions of the fifth Painlev\'e equation.

\subsection{Complete classification of rational solutions of Type B}
In this subsection, we 
classify 
the rational solutions of Type B 
of $A_5^{(1)}(\alpha_j)_{0\leq j \leq 5}.$ 
For this purpose, 
we have the following lemma:

\begin{lemma}
\label{lem:bpointgeneral}
\it{
Suppose that $2 \alpha_j \in \mathbb{Z} \,\,(0\leq j \leq 5)$. 
By some B\"acklund transformations, 
the parameters 
can then be transformed into 
$(1/2,0,1/2,0,0,0),$ 
$(1/2,1/2,0,0,0,0),$ 
$(1/2,0,0,1/2,0,0).$
Especially, 
the parameters 
are transformed into $(1/2,0,1/2,0,0,0)$ 
if and only if 
for some $i=0,1,2,3,4,5,$ 
\begin{align*}
(\alpha_i,\alpha_{i+1},\alpha_{i+2},\alpha_{i+3},\alpha_{i+4},\alpha_{i+5})
&\equiv
(1/2,1/2,1/2,1/2,0,0), \,
(1/2,1/2,0,1/2,1/2,0), \,  \\
&\quad\,\,(1/2,0,1/2,0,0,0) \, \mathrm{mod} \, \mathbb{Z}. 
\end{align*}
}
\end{lemma}

\begin{proof}
Since $\sum_{k=0}^5 \alpha_k =1,$ 
it follows that 
\begin{align*}
(\alpha_i,\alpha_{i+1},\alpha_{i+2},\alpha_{i+3},\alpha_{i+4},\alpha_{i+5})  
\equiv                                                                        
&(1/2,1/2,1/2,1/2,1/2,1/2),                            
(1/2,1/2,1/2,1/2,0,0),   \\                                      
&(1/2,1/2,0,1/2,1/2,0), \,                                                                      
(1/2,1/2,1/2,0,1/2,0),    \\                                    
&(1/2,1/2,0,0,0,0),                                                  
(1/2,0,1/2,0,0,0), \\
&(1/2,0,0,1/2,0,0)                                                      
\mathrm{mod} \, \,\mathbb{Z},                                                           
\end{align*}
for some $i=0,1,2,3,4,5.$ 
We only prove that 
for some $i=0,1,2,3,4,5,$ 
\begin{equation*}
(\alpha_i,\alpha_{i+1},\alpha_{i+2},\alpha_{i+3},\alpha_{i+4},\alpha_{i+5})
\equiv
(1/2,1/2,1/2,1/2,1/2,1/2)
\longrightarrow 
(1/2,0,0,1/2,0,0).
\end{equation*}
The other cases can be proved in the same way.
\par 
Since $\sum_{k=0}^5 \alpha_k =1,$ by $\pi$ and the shift operators, we get 
$$
(\alpha_0,\alpha_{1},\alpha_{2},\alpha_{3},\alpha_{4},\alpha_{5})
=
(1/2,1/2,1/2,-1/2,1/2,-1/2).
$$
By $\pi^{-1}  s_4  s_5  s_3,$ 
we get 
$
(1/2,1/2,1/2,-1/2,1/2,-1/2)
\rightarrow
(1/2,0,0,1/2,0,0).
$
\end{proof}

We then have the following theorem:

\begin{theorem}
\label{thm:a5mainB}
\it{
For a rational solution of Type B of $A_5^{(1)}(\alpha_j)_{0\leq j \leq 5},$ 
by some B\"acklund transformations, 
the parameters and solution 
can be transformed so that 
\begin{align*}
(\alpha_0,\alpha_1,\alpha_2,\alpha_3,\alpha_4,\alpha_5) &=(\alpha_0,-\alpha_0+1/2,\alpha_0,-\alpha_0+1/2,0,0) \,\,{\it and}  \\
(f_0,f_1,f_2,f_3,f_4f_5) &=
(t/2,t/2,t/2,t/2,0,0).
\end{align*}
The orbit of $(\alpha_0,-\alpha_0+1/2,\alpha_0,-\alpha_0+1/2,0,0)$ 
by the B\"acklund transformation group $\tilde{W}(A_5^{(1)})$ 
consists of the parameters 
which satisfy one of the following conditions:
\newline
(1) \quad for some $i=0,1,2,3,4,5,$
\begin{align*}
&-\alpha_i+\alpha_{i+2}-\alpha_{i+4}, 
-\alpha_{i+1}+\alpha_{i+3}+\alpha_{i+5}, 
2\alpha_{i+4}, -2\alpha_{i+5} \in \mathbb{Z},  \\
&(-\alpha_i+\alpha_{i+2}-\alpha_{i+4})
+
(-\alpha_{i+1}+\alpha_{i+3}+\alpha_{i+5}) 
\in 2\mathbb{Z};
\end{align*}  
(2) \quad for some $i=0,1,2,3,4,5,$
\begin{align*}
& \alpha_{i+1} \neq 0, 
-\alpha_i+\alpha_{i+2}-\alpha_{i+4},
\alpha_{i+1}+\alpha_{i+3}+\alpha_{i+5}, 
2\alpha_{i+4}, -2\alpha_{i+5}, 
\in \mathbb{Z},  \\
& (-\alpha_i+\alpha_{i+2}-\alpha_{i+4})
+
(\alpha_{i+1}+\alpha_{i+3}+\alpha_{i+5})
\in 2\mathbb{Z};
\end{align*}
(3) \quad for some $i=0,1,2,3,4,5,$
\begin{align*}
&
\alpha_{i+2} \neq 0, 
-\alpha_{i}-\alpha_{i+2}-\alpha_{i+4},
-\alpha_{i+1}+\alpha_{i+3}+\alpha_{i+5}, 
2\alpha_{i+4}, -2\alpha_{i+5}, 
\in \mathbb{Z},  \\
& 
(-\alpha_{i}-\alpha_{i+2}-\alpha_{i+4})
+
(-\alpha_{i+1}+\alpha_{i+3}+\alpha_{i+5})
\in 2\mathbb{Z}; 
\end{align*}
(4) \quad for some $i=0,1,2,3,4,5,$
\begin{align*}
& 
\alpha_{i+3} \neq 0, 
-\alpha_{i}+\alpha_{i+2}-\alpha_{i+4}, 
-\alpha_{i+1}-\alpha_{i+3}+\alpha_{i+5},
2\alpha_{i+3}+2\alpha_{i+4}, 
-2\alpha_{i+5}, 
\in \mathbb{Z},  \\
& 
(-\alpha_{i}+\alpha_{i+2}-\alpha_{i+4})
+
(-\alpha_{i+1}-\alpha_{i+3}+\alpha_{i+5})
\in 2\mathbb{Z}; 
\end{align*}
(5) \quad for some $i=0,1,2,3,4,5,$
\begin{align*}
&
\alpha_{i+4} \neq 0, 
\alpha_{i+1}-\alpha_{i+3}-\alpha_{i+5},
-\alpha_i+\alpha_{i+2}-\alpha_{i+4},
2\alpha_{i+3}+2\alpha_{i+4},
-2\alpha_{i+4} 
\in \mathbb{Z},   \\
& 
(
\alpha_{i+1}-\alpha_{i+3}-2\alpha_{i+4}-\alpha_{i+5}
)
+
(
-\alpha_i+\alpha_{i+2}-\alpha_{i+4}
)
\in 2\mathbb{Z};
\end{align*}
(6) \quad for some $i=0,1,2,3,4,5,$
\begin{align*}  
&
\alpha_{i+5} \neq 0, 
-\alpha_{i+1}+\alpha_{i+3}+\alpha_{i+5},
\alpha_i-\alpha_{i+2}+\alpha_{i+4}+2\alpha_{i+5}, 
-2\alpha_{i+5}, 
-2\alpha_{i}-2\alpha_{i+5}  
\in \mathbb{Z},   \\
& 
(
-\alpha_{i+1}+\alpha_{i+3}+\alpha_{i+5}
)
+
(
\alpha_{i}-\alpha_{i+2}+\alpha_{i+4}+2\alpha_{i+5}
)
\in 2\mathbb{Z};  
\end{align*}
(7) \quad for some $i=0,1,2,3,4,5,$
\begin{align*}
& 
\alpha_i \neq 0, 
\alpha_{i}+\alpha_{i+2}-\alpha_{i+4}, 
-\alpha_{i+1}+\alpha_{i+3}+\alpha_{i+5}, 
2\alpha_{i+4},
-2\alpha_{i}-2\alpha_{i+5} \in \mathbb{Z},   \\
& 
(
\alpha_i+\alpha_{i+2}-\alpha_{i+4}
)
+
(
-\alpha_{i+1}+\alpha_{i+3}+\alpha_{i+5}
)
\in 2\mathbb{Z};
\end{align*}
(8) \quad 
for some $i=0,1,2,3,4,5,$ 
\begin{align*}
&
\alpha_{i+1}, \alpha_{i+4} \neq 0, 
-\alpha_{i+1}-\alpha_{i+3}-2\alpha_{i+4}-\alpha_{i+5},
-\alpha_i+\alpha_{i+2}-\alpha_{i+4}, 
2\alpha_{i+3}+2\alpha_{i+4},
2\alpha_{i+4} 
\in \mathbb{Z},  \\
& 
(
-\alpha_{i+1}-\alpha_{i+3}-2\alpha_{i+4}-\alpha_{i+5}
)
+
(
-\alpha_i+\alpha_{i+2}-\alpha_{i+4}
)
\in 2\mathbb{Z}; 
\end{align*}
(9) \quad 
for some $i=0,1,2,3,4,5,$
\begin{align*}
&
\alpha_{i+2},\alpha_{i+5} \neq 0, 
-\alpha_i-\alpha_{i+2}-\alpha_{i+4}-2\alpha_{i+5},
-\alpha_{i+1}+\alpha_{i+3}-\alpha_{i+5},
-2\alpha_{i+5}, 
-2\alpha_i-2\alpha_{i+5}, 
\in \mathbb{Z},    \\
& 
(
-\alpha_i-\alpha_{i+2}-\alpha_{i+4}-2\alpha_{i+5}
)
+
(
-\alpha_{i+1}+\alpha_{i+3}-\alpha_{i+5}
)
\in 2 \mathbb{Z}; 
\end{align*}
(10) 
\quad 
for some $i=0,1,2,3,4,5,$ 
\begin{align*}
&
\alpha_{i+3},\alpha_{i} \neq 0,
\alpha_i+\alpha_{i+2}-\alpha_{i+4},
-\alpha_{i+1}-\alpha_{i+3}+\alpha_{i+5}, 
2\alpha_{i+3}+2\alpha_{i+4}, 
-2\alpha_{i}-2\alpha_{i+5} 
\in \mathbb{Z},   \\
& 
(
\alpha_i+\alpha_{i+2}-\alpha_{i+4}
)
+
(
-\alpha_{i+1}-\alpha_{i+3}+\alpha_{i+5}
)
\in 2\mathbb{Z};  
\end{align*}
(11) 
\quad 
for some $i=0,1,2,3,4,5,$
\begin{align*}
(
\alpha_{i},
\alpha_{i+1},
\alpha_{i+2},
\alpha_{i+3},
\alpha_{i+4},
\alpha_{i+5}
)
\equiv&
(1/2,1/2,1/2,1/2,0,0),  \,
(1/2,1/2,0,1/2,1/2,0),  \\
&(1/2,0,1/2,0,0,0) \,\mathrm{mod} \mathbb{Z},
\end{align*}
where (1), (2),..., (11) in this theorem correspond to 
(1), (2),..., (11) in Proposition \ref{prop:necB}, respectively.
\newline

}
\end{theorem}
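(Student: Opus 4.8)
The plan is to prove both directions by routing an arbitrary Type B configuration through the three standard parameter families of Proposition \ref{prop:standardB} and then invoking the family-by-family existence results of Propositions \ref{prop:bpara1}, \ref{prop:bpara2} and \ref{prop:bpara3}. The integrality half of each of the conditions (1)--(11) is already supplied by Proposition \ref{prop:necB}, whose eleven cases are arranged to match (1)--(11) here one-to-one; so the real content is the extra parity requirement $(\cdots)+(\cdots)\in 2\mathbb{Z}$ attached to (1)--(10), together with the verification that (11) is genuinely equivalent to the $2\alpha_j\in\mathbb{Z}$ regime.

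For necessity, I would assume a Type B solution exists and apply Proposition \ref{prop:necB} to land in one of its eleven cases. If it is case (11), we are done. Otherwise Proposition \ref{prop:standardB} transforms $(\alpha_j)_{0\le j\le 5}$ into one of the three standard families according to the sign pattern $(\beta^0_{-1},\gamma^0_{-1},\epsilon^0_{-1},\phi^0_{-1})$. Even parity $\beta^0_{-1}+\gamma^0_{-1}\equiv 0 \bmod 2$ is exactly the condition that sends us to family (1), which is the parity statement in (1); the remaining sign patterns send us to families (2) or (3). For those two families the \emph{only if} halves of Propositions \ref{prop:bpara2} and \ref{prop:bpara3} force $\alpha_0\in\frac{1}{2}\mathbb{Z}$, hence $2\alpha_j\in\mathbb{Z}$ for every $j$, which by Lemma \ref{lem:bpointgeneral} is case (11). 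Thus an existing solution always satisfies a parity-even condition among (1)--(10) or else condition (11).

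For sufficiency, I would run this backwards. If one of (1)--(10) holds, its integrality part feeds Proposition \ref{prop:standardB} and its even-parity part guarantees arrival at family (1); Proposition \ref{prop:bpara1} then yields a Type B solution and transforms it into $(\frac{t}{2},\frac{t}{2},\frac{t}{2},\frac{t}{2},0,0)$ with parameters $(\alpha_0,-\alpha_0+\frac{1}{2},\alpha_0,-\alpha_0+\frac{1}{2},0,0)$. If instead (11) holds, Lemma \ref{lem:bpointgeneral} reduces $(\alpha_j)$ to $(\frac{1}{2},0,\frac{1}{2},0,0,0)$ (or its siblings) and Lemma \ref{lem:bpoint1} supplies the solution, which again reduces to the canonical one. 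Since every branch terminates at the same canonical pair, the final classification assertion follows immediately.

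The hard part will be the parity bookkeeping. Conditions (2)--(10) are phrased not in the raw residues $\beta^i_{-1},\gamma^i_{-1}$ but in the residues computed after the regularizing transformation (such as $s_{i+1}$ or $s_{i+2}$) that Proposition \ref{prop:necB} uses to clear the pole at $t=0$; this is why, for instance, the parity sum in (5) carries an extra $-2\alpha_{i+4}$ relative to its integrality line. I would therefore need to check, case by case, that each stated sum lies in $2\mathbb{Z}$ precisely when the transformed $\beta^0_{-1}+\gamma^0_{-1}$ is even, so that the parity dichotomy of Proposition \ref{prop:standardB} transports correctly through the elements of $\tilde W(A_5^{(1)})$ involved. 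The computations are routine, but the sign and coefficient tracking across the ten cases is the principal source of potential error.
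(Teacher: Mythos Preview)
Your proposal is correct and follows essentially the same route as the paper: both directions are reduced, via Proposition~\ref{prop:necB} and Proposition~\ref{prop:standardB}, to the three standard families, after which Propositions~\ref{prop:bpara1}, \ref{prop:bpara2}, \ref{prop:bpara3} and Lemmas~\ref{lem:bpoint1}, \ref{lem:bpointgeneral} finish the job. The paper organizes the necessity direction as a contrapositive (assume none of (1)--(11) holds and derive a contradiction from Lemma~\ref{lem:bpointgeneral}), whereas you argue it directly, but the content and the parity bookkeeping you flag are identical.
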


{\bf Remark}
\newline
The rational solution 
$(t/2,t/2,t/2,t/2,0,0) $ 
corresponds to the rational solution of the fifth Painlev\'e equation.

\begin{proof} 
Suppose that for $A_5^{(1)}(\alpha_j)_{0\leq j \leq 5},$ there exists a rational solution of Type B. 
By Proposition \ref{prop:necB}, we then obtain eleven conditions.  
Furthermore, it follows from Proposition \ref{prop:standardB} that 
the parameters 
can be transformed into 
$(\alpha_0,-\alpha_0+1/2,\alpha_0,-\alpha_0+1/2,0,0),$ 
$(1/2, 0,1/2, \alpha_0, 0, -\alpha_0)$ 
or 
$(\alpha_0,0,-\alpha_0+1,0,0,0).$ 
Especially, 
the parameters satisfy one of the conditions in this theorem 
if and only if 
the parameters can be transformed into 
$(\alpha_0,-\alpha_0+1/2,\alpha_0,-\alpha_0+1/2,0,0).$
\par
Proposition \ref{prop:bpara1} 
shows that 
for $A_5^{(1)}(\alpha_0,-\alpha_0+1/2,\alpha_0,-\alpha_0+1/2,0,0),$ 
the parameters and solutions can be transformed so that 
$(\alpha_0,\alpha_1,\alpha_2,\alpha_3,\alpha_4,\alpha_5)= 
(\alpha_0,-\alpha_0+1/2,\alpha_0,-\alpha_0+1/2,0,0)$ 
and 
$(f_0,f_1,f_2,f_3,f_4,f_5)=(t/2,t/2,t/2,t/2,0,0)$. 
\par
If the parameters 
are transformed into 
$(1/2, 0, 1/2, \alpha_0, 0, -\alpha_0),$ 
it follows from Proposition \ref{prop:bpara2} 
that $2\alpha_j \in \mathbb{Z} \,\,(0\leq j \leq 5)$ and 
$(\alpha_j)_{0\leq j \leq 5}$ 
are transformed into $(1/2,0,1/2,0,0,0).$ 
\par
If the parameters 
are transformed into 
$(\alpha_0,0,-\alpha_0+1,0,0,0),$ 
it follows from Proposition \ref{prop:bpara3} 
that $2\alpha_j \in \mathbb{Z} \,\,(0\leq j \leq 5)$ and 
$(\alpha_{j})_{0\leq j \leq 5}$ 
are transformed into $(1/2,0,1/2,0,0,0).$

\end{proof}

\subsection{Complete classification of rational solutions of Type C}
In this subsection, we 
classify 
the rational solutions of Type C 
of $A_5^{(1)}(\alpha_j)_{0\leq j \leq 5}.$ 
For this purpose, 
we have

\begin{lemma}
\label{lem:cpointgeneral}
\it{
Suppose that 
for some $i=0,1,2,3,4,5,$ 
\begin{align*}
(
\alpha_i,
\alpha_{i+1},
\alpha_{i+2},
\alpha_{i+3},
\alpha_{i+4},
\alpha_{i+5}
)
&\equiv
\frac{p}{3}
(1,0,1,0,1,0)
+
\frac{q}{3}
(1,0,-1,-1,0,1), \,
{\it or} \\ 
&\equiv 
\frac{r}{3}
(0,1,1,1,0,0)
+
\frac{s}{3}
(1,1,0,0,0,1) 
\, \mathrm{mod} \, \mathbb{Z}, \,\,(p,q,r,s=0,\pm1).
\end{align*}
By some B\"acklund transformations, 
the parameters 
$(\alpha_j)_{0\leq j \leq 5}$ 
can then be transformed into 
\begin{equation*}
(\alpha_0,\alpha_1,\alpha_2,\alpha_3,\alpha_4,\alpha_5)
=
(1,0,0,0,0,0), \,
(1/3,1/3,1/3,0,0,0), \,
(1/3,0,1/3,0,1/3,0).
\end{equation*}
The parameters $(\alpha_i)_{0\leq i \leq 5}$ can be transformed into 
$(1/3,0,1/3,0,1/3,0)$ 
if and only if 
\begin{align*}
(
\alpha_i,
\alpha_{i+1},
\alpha_{i+2},
\alpha_{i+3},
\alpha_{i+4},
\alpha_{i+5}
)
\equiv 
&
\frac{\pm 1}{3}
(1,-1,1,1,0,1), \,\,
\frac{\pm 1}{3}
(1,0,-1,-1,0,1), \\
&
\frac{\pm 1}{3}
(1,0,1,0,1,0) \,\,
\mathrm{mod} \, \mathbb{Z},
\end{align*}
for some $i=0,1,2,3,4,5.$ 
}
\end{lemma}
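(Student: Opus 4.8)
The plan is to treat Lemma \ref{lem:cpointgeneral} as the discrete (residue-class) analogue of Proposition \ref{prop:standardC}, and to prove it by the same enumerate-and-reduce strategy used for Lemma \ref{lem:bpointgeneral} in the Type B case. Since $p,q,r,s$ each range over $\{0,\pm1\}$, the hypothesis singles out only finitely many residue vectors $(\alpha_0,\dots,\alpha_5)$ modulo $\mathbb{Z}$ (at most nine classes in each family, with overlaps); because the condition is stated ``for some $i$'', it suffices to list one representative of each $\pi$-orbit of these classes. First I would tabulate these representatives explicitly, all of whose entries lie in $\tfrac13\mathbb{Z}$.

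For each representative I would then exhibit a word in the shift operators $T_i$, the rotation $\pi$, and the reflections $s_j$ carrying it to one of the three target vectors, exactly as in the displayed cases of Proposition \ref{prop:standardC} and the sample cases of Lemma \ref{lem:bpointgeneral}, with the remaining cases handled identically. The mechanism is uniform: the shift operators change each $\alpha_j$ only by integers, so modulo $\mathbb{Z}$ they act freely and merely let me select convenient representatives in $\{0,\pm\tfrac13\}$ subject to $\sum_j\alpha_j=1$; the rotation $\pi$ cyclically relabels; and each $s_j$ acts on the residues by $\alpha_j\mapsto-\alpha_j$, $\alpha_{j\pm1}\mapsto\alpha_{j\pm1}+\alpha_j$. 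A short bookkeeping then shows that an all-integer class collapses to $(1,0,0,0,0,0)$, a ``three consecutive thirds'' class to $(\tfrac13,\tfrac13,\tfrac13,0,0,0)$, and an ``alternating thirds'' class to $(\tfrac13,0,\tfrac13,0,\tfrac13,0)$.

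For the asserted equivalence I would argue in two directions. The forward direction is the explicit verification that each of the listed classes $\tfrac{\pm1}{3}(1,-1,1,1,0,1)$, $\tfrac{\pm1}{3}(1,0,-1,-1,0,1)$, $\tfrac{\pm1}{3}(1,0,1,0,1,0)$ reduces to $(\tfrac13,0,\tfrac13,0,\tfrac13,0)$ by such a word (the last being that vector itself). The backward direction requires showing that no other class in the hypothesis reaches it, i.e.\ that the three target vectors lie in distinct orbits of the extended affine Weyl group. The vector $(1,0,0,0,0,0)\equiv 0$ is immediate: modulo $\mathbb{Z}$ it is a fixed point of every $s_j$ and of $\pi$, hence alone in its orbit.

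The hard part will be separating $(\tfrac13,\tfrac13,\tfrac13,0,0,0)$ from $(\tfrac13,0,\tfrac13,0,\tfrac13,0)$: these two are not distinguished by any linear functional of the residues (every $\langle s_0,\dots,s_5\rangle$-invariant linear form on the $\tfrac13$-lattice vanishes on both), so a finer invariant is needed. Here I would invoke the discrete data $(\chi,n)$ with $\chi=x+y+z+\omega$ and $x=\alpha_2-\alpha_4$, $y=\alpha_3-\alpha_5$, $z=\alpha_0-\alpha_4$, $\omega=\alpha_1-\alpha_5$ that already organized the nine cases of Proposition \ref{prop:standardC}: evaluating these on the two vectors yields different values of $(\chi, n \bmod 3)$, which persist under the reductions and thereby certify inequivalence; equivalently, the separation reduces to a bounded orbit computation in $(\mathbb{Z}/3\mathbb{Z})^6$ restricted to vanishing coordinate sum. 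As a consistency check, Proposition \ref{prop:cpara1} shows $A_5(\tfrac13,0,\tfrac13,0,\tfrac13,0)$ admits a Type C solution while $A_5(\tfrac13,\tfrac13,\tfrac13,0,0,0)$ and $A_5(1,0,0,0,0,0)$ do not (Lemma \ref{lem:cpoint1} and the first lemma of Subsection 11.1), matching the trichotomy; but since the B\"acklund transformations need not preserve the type, I would use this only as a sanity check and rely on the invariant for the actual inequivalence.
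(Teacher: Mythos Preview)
Your overall strategy---tabulate the finitely many residue classes determined by $(p,q)$ and $(r,s)$, pick one representative per $\pi$-orbit, and for each exhibit an explicit word in $T_i$, $\pi$, $s_j$ reducing it to one of the three targets---is exactly what the paper does. The paper's proof displays only the two sample cases $(p,q)=(1,1)$ and $(r,s)=(1,1)$ and declares the remaining ones analogous, just as you propose; the ``if and only if'' clause is not argued separately there but falls out of the complete enumeration, since once every class has been reduced you can simply read off which ones land on $(\tfrac13,0,\tfrac13,0,\tfrac13,0)$.

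One caution about your orbit-separation idea: the pair $(\chi,n)$ is \emph{not} invariant under the full B\"acklund group. For instance $s_0$ sends $\chi\mapsto\chi-3\alpha_0$, so on $(\tfrac13,0,\tfrac13,0,\tfrac13,0)$ one has $\chi=0\mapsto\chi=-1$, and $\chi\bmod 3$ changes. Thus $(\chi,n\bmod 3)$ does not by itself certify that $(\tfrac13,\tfrac13,\tfrac13,0,0,0)$ and $(\tfrac13,0,\tfrac13,0,\tfrac13,0)$ lie in distinct orbits. Your fallback---a direct orbit computation in $(\mathbb{Z}/3\mathbb{Z})^6$ with zero coordinate sum under the reflections $s_j$ and rotation $\pi$---is the right fix, and in practice amounts to nothing more than the enumeration you already planned. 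So drop the $(\chi,n)$ heuristic as an invariant and let the case-by-case reduction carry both directions of the equivalence, as the paper does.
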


\begin{proof}
We deal with the following two cases: 
$
\mathrm{(1)}
\quad 
(p,q)=(1,1),
\quad 
\mathrm{(2)}
\quad
(r,s)=(1,1). 
$
The other cases can be proved in the same way. 
\newline
(1) 
\quad
By $\pi,$ 
we assume that 
\begin{align*}
(
\alpha_0,
\alpha_{1},
\alpha_{2},
\alpha_{3},
\alpha_{4},
\alpha_{5}
)
&\equiv
\frac{1}{3}
(1,0,1,0,1,0)
+
\frac{1}{3}
(1,0,-1,-1,0,1) &\mathrm{mod} \, \mathbb{Z} \\
&\equiv 
\left(\frac23,0,0,-\frac13,\frac13,\frac13 \right) 
&\mathrm{mod} \, \mathbb{Z}.
\end{align*}
By some shift operators $T_j \,\,(0\leq j \leq 5),$ 
we have 
%\begin{equation*}
$(\alpha_i)_{0\leq i \leq 5}
\longrightarrow
(2/3,0,0,-1/3,1/3,1/3).
$
%\end{equation*}
By 
$\pi  s_1  s_2  s_3,$ 
we get
$
(2/3,0,0,-1/3,1/3,1/3)
\longrightarrow
(1/3,1/3,1/3,0,0,0).
$
\newline
(2) 
\quad 
By $\pi,$ 
we assume that 
\begin{align*}
(
\alpha_0,
\alpha_{1},
\alpha_{2},
\alpha_{3},
\alpha_{4},
\alpha_{5}
)
&\equiv
\frac{1}{3}
(0,1,1,1,0,0)
+
\frac{1}{3}
(1,1,0,0,0,1)
\,\, \mathrm{mod} \, \mathbb{Z}  \\
&\equiv 
\left(\frac13,\frac23,\frac13,\frac13,0,\frac13 \right).
\end{align*}
By some shift operators $T_j \,\,(0\leq j \leq 5),$ 
we have 
%\begin{equation*}
$(\alpha_i)_{0\leq i \leq 5}
\longrightarrow
(2/3,0,0,-1/3,1/3,1/3).
$
%\end{equation*}
By $\pi  s_1,$ 
we have 
$
(2/3,0,0,-1/3,1/3,1/3)
\longrightarrow
(1/3,0,1/3,0,1/3,0).
$
\end{proof}

\par
In order to state Theorem \ref{thm:a5mainC}, 
we define 
\begin{align*}
& 
\hat{x}_k:=\hat{\alpha}_{k+2}-\hat{\alpha}_{k+4}, \,
\hat{y}_k:=\hat{\alpha}_{k+3}-\hat{\alpha}_{k+5}, \,
\hat{z}_k:=\hat{\alpha}_k-\hat{\alpha}_{k+4}, \,
\hat{\omega}_k:=\hat{\alpha}_{k+1}-\hat{\alpha}_{k+5}, \, \\
& 
\hat{\chi}_k:=\hat{x}_k +\hat{y}_k +\hat{z}_k +\hat{\omega}_k \,\,
(0\leq k \leq 5),
\end{align*}
where $\hat{\alpha}_k \,\,(0\leq k \leq 5)$ 
are defined in Theorem \ref{thm:a5mainC}.

\begin{theorem}
\label{thm:a5mainC}
\it{
For a rational solution of Type C of $A_5^{(1)}(\alpha_j)_{0\leq j \leq 5},$ 
by some B\"acklund transformations, 
the parameters and solution 
can be transformed so that 
\begin{align*}
(\alpha_0, \alpha_1,\alpha_2,\alpha_3,\alpha_4,\alpha_5) &=(\alpha_4, -\alpha_4+1/3,\alpha_4,-\alpha_4+1/3,\alpha_4, -\alpha_4+1/3)\,\, {\it and} \\
(f_0, f_1,f_2,f_3,f_4,f_5) &=(
t/3,t/3,t/3,t/3,t/3,t/3
).
\end{align*}
The orbit of $(\alpha_4, -\alpha_4+1/3,\alpha_4,-\alpha_4+1/3,\alpha_4, -\alpha_4+1/3)$ 
by the B\"acklund transformation group $\tilde{W}(A_5^{(1)})$ 
consists of the parameters which satisfy 
one of the following conditions:
for some $k=0,1,2,3,4,5,$ 
\begin{align*}
&\mathrm{(1)} \quad 
\hat{x}_{k},\hat{y}_{k},\hat{z}_{k},\hat{\omega}_{k} \in \mathbb{Z}, \hat{\chi}_k \in 3 \mathbb{Z},     \\
&\mathrm{(2)} \quad 
(\hat{x}_{k},\hat{y}_{k},\hat{z}_{k},\hat{\omega}_{k}) 
\equiv \frac13 (-1,1,1,-1) \, \mathrm{mod} \, \mathbb{Z},  \, \hat{\chi}_k \in 3\mathbb{Z}, \\
&\mathrm{(3)} \quad 
\hat{x}_{k},\hat{y}_{k},\hat{z}_{k},\hat{\omega}_{k} \in \mathbb{Z}, \, \hat{\chi}_k +1\in 3\mathbb{Z},  \\ 
&\mathrm{(4)} \quad 
(\hat{x}_{k},\hat{y}_{k},\hat{z}_{k},\hat{\omega}_{k}) 
\equiv -\frac13 (-1,1,1,-1) \, \mathrm{mod} \, \mathbb{Z},  \, \hat{\chi}_k +1\in 3\mathbb{Z},  \\
&\mathrm{(5)} \quad 
(
\alpha_{k},\alpha_{k+1},\alpha_{k+2},\alpha_{k+3},\alpha_{k+4},\alpha_{k+5}
)
\equiv
\pm \frac13
(1,-1,1,1,0,1), \,\,
\pm \frac13
(1,0,-1,-1,0,1),  \\  
&\hspace{68mm}  
\equiv                                                          
\pm \frac13  (1,0,1,0,1,0) \,\,  \mathrm{mod} \, \mathbb{Z}.                                 
\end{align*}
where $\hat{\alpha}_k \,\,(k=0,1,2,3,4,5)$ are defined by one of the following equations:
\begin{align*}
&\mathrm{(i)}   \quad \hat{\alpha}_k=\alpha_k; \\
&\mathrm{(ii)} \quad 
\hat{\alpha}_k=\alpha_k+\alpha_{k+1}, 
\hat{\alpha}_{k+1}=-\alpha_{k+1}, 
\hat{\alpha}_{k+2}=\alpha_{k+2}+\alpha_{k+1},
\hat{\alpha}_{k+3}=\alpha_{k+3},  \\
&\hspace{7mm}
\hat{\alpha}_{k+4}=\alpha_{k+4},
\hat{\alpha}_{k+5}=\alpha_{k+5}, 
\mathrm{and} \, \alpha_{k+1} \neq 0;  \\
&\mathrm{(iii)} \quad 
\hat{\alpha}_k=\alpha_{k+1}+\alpha_{k},
\hat{\alpha}_{k+1}=-\alpha_{k+1},
\hat{\alpha}_{k+2}=\alpha_{k+2}+\alpha_{k+1},
\hat{\alpha}_{k+3}=\alpha_{k+3}+\alpha_{k+4},  \\
&\hspace{8mm}
\hat{\alpha}_{k+4}=-\alpha_{k+4},
\hat{\alpha}_{k+5}=\alpha_{k+5}+\alpha_{k+4}, 
\mathrm{and} \, \alpha_{k+1}, \alpha_{k+4} \neq 0.  
\end{align*}

}
\end{theorem}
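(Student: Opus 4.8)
The plan is to follow the two-step template already used for Theorems~\ref{thm:a5mainA} and~\ref{thm:a5mainB}: prove that each of conditions (1)--(5) forces $A_5(\alpha_j)_{0\leq j\leq 5}$ to have a Type C solution, and conversely that a Type C solution forces one of (1)--(5). The argument rests on the observation that every generator of the B\"acklund group preserves Type C. Indeed $s_i$ changes only two of the $f_j$, each by $\pm\alpha_i/f_i$, and in the Type C situation $f_i$ has a simple pole at $t=\infty$, so the added term is regular and vanishes there; hence the six simple poles survive. The same holds for $\pi$, which merely permutes the $f_j$ cyclically, and therefore for every shift operator $T_i$. Consequently the property of admitting a Type C solution is constant along each B\"acklund orbit, which is what lets us transport the existence and non-existence statements of this section between parameter points.

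For sufficiency I would organize conditions (1)--(4) around the three definitions (i), (ii), (iii) of $\hat\alpha_k$. Definitions (ii) and (iii) are exactly the transformations $s_{k+1}$ and $s_{k+1}s_{k+4}$ furnished by Corollary~\ref{coro:reg}, which regularize a Type C solution at $t=0$ precisely when $(f_k,f_{k+2})$ or $(f_k,f_{k+2},f_{k+3},f_{k+5})$ have poles there, while (i) is the identity covering the already-regular case. In each of (1)--(4) the regularized parameters $\hat\alpha_k$ then satisfy the congruence of the first case of Proposition~\ref{prop:necC}, with the admissible pairs $(\hat\chi_k,n)=(0,0),(0,-1),(-1,0),(-1,1)$ matched one to one with (1),(2),(3),(4) through the relations $x\equiv\omega\equiv n/3$ and $y\equiv z\equiv 2n/3$ forced by $m=n$. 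By Proposition~\ref{prop:standardC} these four pairs are carried to the standard form $A_5(\alpha_4,-\alpha_4+\tfrac13,\alpha_4,-\alpha_4+\tfrac13,\alpha_4,-\alpha_4+\tfrac13)$, which by Proposition~\ref{prop:cpara1} possesses the Type C solution $(\tfrac{t}{3},\tfrac{t}{3},\tfrac{t}{3},\tfrac{t}{3},\tfrac{t}{3},\tfrac{t}{3})$; pulling this solution back along the Type-C-preserving transformations yields a Type C solution of $A_5(\alpha_j)$. Condition (5) is treated directly: Lemma~\ref{lem:cpointgeneral} sends those congruences to $(\tfrac13,0,\tfrac13,0,\tfrac13,0)$, a specialization of the same good standard form, so Proposition~\ref{prop:cpara1} again applies.

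For necessity I would assume a Type C solution exists and apply Proposition~\ref{prop:necC}, which places the parameters in one of its three cases. In the main branches, where $\alpha_{k+1}\neq 0$ or $\alpha_{k+1},\alpha_{k+4}\neq 0$, I apply the matching regularizing transformation to form $\hat\alpha_k$ and extract the first-case congruence on $(\hat x_k,\hat y_k,\hat z_k,\hat\omega_k)$; the relation $m=n$ together with the residue constraint of Corollary~\ref{coro:a5res} reduces the possibilities to nine pairs $(\hat\chi_k,n)$. The non-existence Propositions~\ref{prop:cpara2}, \ref{prop:cpara3} and~\ref{prop:cpara4} rule out the five pairs that standardize to the other three forms, leaving exactly the four pairs recorded in (1)--(4). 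The remaining \emph{or}-branches of the second and third cases of Proposition~\ref{prop:necC} are handled by Lemma~\ref{lem:cpointgeneral}, which carries them to one of $(1,0,0,0,0,0)$, $(\tfrac13,\tfrac13,\tfrac13,0,0,0)$, $(\tfrac13,0,\tfrac13,0,\tfrac13,0)$; the first two admit no Type C solution by the opening lemma of this section and by Lemma~\ref{lem:cpoint1}, so only the third survives, and this is condition (5).

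The main obstacle will be the combinatorial bookkeeping in the necessity step: collecting the six choices of $k$, the three possible pole-patterns at $t=0$ (encoded by (i)--(iii)), and the half-integer residue constraint into the single congruence of the first case of Proposition~\ref{prop:necC}, and then verifying that the four admissible pairs $(\hat\chi_k,n)$ are matched exactly and exhaustively to (1)--(4) while each of the five inadmissible pairs is excluded by a specific non-existence result. A secondary but essential point is to confirm rigorously that regularization at $t=0$ and the standardization chains preserve Type C, as in the first paragraph, so that the non-existence propositions---stated only for isolated parameter values---may legitimately be transported along the orbit to the point $(\alpha_j)_{0\leq j\leq 5}$ under consideration.
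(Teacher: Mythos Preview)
Your proposal is correct and follows essentially the same approach as the paper's own proof: reduce both directions to Proposition~\ref{prop:standardC} and Lemma~\ref{lem:cpointgeneral}, invoke Proposition~\ref{prop:cpara1} for the good standard form, and cite Propositions~\ref{prop:cpara2}--\ref{prop:cpara4} (together with the two lemmas at the start of Section~11) to exclude the remaining cases. Your write-up is in fact more explicit than the paper's on two points it leaves implicit: the invariance of Type~C under the B\"acklund group, and the precise matching of conditions (1)--(4) to the four admissible pairs $(\hat\chi_k,n)\in\{(0,0),(0,-1),(-1,0),(-1,1)\}$ in Proposition~\ref{prop:standardC}.
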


\begin{proof}
Suppose that for $A_5^{(1)}(\alpha_j)_{0\leq j \leq 5},$ there exists a rational solution of Type C. 
It then follows from 
Proposition \ref{prop:standardC} 
that the parameters can be transformed into 
$(
\alpha_4,
-\alpha_4+1/3,
\alpha_4,
-\alpha_4+1/3,
\alpha_4,
-\alpha_4+1/3
)$ 
or 
$
(-\alpha_4+1/3,1/3,1/3,\alpha_4,0,0),
$
or 
$
(\alpha_4,0,0,1-\alpha_4,0,0),
$ 
or 
$(\alpha_4,1/3,1/3,-\alpha_4+1/3,0,0).$ 
Especially, 
the parameters can be transformed into 
$(
\alpha_4,
-\alpha_4+1/3,
\alpha_4,
-\alpha_4+1/3,
\alpha_4,
-\alpha_4+1/3
),$ 
if and only if they satisfy one of the conditions in this theorem. 
Proposition \ref{prop:cpara1} shows that 
for $A_5^{(1)}(
\alpha_4,
-\alpha_4+1/3,
\alpha_4,
-\alpha_4+1/3,
\alpha_4,
-\alpha_4+1/3
),
$ 
there exists 
a rational solution of Type C 
and 
$
(f_0,f_1,f_2,f_3,f_4,f_5)=
(
t/3,t/3,t/3,t/3,t/3,t/3
) 
$
and it is unique. 
Propositions \ref{prop:cpara2}, \ref{prop:cpara3} and \ref{prop:cpara4} 
shows that 
for 
$
A_5^{(1)}(-\alpha_4+1/3,1/3,1/3,\alpha_4,0,0),
$
or 
$
A_5^{(1)}(\alpha_4,0,0,1-\alpha_4,0,0),
$ 
or 
$A_5^{(1)}(\alpha_4,1/3,1/3,-\alpha_4+1/3,0,0)$ 
there exists no rational solution of Type C.

\end{proof}

%%%%%%%%%%%%%%%%%%%%%%%%%%%

%\newpage
%\setlength{\marginrulewidth}{0pt}

\end{document}